\newtheorem{theorem}{Theorem}[chapter]
\newtheorem{lemma}[theorem]{Lemma}
\newtheorem{corollary}[theorem]{Corollary}
\newtheorem{proposition}[theorem]{Proposition}
\theoremstyle{definition}
\newtheorem{definition}[theorem]{Definition}
\newtheorem{example}[theorem]{Example}
\theoremstyle{remark}
\numberwithin{section}{chapter}
\numberwithin{equation}{chapter}
\DeclareMathOperator{\Arg}{Arg}
\DeclareMathOperator{\Aut}{Aut}
\DeclareMathOperator{\Ker}{Ker}
\DeclareMathOperator{\Int}{Int}
\DeclareMathOperator{\Ext}{Ext}
\DeclareMathOperator{\id}{id}
\DeclareMathOperator{\diam}{diam}
\DeclareMathOperator{\inc}{inc}
\DeclareMathOperator{\Real}{Re}
\DeclareMathOperator{\Imaginary}{Im}
\def\2sidelim{%
\lim_{{\substack{\scriptscriptstyle t_1 \leq t \leq t_2 \\%
\scriptscriptstyle  t_2 - t_1 \searrow 0} }}}
\def\sidelimv{%
\lim_{{\substack{\scriptscriptstyle t_1 \leq t_0 \leq t_2 \\%
\scriptscriptstyle  t_2 - t_1 \searrow 0} }}}
\begin{document}

\frontmatter

\title{Loewner Theory on Analytic Universal Covering Maps}


\author{Hiroshi Yanagihara}
\address{Professor Emeritus, Yamaguchi University, Japan \\
Visiting Professor, Indian Institute of Technology Bhubaneswar, 
India}
\email{hiroshi@yamaguchi-u.ac.jp}
\thanks{The author was supported in part 
by JSPS KAKENHI Grant Number 23K03150.}


\date{}

\subjclass[2020]{Primary 30C35, 30F10; Secondary 30F35, 30C45, 30C75.}

\keywords{Loewner chains,
Universal covering maps, 
Subordination principle.
Kernel convergence.
Hyperbolic metrics,
Fuchsian groups,
Univalent functions,
Loewner–Kufarev equation}


\begin{abstract}
We study Loewner chains in $\mathcal{H}_0(\mathbb{D})$ without
assuming univalence of each element. We prove a decomposition:
every chain admits a factorization $f_t=F\circ g_t$, where $F$ is
analytic on $\mathbb{D}(0,r)$ with
$r=\lim_{t \nearrow \sup I} f_t'(0)$, and $\{g_t\}$ is a classical
Loewner chain of univalent functions. Under a mild regularity
assumption on $t \mapsto f_t'(0)$, we derive a partial differential
equation that generalizes the Loewner--Kufarev equation. We then
develop a Loewner theory for chains of universal covering maps. We
characterize such chains in terms of domain families
$\{\Omega_t\}$: continuity and monotonicity of $\{f_t\}$ are
equivalent to kernel continuity and monotonicity of $\{\Omega_t\}$.
We further show that the connectivity
$C(\Omega_t)=\#(\hat{\mathbb{C}}\setminus \Omega_t)$ is a
left-continuous nondecreasing function of $t$. Building on these
results, we formulate a Loewner theory on Fuchsian groups and
obtain evolution equations for deck transformations. As an
application, we study hyperbolic metrics and establish a formula
for the logarithmic derivative of the hyperbolic density along the
chain. Our results provide a unified framework linking classical
Loewner theory, covering maps, and the geometry of hyperbolic
domains.
\end{abstract}

\maketitle

\tableofcontents


\mainmatter
\chapter{Introduction and Main Results}
\label{chapter:introduction}
Let ${\mathbb C}$ denote the complex plane, and let
$\hat{\mathbb{C}}=\mathbb{C} \cup \{ \infty \}$ be the Riemann sphere.
For $c \in \mathbb{C}$ and $r>0$ set
$\mathbb{D}(c,r)=\{ z \in \mathbb{C} : |z-c|<r \}$ and
$\overline{\mathbb{D}}(c,r)=\{ z \in \mathbb{C} : |z-c|\le r \}$.
In particular, we write $\mathbb{D}$ for the unit disc
$\mathbb{D}(0,1)$.
Let $\mathcal{H}(\mathbb{D})$ be the space of analytic functions on
$\mathbb{D}$ endowed with the topology of locally uniform
convergence on $\mathbb{D}$.
Set
$\mathcal{H}_0(\mathbb{D})
=\{ f \in \mathcal{H}(\mathbb{D}) : f(0)=0 \text{ and } f'(0)>0 \}$
and
$\mathfrak{B}
=\{ \omega \in \mathcal{H}_0(\mathbb{D}) : |\omega(z)|\le 1 \}$.
By the Schwarz lemma we also have
$\mathfrak{B}
=\{ \omega \in \mathcal{H}_0(\mathbb{D}) : |\omega(z)|\le |z| \}$.

\section{Background and Motivation}
Let $\Omega$ be a domain in $\mathbb{C}$ with $0 \in \Omega$ such that
$\mathbb{C}\setminus \Omega$ contains at least two points.
Consider the extremal problem
\begin{equation}
\label{eq:extremal_problem}
\sup \{ f'(0) :
f \in \mathcal{H}_0(\mathbb{D}),\ f(\mathbb{D}) \subset \Omega \}.
\end{equation}
If $\Omega$ is simply connected, the Riemann mapping theorem yields a
unique conformal mapping $f_0:\mathbb{D}\to \Omega$ with
$f_0 \in \mathcal{H}_0(\mathbb{D})$.
This $f_0$ solves the extremal problem, that is, for any
$f \in \mathcal{H}_0(\mathbb{D})$ with $f(\mathbb{D}) \subset \Omega$,
we have $f'(0)\le f_0'(0)$, with equality if and only if $f=f_0$.

If we drop simple connectivity, the extremal problem still has a unique
solution.
Let $f_0:S\to \Omega$ be an analytic covering map of a simply
connected Riemann surface $S$ onto $\Omega$.
By the Koebe uniformization theorem, “the single most important
theorem in the whole theory of analytic functions of one variable”
(cf.\ Ahlfors \cite[Chap.~10]{Ahlfors:ConformalInvariants}), we may
assume $S=\mathbb{D}$.
If necessary, after composing with a conformal automorphism of
$\mathbb{D}$, we may normalize $f_0$ so that $f_0(0)=0$ and
$f_0'(0)>0$.
Then for any $f \in \mathcal{H}_0(\mathbb{D})$ with
$f(\mathbb{D}) \subset \Omega$, there exists a unique
$\omega \in \mathfrak{B}$ such that $f=f_0\circ \omega$.
The map $\omega$ is called the lift of $f$ with respect to $f_0$.
For details, see \cite[Chaps.~9–10]{Ahlfors:ConformalInvariants} or
\cite{Springer}.
By Schwarz’s lemma,
\[
f'(0)=f_0'(0)\,\omega'(0)\le f_0'(0),
\]
with equality if and only if $\omega(z)\equiv z$, i.e., $f=f_0$.
Thus $f_0$ is again the unique solution to
\eqref{eq:extremal_problem}.
In this way analytic universal covering maps of $\mathbb{D}$ arise as
a natural generalization of conformal mappings, i.e., univalent
functions.

The theory of univalent functions has a long history and remains an
active field of research.
Likewise, the geometric theory of analytic universal covering maps has
been extensively studied in connection with Fuchsian groups.
Moreover, since the hyperbolic metric on a hyperbolic domain is
obtained by projecting the Poincar\'e metric on $\mathbb{D}$ via any
analytic universal covering map, results on hyperbolic metrics can be
interpreted as theorems on covering maps.

In this article we show that Loewner theory, a powerful method in the
study of univalent functions, is also effective for analytic universal
covering maps.
In 1923, L\"owner \cite{Loewner} discovered that any bounded slit
mapping $f$ of $\mathbb{D}$ admits a parametric representation
satisfying a differential equation now known as the Loewner
differential equation.
The parametric method was subsequently developed by many authors,
notably by Kufarev \cite{Kufarev1,Kufarev2} and Pommerenke
\cite{Pommerenke1965,Pommerenke1}.

In de Branges’s 1985 solution of the coefficient problem for
univalent functions \cite{deBrange}, the Loewner equation played a
central role.
Its use then waned for a time, but in the early 21st century
connections to statistical physics and conformal field theory led to
active study of the stochastic Loewner evolution (SLE).
While SLE primarily concerns conformal mappings of the upper
half-plane, a more unified framework now covers the unit disc, the
upper half-plane, and settings with fixed points away from the origin.
For a comprehensive reference to both the classical theory and its
unified extensions, see Bracci–Contreras–D\'iaz-Madrigal–Vasil'ev
\cite{BCDV}.

\section{Scope: Three Classes of Loewner Chains}
We focus on Loewner chains of analytic functions in $\mathbb{D}$,
introduced by Pommerenke \cite{Pommerenke1965}.
A function $f_0 \in \mathcal{H}(\mathbb{D})$ is subordinate to
$f_1 \in \mathcal{H}(\mathbb{D})$ (written $f_0 \prec f_1$) if there
exists an analytic map $\omega:\mathbb{D}\to \mathbb{D}$ with
$\omega(0)=0$ and $f_0=f_1\circ \omega$.
If $f_0,f_1 \in \mathcal{H}_0(\mathbb{D})$, then
$\omega'(0)=f_0'(0)/f_1'(0)\in (0,1]$, hence $\omega \in \mathfrak{B}$,
and by the identity theorem $\omega$ is uniquely determined by
$f_0,f_1$.

\begin{definition}
Let $I \subset [-\infty,\infty]$ and let $\{f_t\}_{t\in I} \subset
\mathcal{H}_0(\mathbb{D})$.
We say that $\{f_t\}_{t\in I}$ is a \emph{Loewner chain} if
\begin{equation}
\label{eq:subordination-chain}
f_s \prec f_t \quad \text{for all } (s,t)\in I_+^2,
\end{equation}
where $I_+^2=\{(s,t)\in I^2 : s\le t\}$.
For $(s,t)\in I_+^2$ let $\omega_{s,t}\in \mathfrak{B}$ be the unique
map satisfying $f_s=f_t\circ \omega_{s,t}$.
We call $\{\omega_{s,t}\}_{(s,t)\in I_+^2}$ 
the \emph{associated transition family} of $\{f_t\}_{t\in I}$.
By the Schwarz lemma, $f_t'(0)$ is nondecreasing and positive on $I$.
We say that $\{f_t\}_{t\in I}$ is 
\emph{strictly increasing} if $f_t'(0)$ is
strictly increasing in $t\in I$, i.e., $f_s'(0)<f_t'(0)$ whenever
$s<t$.
A Loewner chain $\{f_t\}_{t\in I}$ is called \emph{normalized} if
$f_t'(0)=e^t$, $t\in I$.
We say that $\{f_t\}_{t\in I}$ is 
\emph{continuous} if the map
$I \ni t \mapsto f_t \in \mathcal{H}(\mathbb{D})$ is continuous, 
that is, for each $t_0 \in I$, 
$f_t \to f_{t_0}$ locally uniformly on $\mathbb{D}$ as
$t\to t_0$ in $I$, equivalently, $f(z,t)$ is continuous on
$\mathbb{D}\times I$.
Here, we follow the convention $f(z,t):=f_t(z)$.
\end{definition}

In many texts and papers each $f_t$ is assumed univalent on
$\mathbb{D}$ in the definition of a Loewner chain.
We do not make this assumption.
Without it, Pommerenke \cite{Pommerenke1965} showed that if
$\{f_t\}_{t\in I}$ is a normalized Loewner chain, then for almost
every $t$ in the interior of $I$ the family $\{f_t\}_{t\in I}$
satisfies the Loewner–Kufarev PDE, which generalizes the classical
Loewner equation.

Since we are primarily concerned with continuous 
Loewner chains, we henceforth
assume $I$ is connected; that is, $I$ is an interval in
$[-\infty,\infty]$.
We consider three classes:
\begin{itemize}
\item[{\rm (I)}]
Loewner chains $\{f_t\}_{t\in I}$ with each
$f_t \in \mathcal{H}_0(\mathbb{D})$ univalent.
\item[{\rm (II)}]
Loewner chains $\{f_t\}_{t\in I}$ with each $f_t$ the universal
covering map of $\mathbb{D}$ onto
$\Omega_t:=f_t(\mathbb{D})$.
\item[{\rm (III)}]
All Loewner chains $\{f_t\}_{t\in I}$ without additional assumptions.
\end{itemize}

\section{Main Results for General Chains}
In the first half of the paper
(Chapters \ref{chapter:BasicEstimates}–%
\ref{chapter:SchlichtSubordination}),
we focus on class (III).
Chapters \ref{chapter:BasicEstimates}–\ref{chapter:Solution} treat
properties shared by classes (I) and (III), while
Chapter \ref{chapter:SchlichtSubordination} highlights differences.
Class (III) goes back to Pommerenke \cite{Pommerenke1965}, who did not
assume connectedness of $I$ and mainly studied normalized 
Loewner chains.
A decade later, Pommerenke \cite{Pommerenke1} introduced class (I) and
undertook a detailed study, especially for normalized 
Loewner chains of
univalent functions.

In the latter half
(Chapters \ref{chapter:KernelAndCovering}–%
\ref{chapter:HyperbolicMetrics}),
we study class (II), a geometrically natural generalization of class
(I).

To develop the theory for class (III), Chapter~2 introduces basic
estimates for transition families.
We then prove a key criterion: a Loewner chain
$\{f_t\}_{t\in I}$ is continuous if and only if
$a(t):=f_t'(0)>0$ is continuous on $I$.
The main result is the following decomposition theorem.

\begin{theorem}[Decomposition Theorem]
\label{theorem:FirstDecompositionTheorem}
Let $I \subset [-\infty,\infty)$ be a right-open interval with
$\beta=\sup I \notin I$, and let $\{f_t\}_{t\in I}$ 
be a Loewner chain with $a(t)=f_t'(0)$.
Let $a(\beta)=\lim_{t \nearrow \beta} a(t) \in (0,\infty]$.
\begin{itemize}
\item[{\rm (i)}]
The locally uniform limit $f_\beta=\lim_{t \nearrow \beta} f_t$ 
exists if and only if $a(\beta)<\infty$.
In this case there exist a unique analytic function
$F:\mathbb{D}(0,a(\beta))\to \mathbb{C}$ with $F(0)=0$ and
$F'(0)-1=0$, and a Loewner chain
$\{g_t\}_{t\in I\cup\{\beta\}}$ with
$\bigcup_{t\in I} g_t(\mathbb{D})
= g_\beta(\mathbb{D})=\mathbb{D}(0,a(\beta))$ such that
$f_t=F\circ g_t$ for $t\in I\cup\{\beta\}$.
Furthermore, if $\{f_t\}$ is continuous, each $g_t$ is univalent on
$\mathbb{D}$ for $t\in I\cup\{\beta\}$.
\item[{\rm (ii)}]
If $\{f_t\}$ is continuous and $a(\beta)=\infty$, then there exist a
unique entire function $F:\mathbb{C}\to \mathbb{C}$ with $F(0)=0$ and
$F'(0)-1=0$, and a Loewner chain $\{g_t\}_{t\in I}$ of univalent
functions with $\bigcup_{t\in I} g_t(\mathbb{D})=\mathbb{C}$ such that
$f_t=F\circ g_t$ for $t\in I$.
\end{itemize}
In both cases {\rm (i)} and {\rm (ii)}, the Loewner chains 
$\{f_t\}_{t \in I}$ and $\{g_t\}_{t \in I}$ share the same 
transition family.
\end{theorem}

A similar representation for normalized Loewner chains 
was already studied by
Pommerenke; see \cite[Satz~5]{Pommerenke1965}.

In Chapter~3, without assuming normalization, we show that a strictly
increasing and continuous Loewner chain $\{f_t\}_{t\in I}$ and its
transition family $\{\omega_{s,t}\}_{(s,t)\in I_+^2}$ satisfy,
respectively, a partial differential equation and an ordinary
differential equation with respect to $a(t):=f_t'(0)$.
We denote the partial derivative of $k(z,t)$ with respect to $a(t)$
by
\begin{align*}
\frac{\partial k}{\partial a}(z,t)
:=&\ \2sidelim \frac{k(z,t_2)-k(z,t_1)}{a(t_2)-a(t_1)}
= \lim_{\tau \to t} \frac{k(z,\tau)-k(z,t)}{a(\tau)-a(t)}.
\end{align*}
Let $\mu_a$ denote the Lebesgue–Stieltjes measure associated with
$a(t)$.

\begin{theorem}
\label{thm:IntLoewner-DEs}
Let $\{f_t\}_{t\in I}$ be a strictly increasing continuous Loewner
chain with transition family $\{\omega(\cdot,s,t)\}_{(s,t)\in I_+^2}$
and $a(t)=f_t'(0)$.
Then there exists a $G_\delta$-set $N \subset I$ with $\mu_a(N)=0$
such that for all $z \in \mathbb{D}$ and $t \in I\setminus N$ the
limit
\begin{align}
\label{eq:Int-differentail-wrt-a}
P(z,t)
=\ \2sidelim
\frac{\frac{\omega(z,t_1,t_2)}{z}-1}{\frac{a_{t_1}}{a_{t_2}}-1},
\quad z \in \mathbb{D}
\end{align}
exists, and the convergence is locally uniform on $\mathbb{D}$ for
each fixed $t \in I\setminus N$.
Define $P(z,t):=1$ for $(z,t)\in \mathbb{D}\times N$.
Then $P$ is Borel measurable on $\mathbb{D}\times I$, analytic in $z$,
and satisfies $\Real P(z,t)>0$ and $P(0,t)=1$.
Furthermore,
\begin{align}
\label{eq:Int-generating-ODE}
\frac{\partial \omega}{\partial a}(z,t)
=&\ -\frac{z}{a(t)}\,P(z,t),\quad t \in I\setminus N,\\
\label{eq:Int-Loewner-PDE-transition}
\frac{\partial \omega}{\partial a}(z,t,t_0)
=&\ \frac{z P(z,t)}{a(t)}\,\omega'(z,t,t_0),\quad
t \in (I \cap [-\infty,t_0))\setminus N,\\
\label{eq:Int-Loewner-ODE}
\frac{\partial \omega}{\partial a}(z,t_0,t)
=&\ -\frac{\omega(z,t_0,t)}{a(t)}\,
P(\omega(z,t_0,t),t),\quad
t \in (I \cap (t_0,\infty])\setminus N,\\
\label{eq:Int-Loewner-PDE}
\frac{\partial f}{\partial a}(z,t)
=&\ \frac{z}{a(t)}\,P(z,t)\,f'(z,t),\quad t \in I\setminus N.
\end{align}
Here, $\omega'(z,t_1,t_2)$ and $f'(z,t)$ denote derivatives with
respect to $z$.
\end{theorem}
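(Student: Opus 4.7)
The idea is to reduce to the case of a normalized Loewner chain, apply the classical theorem of Pommerenke (\cite{Pommerenke1965}, \cite[Chapter~6]{Pommerenke1}), and pull back the resulting identities by the monotone change of variable $s=\log a(t)$. Since $a:I\to(0,\infty)$ is continuous and strictly increasing, $h:=\log\circ\,a$ is a homeomorphism from $I$ onto an interval $J\subset\mathbb{R}$. The family defined by $\tilde f_s:=f_{h^{-1}(s)}$ on $J$ is then a continuous Loewner chain with $\tilde f_s'(0)=e^s$, i.e.\ a normalized one, and its transition family is $\tilde\omega(z,s_1,s_2)=\omega\bigl(z,h^{-1}(s_1),h^{-1}(s_2)\bigr)$.

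By the classical Pommerenke theorem for normalized chains, there is a set $\tilde N\subset J$ of Lebesgue measure zero---which can be taken to be $G_\delta$, by declaring it to be the complement of the common Lebesgue points of the Herglotz measures representing $s\mapsto\tilde P(z_n,s)$ for a countable dense $\{z_n\}\subset\mathbb{D}$---on whose complement the limit
\[
\tilde P(z,s)\;=\;\lim_{\substack{s_1\le s\le s_2\\ s_2-s_1\downarrow 0}}\frac{\tilde\omega(z,s_1,s_2)/z-1}{s_1-s_2}
\]
exists locally uniformly on $\mathbb{D}$, defines a Herglotz function in $z$ with $\tilde P(0,s)=1$, and the Loewner--Kufarev equations for $\partial_s\tilde f$, $\partial_s\tilde\omega(\cdot,s,s_0)$ and $\partial_s\tilde\omega(\cdot,s_0,s)$ hold. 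Setting $N:=h^{-1}(\tilde N)$ and $P(z,t):=\tilde P(z,h(t))$ off $N$, and $P\equiv1$ on $\mathbb{D}\times N$, one verifies: $N$ is $G_\delta$; $\mu_a(N)=0$ because $h_*\mu_a$ has density $e^s$ with respect to Lebesgue measure on $J$; and the identity $a(t_1)/a(t_2)-1=e^{s_1-s_2}-1$ together with $(e^{s_1-s_2}-1)/(s_1-s_2)\to 1$ forces the difference quotient in \eqref{eq:Int-differentail-wrt-a} to converge locally uniformly to $P(z,t)$. The four identities \eqref{eq:Int-generating-ODE}--\eqref{eq:Int-Loewner-PDE} then follow from the normalized versions via the chain rule
\[
\frac{\partial k}{\partial a(t)}(z,t)\;=\;\frac{1}{a(t)}\,\frac{\partial\tilde k}{\partial s}(z,s)\Big|_{s=h(t)},
\]
valid for any $k(z,t)=\tilde k(z,h(t))$ directly from the two-sided difference quotient definition of $\partial/\partial a(t)$ and the fact that $a$ is a homeomorphism (so that two-sidedness of the limit is preserved). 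This accounts for the universal factor $1/a(t)$ appearing in each of the four equations. Borel measurability of $P$ and the properties $\operatorname{Re}P>0$, $P(0,t)=1$ are inherited from $\tilde P$ and the construction on the $G_\delta$ set $N$.

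The delicate step is the first one: producing a genuine two-sided limit in \eqref{eq:Int-differentail-wrt-a} off a $G_\delta$ null set with \emph{locally uniform} convergence in $z$, rather than merely an a.e.\ one-sided limit at fixed $z$. This is handled by combining the Schwarz--Pick estimate, which shows that the difference quotients $(\tilde\omega(z,s_1,s_2)/z-1)/(s_1-s_2)$ form a normal family of functions with positive real part, uniformly bounded on compact subsets of $\mathbb{D}$, with a Vitali-type argument upgrading pointwise convergence at a carefully chosen $G_\delta$ set of Lebesgue points into locally uniform convergence on $\mathbb{D}$. Once this is established, the pull-back from the normalized chain and the subsequent derivation of \eqref{eq:Int-generating-ODE}--\eqref{eq:Int-Loewner-PDE} proceed by routine manipulation using the semigroup identity $\omega(z,t_1,t_3)=\omega\bigl(\omega(z,t_1,t_2),t_2,t_3\bigr)$ and the substitutions above.
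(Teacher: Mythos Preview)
Your reduction to the normalized case via the reparametrization $s=\log a(t)$ is valid and gives a correct proof, but it is a genuinely different route from the paper's. The paper does \emph{not} reduce to a normalized chain and does not invoke Pommerenke's theorem as a black box; instead it proves the result directly for the transition family $\{\omega_{s,t}\}$ in the $\mu_a$-calculus. Concretely, the paper first uses the Schwarz--Pick estimate (their inequality~(2.7)) to show that $t\mapsto\omega(z_j,s_k,t)$ is locally Lipschitz with respect to $a(t)$ for countably many $z_j,s_k$, hence $\mu_a$-a.e.\ differentiable; Vitali then upgrades this to locally uniform convergence in $z$. A separate step transfers the derivative of $\omega(z,s_k,t)$ in the third slot to the two-sided limit of $(\omega(z,t_1,t_2)-z)/(a(t_2)-a(t_1))$ via the semigroup relation, and a Schwarz--Pick argument on the auxiliary function $\Phi=(P-1)/(P+1)$ shows $P$ is Herglotz. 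The PDE for $f$ is then a short corollary.

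What each approach buys: yours is shorter and conceptually clean, since the analytic content (a.e.\ differentiability, Vitali, Herglotz property) is already packaged in the classical normalized theorem, and the pull-back by a homeomorphism preserves $G_\delta$-ness, Borel measurability, and the two-sided nature of the limit; the only new ingredient is the elementary observation $h_*\mu_a=e^s\,ds$, so Lebesgue-null sets in $s$ correspond to $\mu_a$-null sets in $t$. The paper's direct approach, on the other hand, is self-contained and builds the $\mu_a$-differentiation machinery (Lemma~3.2) that is reused later in the paper (e.g.\ in solving the Loewner--Kufarev ODE in \S4 and in the equations for deck transformations in \S8), so developing it intrinsically is not wasted effort. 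One small caution in your write-up: the phrase ``common Lebesgue points of the Herglotz measures'' is not quite the right mechanism for the two-sided limit---what is actually used (both in Pommerenke and in the paper) is the a.e.\ two-sided differentiability of an absolutely continuous function, followed by Vitali. Your final paragraph already corrects course on this, but the earlier sentence should be brought in line with it.
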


These differential equations generalize the classical Loewner–Kufarev
equations.
Let $\psi:I\to \mathbb{R}$ be strictly increasing and continuous.
Let $\mu_\psi$ and ${\mathcal F}_\psi$ denote the Lebesgue–Stieltjes
measure and the associated $\sigma$-algebra on $I$ with respect to
$\psi$, respectively.
Then $(I,{\mathcal F}_\psi,\mu_\psi)$ is a complete measure space, and
$\mathcal{B}(I)\subset {\mathcal F}_\psi$, where $\mathcal{B}(I)$
denotes the Borel $\sigma$-algebra.
A family $\{P(\cdot,t)\}_{t\in I}\subset \mathcal{H}(\mathbb{D})$ is a
Herglotz family if $\Real P(z,t)>0$ on $\mathbb{D}$ and $P(0,t)=1$ for
each $t \in I$.
Given a $\sigma$-algebra $\mathcal{F}$ on $I$, we say
$\{P(\cdot,t)\}_{t\in I}$ is $\mathcal{F}$-measurable if, for each
fixed $z \in \mathbb{D}$, the map $t \mapsto P(z,t)$ is
$\mathcal{F}$-measurable.

In Chapter~\ref{chapter:Solution} we show that
\eqref{eq:Int-Loewner-ODE} has a unique solution.
Let $a(t)$ be strictly increasing, continuous, and positive on $I$,
and let $\{P(\cdot,t)\}_{t\in I}$ be a
$\mathcal{F}_a$-measurable Herglotz family.
Then, for each fixed $s \in I$, the ODE
\[
\frac{dw}{da}(t)=-\frac{1}{a(t)}\,w\,P(w,t),\quad
t \in I \cap [s,\infty),
\]
with initial condition $w(s)=z \in \mathbb{D}$, has a unique solution
on $I \cap [s,\infty)$.
Writing this solution as $\omega_{s,t}(z)$, the family
$\{\omega_{s,t}\}_{(s,t)\in I_+^2}$ forms a transition family.
We also give a representation formula for $g_t(z)$ in
Theorem~\ref{theorem:FirstDecompositionTheorem}.

In Chapter~\ref{chapter:SchlichtSubordination} 
we study properties that
hold for class (I) but need not hold for class (III).
We say $f \in \mathcal{H}_0(\mathbb{D})$ is 
\emph{maximal (in the sense of continuous Loewner chains)} 
if there is no continuous Loewner chain
$\{f_t\}_{0\le t \le \varepsilon}$ with $f_0=f$ and
$f_\varepsilon'(0)>f'(0)$ for some $\varepsilon>0$.
Pommerenke \cite{Pommerenke1} proved that for any univalent
$f \in \mathcal{H}_0(\mathbb{D})$ there exists a continuous 
Loewner chain
$\{f_t\}_{0\le t<\infty}$ of univalent functions with $f_0=f$ and
$\lim_{t\to \infty} f_t'(0)=\infty$; thus univalent functions in
$\mathcal{H}_0(\mathbb{D})$ are never maximal.

\begin{theorem}
\label{thm:no-nontangentaial-limit}
If $f \in \mathcal{H}_0(\mathbb{D})$ has 
nontangential boundary values almost nowhere,
(i.e,  for almost every $\zeta \in \partial \mathbb{D}$,
the nontangential limit of $f$ does not exist at $\zeta$),
then $f$ is maximal in
the sense of continuous Loewner chains.
\end{theorem}

\section{Kernel Convergence and Universal Coverings} 
Loewner chains of analytic universal covering maps, which are our main
concern in the latter half, provide a natural geometric generalization
of chains of univalent functions.
We record two properties shared by univalent functions and universal
covering maps.

For $j=0,1$, let $\Omega_j$ be a hyperbolic domain in $\mathbb{C}$
and let $f_j:\mathbb{D}\to \Omega_j$ be a universal covering map with
$f_0(0)=f_1(0)$.
If $\Omega_0$ and $\Omega_1$ are simply connected, then since $f_0$
and $f_1$ are univalent it is easy to see that
\begin{equation}
\label{inclusion-subordination}
\Omega_0 \subset \Omega_1
\quad \text{if and only if} \quad f_0 \prec f_1 .
\end{equation}

The same equivalence holds in general.
Indeed, if $f_0 \prec f_1$ then $\Omega_0 \subset \Omega_1$ is clear.
Conversely, if $\Omega_0 \subset \Omega_1$, then by the lifting lemma
for covering maps (see \cite[Lemma~97.1]{Munkres}) the map
$f_0:\mathbb{D}\to \Omega_0(\subset \Omega_1)$ lifts to a unique
continuous map $\omega:\mathbb{D}\to \mathbb{D}$ with
$f_0=f_1\circ \omega$ and $\omega(0)=0$,
\[
\begin{tikzcd}[column sep=4em]
& \mathbb{D} \arrow[d,"f_1"] \\
\mathbb{D} \arrow[r,"f_0"'] \arrow[ru,"\omega"]
& \Omega_0 \overset{\inc}{\hookrightarrow} \Omega_1 \hspace{6ex}
\end{tikzcd}
\]
Since $f_0$ and $f_1$ are analytic and locally univalent, $\omega$ is
analytic on $\mathbb{D}$, hence $f_0 \prec f_1$.

Next, let $a \in \mathbb{C}$ and let $\{\Omega_n\}_{n=1}^\infty$ be a
sequence of hyperbolic domains with $a \in \Omega_n$ for each $n$.
Let $\Ker(a,\{\Omega_n\})$ be the set consisting of $a$ and all points
$w$ for which there exist a domain $H$ and $N \in \mathbb{N}$ with
$a,w \in H \subset \Omega_n$ for all $n \ge N$.
By definition, $\Ker(a,\{\Omega_n\})$ is a domain containing $a$, or
else $\{a\}$.
We say $\{\Omega_n\}$ converges to a domain $\Omega$ in the sense of
kernel with respect to $a$ if
$\Ker(a,\{\Omega_{n_k}\})=\Omega$ for every subsequence
$\{\Omega_{n_k}\}$.
Let $f$ and $f_n \in \mathcal{H}(\mathbb{D})$ be the universal
covering maps of $\mathbb{D}$ onto $\Omega$ and $\Omega_n$ with
$f(0)=f_n(0)=a$ and $f'(0),f_n'(0)>0$.

If $\Omega$ and each $\Omega_n$ are simply connected, the
Carath\'eodory kernel convergence theorem yields the equivalence
\begin{align}
\label{kernel-loc-unif-conv}
& \Omega_n \to \Omega \text{ in the sense of kernel w.r.t.\ } a \\
& \text{ if and only if } f_n \to f \text{ locally uniformly on }
\mathbb{D}. \nonumber
\end{align}
When $\Omega$ and $\Omega_n$ are not necessarily simply connected, the
Carath\'eodory theorem does not apply.
However, Hejhal’s generalization \cite{Hejhal} implies that the
equivalence \eqref{kernel-loc-unif-conv} still holds.
In Chapter~\ref{chapter:KernelAndCovering} we recall Pommerenke’s
criterion for kernel convergence and slightly generalize Hejhal’s
theorem.

Using \eqref{inclusion-subordination} and
\eqref{kernel-loc-unif-conv}, Pommerenke \cite{Pommerenke1} developed
his theory of Loewner chains of univalent functions in 1975.
In the same spirit we obtain the following.

\begin{theorem}
\label{theorem:f_t_and_Omega_t}
Let $\{\Omega_t\}_{t\in I}$ be a family of hyperbolic domains in
$\mathbb{C}$ with $0 \in \Omega_t$ for each $t \in I$.
For each $t$ let $f_t:\mathbb{D}\to \Omega_t$ be the universal
covering map with $f_t(0)=0$ and $f_t'(0)>0$.
Then $\{f_t\}_{t\in I}$ is a continuous Loewner chain of universal
covering maps if and only if $\{\Omega_t\}_{t\in I}$ is nondecreasing
and continuous.
\end{theorem}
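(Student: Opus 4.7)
The theorem is essentially a direct consequence of the two equivalences highlighted in the introduction, and my plan is to deduce it term-by-term from them. The ``Loewner chain'' condition unpacks into two ingredients, the subordination condition $f_s\prec f_t$ for $s\le t$ and the continuity of $t\mapsto f_t$ in $\mathcal H(\mathbb D)$. Correspondingly, ``nondecreasing and continuous'' for $\{\Omega_t\}$ unpacks into $\Omega_s\subset\Omega_t$ for $s\le t$ and kernel continuity of $t\mapsto\Omega_t$ with respect to the base point $0$. Equivalence (\ref{inclusion-subordination}) handles the first pair, and equivalence (\ref{kernel-loc-unif-conv}) (i.e., the generalization of the Carath\'{e}odory theorem due to Hejhal, to be slightly refined in \S\ref{section:KernelAndCovering}) handles the second. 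So the proof reduces to invoking these two equivalences along with the hypothesis $0\in\Omega_t$ for all $t\in I$, which supplies the common base point needed for both statements.

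For the monotonicity, suppose first that $\{f_t\}_{t\in I}$ is a Loewner chain of universal covering maps. For $s\le t$ we have $f_s\prec f_t$, and since $f_s,f_t\in\mathcal H_0(\mathbb D)$ and both are universal covering maps, (\ref{inclusion-subordination}) gives $\Omega_s\subset\Omega_t$. Conversely, if $\Omega_s\subset\Omega_t$, then the lifting construction recalled after (\ref{inclusion-subordination}) produces a unique $\omega_{s,t}\in\mathfrak B$ with $f_s=f_t\circ\omega_{s,t}$, i.e.\ $f_s\prec f_t$, so $\{f_t\}_{t\in I}$ satisfies the subordination requirement (\ref{eq:subordination-chain}).

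For the continuity, fix $t_0\in I$ and an arbitrary sequence $t_n\to t_0$ in $I$. If $\{f_t\}_{t\in I}$ is continuous, then $f_{t_n}\to f_{t_0}$ locally uniformly on $\mathbb D$; since $0\in\Omega_{t_n}\cap\Omega_{t_0}$ and each $f_{t_n}$, $f_{t_0}$ is a universal covering map normalized by $f_{t_n}(0)=f_{t_0}(0)=0$ and positive derivative at $0$, the generalized Hejhal theorem gives $\Omega_{t_n}\to\Omega_{t_0}$ in the kernel sense with respect to $0$. As this holds for every extracted subsequence, $\{\Omega_t\}_{t\in I}$ is continuous. The reverse implication is symmetric: kernel continuity of $\{\Omega_t\}_{t\in I}$ at $t_0$ forces $f_{t_n}\to f_{t_0}$ locally uniformly by the same theorem, which, being true along every sequence, means $t\mapsto f_t$ is continuous into $\mathcal H(\mathbb D)$.

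The genuine work is not in this theorem but in the two equivalences it rests on, both of which are treated elsewhere in the paper: the lifting argument behind (\ref{inclusion-subordination}) has already been sketched in the excerpt, and the extension of the Carath\'{e}odory kernel theorem to universal covering maps is the subject of \S\ref{section:KernelAndCovering}. The one subtlety to verify carefully while carrying out the plan is that kernel convergence in the sense defined via subsequences really matches the mode of convergence used in the statement ``$\Omega_t\to\Omega_{t_0}$ as $t\to t_0$ in $I$'', and that the normalization $f_t(0)=0$, $f_t'(0)>0$ is preserved under locally uniform limits; both are routine once the results of \S\ref{section:KernelAndCovering} are in hand, and no further analytic input is required.
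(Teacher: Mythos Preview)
Your proposal is correct and takes essentially the same approach as the paper. The paper states that Theorem~\ref{theorem:f_t_and_Omega_t} ``directly follows from Proposition~\ref{prop:subordination-and-inclusion}, Theorems~\ref{thm:2ndPart} and~\ref{thm:3rd-part}'', which is precisely your decomposition: Proposition~\ref{prop:subordination-and-inclusion} is the inclusion--subordination equivalence (\ref{inclusion-subordination}) you invoke for monotonicity, while Theorems~\ref{thm:2ndPart} and~\ref{thm:3rd-part} are the two halves of the generalized Hejhal theorem (\ref{kernel-loc-unif-conv}) you invoke for continuity.
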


Here,``$\{\Omega_t\}_{t\in I}$ is continuous'' means that for every
$t_0 \in I$ and every sequence $\{t_n\} \subset I$ with
$t_0 \ne t_n \to t_0$, one has $\Omega_{t_n} \to \Omega_{t_0}$ in the
sense of kernel with respect to $0$.

It is natural to expect that Pommerenke’s theory extends to 
Loewner chains of
universal covering maps.
For instance, $\{f_t\}$ and its transition family satisfy the
generalized Loewner–Kufarev equations
\eqref{eq:Int-generating-ODE}–\eqref{eq:Int-Loewner-PDE}.
Nonetheless, phenomena arise that do not occur in the univalent case.

For a domain $G \subset \hat{\mathbb{C}}$ write
$C(G) \in \mathbb{N}\cup\{\infty\}$ for the number of connected
components of $\hat{\mathbb{C}}\setminus G$.
Let $\Omega,\Omega_n$ be hyperbolic domains and assume
$\Omega_n \to \Omega$ in the sense of kernel.
Suppose that  each $\Omega_n$ is simply connected.
Clearly, we have $C(\Omega_n)=1$. 
One can show that $\Omega$ is also simply
connected and $\Omega \ne \hat{\mathbb{C}}$, 
so $C(\Omega)=1= \lim_{n\to \infty} C(\Omega_n)$.
In general, however, only the lower semicontinuity
$C(\Omega) \le \liminf_{n\to \infty} C(\Omega_n)$ holds (see
Theorem~\ref{ineq:lower-semi-continuity}
and Example~\ref{example:strip-minus-sgments}).

Concerning the limiting behavior of the image domains of a 
Loewner chain, we have the following example.
\begin{example}
\label{eg:CantorSet}
Let $E_0$ be the closed line segment joining $1$ and $2$ in
$\mathbb{C}$.
For $0<t\le 1$, obtain $E_t$ from $E_0$ by removing the concentric
open subsegment of length $3^{-1}t$.
Then $E_1$ consists of two closed segments.
For $1<t\le 2$, obtain $E_t$ from each of those two segments by
removing a concentric open subsegment of length $3^{-2}(t-1)$.
Continuing indefinitely yields $\{E_t\}_{t\ge 0}$ and
$E_\infty=\bigcap_{t\ge 0} E_t$, which is a translate of the Cantor
ternary set.
Let $f_t$ be the universal covering maps 
of $\mathbb{D}$ onto
$\Omega_t:=\mathbb{C}\setminus E_t$ with $f_t(0)=0$ and $f_t'(0)>0$.
Then $\{\Omega_t\}_{t\in [0,\infty]}$ is strictly increasing and
continuous in the kernel sense, and hence
$\{f_t\}_{t\in [0,\infty]}$ is a strictly increasing continuous
Loewner chain of universal covering maps.
\end{example}

\begin{center}
\begin{tikzpicture}
\draw[very thick] (0,0)--(9,0);
\node at (-1,0) {$E_0$};
\draw[very thick] (0,-2)--(3,-2);
\draw[very thick] (6,-2)--(9,-2);
\node at (-1,-2) {$E_1$}; 
\draw[dashed] (4.5,0)--(3,-2);
\draw[dashed] (4.5,0)--(6,-2);
\draw[very thick] (0,-4)--(1,-4);
\draw[very thick] (2,-4)--(3,-4);
\draw[very thick] (6,-4)--(7,-4);
\draw[very thick] (8,-4)--(9,-4);
\node at (-1,-4) {$E_2$};
\draw[dashed] (1.5,-2)--(1,-4);
\draw[dashed] (1.5,-2)--(2,-4);
\draw[dashed] (7.5,-2)--(7,-4);
\draw[dashed] (7.5,-2)--(8,-4);
\draw[very thick] (0,-3)--(1.25,-3);
\draw[very thick] (1.75,-3)--(3,-3);
\draw[very thick] (7.75,-3)--(9,-3);
\draw[very thick] (6,-3)--(7.25,-3);
\node at (-1,-3) {$E_t$};
\end{tikzpicture} 
\end{center}

Let $\{\Omega_t\}_{t\in I}$ be a continuous nondecreasing family of
domains in $\hat{\mathbb{C}}$, and set
$E_t=\hat{\mathbb{C}}\setminus \Omega_t$.
Example~\ref{eg:CantorSet} shows that if $C$ is a connected component
of $E_{t_0}$, then the sets $C\cap E_t$ for $t \ge t_0$ shrink and may
split into many pieces as $t$ increases.
In Chapter~\ref{chapter:Connectivity} we prove the following.

\begin{theorem}
\label{thm:nonvanishing}
Let $\{\Omega_t\}_{t\in I}$ be a continuous nondecreasing family of
domains in $\hat{\mathbb{C}}$, and let $E_t=\hat{\mathbb{C}}\setminus
\Omega_t$.
If $C$ is a connected component of
$\hat{\mathbb{C}}\setminus \Omega_{t_0}$ for some $t_0 \in I$, then
\[
C \cap \bigcap_{t \in I} E_t \ne \emptyset .
\]
In particular, $C(\Omega_t)$ is left-continuous and nondecreasing in
$t \in I$.
\end{theorem}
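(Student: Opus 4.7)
The second statement will follow once the main claim $C \cap \bigcap_{t \in I} E_t \neq \emptyset$ is established, so I focus the argument there. Since $C \subset E_t$ for every $t \leq t_0$, and $\{C \cap E_t\}_{t \in I,\, t \geq t_0}$ is a nested decreasing family of closed subsets of the compact space $\hat{\mathbb{C}}$, the finite intersection property reduces the problem to showing $C \cap E_t \neq \emptyset$ for each individual $t \in I$ with $t > t_0$.

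Fix such a $t$ and argue by contradiction, assuming $C \cap E_t = \emptyset$. The key step is to construct a topological separator: an open set $W' \subset \hat{\mathbb{C}}$ with $C \subset W'$, $\overline{W'} \cap E_t = \emptyset$, and $\partial W' \cap E_{t_0} = \emptyset$. Since $C$ is compact and disjoint from the closed set $E_t$, I first choose an open $V \supset C$ with $\overline{V} \cap E_t = \emptyset$. Because $C$ is a connected component of the compact Hausdorff space $E_{t_0}$, it equals the intersection of its clopen neighborhoods in $E_{t_0}$, and a finite-intersection argument produces a clopen set $U \subset V \cap E_{t_0}$ with $C \subset U$. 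Normality of $\hat{\mathbb{C}}$ then separates the disjoint closed sets $U$ and $E_{t_0} \setminus U$ by disjoint opens $A \supset U$ and $B \supset E_{t_0} \setminus U$, and $W' := A \cap V$ has the required properties. In particular $\partial W' \subset \Omega_{t_0} \subset \Omega_s$ for every $s \geq t_0$, so $A_s := W' \cap E_s$ is clopen in $E_s$ throughout $I \cap [t_0, t]$, and $\overline{W'} \subset \Omega_s$ whenever $A_s = \emptyset$.

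Set $\tau := \sup\{s \in I \cap [t_0, t] : A_s \neq \emptyset\} \geq t_0$. A compactness argument on points $z_n \in A_{s_n}$ with $s_n \uparrow \tau$, combined with the left kernel convergence identity $\bigcap_{s < \tau,\, s \in I} E_s = E_\tau$ when $\tau > t_0$ (trivially when $\tau = t_0$), produces a point $z_0 \in A_\tau$. Let $W'_0$ be the connected component of $W'$ containing $z_0$; by local connectedness of $\hat{\mathbb{C}}$, $W'_0$ is open with $\partial W'_0 \subset \partial W' \subset \Omega_\tau$. Pick a path $\gamma \subset \Omega_\tau$ from $0$ to some $p \in \partial W'_0$, thicken it to an open tubular neighborhood $N_\gamma \subset \Omega_\tau$, and set $H := N_\gamma \cup W'_0$. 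Then $H$ is a domain containing $\{0, z_0\}$, and since $W'_0 \subset W' \subset \Omega_s$ for all $s \in I$ with $s > \tau$ while $N_\gamma \subset \Omega_\tau \subset \Omega_s$, we have $H \subset \Omega_s$ for every such $s$. Right kernel convergence of $\Omega_s$ to $\Omega_\tau$ then forces $z_0 \in \Ker(\{\Omega_s\}_{s \downarrow \tau}, 0) = \Omega_\tau$, contradicting $z_0 \in E_\tau$.

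For the second assertion, monotonicity $C(\Omega_s) \leq C(\Omega_t)$ when $s < t$ follows because the nonvanishing result places a point of each component of $E_s$ inside $E_t$, and each component of $E_t \subset E_s$ sits in a unique component of $E_s$. For left continuity at $t \in I$ and any finite $m \leq C(\Omega_t)$, apply the separator construction to $m$ distinct components $D_1, \ldots, D_m$ of $E_t$ to obtain pairwise disjoint opens $V_i \supset D_i$ with $\partial V_i \subset \Omega_t$; left kernel convergence at $t$ then gives $\partial V_i \subset \Omega_s$ for $s < t$ sufficiently close to $t$, making each $V_i \cap E_s$ a nonempty clopen piece of $E_s$, and the disjointness of the $V_i$ forces these pieces to lie in distinct components, yielding $C(\Omega_s) \geq m$. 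The chief obstacle is the separator construction — ensuring the clopen neighborhood extends cleanly into $\hat{\mathbb{C}}$ — and, more subtly, the choice of $\tau$ as a sup so that left kernel convergence produces $A_\tau \neq \emptyset$ while right kernel convergence at $\tau$ delivers the contradiction.
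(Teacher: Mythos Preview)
Your argument is correct and takes a genuinely different route from the paper's. The paper factors the proof through two auxiliary results: the Separation Lemma (Lemma~\ref{lemma:separation}), which produces a simple closed curve in $\Omega_{t_0}$ separating the component $C_0$ from any other closed piece of $E_{t_0}$, and Proposition~\ref{prop:isolated_component_never_vansh}, which runs a sup--argument for sets of the form $F = V \cap E_{t_0}$ with $V$ a \emph{domain}. The Separation Lemma is then applied once for each other component $C_\lambda$, and a finite-intersection argument on the resulting family $\{F_\lambda \cap E_t\}_\lambda$ recovers $C_0 \cap E_t \neq \emptyset$. Crucially, the paper's proof of the Separation Lemma in \S\ref{section:separation} relies on planar topology (the Jordan curve theorem and the Riemann mapping theorem).

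You bypass all of that. Your separator $W'$ comes from the purely point-set fact that in a compact Hausdorff space connected components coincide with quasi-components (intersections of clopen neighbourhoods), followed by normality of $\hat{\mathbb{C}}$ to extend the clopen $U\subset E_{t_0}$ to an open $W'\subset\hat{\mathbb{C}}$ with $\partial W'\subset\Omega_{t_0}$. Your sup--argument then plays the role of Proposition~\ref{prop:isolated_component_never_vansh}, with the component $W'_0$ of $W'$ supplying the domain needed for kernel convergence. This is more elementary---no Jordan curves, no conformal maps---and would generalise beyond planar domains. The paper's approach, on the other hand, produces honest simple closed curves in $\Omega_{t_0}$, which are geometrically natural and are reused elsewhere (e.g.\ in the proof of lower semicontinuity of connectivity, Theorem~\ref{thm:lower-semi-continuity}). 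One cosmetic remark: you write ``from $0$'' when building the path $\gamma$, but the theorem does not assume $0\in\Omega_\tau$; any fixed reference point $w_0\in\Omega_{t_0}\subset\Omega_\tau$ will do, and nothing else changes.
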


Note that this theorem has a trivial counterpart 
for Loewner chains of universal covering maps.

A subset $A \subset \hat{\mathbb{C}}$ is totally disconnected if each
connected component of $A$ is a single point.
We say that 
a domain $\Omega \subset \hat{\mathbb{C}}$ is 
\emph{maximal in the sense of kernel} 
if there is no continuous family $\{\Omega_t\}_{0\le t <
\varepsilon}$ with $\Omega_0=\Omega$ and $\Omega \subsetneq
\Omega_\varepsilon$.
\begin{corollary}
If the complement $\hat{\mathbb{C}}\setminus \Omega$ is totally
disconnected, then $\Omega$ is maximal in the sense of kernel.
\end{corollary}
Hence $\Omega_\infty$ in Example~\ref{eg:CantorSet} is maximal.
We also give an example of a maximal domain whose complement is not
totally disconnected.

To prove the lower semicontinuity of connectivity and
Theorem~\ref{thm:nonvanishing}, we introduce a simple topological
separation lemma.
Let $\alpha:\partial \mathbb{D}\to \hat{\mathbb{C}}$ be a simple
closed curve.
By the Jordan curve theorem,
$\hat{\mathbb{C}}\setminus \alpha(\partial \mathbb{D})$ consists of
exactly two domains $D_1$ and $D_2$ with
$\partial D_1=\partial D_2=\alpha(\partial \mathbb{D})$.
We say that $\alpha$ separates sets $B_1$ and $B_2$ if they lie in
different components of
$\hat{\mathbb{C}}\setminus \alpha(\partial \mathbb{D})$.
\begin{lemma}
\label{lemma:separation}
Let $\Omega \subset \hat{\mathbb{C}}$, and let $C$ be a connected
component and $F$ a nonempty closed subset of
$\hat{\mathbb{C}}\setminus \Omega$ with $C \cap F=\emptyset$.
Then there exists a simple closed curve in $\Omega$ that separates $C$
and $F$.
\end{lemma}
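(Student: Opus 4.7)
The plan proceeds in three stages: a clopen separation inside $K = \hat{\mathbb{C}} \setminus \Omega$, a topological observation exploiting the hypothesis that $\Omega$ is a domain, and a polygonal grid construction of the Jordan curve.

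First, since $K$ is compact Hausdorff, its connected components coincide with its quasi-components, so $E$ is the intersection of all clopen-in-$K$ subsets of $K$ containing $E$. Using the compactness of $F$ together with $E \cap F = \emptyset$, a finite intersection of such sets produces a single clopen set $U \subset K$ with $E \subset U$ and $U \cap F = \emptyset$; set $V = K \setminus U$, so that $K = U \sqcup V$ with spherical distance $d(U, V) > 0$. A further clopen refinement lets me additionally require $U \subset B_\varepsilon(E)$ (the spherical $\varepsilon$-neighborhood of $E$) for any prescribed $\varepsilon > 0$. The connectedness of $\Omega$ now forces $\hat{\mathbb{C}} \setminus E$ to be connected: since $K \subset \partial \Omega \subset \overline\Omega$, the set $\hat{\mathbb{C}} \setminus E = \Omega \cup (K \setminus E)$ is the closure of $\Omega$ inside the subspace $\hat{\mathbb{C}} \setminus E$, hence is connected. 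Thus $E$ is a non-separating continuum---equivalently a \emph{cellular set}---in the sphere. A classical planar topology theorem (a consequence of the Alexander-duality computation $\tilde H_1(\hat{\mathbb{C}} \setminus E) \cong \tilde{\check H}^0(E) = 0$ together with polygonal Schoenflies-type approximation; cf.\ Whyburn's \emph{Analytic Topology}) then says that every cellular $E$ admits a neighborhood basis of closed Jordan domains in $\hat{\mathbb{C}}$.

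After a M\"obius transformation moving a point of $\Omega$ to $\infty$, assume $E, F, U, V \subset \mathbb{C}$. Fix $\varepsilon$ so small that $B_\varepsilon(E) \cap F = \emptyset$, and arrange $U \subset B_\varepsilon(E)$ via the clopen refinement. By cellularity choose a closed Jordan domain $\overline D$ with $E \subset D \subset \overline D \subset B_\varepsilon(E)$; overlay a square grid on $\mathbb{C}$ of mesh $\delta \ll \varepsilon$, shifted generically so no grid vertex lies on $K$, and let $N$ be the union of closed grid squares that meet $\overline D$. For $\delta$ sufficiently small, $N$ is a compact polygonal neighborhood of $\overline D$ contained in $B_{2\varepsilon}(E)$, disjoint from $V$; after a standard local modification at any ``saddle'' vertices (those where only two diagonally opposite adjacent squares belong to $N$), $N$ is a polygonal closed disk whose boundary $\gamma = \partial N$ is a single simple closed polygonal curve. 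The remaining step, which is the main technical point, is to arrange $\gamma \cap U = \emptyset$. Each component of $U \setminus E$ caught in the thin annular region $N \setminus \overline D$ is itself a non-separating continuum (the same Alexander-duality argument applies component by component, using again that $\Omega$ is a domain), so it may be absorbed into a slightly enlarged Jordan domain $\overline{D'} \supset \overline D$ still contained in $B_\varepsilon(E)$; after finitely many such enlargements the new thickening $N$ has boundary disjoint from $U$. The resulting $\gamma$ lies in $\Omega$, bounds a Jordan domain containing $E$, and has $F \subset V$ in its complementary disk, thereby separating $E$ and $F$.

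The principal obstacle is this last step of arranging $\gamma \cap U = \emptyset$ while keeping $\gamma$ a single simple closed curve. The cellularity of $E$, itself a consequence of $\Omega$ being a domain, is what makes both possible: it yields the simply connected polygonal disk $N$ bounded by a single curve, and supplies the flexibility to absorb stray components of $U$ into $\overline D$. The hypothesis that $\Omega$ be a domain is indispensable, as is shown by taking $E$ to be a Jordan curve in $\hat{\mathbb{C}}$ (which forces $\Omega$ to be disconnected) with $F$ a point on each side of $E$: then no single Jordan curve in $\hat{\mathbb{C}} \setminus (E \cup F)$ can separate $E$ from $F$.
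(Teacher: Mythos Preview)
Your overall strategy---clopen partition of $K=\hat{\mathbb{C}}\setminus\Omega$, then a geometric construction of a Jordan curve---is sound, but there is a real gap at exactly the place you flag as ``the main technical point.''

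First, a minor but genuine error: the argument that $\hat{\mathbb{C}}\setminus E$ is connected relies on the claim $K\subset\partial\Omega$, which is false whenever $K$ has interior (take $\Omega=\mathbb{D}$). The conclusion itself is correct and is proved elsewhere in the paper as a separate lemma, so this is easily patched; but the reasoning you give does not work.

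The substantive gap is the ``absorbing'' step. You obtain a Jordan domain $\overline D\supset E$ from cellularity, thicken it to a polygonal disk $N$, and then want to enlarge $\overline D$ finitely many times so that $\partial N\cap U=\emptyset$. Two things are unjustified. First, even a single absorption is unclear: given a stray component $C_i\subset U\setminus E$ (itself cellular), why is there a Jordan domain $\overline{D'}\supset \overline D\cup C_i$ still inside $B_\varepsilon(E)$? The set $\overline D\cup C_i$ need not be connected, and there is no reason its complement is connected, so cellularity does not apply directly. Second, there is no reason only finitely many stray components meet $\partial N$: components of $U\setminus E$ may accumulate on $E$ and straddle $\partial N$ in infinitely many places, and each enlargement of $D$ (hence of $N$) may expose new ones. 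As written, the iteration may fail at each step and may not terminate.

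The paper's proof avoids this difficulty by, in effect, working on the opposite side. After the same clopen split $H_1\sqcup H_2$ (your $U\sqcup V$), it builds the grid around $H_2$, not around $E$: with mesh small relative to $d(H_1,H_2)$ and $d(H_1,\partial S)$, the boundary of the grid region lies automatically in $\Omega$---no absorbing is needed. The finitely many components of the grid region are then joined by arcs in $\Omega$ (here the connectedness of $\Omega$ is used) to form a single connected compact set $\tilde F\supset F$. The component $\Omega_0$ of $\hat{\mathbb{C}}\setminus\tilde F$ containing $E$ is simply connected because its complement $\tilde F$ is connected, and a Riemann map $h:\mathbb{D}\to\Omega_0$ sends a suitable circle $|z|=r$ to the desired Jordan curve: since $\partial\Omega_0\subset\Omega$, the set $\Omega_0\cap H_1$ is compact in $\Omega_0$, so its preimage is compactly contained in $\mathbb{D}$. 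This Riemann-map step absorbs in one stroke all the complicated accumulation of $H_1$ near $E$ that your iterative absorption was trying to handle piece by piece.
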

See Newman \cite[Theorem~3.3, Chapter~VI]{Newman} for a proof in the
case where $F$ is also a component of
$\hat{\mathbb{C}}\setminus \Omega$.
Although natural and elementary, we are not aware of a reference in
full generality, so we provide a short proof in
Appendix~\ref{chapter:separation}.

\section{Loewner Theory on Fuchsian Groups and Applications}
In Chapter~\ref{chapter:DeckTransformation} we study Loewner theory on
Fuchsian groups.
Let $\{\Omega_t\}_{t\in I}$ be a continuous nondecreasing family of
hyperbolic domains in $\mathbb{C}$ with $0 \in \Omega_t$, and let
$\{f_t\}_{t\in I} $ be the
corresponding Loewner chain of universal covering maps.
For each $t$ let $\Gamma_t$ be the covering transformation group of
$f_t:\mathbb{D}\to \Omega_t$, i.e., 
$\Gamma_t = 
\{ \varphi \in \Aut(\mathbb{D}) : f_t \circ \varphi = f_t \}$.
Here $\Aut(\mathbb{D})$ denotes the group of all automorphisms of
$\mathbb{D}$.

We introduce a family of 
mappings $\{\sigma_{s,t}\}_{(s,t)\in I_+^2}$ with
$\sigma_{s,t}:\Gamma_s \to \Gamma_t$ 
satisfying the semigroup relation
\begin{equation}
\sigma_{t_1,t_2}\circ \sigma_{t_0,t_1}=\sigma_{t_0,t_2}
\end{equation}
for $t_0\le t_1 \le t_2$.

\begin{theorem}
\label{thm:int_the_injective_homomorphsim}
Let $\{f_t\}_{t\in I}$ be a continuous strictly increasing chain of
universal covering maps. 
For $(s,t)\in I_+^2$, the map 
$\sigma_{s,t}:\Gamma_s \to \Gamma_t$ is
an injective homomorphism and satisfies
$\omega_{s,t}\circ \varphi=\sigma_{s,t}(\varphi)\circ \omega_{s,t}$.
\end{theorem}

\begin{theorem}
\label{theorem:int_LoewnerPDEforDeckTransformation}
Let $\{f_t\}_{t\in I}$ be a continuous strictly increasing chain of
universal covering maps, with Herglotz family
$\{P(\cdot,t)\}_{t\in I}$ and a $G_\delta$-set $N \subset I$ with
$\mu_a(N)=0$.
Let $t_0 \in I$ and $\varphi \in \Gamma_{t_0}$, and set
$\varphi_t:=\sigma_{t_0,t}(\varphi)\in \Gamma_t$ for
$t \in I \cap [t_0,\infty)$.
Then the map $t \mapsto \varphi_t \in \Aut(\mathbb{D})$ is continuous
on $I \cap [t_0,\infty)$ and
\begin{align}
\label{eq:int_LoewnerPDEforDeckTransformation}
\frac{\partial \varphi}{\partial a(t)}(z,t)
=&\ \frac{1}{a(t)}\Big\{ z P(z,t)\,\varphi'(z,t)
- \varphi(z,t)\,P(\varphi(z,t),t) \Big\}
\end{align}
for $t \in (I \cap [t_0,\infty))\setminus N$, where
$\varphi(z,t):=\varphi_t(z)$.
If in addition $a(t)$ is locally absolutely continuous on $I$ and
$\dot a(t):=\frac{da}{dt}(t)>0$ a.e., then for each fixed
$z \in \mathbb{D}$ the map $t \mapsto \varphi_t(z)$ is absolutely
continuous and
\begin{equation}
\label{eq:int_LoewnerPDEforDeckTransformation_by_time}
\frac{\partial \varphi}{\partial t}(z,t)
=\frac{\dot a(t)}{a(t)}\Big\{ z P(z,t)\,\varphi'(z,t)
- \varphi(z,t)\,P(\varphi(z,t),t) \Big\}
\end{equation}
holds a.e.\ on $I \cap [t_0,\infty)$.
\end{theorem}

In Chapter~\ref{chapter:HyperbolicMetrics} we apply Loewner theory for
universal covering maps to hyperbolic metrics and derive a formula for
the logarithmic derivative of the hyperbolic density.

\chapter{Transition Families and Loewner Chains}
\label{chapter:BasicEstimates}
\section{Preliminaries on Transition Families}
Recall that $\mathfrak{B}$ is the class of all holomorphic mappings 
$\omega : \mathbb{D} \rightarrow \mathbb{D}$  
with $\omega(0)=0$ and $\omega'(0) > 0$. 
\begin{definition}
For an interval $I \subset [- \infty, \infty ]$ let
$I_+^2 = \{ (s,t) : s,t \in I \text{ with } s \leq t  \}$.
Let $\{ \omega_{s,t} \}_{(s,t) \in I_+^2} $ 
be a family of functions in $\mathfrak{B}$. 
We say that $\{ \omega_{s,t} \}_{(s,t) \in I_+^2} $ 
is a \textit{transition family}
if 
\begin{equation}
\label{eq:stu-transitions}
  \omega_{t,t} =  \id_{\mathbb{D}} \quad \text{and} \quad
  \omega_{t_1,t_2} \circ \omega_{t_0,t_1} =  \omega_{t_0,t_2}
\end{equation}
for all $t, t_0, t_1,t_2 \in I$ with 
$t_0 \leq t_1 \leq t_2$.
Here, $\id_{\mathbb{D}}$ is the identity mapping on $\mathbb{D}$.
\end{definition}
Let  $\{\omega_{s,t} \}_{(s,t) \in I_+^2} $ be a transition family. 
Define 
\[
  a_{s,t} = \omega_{s,t}'(0) > 0 , 
  \qquad (s, t) \in I_+^2 .
\]
By the Schwarz lemma, we have
$a_{s,t} \leq 1$ for all $(s,t) \in I_+^2$,
and from (\ref{eq:stu-transitions}) it follows
\begin{equation}
\label{eq:stu-transition-derivative}
   a_{t,t} = 1 \quad \text{and} \quad a_{t_1,t_2} a_{t_0,t_1} =  a_{t_0,t_2}
\end{equation}
for all $t , t_0,t_1,t_2 \in I$ with  
$t_0 \leq t_1 \leq t_2$.
Thus, for each fixed $s$, the function $a_{s,t}$ 
is nonincreasing in $t \in I \cap [s, \infty ]$, 
and for each fixed $t$, it is nondecreasing 
in $s \in I \cap [ - \infty ,t ]$.
Fix $t_0 \in I$ and $c>0$, and define
\begin{equation}
\label{eq:defi-of-at}
    a(t) = \begin{cases} 
	    c a_{t , t_0},  \quad & t \in I \cap [-\infty , t_0] ,\\
	    \frac{c}{a_{t_0 ,t}},  \quad & t \in I \cap  (t_0, \infty ] .
	   \end{cases} 
\end{equation}
Then $a(t)$ is nondecreasing, such that $a(t_0) = c$, and satisfies
\begin{equation}
\label{eq:relation-between-a_s-and-a_st}
  a_{s,t} = \frac{a(s)}{a(t)}, \quad (s,t) \in I_+^2 .
\end{equation}
Conversely, 
if $a(t)$, $t \in I$, is a positive and nondecreasing function,
then $a_{s,t}$ defined by
(\ref{eq:relation-between-a_s-and-a_st})
satisfies (\ref{eq:stu-transition-derivative}).

Let $\{f_t \}_{t \in I}$ be a Loewner chain.
Then for each $(s,t) \in I_+^2$,
there exists a function $\omega_{s,t} \in \mathfrak{B}$
satisfying $f_s = f_t \circ \omega_{s,t}$.
Since $f_t '(0)>0$, $f_t$ is univalent in a neighborhood
of the origin, and hence $\omega_{s,t}$ is uniquely determined.
It is easy to see that
$\{ \omega_{s,t} \}_{(s,t) \in I_+^2}$ forms a transition family.
We call $\{ \omega_{s,t} \}_{(s,t) \in I_+^2}$ 
the transition family associated with $\{f_t \}_{t \in I}$.
In this case we have $a_{s,t} = a(s)/a(t)$, 
where we put $a(t) = f_t'(0)> 0$, $t \in I$.

We begin our discussion the following 
fundamental inequalities.
\begin{lemma}
\label{lemma:fundamental_inequalities}
Let $\omega \in \mathfrak{B}$ with $\alpha = \omega' (0) > 0$.
Then, for $z \in \mathbb{D}$, the following inequalities hold:
\begin{align}
  &
 \left| 
     \displaystyle 
    \frac{\omega(z)}{z} - \alpha 
  \right|
  \leq 
   |z|
     \left|   1 - \alpha \frac{\omega(z)}{z} \right| ,
\label{ineq:1st-inequality-by-Schwarz-Pick}
\\
  &
 \left| 
 \displaystyle \frac{\omega (z)}{z} 
 -
 \frac{ \alpha  (1-|z|^2) }
 {1- \alpha^2 |z|^2}
 \right|
 \leq
 \frac{\left( 1 - \alpha^2  \right) |z|  }
 {1- \alpha^2 |z|^2} ,
\label{ineq:2nd-inequality-in-Euclidean-form}
\\[2ex]
 & \left|
   \omega (z)
   -
   z
   \right|
   \leq
   (1- \alpha) \frac{|z|(1+|z|) }
   {\left( 1 - \alpha |z| \right) } ,
\label{ineq:distance-between-omega/z-and-1}
\\[2ex]
 & 
  |z| \frac{\alpha -|z|}{1 -  \alpha  |z|}
  \leq
  \left|
   \omega(z)
   \right|
   \leq
   |z| \frac{|z|+ \alpha }{1 +  \alpha  |z|} .
\label{ineq:omega_growth}
\end{align} 
\end{lemma}
\begin{proof}
We may assume $0< \alpha < 1$, since the case 
$\alpha = 1$ implies $\omega = \id_{\mathbb{D}}$ 
by the Schwarz lemma, and all the inequalities 
(\ref{ineq:1st-inequality-by-Schwarz-Pick})-(\ref{ineq:omega_growth})
 hold trivially.
Applying the Schwarz-Pick inequality 
to the analytic function $g(z) = \omega(z)/z$ in $\mathbb{D}$,
we obtain
\begin{equation}
\label{ineq:Schwarz-Pick}
   \left| \frac{g(z)-\alpha}{1-\alpha g(z)} \right| \leq |z|,
 \quad z \in \mathbb{D} .
\end{equation}
This inequality is equivalent to 
(\ref{ineq:1st-inequality-by-Schwarz-Pick}).
A direct computation from (\ref{ineq:Schwarz-Pick}) 
yields 
\begin{equation}
\label{ineq:Schwarz-Pick-euclidean-form}
  \left| g(z) - \frac{\alpha(1-|z|^2)}{1-\alpha^2 |z|^2} \right|
 \leq
 \frac{(1-\alpha^2)|z|}{1-\alpha^2|z|^2 },
\end{equation}
which in turn is equivalent
to (\ref{ineq:2nd-inequality-in-Euclidean-form}).
Next, observe that 
\[
   1
   -
   \frac{\alpha (1-|z|^2) }{1- \alpha^2 |z|^2} 
   =
   \frac{(1- \alpha) 
   \left( 
    1  + \alpha |z|^2 
   \right)}
   {1- \alpha^2 |z|^2} .   
\]
Hence using the triangle inequality, we have
\begin{align*}
 \left| g(z) - 1 \right|
  & \leq
  \left| g(z)
 - \frac{\alpha (1-|z|^2) }{1- \alpha^2 |z|^2} 
  \right|
 +
 \left| 
  \frac{\alpha (1-|z|^2)}{1- \alpha^2 |z|^2 }
   -
  1
  \right|
\\
 &
  \leq
  \frac{ (1- \alpha^2) |z| }{1- \alpha^2 |z|^2 }
  +
 \frac{(1-\alpha)(1+ \alpha |z|^2 )}
   { 1- \alpha^2 |z|^2 } 
  \leq
  \frac{(1- \alpha) (1+ |z| )}{1 - \alpha |z| } ,
\end{align*}
which proves (\ref{ineq:distance-between-omega/z-and-1}).
On the other hand, 
from (\ref{ineq:Schwarz-Pick-euclidean-form}), we obtain
\begin{align*}
 \frac{\alpha -|z|}{1 - \alpha |z|} 
 = & \,  \frac{\alpha (1-|z|^2)}{1-\alpha^2 |z|^2} 
 -   \frac{(1-\alpha^2)|z|}{1-\alpha^2|z|^2 }
\\
 \leq & \,  |g(z)|
 \leq 
 \frac{\alpha (1-|z|^2)}{1-\alpha^2 |z|^2} 
 +   \frac{(1-\alpha^2)|z|}{1-\alpha^2|z|^2 }
 = \frac{|z|+ \alpha}{1+\alpha |z|} ,
\end{align*}
which implies (\ref{ineq:omega_growth}).
\end{proof}

\begin{proposition}
\label{proposition:fundamental_inequalities}
Let $\{ \omega_{s,t} \}_{(s,t) \in I_+^2} $ be 
a transition family, and define $a_{s,t} = \omega_{s,t}'(0)$
for ${(s,t) \in I_+^2}$.
Then for $(s,t) \in I_+^2$ and 
for $t_0,t_1,t_2 \in I$ with $t_0 <  t_1 < t_2 $, 
the following inequalities hold:
\begin{align}
 & \left|
   \omega_{s,t}(z)
   -
   z
   \right|
   \leq
   (1- a_{s,t}) \frac{|z|(1+|z|) }
   {\left( 1 - a_{s,t} |z| \right) } ,
\label{ineq:distance-between-omega-and-id}
\\[2ex]
 & \left|
   \omega_{t_0,t_2}(z)
   -
   \omega_{t_0,t_1}(z)
   \right|
   \leq
   (1- a_{t_1,t_2}) \frac{|z|(1+|z|) }
   {\left( 1 - a_{t_1,t_2}  |z| \right) } .
\label{ineq:omega_isLipschitz_wrt_a}
\end{align}  
\end{proposition}
\begin{proof}
Inequality (\ref{ineq:distance-between-omega-and-id})
follows directly from (\ref{ineq:distance-between-omega/z-and-1}).
By replacing $s$ and $t$ by $t_1$ and $t_2$ respectively in 
(\ref{ineq:distance-between-omega-and-id}),
and substituting $\omega_{t_0,t_1}(z)$ for $z$,  
we obtain
\[
 \left| 
 \omega_{t_1,t_2} (\omega_{t_0,t_1}(z)) - \omega_{t_0,t_1}(z)
 \right|
 \leq
   (1-a_{t_1,t_2}) 
   \frac{|\omega_{t_0,t_1}(z)|(1+|\omega_{t_0,t_1}(z)|) }
   { \left( 1 - a_{t_1,t_2} |\omega_{t_0,t_1}(z)| \right) } . 
\]
Since $\omega_{t_1,t_2} (\omega_{t_0,t_1}(z)) = \omega_{t_0,t_2} (z)$,
(\ref{ineq:omega_isLipschitz_wrt_a}) easily follows
from the above inequality 
and $|\omega_{t_0,t_1}(z)| \leq |z|$.
\end{proof}

\section{Continuity and Univalence of Transition Families}
As simple applications of 
Proposition~\ref{proposition:fundamental_inequalities}
we give criteria for the continuities of 
a transition family and a Loewner chain. 
\begin{definition}
Let $\{ \omega_{s,t} \}_{(s,t) \in I_+^2}$ be a transition family.
We say that $\{ \omega_{s,t} \}_{(s,t) \in I_+^2}$ 
is \textit{continuous}
if the mapping
$I_+^2 \ni (s,t) \mapsto \omega_{s,t} \in \mathcal{H}(\mathbb{D})$ 
is continuous on $I_+^2$; that is,
$\omega_{s,t} \rightarrow \omega_{s_0,t_0}$ 
locally uniformly on $\mathbb{D}$
as $(s,t) \rightarrow (s_0,t_0)$ in $I_+^2$ at every $(s_0,t_0) \in I_+^2$.
Also we say that $\{ \omega_{s,t} \}_{(s,t) \in I_+^2}$ is 
\textit{strictly monotone}
if $a_{st} < 1$ for $(s,t) \in I_+^2$ with $s \not= t$.
This is equivalent to
that $a(t)$ is strictly increasing, where $a(t)$ is defined by 
(\ref{eq:defi-of-at}). 
\end{definition}

\begin{theorem}
\label{theorem:continuity-of-transition-family}
Let $\{ \omega_{s,t} \}_{(s,t) \in I_+^2}$ be a transition family
with $a_{s,t} = \omega_{s,t}'(0)$, $(s,t) \in I_+^2$
and let $a(t)$, $t \in I$ be defined by
(\ref{eq:defi-of-at}) for some $c>0$.
Then the following five conditions are equivalent:
\begin{itemize}
 \item[{\rm (i)}]
The function $a(t)$ is continuous on $I$.
 \item[{\rm (ii)}]
For all $t_0 \in I$,
$a_{s,t} \rightarrow a_{t_0,t_0} = 1$ 
as $t-s \searrow 0$ with $s \leq t_0 \leq t$.
 \item[{\rm (iii)}]
The mapping
$I_+^2 \ni (s,t) \mapsto a_{s,t} \in (0,1]$ 
is continuous on $I_+^2$.
 \item[{\rm (iv)}]
The mapping
$I_+^2 \ni (s,t) \mapsto \omega_{s,t} \in \mathcal{H}(\mathbb{D})$ 
is continuous on $I_+^2$.
 \item[{\rm (v)}]
The mapping
$\mathbb{D} \times I_+^2 \ni (z,s,t) \mapsto \omega_{s,t}(z) \in \mathbb{D}$ 
is continuous on $\mathbb{D} \times I_+^2$.
\end{itemize} 
\end{theorem}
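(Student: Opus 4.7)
The plan is to establish the cycle of equivalences (i) $\Leftrightarrow$ (ii) $\Leftrightarrow$ (iii) $\Leftrightarrow$ (iv). The first three are almost immediate from the factorization $a_{s,t} = a(s)/a(t)$ provided by (\ref{eq:relation-between-a_s-and-a_st}) together with positivity and monotonicity of $a$; the bulk of the work lies in (iii) $\Rightarrow$ (iv).

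For (i) $\Leftrightarrow$ (iii), since $a$ is strictly positive on $I$, the map $(s,t) \mapsto a(s)/a(t)$ is jointly continuous on $I_+^2$ precisely when $a$ is continuous on $I$. The implication (iii) $\Rightarrow$ (ii) is trivial, and for (ii) $\Rightarrow$ (i) I use that monotonicity of $a$ produces one-sided limits at every $t_0 \in I$; condition (ii) applied with $s \uparrow t_0$ and $t \downarrow t_0$ then forces $\lim_{s \uparrow t_0} a(s) = a(t_0) = \lim_{t \downarrow t_0} a(t)$, i.e.\ continuity of $a$ at $t_0$. The implication (iv) $\Rightarrow$ (iii) follows at once from the Weierstrass convergence theorem: locally uniform convergence $\omega_{s,t} \to \omega_{s_0,t_0}$ forces $a_{s,t} = \omega_{s,t}'(0) \to \omega_{s_0,t_0}'(0) = a_{s_0,t_0}$.

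The main step (iii) $\Rightarrow$ (iv) will combine (\ref{ineq:distance-between-omega-and-id}), (\ref{ineq:omega_isLipschitz_wrt_a}), the semigroup law (\ref{eq:stu-transitions}), and the Schwarz--Pick derivative inequality. Given $(s_0, t_0) \in I_+^2$ and $(s,t) \to (s_0, t_0)$ in $I_+^2$, I split
\begin{equation*}
  \omega_{s,t}(z) - \omega_{s_0, t_0}(z)
  = \bigl(\omega_{s,t}(z) - \omega_{s_0, t}(z)\bigr)
  + \bigl(\omega_{s_0, t}(z) - \omega_{s_0, t_0}(z)\bigr).
\end{equation*}
The second bracket involves only the upper parameter moving from $t_0$ to $t$ with $s_0$ fixed, so (\ref{ineq:omega_isLipschitz_wrt_a}) applied with the appropriate ordering of $t_0$ and $t$ yields a bound proportional to $1 - a_{\min(t_0,t),\max(t_0,t)}$, which tends to $0$ uniformly on $\{|z| \leq r\}$ by (iii). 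For the first bracket, setting $\sigma = \min(s, s_0)$ and $\tau = \max(s, s_0)$, the semigroup law gives $\omega_{\sigma, t} = \omega_{\tau, t} \circ \omega_{\sigma, \tau}$; hence the first bracket equals $\pm\bigl(\omega_{\tau, t}(\omega_{\sigma,\tau}(z)) - \omega_{\tau, t}(z)\bigr)$. The Schwarz--Pick inequality $|\omega_{\tau,t}'(w)| \leq (1-|w|^2)^{-1}$ supplies a Lipschitz constant $(1-r^2)^{-1}$ on $\overline{\mathbb{D}(0,r)}$ independent of $\tau$ and $t$, while (\ref{ineq:distance-between-omega-and-id}) forces $|\omega_{\sigma,\tau}(z) - z| \to 0$ uniformly on $\{|z| \leq r\}$ as $s \to s_0$ by (iii).

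The main obstacle is keeping the case analysis of the orderings of $s$ versus $s_0$ and $t$ versus $t_0$ clean, but since all the relevant estimates depend only on $|z|$ and on $1 - a_{\sigma,\tau}$, which is continuous on $I_+^2$ under (iii), the locally uniform passage to the limit will be routine once the above decomposition is in place.
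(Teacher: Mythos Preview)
Your proposal is correct and follows essentially the same route as the paper: the same two-term split for (iii) $\Rightarrow$ (iv), controlled by (\ref{ineq:distance-between-omega-and-id}), (\ref{ineq:omega_isLipschitz_wrt_a}), and the Schwarz--Pick Lipschitz bound, with the remaining implications handled just as you describe. One small caveat worth making explicit: your decomposition uses $\omega_{s_0,t}$ and hence needs $s_0 \leq t$, which can fail when $(s,t)$ approaches a diagonal point $s_0 = t_0$; the paper treats that case separately (and trivially) by applying (\ref{ineq:distance-between-omega-and-id}) directly to $|\omega_{s,t}(z) - z|$.
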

\begin{proof}
The equivalence of (iv) and (v) is elementary, 
and its proof is omitted.
By (\ref{eq:relation-between-a_s-and-a_st}) 
it is clear that (i) implies (iii), and that (iii) implies (ii).
Assume (ii). 
Then by (\ref{eq:relation-between-a_s-and-a_st}) 
\begin{align*}
  \lim_{s \nearrow t} a(s) 
 = & \, 
    \lim_{s \nearrow t} a(t) a_{s,t}
  = a(t) \lim_{s \nearrow t} a_{s,t} = a(t) 
\\
  \lim_{t \searrow s} a(t)
 = & \,
  \lim_{t \searrow t} \frac{a(s)}{a_{s,t}} 
 =  a(s) \lim_{t \searrow s} \frac{1}{a_{s,t}} = a(s) .
\end{align*}
Therefore the function $a(t)$ is left-continuous and right-continuous 
on $I$ and (i) holds.

From
\[
 a_{s,t} =   \omega_{s,t}'(0) 
 = 
 \frac{1}{2 \pi i} 
 \int_{|z|=r} \frac{\omega_{s,t}(z)}{z^2} \, dz , \quad 0 <  r < 1 ,
\]
it follows that (iv) implies (iii).

It remains to show that (iii) implies (iv).
To see this let $(s_0,t_0) \in I_+^2$.
We show that 
$\{ \omega_{s,t}\}_{(s,t) \in I_+^2}$ is continuous at $(s_0,t_0)$.
First we consider the case that $s_0=t_0$.
Since  by 
(\ref{ineq:distance-between-omega-and-id})
we have for $(s,t) \in I_+^2$
\[
   |\omega_{s,t}(z)-\omega_{t_0,t_0}(z)|
 =
   |\omega_{s,t}(z)-z| 
 \leq
  (1-a_{s,t})\frac{|z|(1+|z|)}{1-|z|} , 
\]
it is clear $\omega_{s,t}(z) \rightarrow \omega_{t_0,t_0}(z)=z$
locally uniformly on $\mathbb{D}$
as $(s,t) \rightarrow (t_0,t_0)$ in $I_+^2$.

Finally we consider the case that $s_0 < t_0$.
Here we introduce the notation which is used throughout the article;
\[
  x \vee y = \max \{ x, y \}
 \quad 
 \text{and} 
 \quad
 x \wedge y = \min \{ x, y \}
 \quad
 \text{for} \; x, y \in \mathbb{R} .
\] 
By making use of the inequality 
for $\omega \in \mathfrak{B}$ and $|z_0|,|z_1| \leq r < 1$
\begin{equation}
  |\omega(z_1) - \omega (z_0)| 
 \leq |z_1-z_0| \int_0^1 |\omega'((1-t)z_0+ t z_1)| \, dt 
 \leq
 \frac{|z_1-z_0|}{1-r^2} 
\end{equation}
and (\ref{ineq:omega_isLipschitz_wrt_a}) we have 
for $(s,t) \in I_+^2$ with $s<t_0$ and $s_0<t$
and $|z| \leq r$
\begin{align*}
  &
 |\omega_{s,t}(z) - \omega_{s_0,t_0} (z)|
\\
 \leq &
 |\omega_{s,t}(z) - \omega_{s_0,t} (z)|
+ |\omega_{s_0,t}(z) - \omega_{s_0,t_0} (z)|
\\
 = &
 |\omega_{s\wedge s_0,t}(z) - \omega_{s \vee s_0,t} (z)|
+ |\omega_{s_0,t\vee t_0}(z) - \omega_{s_0,t \wedge t_0} (z)|
\\
 = &
 |\omega_{s \vee s_0, t}(\omega_{s\wedge s_0,s \vee s_0}(z)) - \omega_{s \vee s_0,t} (z)|
+
 |\omega_{t \wedge t_0,t \vee t_0} (\omega_{s_0,t \wedge t_0}(z)) 
 - \omega_{s_0,t \wedge t_0} (z)|
\\
 \leq &
  \frac{|\omega_{s\wedge s_0,s \vee s_0}(z) - z|}{1-r^2}
 +
 (1-a_{t \wedge t_0,t \vee t_0}) 
 \frac{|z|(1+|z|)}
 {1-a_{s_0,t \wedge t_0}|z|}
\\
 \leq &
  (1-a_{s\wedge s_0,s \vee s_0}) \frac{|z|(1+|z|)}{(1-r^2)(1-|z|)}
 +
 (1-a_{t \wedge t_0,t \vee t_0}) 
 \frac{|z|(1+|z|)}{(1-|z|)} .
\end{align*}
This implies 
$\omega_{s,t}(z) \rightarrow \omega_{s_0,t_0}(z)$ 
locally uniformly in $\mathbb{D}$
as $(s,t) \rightarrow (s_0,t_0)$ in $I_+^2$.
\end{proof}

By slightly generalizing 
the original proof in \cite{Pommerenke1965},
we show that 
if $\{ \omega_{s,t} \}_{(s,t) \in I_+^2}$ is continuous, then
each $\omega_{s,t}$ is univalent in $\mathbb{D}$.
We require the following lemma due to Landau and Dieudonn\'{e}.
We include a proof here for the reader's convenience.
For alternative proofs
see \cite[Theorem 10.1]{Heins} and \cite[Theorem VI.10]{Tsuji}.

\begin{lemma}[Landau-Dieudonn\'{e}]
\label{lemma:Landau}
Let $\omega \in \mathfrak{B}$ 
with $\omega'(0)= \alpha \in (0,1)$.
Then 
$\omega$ is univalent in $\mathbb{D}(0, \rho)$,
where $\rho = \rho( \alpha ) = \alpha /(1+ \sqrt{1-\alpha^2})$.
Furthermore 
$\mathbb{D}(0, \rho^2) \subset \omega (\mathbb{D}(0, \rho))$.
\end{lemma}
Note that $\lim_{\alpha \nearrow 1} \rho( \alpha ) = 1$.
\begin{proof}
Let $g(z) = \frac{\omega(z)}{z}$. 
Then  $g$ is analytic in $\mathbb{D}$ with $g(0) = \alpha \in (0,1)$
and satisfies $|g(z)| \leq 1$ in $\mathbb{D}$.
We have by (\ref{ineq:omega_growth}) that 
for $|z| < \alpha$
\begin{align*}
  \left| \frac{zg'(z)}{g(z)} \right|
 \leq
  \frac{|z|(1-|g(z)|^2) } {(1-|z|^2)|g(z)|} 
  \leq
  \frac{(1-\alpha^2)|z|}{(\alpha-|z|)(1-\alpha |z|)} .
\end{align*}
It follows from this that
$\left| \frac{zg'(z)}{g(z)} \right| < 1$ 
for $|z| < \rho (\alpha ) = \frac{\alpha}{1+\sqrt{1-\alpha^2}}$
Note that $\rho (\alpha ) <\alpha)$.
Since
$\frac{z\omega'(z)}{\omega(z)} = 1 + \frac{zg'(z)}{g(z)}$,
we have
\[
 \Real \left\{ \frac{z\omega'(z)}{\omega(z)} \right\}
 = 1 + \Real \left\{ \frac{z g'(z)}{g(z)} \right\}
 \geq
 1- \left| \frac{z g '(z)}{g(z)} \right|
 > 0 \quad
 \text{for} \quad |z| < \rho (\alpha ) .
\]
This implies that $\omega$ is starlike univalent 
in $\mathbb{D}(0, \rho (\alpha))$.

In view of the inequality~(\ref{ineq:omega_growth}),  
let us define the function 
\[
  k(x) = x \frac{\alpha - x}{1 - \alpha x}, 
  \quad 0 \leq x \leq \alpha .
\]  
It is easy to verify that 
\[
  \max_{0 \leq x \leq \alpha} k(x) 
 = k(\rho (\alpha)) 
 = \rho (\alpha)^2 .
\]
Combining this with the fact that $\omega$ is starlike univalent 
in $\mathbb{D}(0, \rho (\alpha))$,
it follows that $\omega ( \mathbb{D}(0, \rho (\alpha)))$ 
contains the disk $\mathbb{D}(0,\rho(\alpha)^2)$.
\end{proof}

\begin{theorem} 
\label{thm:omega-is-univalent-if-it-is-continuous}
Let $\{ \omega_{s,t} \}_{(s,t) \in I_+^2}$ be a transition family.
If $\{ \omega_{s,t} \}_{(s,t) \in I_+^2}$ is continuous,
then
each $\omega_{s,t} $ is univalent in 
$\mathbb{D}$ for every $(s,t) \in I_+^2$.
\end{theorem}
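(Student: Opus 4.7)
The plan is to combine Landau's lemma, which supplies univalence on a disc whose radius tends to $1$ as $\omega'(0) \uparrow 1$, with the semigroup relation (\ref{eq:stu-transitions}) to build up univalence of $\omega_{s,t}$ from univalence of many small ``pieces.'' Fix $(s,t)\in I_+^2$ with $s<t$ (the case $s=t$ is trivial). Since showing univalence of $\omega_{s,t}$ on every $\mathbb{D}(0,r)$ with $r\in(0,1)$ yields univalence on $\mathbb{D}$, I fix such an $r$ and also choose an intermediate $r'\in(r,1)$. Because $\lim_{\sigma\uparrow 1}\rho(\sigma)=1$, Lemma~\ref{lemma:Landau} provides $\sigma_0\in(0,1)$ such that $\rho(\sigma)\ge r'$ whenever $\sigma\in[\sigma_0,1)$.

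Next I would invoke the equivalence of (i) and (iii) in Theorem~\ref{theorem:continuity-of-transition-family}: the continuity of $\{\omega_{s,t}\}$ forces $a(t)$ to be continuous on $I$, hence $\log a$ is uniformly continuous on the compact interval $[s,t]$. Choose a partition $s=t_0<t_1<\cdots<t_n=t$ fine enough that
\[
  \log a(t_k)-\log a(t_{k-1}) \le -\log\sigma_0 \quad (k=1,\ldots,n),
\]
equivalently $a_{t_{k-1},t_k}=a(t_{k-1})/a(t_k)\ge\sigma_0$. By Lemma~\ref{lemma:Landau} each $\omega_{t_{k-1},t_k}$ is then univalent on $\mathbb{D}(0,r')$.

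Finally I would run an induction on $k$ to show that $\omega_{t_0,t_k}$ is univalent on $\mathbb{D}(0,r)$. The base case $k=1$ is immediate since $\mathbb{D}(0,r)\subset\mathbb{D}(0,r')$. For the inductive step, use $\omega_{t_0,t_k}=\omega_{t_{k-1},t_k}\circ\omega_{t_0,t_{k-1}}$: by the Schwarz lemma every $\omega\in\mathfrak{B}$ satisfies $|\omega(z)|\le|z|$, so
\[
  \omega_{t_0,t_{k-1}}\bigl(\mathbb{D}(0,r)\bigr) \subset \mathbb{D}(0,r) \subset \mathbb{D}(0,r'),
\]
ensuring the composition takes place inside the disc of univalence of $\omega_{t_{k-1},t_k}$. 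The inductive hypothesis and the univalence of $\omega_{t_{k-1},t_k}$ on $\mathbb{D}(0,r')$ then yield univalence of $\omega_{t_0,t_k}$ on $\mathbb{D}(0,r)$. Taking $k=n$ gives univalence of $\omega_{s,t}$ on $\mathbb{D}(0,r)$, and since $r\in(0,1)$ is arbitrary, $\omega_{s,t}$ is univalent on $\mathbb{D}$.

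There is no serious obstacle in this argument; the only delicate point is the choice $r<r'<1$ guaranteeing that the \emph{images} of $\mathbb{D}(0,r)$ under the partial compositions stay comfortably inside the disc on which each factor is known to be univalent, so that the inductive step goes through uniformly along the partition.
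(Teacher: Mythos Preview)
Your proof is correct and follows essentially the same route as the paper: both use Landau's lemma to get univalence of each small piece $\omega_{t_{k-1},t_k}$ on a disc of radius exceeding $r$, then compose using the Schwarz-lemma bound $|\omega(z)|\le|z|$ to keep images inside that disc. The paper's version is marginally leaner in that it dispenses with the auxiliary radius $r'$ (choosing $\sigma$ with $\rho(\sigma)>r$ already makes each factor univalent on $\mathbb{D}(0,r)$, and the images never leave $\mathbb{D}(0,r)$), but your extra buffer is harmless.
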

\begin{proof}
Fix $(s_0,t_0) \in I_+^2$ and consider $\omega_{s_0,t_0}$.
We may assume $s_0< t_0$, since the univalence is trivial
when $s_0=t_0$. 

For $r \in (0,1)$ take $\alpha \in (0,1)$ with $\rho( \alpha ) > r$.
Since $a_{s,t}$ is continuous on $I_+^2$ and $a_{t,t} = 1$ for $t \in I$,
for each $s \in [s_0,t_0]$ 
there exists an interval $I(s)$ containing  $s$,
which is open in the subspace topology of $[s_0,t_0]$
and  satisfies $a_{u,v} > \alpha$ for all $(u,v) \in I(s)_+^2$.
Consider the open covering 
\[
  [s_0,t_0] 
 \subset 
 \bigcup_{s_0 \leq s \leq t_0} I(s) .
\]
Since $[s_0,t_0]$ is a compact metric space, 
by the Lebesgue number theorem
we can find $\delta > 0$ such that
any subset of $[s_0,t_0]$ with diameter less than $\delta $
is contained in some $I(s)$.
Choose a partition
$s_0 < s_1< \cdots < s_n = t_0$ 
such that $\max_{1 \leq k \leq n} s_k-s_{k-1} < \delta$.
Then 
$a_{s_{k-1},s_k} \geq \alpha$, $k=1, \ldots , n$.
Hence by the Landau-Dieudonn\'{e} lemma 
each $\omega_{s_{k-1},s_k}$ is univalent in $\mathbb{D}(0,r)$.
Since each 
$\omega_{s_{k-1},s_k}$
maps $\mathbb{D}(0,r))$ into itself,
it follows that the composition
\[
 \omega_{s_0,t_0} 
 = \omega_{s_{n-1},s_n} \circ \cdots 
  \circ \omega_{s_1,s_2} \circ \omega_{s_0,s_1}
\]
is also univalent in $\mathbb{D}(0,r)$.
Since $r \in (0,1)$ was chosen arbitrarily, we conclude that 
$\omega_{s_0,t_0}$ is univalent in $\mathbb{D}$.
\end{proof}

\section{Continuity of Loewner Chains}
For a Loewner chain $\{ f_t \}_{t \in I}$,
it is easy to see that
$\{ f_t \}_{t \in I}$ is continuous
if and only if the function $f(z,t):= f_t(z)$ is continuous
in both variables $z\in \mathbb{D}$ and $t \in I$.
\begin{theorem}
\label{thm:omega-is-univalent-if-ft-is-continuous}
Let $\{ f_t \}_{t \in I} \subset \mathcal{H}_0 (\mathbb{D})$ 
be a Loewner chain 
with $a(t) = f_t'(0)$, $t \in I$.
Then $\{ f_t \}_{t \in I}$
is continuous 
if and only if 
the function $a(t)$ is continuous on $I$.
Furthermore, in this case, the associated transition family
$\{ \omega_{s,t}\}_{(s,t) \in I_+^2}$ is also continuous,
and each $\omega_{s,t}$ is univalent in $\mathbb{D}$
for all $(s,t) \in I_+^2$.
\end{theorem}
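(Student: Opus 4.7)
The plan has two directions. The forward implication—that continuity of $\{f_t\}$ in $\mathcal{H}(\mathbb{D})$ forces continuity of $a(t) = f_t'(0)$—is immediate from Weierstrass's theorem: local uniform convergence of $f_t \to f_{t_0}$ entails $f_t'(0) \to f_{t_0}'(0)$. All the substance is in the converse.

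Assume $a$ is continuous on $I$. Differentiating $f_s = f_t \circ \omega_{s,t}$ at $z=0$ gives $a(s) = a(t)\omega_{s,t}'(0)$, so $a_{s,t} = a(s)/a(t)$ is continuous on $I_+^2$. Theorem \ref{theorem:continuity-of-transition-family} (the equivalence of (iii) and (iv)) then yields continuity of the associated transition family, and Theorem \ref{thm:omega-is-univalent-if-it-is-continuous} gives univalence of each $\omega_{s,t}$. What remains is to show $f_t \to f_{t_0}$ locally uniformly on $\mathbb{D}$ at every $t_0 \in I$, which I treat by handling $t \leq t_0$ and $t \geq t_0$ separately.

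For $s \leq t_0$ the identity $f_s = f_{t_0} \circ \omega_{s,t_0}$ yields
\[
|f_s(z) - f_{t_0}(z)| = |f_{t_0}(\omega_{s,t_0}(z)) - f_{t_0}(z)|,
\]
and since $\omega_{s,t_0} \to \id_{\mathbb{D}}$ locally uniformly as $s \uparrow t_0$ while $f_{t_0}$ is continuous, the right side tends to $0$ uniformly on each compact subset of $\mathbb{D}$. The case $t \geq t_0$ is the main difficulty: here $f_{t_0} = f_t \circ \omega_{t_0,t}$ expresses $f_{t_0}$ in terms of $f_t$ rather than the reverse, so one must locally invert $\omega_{t_0,t}$ to solve for $f_t$.

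To carry out this inversion, fix $r \in (0,1)$ and pick $r' \in (r,1)$. By continuity of the transition family, for $t$ close enough to $t_0$ one has $\|\omega_{t_0,t} - \id\|_{\overline{\mathbb{D}(0,r')}} < r' - r$, so for every $w \in \overline{\mathbb{D}(0,r)}$ and $|z| = r'$,
\[
|\omega_{t_0,t}(z) - z| < r' - r \leq |z - w|.
\]
Rouch\'e's theorem then produces a unique preimage $\zeta_t(w) \in \mathbb{D}(0,r')$ of $w$ under $\omega_{t_0,t}$, and univalence of $\omega_{t_0,t}$ identifies $\zeta_t$ with the restriction of the analytic inverse $\omega_{t_0,t}^{-1}$ to a neighborhood of $\overline{\mathbb{D}(0,r)}$. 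Rewriting $\omega_{t_0,t}(\zeta_t(w)) = w$ as $\zeta_t(w) - w = -\bigl(\omega_{t_0,t}(\zeta_t(w)) - \zeta_t(w)\bigr)$ and using $\zeta_t(w) \in \mathbb{D}(0,r')$ gives $\|\zeta_t - \id\|_{\overline{\mathbb{D}(0,r)}} \leq \|\omega_{t_0,t} - \id\|_{\overline{\mathbb{D}(0,r')}} \to 0$. Uniform continuity of $f_{t_0}$ on $\overline{\mathbb{D}(0,r')}$ then delivers $f_t(w) = f_{t_0}(\zeta_t(w)) \to f_{t_0}(w)$ uniformly on $\overline{\mathbb{D}(0,r)}$. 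The key obstacle is exactly this right-continuity step; the ingredients that make it go through are the univalence supplied by Theorem \ref{thm:omega-is-univalent-if-it-is-continuous} together with the quantitative $C^0$-closeness $\omega_{t_0,t} \approx \id$ supplied by continuity of the transition family.
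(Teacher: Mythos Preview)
Your argument is correct, but the converse direction follows a genuinely different route from the paper's. The paper does not split into left- and right-continuity; instead it fixes $t_0$, picks an auxiliary $t^* \in I$ with $t^* \geq t_0$, and uses $f_t = f_{t^*} \circ \omega_{t,t^*}$ together with $|\omega_{t,t^*}(z)| \leq |z|$ to get a uniform bound $|f_t| \leq M$ on $\overline{\mathbb{D}}(0,(1+r)/2)$ for all $t \leq t^*$. A Cauchy estimate then bounds $|f_t'|$ on $\overline{\mathbb{D}}(0,r)$ uniformly in $t \leq t^*$, after which the identity $f_{t_2}(z) - f_{t_1}(z) = f_{t_2}(z) - f_{t_2}(\omega_{t_1,t_2}(z))$ and the explicit inequality (\ref{ineq:distance-between-omega-and-id}) yield
\[
|f_{t_2}(z) - f_{t_1}(z)| \leq \frac{2(a(t_2)-a(t_1))Mr(1+r)^2}{a(t_2)(1-r)^3},
\]
handling $t_1 \leq t_0 \leq t_2$ in one stroke. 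Your approach instead inverts $\omega_{t_0,t}$ locally via Rouch\'e to write $f_t = f_{t_0} \circ \omega_{t_0,t}^{-1}$ near the origin, which is more conceptual but yields no explicit modulus of continuity and relies on the prior univalence result. The paper's argument is more self-contained (it needs only Proposition~\ref{thm:fundamental_inequalities}, not the full continuity or univalence of the transition family) and gives a quantitative Lipschitz-type estimate in $a(t)$; your argument is cleaner in that it avoids the auxiliary reference point $t^*$ and the derivative bound altogether.
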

\begin{proof}
The latter statement follows directly from
Theorem \ref{theorem:continuity-of-transition-family} and
\ref{thm:omega-is-univalent-if-it-is-continuous}.

Take $r \in (0,1)$ and 
consider 
\[
    |a(t) -a(t_0)| 
   = |f_t'(0) -f_{t_0}'(0) | 
    \leq 
    \frac{1}{2 \pi} 
    \int_{|z|=r} 
    \frac{| f_t(z) - f_{t_0}(z)|}{|z|^2} \, |dz| .
\]
If
$\{ f_t \}_{t \in I}$
is continuous at $t_0 \in I$, 
then $f_t (z) \rightarrow f_{t_0} (z)$ 
uniformly on $\partial \mathbb{D}(0,r)$ as 
$t \rightarrow t_0$ in $I$.
Hence $a(t) \rightarrow a(t_0)$ and 
$a(t)$ is continuous at $t_0 \in I$.

To prove the converse 
let $t_0 \in I$.
If $t_0 < \sup I$, choose $t^* \in I$ with $t_0 < t^*$.  
If $t_0 = \sup I$, let $t^* = t_0$.  
For each fixed $r \in (0,1)$, it suffices to show 
that $|f_{t_2}(z) -f_{t_1}(z)| \rightarrow 0$ uniformly 
on $\overline{\mathbb{D}}(0,r)$
as $t_2 - t_1 \searrow 0$ with 
$t_1 \leq t_0 \leq t_2 \leq t^*$.
Let 
\[
  M (r) = \max_{|z| \leq r } |f_{t^*}(z)|, \quad 0 \leq r<1.
\]
Then for any $t \in I$ with $t \leq t^*$,
using $|\omega_{t,t^*}(z)| \leq |z|$, we obtain
\begin{align*}
  \max_{|z|\leq 2^{-1} (1+r)}|f_t(z)| 
= & \, \max_{|z|\leq 2^{-1} (1+r)}|f_{t^*}(\omega_{t,t^*}(z))| 
\\
\leq & \, \max_{|z|\leq 2^{-1} (1+r)}|f_{t^*}(z)| 
 = M \left(2^{-1}(1+r) \right) .
\end{align*}
Therefore, for $|z| \leq r$, we have 
\[
 |f_t'(z)| 
 \leq
    \frac{1}{2 \pi} 
    \int_{|\zeta|= 2^{-1}(1+r)} 
    \frac{| f_t(\zeta)|}{|\zeta - z|^2} \, |d \zeta|
  \leq
  2M \left(2^{-1}(1+r) \right) \frac{1+r}{(1-r)^2} .
\]
Using inequality (\ref{ineq:distance-between-omega-and-id}), 
we get
\begin{align}
\label{ineq:Lipshctz-cont-f_t}
 |f_{t_2}(z) - f_{t_1}(z) |
 &=
 |f_{t_2}(z) - f_{t_2}(\omega_{t_1,t_2}(z)) |
\\
 &=
 \left| \int_{\omega_{t_1,t_2}(z)}^z f_{t_2}'(\zeta ) \, d \zeta \right|
\nonumber \\
 &\leq 2 M \left(2^{-1}(1+r) \right) 
 \frac{1+r}{(1-r)^2} |z-\omega_{t_1,t_2}(z)|
\nonumber \\
 &\leq
\frac{ 2 (a(t_2)-a(t_1)) M \left(2^{-1}(1+r) \right) 
 r(1+r)^2}{a(t_2)(1-r)^3}
 \; \rightarrow \; 0 
\nonumber 
\end{align}
as $t_2 - t_1 \searrow 0$
with 
$t_1 \leq t_0 \leq t_2 \leq t^*$.
\end{proof}

For later use, we provide the following estimate.
\begin{lemma}
\label{lemma:Lipshictz_estimte_for_derivative_of_f}
Let $\{ f_t \}_{t \in I} \subset \mathcal{H}_0 (\mathbb{D})$ 
be a Loewner chain 
and define $a(t) = f_t'(0)$ for $t \in I$.
Let $t^* \in I$, and for each $r \in [0,1)$, define
$M(r) = \max_{|z|\leq r} |f_{t^*}(z)|$.
Then, for $t_1,t_2 \in I$ with $t_1 \leq t_2 \leq t^*$
and $|z| \leq r < 1$,
we have
\[
 \left| f_{t_2}^{(m)}(z) - f_{t_1}^{(m)}(z)\right|
 \leq
 \frac{m!2^{m+7} M((3+r)/4) }{(1-r)^{m+4}}
 \frac{a(t_2)-a(t_1)}{a(t_2)} .
\] 
\end{lemma}
\begin{proof}
Define an analytic function $g$ on $\mathbb{D}$ by
$g(z) = f_{t_2}(z) - f_{t_1}(z)$.
Applying inequality (\ref{ineq:Lipshctz-cont-f_t}) 
with $|z| \leq 2^{-1}(1+r)$
we obtain
\[
   |g(z)|
 \leq
 \frac{2^6 M((3+r)/4)}{(1-r)^3} \frac{a(t_2)-a(t_1)}{a(t_2)},
 \quad |z| \leq \frac{1+r}{2} .
\]
By the Cauchy integral formula, it follows that
for $|z| \leq r$
\begin{align*}
  \left| f_{t_2}^{(m)}(z) - f_{t_1}^{(m)}(z)\right|
 = & \, |g^{(m)}(z)| 
\\
 = & \, \left|  
 \frac{m!}{2 \pi i } 
 \int_{|\zeta|= 2^{-1}(1+r)}
 \frac{g(\zeta)}{(\zeta - z)^{m+1}}
 \, d \zeta \right|
\\
  \leq & \, 
 m! \frac{1+r}{2} \frac{1}{\left( \frac{1+r}{2}-r \right)^{m+1}}
 \frac{2^6 M((3+r)/4)}{(1-r)^3} \frac{a(t_2)-a(t_1)}{a(t_2)}
\\
  < & \, 
 \frac{m! 2^{m+7} M((3+r)/4)}{(1-r)^{m+4}} \frac{a(t_2)-a(t_1)}{a(t_2)}.
\end{align*}
\end{proof}

Let $I \subset [-\infty , \infty ]$ be a right-open interval 
with $\beta = \sup I \not\in I $ 
and $\{ \omega_{s,t} \}_{(s,t) \in I_+^2} $ 
be a transition family on $I$.
We will show that, 
for any fixed $s \in I$,
the limit 
\[
  \omega_{s, \beta} 
 = \lim_{t \nearrow \beta} \omega_{s,t}
\]
exists in $\mathcal{H}(\mathbb{D})$.
Following the argument of Pommerenke \cite{Pommerenke1960},
we will give a necessary and sufficient condition
under which the extended family 
$\{ \omega_{s,t} \}_{(s,t) \in (I\cup \{ \beta \})_+^2}$
forms a transition family.
As an application,
we derive the decomposition theorem for Loewner chains.
To this end, We require a lemma concerning 
the  inverse of a univalent function 
$f \in \mathcal{H}_0(\mathbb{D})$,
as well as the Vitali-Porter convergence theorem.
For proofs and further details see 
\cite[Corollary 7.5]{Burckel}
or 
\cite[Chap. 7]{Remmert}.

\begin{lemma}
\label{lemma:estimate-for-inverse-of-univalent-functions}
Let $h:\mathbb{D} \rightarrow \mathbb{C}$ be 
a univalent analytic function
with $h(0) =0$ and $h'(0)=a > 0$.
Then
$\mathbb{D}(0, a/4 ) \subset h(\mathbb{D})$
and 
\[
    |ah^{-1}(w) - w| 
 \leq
 \frac{16|w|^2}{a- 4|w|}, \quad
 |w| < \frac{a}{4} .
\]
\end{lemma}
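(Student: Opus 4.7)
The plan is to rescale so that the inequality reduces to a standard coefficient estimate on the unit disc. Since $h$ is univalent on $\mathbb{D}$ with $h(0)=0$ and $h'(0)=a$, Koebe's one-quarter theorem (applied to $z \mapsto h(z)/a \in S$) gives $\mathbb{D}(0,a/4) \subset h(\mathbb{D})$. Consequently $h^{-1}$ is well defined and analytic on $\mathbb{D}(0,a/4)$ with values in $\mathbb{D}$.

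Next I would rescale: set $\omega(v) := h^{-1}(av/4)$ for $v \in \mathbb{D}$. Then $\omega \in \mathfrak{B}$ with $\omega(0)=0$ and
\[
\omega'(0) = \frac{a}{4}\,(h^{-1})'(0) = \frac{a}{4}\cdot\frac{1}{a} = \frac{1}{4}.
\]
Writing the Taylor expansion $\omega(v) = \sum_{n\geq 1} c_n v^n$, we have $c_1 = 1/4$, and since $|\omega(v)|<1$ on $\mathbb{D}$, the Cauchy inequalities give $|c_n|\leq 1$ for every $n \geq 1$. Hence
\[
\bigl|\omega(v) - v/4\bigr| \;=\; \Bigl|\sum_{n=2}^{\infty} c_n v^n\Bigr| \;\leq\; \sum_{n=2}^{\infty}|v|^n \;=\; \frac{|v|^2}{1-|v|}, \qquad v\in\mathbb{D}.
\]

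Finally, substituting $w = av/4$ (so that $|w|<a/4$ corresponds to $|v|<1$) and multiplying by $a$ yields
\[
|ah^{-1}(w) - w| \;=\; a\,\bigl|\omega(v) - v/4\bigr| \;\leq\; \frac{a\,|v|^2}{1-|v|} \;=\; \frac{16|w|^2}{a-4|w|},
\]
which is exactly the claimed inequality; note that the constant $16$ arises naturally from $v^2 = 16w^2/a^2$, and the $a/4$ restriction comes directly from Koebe. The only non-elementary input is the Koebe 1/4 theorem; the rest is a one-line Cauchy coefficient estimate, so no serious obstacle is expected.
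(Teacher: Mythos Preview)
Your proof is correct and follows essentially the same approach as the paper: both invoke Koebe's one-quarter theorem to ensure $h^{-1}$ is defined on $\mathbb{D}(0,a/4)$, then bound the Taylor coefficients of the inverse via Cauchy estimates and sum the resulting geometric tail. The only cosmetic difference is that the paper estimates the coefficients of $h^{-1}$ directly on discs of radius $R<a/4$ and then lets $R\uparrow a/4$, whereas your rescaling $\omega(v)=h^{-1}(av/4)$ builds the limit into the substitution and avoids that final step.
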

\begin{proof}
By the Koebe one-quarter theorem 
we have $\mathbb{D}(0,a/4) \subset f(\mathbb{D})$.
Let 
\[
  h^{-1}(w) = a^{-1}w + \sum_{n=2}^\infty b_nw^n, 
 \quad |w| < a/4
\].
Then, for $R< a/4$,
\[
   |b_n| 
   = 
  \left| 
  \frac{1}{2\pi i} 
  \int _{|w|=R} \frac{f^{-1}(w)}{w^{n+1}} \, dw 
  \right|
 \leq \frac{1}{R^n}, \quad n \geq 2.
\]
Thus 
\[
   |ah^{-1}(w)- w|
 \leq
 \sum_{n=2}^\infty a (|w|/R)^n 
 = \frac{a|w|^2}{R(R-|w|)} , \quad |w| < \frac{a}{4} .
\]
Letting $R \nearrow a/4$ 
we obtain the desired inequality.
\end{proof}

\begin{lemma}[the Vitali-Porter convergence theorem I]
\label{lemma:Vitali_for_sequnece}
Let $\{ g_n \}_{n=1}^\infty$ be a  sequence 
in $\mathcal{H}( \mathbb{D})$ 
that is locally uniformly bounded in $\mathbb{D}$. 
Suppose that $\lim_{n \rightarrow \infty} g_n(z)$ exists
for all $z$ in a subset $A$ of $\mathbb{D}$  
which has at least an accumulation point in $\mathbb{D}$.
Then $\{ g_n \}_{n=1}^\infty$ converges 
locally uniformly on $\mathbb{D}$ to an analytic function.
\end{lemma}

In the next chapter, we use the following form of 
the Vitali-Porter convergence theorem.  
We provide a proof for completeness, 
although it is essentially 
the same as that of the version stated above.

\begin{lemma}[the Vitali-Porter convergence theorem II]
\label{lemma:Vitali_for_family}
Let $\Lambda$ be a metric space with a distance function $d$,
and $\{ g_\lambda \}_{\lambda \in \Lambda}$ 
be a  family of analytic functions on a domain $D \subset \mathbb{C}$,
indexed by $\Lambda$.
Let $\lambda_0 \in \Lambda$, and let $A$ be a subset of $D$
that has at least one accumulation point in $D$.
Suppose that $\{ g_\lambda \}_{\lambda \in \Lambda}$ 
is locally uniformly bounded in $D$, and that
$\lim_{\lambda \to \lambda_0}g_\lambda (z)$ 
exists for every $z \in A$.
Then there exists an analytic function 
$g$ on $D$ such that
$g_\lambda \to g$ locally uniformly on $D$ 
as $\lambda \to \lambda_0$.
\end{lemma}
\begin{proof}
Suppose, for the sake of contradiction, 
that the conclusion does not hold. 
Then there exists $\varepsilon >0$, 
a compact set $K \subset D$,
sequences $\{ \lambda_j^{(1)} \}_{j=1}^\infty$, 
$\{ \lambda_j^{(2)} \}_{j=1}^\infty \subset \Lambda$,
and $\{ z_j \}_{j=1}^\infty \subset K$ sucht that
\begin{align}
\label{eq:lambda_j_tends_to_lambda_0}
   & d( \lambda_j^{(1)}, \lambda_0 ) \to 0, \quad 
  d( \lambda_j^{(2)}, \lambda_0 ) \to 0 \quad \text{as } j \to \infty, 
\\
\label{eq:modulus_is_geq_epsilon}
  & \left| 
 g_{\lambda_j^{(1)}} (z_j) - g_{\lambda_j^{(2)}} (z_j) 
  \right| 
 \geq \varepsilon .
\end{align}
Since $K$ is compact, we may assume 
(by passing to a subsequence if necessary)
that $z_j \to z_0$ as $j \to \infty$
for some $z_0 \in K$.
As the family $\{ g_\lambda \}_{\lambda \in \Lambda}$ 
is locally uniformly bounded on $D$, 
both sequences $\{ g_{\lambda_j^{(1)}} \}_{j=1}^\infty$ and
$\{ g_{\lambda_j^{(2)}} \}_{j=1}^\infty$ are normal families.
Hence, by Montel's theorem, 
we may also assume that 
$g_{\lambda_j^{(1)}} \to g_1$ and  
$g_{\lambda_j^{(2)}} \to g_2$ locally uniformly on $D$ 
as $j \to \infty$,
 for some analytic functions $g_1$ and $g_2$ in $D$.
Since 
$\lim_{\lambda \to \lambda_0} g_\lambda (z)$ 
exists for every $z \in A$
and 
the convergence in 
(\ref{eq:lambda_j_tends_to_lambda_0}) holds,
it follows that for every $z \in A$,
\[
   g_1(z) 
 = \lim_{j \to \infty } g_{\lambda_j^{(1)}} (z)
 = \lim_{\lambda \to \lambda_0} g_\lambda (z)
 = \lim_{j \to \infty } g_{\lambda_j^{(2)}} (z)
 = g_2(z) .
\]
Therefore, by the identity theorem for analytic functions,
we conlude that $g_1=g_2$ on $D$.
However, taking the limit 
in (\ref{eq:modulus_is_geq_epsilon})
as $j \to \infty$, 
we obtain $|g_1 (z_0)-g_2(z_0)| \geq \varepsilon$,
which contradicts the fact $g_=g_2$.
This completes the proof.
\end{proof}

\section{Extension of Transition Families}
We are now in a position to describe 
the extendability of a transition family to its right endpoint, 
based on the limiting behavior of the associated 
derivative functions.

\begin{theorem}
\label{thm:extendability-of-transiton-family}
Let $I \subset [-\infty , \infty )$ be a right-open interval 
with $\beta = \sup I \not\in I $, 
and let $\{ \omega_{s,t} \}_{(s,t) \in I_+^2}$ 
be a transition family on $I$ with 
$a_{s,t} = \omega_{s,t}'(0)$, $(s,t) \in I_+^2$.
Define 
$a_{s, \beta} = \lim_{t \nearrow \beta} a_{s,t} \in [0,1]$ 
for $s \in I$.
Then, for each $s \in I$,
the locally uniform limit 
\[
  \omega_{s, \beta } = \lim_{t \nearrow \beta} \omega_{s,t}
\]
exists on $\mathbb{D}$,
and the following statements hold:
\begin{itemize}
 \item[{\rm (i)}]
If $a_{s_0,\beta} > 0$ 
for some $s_0 \in I$,
then $a_{s,\beta} >0$ 
and 
$\omega_{s,\beta } \in \mathfrak{B}$ for all $s \in I$,
and the extended family 
$\{ \omega_{s,t} \}_{(s,t) \in (I\cup \{ \beta\})_+^2 }$ 
forms a transition family on 
$I \cup \{  \beta \}$.
Here, we set $\omega_{\beta,\beta} = \id_{\mathbb{D}}$ 
and $a_{\beta , \beta} = 1$. 
 \item[{\rm (ii)}]
If $a_{s_0,\beta} = 0$ for some $s_0 \in I$,  
then $a_{s,\beta} = 0$ and $\omega_{s,\beta} = 0$ 
for all $s \in I$.
Furthermore, if $\{ \omega_{s,t} \}_{(s,t) \in I_+^2} $
is continuous, then for any fixed $t_0 \in I$ and $c>0$, 
the locally uniform limit 
\[
  g_t 
  = 
  \lim_{\tau \nearrow  \beta} 
  \frac{c}{a_{t_0,\tau }} \omega_{t,\tau}
\]
exists
and is univalent on $\mathbb{D}$ for all $t \in I$.
The family $\{ g_t \}_{t \in I}$ forms a continuous Loewner chain of
univalent functions, 
has  $\{ \omega_{s,t} \}_{(s,t) \in I_+^2 }$
as its associated transition family, and satisfies
$g_t'(0) \rightarrow \infty$ as $t \nearrow \beta$.
\end{itemize}
\end{theorem}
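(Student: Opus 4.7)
I would proceed by first proving the dichotomy between the two cases via monotonicity of $a_{s,t}$, then dispatching (i) by a Cauchy estimate, the vanishing part of (ii) by an iteration argument, and finally treating the continuous case of (ii) (the real obstacle) by working with inverse functions.

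The monotonicity is immediate: the semigroup relation $a_{s,t}=a_{s,t'}a_{t',t}$ shows $a_{s,t}$ is nonincreasing in $t$, so $a_{s,\beta}:=\lim_{t\uparrow\beta}a_{s,t}\in[0,1]$ exists, and since $a_{s,\beta}=a_{s,s_0}a_{s_0,\beta}$ for $s\leq s_0$ and $a_{s,\beta}=a_{s_0,\beta}/a_{s_0,s}$ for $s_0\leq s$, positivity (resp.\ vanishing) of $a_{s_0,\beta}$ for one $s_0\in I$ propagates to every $s\in I$.

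For (i), with $a_{s,\beta}>0$ everywhere, the Lipschitz-type bound (\ref{ineq:omega_isLipschitz_wrt_a}) applied with $t_0=s$ yields $|\omega_{s,t_2}(z)-\omega_{s,t_1}(z)|\leq (1-a_{t_1,t_2})|z|(1+|z|)/(1-a_{t_1,t_2}|z|)$, and since $a_{t_1,t_2}=a_{s,t_2}/a_{s,t_1}\to 1$ as $t_1,t_2\uparrow\beta$, this is Cauchy locally uniformly, producing a limit $\omega_{s,\beta}$. It satisfies $\omega_{s,\beta}(0)=0$ and $\omega_{s,\beta}'(0)=a_{s,\beta}>0$, hence is nonconstant, so the maximum principle gives $\omega_{s,\beta}\in\mathfrak{B}$; passing to the limit in $\omega_{s,u}=\omega_{t,u}\circ\omega_{s,t}$ as $u\uparrow\beta$ verifies the extended semigroup. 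For the vanishing half of (ii), I would iterate the growth estimate (\ref{ineq:omega_growth}): writing $\omega_{s,t}=\omega_{t',t}\circ\omega_{s,t'}$ gives $|\omega_{s,t}(z)|\leq|\omega_{s,t'}(z)|^2+a_{t',t}|\omega_{s,t'}(z)|$. Setting $L(z):=\limsup_{t\uparrow\beta}|\omega_{s,t}(z)|$ and taking $\limsup$ first in $t$ (using $a_{t',t}\to 0$) and then in $t'$ yields $L(z)\leq L(z)^2$; combined with $L(z)\leq|z|<1$ from Schwarz, this forces $L(z)=0$, and normality upgrades pointwise to locally uniform convergence $\omega_{s,t}\to 0$.

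The main obstacle is the continuous case of (ii), where $a(\tau)\to\infty$ breaks any naive Cauchy estimate on $g_{t,\tau}(z):=a(\tau)\omega_{t,\tau}(z)$ directly. My plan is to pass to inverses. Each $g_{t,\tau}$ is univalent by Theorem \ref{thm:omega-is-univalent-if-it-is-continuous} with $g_{t,\tau}(0)=0$ and $g_{t,\tau}'(0)=a(t)$, so Koebe's one-quarter theorem gives $g_{t,\tau}(\mathbb{D})\supset\mathbb{D}(0,a(t)/4)$ and the inverse $f_{t,\tau}:=g_{t,\tau}^{-1}$ is defined there, bounded by $1$. The relation $\omega_{t,\tau_2}=\omega_{\tau_1,\tau_2}\circ\omega_{t,\tau_1}$ translates to $g_{t,\tau_2}(z)=g_{\tau_1,\tau_2}(g_{t,\tau_1}(z)/a(\tau_1))$, which upon inversion becomes
\[
f_{t,\tau_2}(w)=f_{t,\tau_1}\bigl(a(\tau_1)\,f_{\tau_1,\tau_2}(w)\bigr).
\]
Lemma \ref{lemma:estimate-for-inverse-of-univalent-functions} applied to $g_{\tau_1,\tau_2}$ gives $|a(\tau_1)f_{\tau_1,\tau_2}(w)-w|\leq 16|w|^2/(a(\tau_1)-4|w|)$, which tends to $0$ uniformly on compact subsets of $\mathbb{D}(0,a(t)/4)$ as $\tau_1\uparrow\beta$; combined with the uniform-in-$\tau_1$ Lipschitz property of $f_{t,\tau_1}$ (from the Cauchy derivative estimate on the family $\{f_{t,\tau_1}\}$, which is uniformly bounded by $1$), this produces Cauchy convergence $f_{t,\tau}\to f_t$ locally uniformly on $\mathbb{D}(0,a(t)/4)$. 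Setting $g_t:=f_t^{-1}$ on the open set $f_t(\mathbb{D}(0,a(t)/4))\subset\mathbb{D}$, the normal family $\{g_{t,\tau}\}_\tau$ on $\mathbb{D}$ (locally uniformly bounded by Koebe distortion) has every subsequential limit coinciding with $g_t$ on this open set, hence by the identity theorem all such limits agree with a single analytic function on $\mathbb{D}$, giving $g_{t,\tau}\to g_t$ locally uniformly on $\mathbb{D}$. Finally $g_s=g_t\circ\omega_{s,t}$ follows by taking $\tau\uparrow\beta$ in $g_{s,\tau}=g_{t,\tau}\circ\omega_{s,t}$, so $\{g_t\}_{t\in I}$ is a Loewner chain of univalent functions with transition family $\{\omega_{s,t}\}$, and its continuity follows from continuity of $a(t)=g_t'(0)$ via Theorem \ref{thm:omega-is-univalent-if-ft-is-continuous}.
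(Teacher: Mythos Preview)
Your proof is correct, and in parts more direct than the paper's, though it uses the same toolkit (inequalities (\ref{ineq:omega_growth}), (\ref{ineq:omega_isLipschitz_wrt_a}), Koebe, Lemma~\ref{lemma:estimate-for-inverse-of-univalent-functions}, and normality). The differences are worth noting. For (i), the paper first extracts a convergent subsequence by normality and then shows full convergence via (\ref{ineq:distance-between-omega-and-id}); your direct Cauchy argument from (\ref{ineq:omega_isLipschitz_wrt_a}) is shorter. For the vanishing part of (ii), the paper builds a sequence $t_n\uparrow\beta$ with $a_{t_{n-1},t_n}\leq c_n\downarrow 0$ and iterates (\ref{ineq:omega_growth}) to obtain a product bound $|\omega_{s,t}(z)|\leq \rho_n\cdots\rho_1|z|$; your $L\leq L^2$ argument is a clean alternative that avoids constructing an explicit sequence. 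For the continuous case of (ii) the two routes are genuinely dual: the paper first picks \emph{one} index $\tau$, uses normality to get subsequential convergence of $a(t_n)\omega_{\tau,t_n}$, then propagates to all $t$ via the transition relation and Vitali (Lemma~\ref{lemma:Vitali}), and only at the very end invokes Lemma~\ref{lemma:estimate-for-inverse-of-univalent-functions} to upgrade to the full limit; you instead start with Lemma~\ref{lemma:estimate-for-inverse-of-univalent-functions} to get Cauchy convergence of the inverses $f_{t,\tau}$ on $\mathbb{D}(0,a(t)/4)$ for each $t$ separately, and then use normality plus the identity theorem (in effect, the mechanism behind Vitali) to pull the convergence back to $g_{t,\tau}$. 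What your ordering buys is a self-contained argument for each $t$ without the bootstrap from a distinguished $\tau$; what the paper's ordering buys is that the transition relation $g_s=g_t\circ\omega_{s,t}$ falls out immediately from the construction rather than needing a separate limit passage.
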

By a similar argument, one can prove an analogue of the above theorem 
for transition families defined on \textit{left}-open intervals 
$I$ with $\alpha := \inf I \not\in I$.
We omit the statement for brevity and to avoid unnecessary complications. 

\begin{proof}[Proof of Theorem \ref{thm:extendability-of-transiton-family}]
(i) Suppose $a_{s_0,\beta} > 0$
for some $s_0 \in I$.
Then, by (\ref{eq:stu-transition-derivative}),
we have 
$a_{s,\beta} = \lim_{t \nearrow \beta} a_{s,t} > 0$
for all $s \in I$.

Fix $s \in I$ arbitrarily. 
Since the family 
$\{ \omega_{s,t} \}_{s\leq t \in I}$ 
is uniformly bounded  on $\mathbb{D}$,
there exists a sequence $\{t_n \}_{n=1}^\infty \subset I$
with $s < t_1 <t_2 < \cdots < t_n \nearrow \beta $ 
such that
$\omega_{s,t_n} \rightarrow \varphi$ locally uniformly on $\mathbb{D}$ 
for some analytic function $\varphi$.

We show that $\omega_{s,t} \rightarrow \varphi$ 
locally uniformly on $\mathbb{D}$ as
$t \nearrow \beta$.
Fix $r \in (0,1)$ and $\varepsilon > 0$.
Choose $N \in \mathbb{N}$
such that 
for all $n \geq N$ and $|z| \leq r$
\[
   | \omega_{s,t_n}(z) - \varphi (z)| < \frac{\varepsilon}{2}
\]
and 
\[
   1 - \frac{\varepsilon (1-r)}{2r(1+r)} < a_{t,u} 
 = \frac{a_{s,u}}{a_{s,t}}
\]
for $t_N \leq t \leq u < \beta$.
Then, for $t \in (t_N , \beta )$ and $|z| \leq r$, 
we have, 
by (\ref{ineq:distance-between-omega-and-id}) and 
the inequality $|\omega_{s,t_N}(z)| \leq |z|$,
\begin{align*}
   |\omega_{s,t}(z) - \varphi (z)|
 \leq &
 |\omega_{t_N,t}(\omega_{s,t_N}(z)) - \omega_{s,t_N}(z) | 
 + |\omega_{s,t_N}(z) - \varphi (z)|
\\
 \leq &
 (1- a_{t_N,t}) 
 \frac{|\omega_{s,t_N}(z)|(1+|\omega_{s,t_N}(z)|)}
 {1-a_{t_N,t}|\omega_{s,t_N}(z)|}
 + |\omega_{s,t_N}(z) - \varphi (z)|
\\
 \leq &
 (1- a_{t_N,t}) 
 \frac{r(1+r)}{1-r}
 + \frac{\varepsilon}{2} < \varepsilon .
\end{align*}
Thus $\omega_{s,t} \rightarrow \varphi $ 
locally uniformly on $\mathbb{D}$ as
$t \nearrow \beta$. 
We henceforth denote the limit $\varphi$ by $\omega_{s,\beta}$.

We now verify 
that $\omega_{s, \beta} \in \mathfrak{B}$.
Indeed, since
$|\omega_{s,\beta}(z)| = \lim_{t \nearrow \beta}|\omega_{s,t}(z)| \leq 1$,
$\omega_{s,\beta}(0) = \lim_{t \nearrow \beta} \omega_{s,t}(0) =0$
and 
$\omega_{s, \beta}'(0) = \lim_{t \nearrow \beta} \omega_{s,t}'(0) 
= a_{s,\beta} > 0$,
it follows that $\omega_{s,\beta} \in \mathfrak{B}$.

Next taking the limit $u \nearrow \beta$ 
in the identity 
\[
   \omega_{s,u}(z) = \omega_{t,u} (\omega_{s,t}(z)) \quad
  \text{ for $z \in \mathbb{D} $ and $s \leq t \leq u < \beta$}, 
\]
we obtain
\[
  \omega_{s,\beta}(z) = \omega_{t,\beta} (\omega_{s,t}(z)) 
 \quad \text{ for $z \in \mathbb{D} $ and $s \leq t < \beta$}.
\]
Hence, the extended family 
$\{ \omega_{s,t} \}_{(s,t) \in (I\cup \{ \beta \})_+^2 }$ 
is a transition family on $I \cup \{  \beta \}$.

(ii) Now suppose $a_{s_0,\beta} = \lim_{t \nearrow \beta} a_{s_0,t} = 0$
for some $s_0 \in I$.
Then by (\ref{eq:stu-transition-derivative})
we have 
$a_{s,\beta} = \lim_{t \nearrow \beta} a_{s,t} = 0$
for all $s \in I$.

Fix $s \in I$. We show that $\omega_{s,t} \rightarrow 0$ 
locally uniformly on $\mathbb{D}$ as $t \nearrow \beta$.
Let $r \in (0,1)$ and 
$\rho$ be the unique solution to 
\[
   \frac{r+\rho}{1+\rho r} = \frac{1+r}{2},
\]
which gives 
\[
  0 < \rho = \frac{1-r}{2-r(1+r)} < \frac{1}{2}. 
\]
Choose a sequence $\{ t_n \}_{n=0}^\infty \subset I$
with $s = t_0  <t_1<t_2 < \cdots < t_n < \cdots$
such that
\[
  a_{t_{n-1},t_n} \leq \rho, \quad n \in \mathbb{N}.
\]
Note that $t_n \nearrow \beta$ as $n \rightarrow \infty$.
Indeed, if $\beta_0 := \sup_{n \in \mathbb{N}} t_n < \beta$,
then for all $n \in \mathbb{N}$, we have
\[
  0 
 < a_{s,\beta_0} 
 \leq a_{t_0,t_n}
 = a_{t_{n-1},t_n} \cdots a_{t_0,t_1}
 \leq \rho^n ,
\]
which is a contradiction.
From (\ref{ineq:omega_growth}), we get for $|z| < r$
and $1 \leq k \leq n$:
\[
  |\omega_{t_{k-1},t_k}(z)| 
 \leq 
 |z| \frac{|z|+a_{t_{k-1},t_k}}{1+a_{t_{k-1},t_k}|z|}
 \leq
 |z| \frac{r + \rho}{1+\rho r}
 = |z| \frac{1+r}{2}, 
 \quad |z| \leq r .
\]
Therefore, by repeated application,
we obtain for $|z| \leq r$ and $t \geq t_n $ 
\begin{align*}
 |\omega_{s,t}(z)|
 =
 |\omega_{t_n,t}(\omega_{t_0,t_n}(z))|
 \leq & 
 |\omega_{t_0,t_n}(z)|
\\
 \leq &
 |\omega_{t_{n-1},t_n}(\omega_{t_0,t_{n-1}}(z))|
\\
 \leq &
 \frac{1+r}{2} |\omega_{t_0,t_{n-1}}(z)|
\\
 \leq & \left( \frac{1+r}{2} \right)^n |\omega_{t_0,t_0}(z)| 
 = \left( \frac{1+r}{2} \right)^n |z| .
\end{align*}
This shows that
$\omega_{s,t} (z) \rightarrow  0$ locally uniformly on $\mathbb{D}$
as $t \nearrow \beta$.


We now consider the case where 
$\{\omega_{s,t} \}_{(s,t) \in I_+^2}$ is continuous.
In this case, by Theorem 
\ref{thm:omega-is-univalent-if-it-is-continuous},
each $\omega_{s,t}$ is univalent in $\mathbb{D}$ 
for all $(s,t) \in I_+^2$.

Fix $t_0 \in I$ and $c > 0$ be fixed, and
define $a(t)$ by (\ref{eq:defi-of-at}).
Note that 
\[
  \omega_{s,t}'(0) = a_{s,t} = \frac{a(s)}{a(t)}
  \quad \text{and} \quad  \lim_{t \nearrow \beta} a(t) 
  =  \lim_{t \nearrow \beta} \frac{c}{a_{t_0,t}} = \infty .
\]
Fix $\tau \in I$ and consider the family 
$\{ a(t) \omega_{\tau,t}\}_{\tau \leq t < \beta}$.
By the growth theorem for univalent functions,
we have
\[
   a(t) |\omega_{\tau,t}(z)|
 \leq
  \frac{a(\tau )|z|}{(1-|z|)^2}, \quad 
 z \in \mathbb{D}.
\]
This implies that the family 
$\{ a(t) \omega_{\tau ,t}\}_{\tau  \leq t < \beta}$ 
is locally uniformly bounded, 
and hence forms a normal family.
Thus there exists a sequence 
$\{ \tau_n \}_{n=1}^\infty \subset I$ such that
$\tau < \tau_1< \cdots < \tau_n \nearrow \beta$ and 
a locally uniform limit 
\begin{equation}
\label{eq:convergence_of_g_tau}
   g_\tau (z) 
 := 
 \lim_{n \rightarrow \infty} a(\tau_n) \omega_{\tau ,\tau_n}(z) , 
 \quad 
 z \in \mathbb{D} 
\end{equation}
exists.

For $t \in I$ with 
$t < \tau$, we have
\[
  a(\tau_n) \omega_{t,\tau_n}(z)
  = a(\tau_n) \omega_{\tau,\tau_n} (\omega_{t,\tau}(z))
 \rightarrow g_\tau (\omega_{t,\tau}(z)), \quad
 n \rightarrow \infty . 
\]
Therefore, for $z \in \mathbb{D}$,
the limit 
$g_t(z) :=  \lim_{n \rightarrow \infty} a(\tau_n) \omega_{s,\tau_n}(z)$
exists and satisfies
$g_t(z) = g_\tau ( \omega_{t,\tau}(z))$.
Since the convergence in 
(\ref{eq:convergence_of_g_tau})
is locally uniform on $\mathbb{D}$
and $|\omega_{t,\tau}(z)| \leq |z|$,
it follows that the convergence
of the limit  
$g_t =  \lim_{n \rightarrow \infty} a(\tau_n) \omega_{s,\tau_n}$
is also locally uniform on $\mathbb{D}$.

Let $t \in I$ with $t  > \tau $.
Choose $n_0 \in \mathbb{N}$ such that $\tau_{n_0} > t$.  
Then we also have
\[
  g_\tau (z) = 
 \lim_{n_0 \leq n \rightarrow \infty} 
  a(\tau_n) \omega_{\tau ,\tau_n}(z)
 =
 \lim_{n_0 \leq n \rightarrow \infty} 
  a(\tau_n) \omega_{t,\tau_n} (\omega_{\tau ,t}(z)) .
\]
Hence, for each $\zeta \in \omega_{\tau ,t}(\mathbb{D})$,
the limit 
$\lim_{n_0 \leq n \rightarrow \infty } 
a(\tau_n) \omega_{t,\tau_n} ( \zeta )$
exists.
Since $\omega_{\tau ,t}(\mathbb{D})$ is 
a nonempty subdomain of $\mathbb{D}$,
and the family $\{ a(\tau_n) \omega_{t,\tau_n} \}_{n \geq n_0}$ 
is locally uniformly bounded on $\mathbb{D}$,
the Vitali-Porter convergence theorem implies that
the locally uniform limit 
$g_t(z) :=  \lim_{n_0 \leq n \rightarrow \infty} 
a(\tau_n) \omega_{t,\tau_n}(z)$
exists on $\mathbb{D}$.

At this point, it is straightforward to verify that  
the family $\{ g_t \}_{t \in I}$ forms a Loewner chain  
that shares the same associated transition family 
as $\{ f_t \}_{t \in I}$.
To see this, let $(s,t) \in I_+^2$.
Taking the limit as $n \rightarrow \infty $ 
in the identity
\[
   a(t_n) \omega_{s,t_n}(z) = a(t_n) \omega_{t,t_n}(\omega_{s,t}(z))
 \quad \text{for} \quad  t_n > t ,
\]
we obtain $g_s = g_t \circ \omega_{s,t}$, as required.

Since 
$g_t '(0) =  a( t ) > 0$
and $a(\tau_n)\omega_{t,\tau_n}$ is univalent in $\mathbb{D}$, 
Hurwitz's theorem implies that $g_t$ is univalent on $\mathbb{D}$. 
In particular,  
since $a(\tau) \rightarrow \infty$ as $\tau \nearrow \beta$,
it follows from 
Lemma~\ref{lemma:estimate-for-inverse-of-univalent-functions}
that
\[
  a(\tau ) \omega_{t,\tau} (z) = a(\tau) g_\tau^{-1} (g_t(z)) 
 \rightarrow g_t(z)
\]
locally uniformly on $\mathbb{D}$ as $\tau \nearrow \beta$.
\end{proof}

\section{Decomposition Theorem for Loewner Chains}
We now present the proof of the decomposition theorem for Loewner chains,  
as stated in Chapter~\ref{chapter:introduction}.

\begin{proof}[Proof of Theorem~\ref{theorem:FirstDecompositionTheorem}]
Let $\{ \omega_{s,t}\}_{(s,t) \in I_+^2}$ be the transition family 
associated with $\{ f_t \}_{t \in I}$.

(i) 
If the locally uniform limit 
$f_\beta = \lim_{t \nearrow \beta} f_t $ exists,
then it is clear that 
$a(\beta) = \lim_{t \nearrow \beta} f_t'(0) = f_\beta'(0) < \infty$.
Conversely if $a(\beta) < \infty$, then 
by Theorem \ref{thm:extendability-of-transiton-family}
$\{ \omega_{s,t}\}_{(s,t) \in I_+^2}$ has the extension
$\{ \omega_{s,t} \}_{(s,t) \in (I \cup \{ \beta \})_+^2}$ with
$\omega_{s,\beta} = \lim_{t \nearrow \beta} \omega_{s,t}$, $s \in I$.

Next, we aim to establish the existence of 
the locally uniform limit $f_\beta := \lim_{t \nearrow \beta} f_t$.
To this end, let us suppose for the moment 
that such a limit exists  
and satisfies the relation $f_t = f_\beta \circ \omega_{t,\beta}$.  
Then it would follow that 
$ f_\beta = f_t \circ \omega_{t,\beta}^{-1}$.
We now refine this heuristic idea into 
a rigorous argument as follows.

Since 
$\omega_{t, \beta} \in \mathfrak{B}$ 
and $\omega_{t, \beta}'(0) = a_{t,\beta} = \frac{a(t)}{a(\beta)}$,
by Lemma \ref{lemma:Landau}
the function $\omega_{t, \beta} $ is univalent in
$\mathbb{D}(0, \rho (a_{t,\beta} ))$
and 
$\mathbb{D}(0, \rho (a_{t,\beta} )^2) \subset 
\omega_{t, \beta} (\mathbb{D}(0, \rho (a_{t,\beta} )))$.
For each $t \in I$, 
let $\omega_{t, \beta}^{-1}$
denote the inverse 
of the restriction 
$\omega_{t, \beta}|_{\mathbb{D}(0, \rho (a_{t,\beta} )))}$
such that $\omega_{t, \beta}^{-1}$ is defined on 
on $\mathbb{D}(0, \rho (a_{t,\beta} )^2)$.
From 
the identity
$\omega_{s,\beta} = \omega_{t,\beta} \circ \omega_{s,t}$ 
for $(s,t) \in I_+^2$,  
it follows that 
$f_s \circ \omega_{s,\beta}^{-1}$ coincides 
with $f_t \circ \omega_{t,\beta}^{-1}$
on a neighborhood of the origin.
Hence by the identity theorem for analytic functions, 
$f_s \circ \omega_{s,\beta}^{-1}$ 
coincides with $f_t \circ \omega_{t,\beta}^{-1}$
on $\mathbb{D}(0,r(a_{s,\beta}))$.
Note that 
$a_{t,\beta}$ is nondecreasing in $t$ and 
$a_{t,\beta} \nearrow 1$ as $t \nearrow \beta$, 
and that 
$\rho (\alpha )$ is strictly increasing on $(0,1)$
and $\rho(\alpha ) \nearrow 1$ as $\alpha \nearrow 1$. 
Therefore there exists  a unique analytic function 
$f_\beta : \mathbb{D} \rightarrow \mathbb{C}$ 
such that for all $t \in I$
\[
  f_\beta = f_t \circ \omega_{t,\beta}^{-1} 
 \quad \text{on} \quad 
  \mathbb{D} (0, \rho(a_{t, \beta })^2).
\]
Thus, again by the identity theorem for analytic functions 
we have $f_t = f_\beta \circ \omega_{t, \beta}$ on $\mathbb{D}$.

Once the existence of the function $f_\beta$ 
has been established,  
the identity $f_t = f_\beta \circ \omega_{t,\beta}$,  
together with the fact that 
$\omega_{t,\beta} \rightarrow \id_{\mathbb{D}}$ 
locally uniformly on $\mathbb{D}$ as $t \nearrow \beta$,  
implies that  
$f_t \rightarrow  f_\beta$
locally uniformly on $\mathbb{D}$ as $t \nearrow \beta$.
It is clear that the extended family 
$\{ f_t \}_{t \in I \cup \{\beta \} }$
is also a Loewner chain
with the associated transition family  
$\{ \omega_{s,t} \}_{(s,t) \in (I \cup \{ \beta\})}$.

Let
\[
   g_t = a(\beta) \omega_{t, \beta} \text{ for } t \in I 
  \quad \text{and} \quad g_\beta  = a(\beta) \id_{\mathbb{D}} .
\]
Then it is easy to see that 
$\{ g_t \}_{t \in I \cup \{ \beta \}}$
forms a Loewner chain with 
$\{ \omega_{s,t} \}_{(s,t) \in (I \cup \{ \beta\})}$ 
as the associated transition family,
and that $g_t'(0)=f_t'(0)$ for $t \in I$.

Since 
$a_{t,\beta} \rightarrow 1$ as $t \nearrow \beta$,
it follows from Proposition~\ref{proposition:fundamental_inequalities}
that $g_t \rightarrow g_\beta$ 
locally uniformly on $\mathbb{D}$ as $t \nearrow \beta$.
Define
\[
   F(w) 
 = f_\beta \left( \frac{w}{a(\beta)} \right), 
   \quad |w| < a (\beta )
\]
Then we have 
$F \circ g_t = f_\beta \circ \omega_{t,\beta} = f_t$,
as required.

To prove the uniqueness, suppose there exists an analytic function 
$\tilde{F} : \mathbb{D}(0, a(\beta))\rightarrow \mathbb{C}$
and a Loewner chain $\{ \tilde{g}_t \}_{t \in I \cup \{ \beta \} }$ 
satisfying
$\tilde{F}(0) = \tilde{F}'(0)-1=0$, 
$\tilde{g}_\beta (\mathbb{D}) = \mathbb{D}(0, a(\beta))$
and $F \circ g_t = \tilde{F} \circ \tilde{g}_t$ 
for all $t \in I \cup \{ \beta \}$.
Then we have
\[
   \tilde{F} \circ \tilde{g}_t \circ \omega_{s,t} 
 = F \circ g_t \circ \omega_{s,t} = f_t \circ \omega_{s,t} = f_s
 = \tilde{F} \circ \tilde{g}_s .
\]
Since $\tilde{F} (0) =0$ and $\tilde{F}'(0) = 1$, 
it follows that
the Loewner chain 
$\{ \tilde{g}_t \}_{t \in I \cup \{ \beta \}}$ shares 
the same associated transition family
$\{ \omega_{s,t} \}_{(s,t) \in (I\cup\{\beta\})_+^2}$.
Since $\tilde{g}_\beta(\mathbb{D}) = \mathbb{D}(0, a(\beta))$,
$\tilde{g}_\beta(0)=0$ and 
$g_\beta'(0) = a(\beta)$,
the Schwarz implies that 
$\tilde{g}_\beta (z) = a(\beta) z = g_\beta (z)$.
Hence 
$\tilde{g}_t(z) = \tilde{g}_\beta (\omega_{t,\beta}(z)) 
= a(\beta )\omega_{t,\beta}(z) = g_t (z)$, $t \in I$,
and therefore $\tilde{F} = F$.

If $\{ f_t \}_{t \in I}$ is continuous,
then the function $a(t)$ is positive and 
continuous on $I \cup \{ \beta \}$,
so $a_{s,t} = a(s)/a(t)$ is continuous on  $(I \cup \{ \beta \})_+^2$.
By Theorem \ref{theorem:continuity-of-transition-family},
the transition family 
$\{ \omega_{s,t}\}_{(s,t) \in (I\cup \{ \beta \})_+^2}$ is continuous.
Hence, by 
Theorem \ref{thm:omega-is-univalent-if-it-is-continuous},
each $\omega_{t,\beta}$ and thus each 
$g_t = a( \beta ) \omega_{t,\beta}$,
is univalent on $\mathbb{D}$.

(ii) Suppose that 
$\{f_t \}_{t \in I}$ is continuous and $a(\beta) = \infty$.
Then, for each $s \in I$,
we have $a_{s,\beta} = \lim_{t \nearrow \beta} \frac{a(s)}{a(t)} = 0$.
Fix $t_0 \in I$ arbitrarily and set $c=a(t_0)$.
Then, for $\tau \in I$ with 
$\tau \geq t_0$, we have 
$\frac{c}{a_{t_0,\tau}} = a( \tau )$.

Applying Theorem~\ref{thm:extendability-of-transiton-family}~(ii),
the locally uniform limit 
\[
  g_t 
 = \lim_{\tau \nearrow \beta} 
 \frac{c}{a_{t_0,\tau}}  \omega_{t,\tau}
 = 
 \lim_{\tau \nearrow \beta} 
 a(\tau) \omega_{t,\tau}
\]
exists and is univalent on $\mathbb{D}$ for all $t \in I$.
Moreover, the family $\{ g_t \}_{t \in I}$ 
forms a Loewner chain 
with $\{ \omega_{s,t}\}_{(s,t) \in I_+^2}$ 
as the associated transition family. 
Note that  
$g_t'(0)= a(t)=f_t'(0)$, $t \in I$.

Now consider the family
\[
  \{ f_t \circ g_t^{-1} \}_{t \in I}.
\]
Each function $f_t \circ g_t^{-1}$
is defined on the domain $g_t( \mathbb{D})$,
and the family $\{ g_t( \mathbb{D}) \}_{t \in I}$ 
is nondecreasing in $t$.
For $(s,t) \in I_+^2$, 
we have, on $g_s(\mathbb{D})$
\[
  f_s \circ g_s^{-1} 
 = f_t \circ \omega_{s,t} \circ (g_t \circ \omega_{s,t})^{-1}
 =f_t \circ g_t^{-1} .
\]
By the Koebe one-quarter theorem we have 
$\mathbb{D}(0, a(t)/4) \subset g_t(\mathbb{D}) $.
Combining this and $\lim_{t \nearrow \beta} g_t'(0) = a(\beta) = \infty$
it follows that 
$\bigcup_{t \in I} g_t( \mathbb{D}) = \mathbb{C}$. 
Therefore,
the family 
$\{ f_t \circ g_t^{-1} \}_{t \in I}$ 
defines a unique entire function 
satisfying $F(0) = F'(0)-1= 0$,
and such that 
\[
   F(w) = f_t \circ g_t^{-1}(w) , \quad w \in g_t(\mathbb{D})  
\]
for all $t \in I$.
Thus, we conclude that 
$f_t = F \circ g_t $, as required.

Finally, 
to prove the uniqueness, 
suppose that there exists an entire function 
$\tilde{F} : \mathbb{C} \rightarrow \mathbb{C}$
satisfying $\tilde{F}(0) = \tilde{F}'(0)-1=0$,
and a Loewner chain $\{ \tilde{g}_t \}_{t \in I}$ 
of univalent functions 
such that 
$\tilde{F} \circ \tilde{g}_t = f_t$, $t \in I$.

Then, as before,   
the family $\{ \tilde{g}_t \}_{t \in I}$ shares
the same transition family 
$\{ \omega_{s,t}\}_{(s,t) \in I_+^2}$ 
with $\{ g_t \}_{t \in I}$.
In particular, we have
$\omega_{s,t} = g_t^{-1} \circ g_s 
= \tilde{g}_t^{-1} \circ \tilde{g}_s$ for all $(s,t) \in I_+^2$.
Since $\tilde{F}(0)= \tilde{F}'(0)-1=0$,
we have $\tilde{g}_t'(0) = f_t'(0) = g_t'(0) = a(t)$, $t \in I$.
Therefore, applying 
Lemma~\ref{lemma:estimate-for-inverse-of-univalent-functions}
we obtain 
\[
 g_t (z) 
 = \lim_{\tau \nearrow \beta} a(\tau) \omega_{t,\tau}(z) 
 =
 \lim_{\tau \nearrow \beta}  a(\tau) \tilde{g}_\tau^{-1} (\tilde{g}_t(z))
 = \tilde{g}_t(z).
\]
This implies $\tilde{F} = F$.
\end{proof}

\section{The Loewner Range}
\begin{definition}
Let $\{ f_t \}_{t \in I}$ be a Loewner chain on a right-open interval $I$ 
with $\beta = \sup I \not\in I$.
Then the domain $\Omega_\beta = \bigcup_{t \in I} f_t(\mathbb{D}) $
 is called the \textit{Loewner range} of $\{ f_t \}_{t \in I}$.
\end{definition}

We study the relation between $\Omega_\beta$ and $a(\beta)$.
In the case where each $f_t$, we have the following equivalence.
\begin{proposition}
\label{proposition:Omega_beta_coincides_withC}
Let $\{ f_t \}_{t \in I}$ be a Loewner chain
of univalent functions on a right-open interval $I$ 
with $\beta = \sup I \not\in I$,
and let $\Omega_\beta$ denote 
the Loewner range of $\{ f_t \}_{t \in I}$.
Then 
$a(\beta) = \lim_{t \nearrow \beta} f_t'(0) = \infty$ 
if and only if $\Omega_\beta = \mathbb{C}$.
\end{proposition}
\begin{proof}
For each $t \in I$, define 
\[
  \rho_1 (t) 
 = \sup \{ r > 0 : \mathbb{D}(0,r) \subset f_t( \mathbb{D})\} .
\]
Then the proposition easily follows from 
the inequality
\[
   \frac{a(t)}{4} \leq \rho_1 (t) \leq a(t),
\] 
which is is a consequence
of the Koebe one-quarter theorem 
and the Schwarz lemma.
\end{proof}

In the general case, we have the following result.
Here, we temporarily use the notion of universal covering maps
which is systematically treated in 
Chapter~ \ref{chapter:KernelAndCovering}.

\begin{theorem}
Let $\{ f_t \}_{t \in I}$ be 
a Loewner chain on a right-open interval $I$
with 
$\beta = \sup I \not\in I$,
and let 
$\Omega_\beta$ denote the Loewner range of 
$\{ f_t \}_{t \in I}$.
If ${}^\# (\mathbb{C} \backslash \Omega_\beta ) \geq 2$,
then $a(\beta) < \infty$.
\end{theorem}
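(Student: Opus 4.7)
The plan is to exhibit a uniform upper bound for $a(t) = f_t'(0)$, $t \in I$, coming from the hyperbolicity of the Loewner range $\Omega$.

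Since $\mathbb{C} \setminus \Omega$ contains at least two points, $\Omega$ is hyperbolic, so by the Koebe uniformization theorem there exists an analytic universal covering map $F : \mathbb{D} \to \Omega$. Because $0 = f_t(0) \in \Omega$ for every $t \in I$, we may pick a preimage $z_0 \in \mathbb{D}$ of $0$ under the original covering and precompose with an automorphism of $\mathbb{D}$ sending $0$ to $z_0$, followed by a rotation, so that we may assume $F(0) = 0$ and $F'(0) > 0$.

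Fix $t \in I$. Since $f_t(\mathbb{D}) \subset \Omega$ and $\mathbb{D}$ is simply connected, the general lifting lemma (used earlier in the excerpt to establish the equivalence \eqref{inclusion-subordination}) provides a continuous lift $h_t : \mathbb{D} \to \mathbb{D}$ of $f_t$ with respect to the covering $F$ such that $h_t(0) = 0$ and $f_t = F \circ h_t$. Since $f_t$ is analytic and $F$ is analytic and locally univalent, $h_t$ is analytic; moreover $h_t(\mathbb{D}) \subset \mathbb{D}$ and $h_t(0) = 0$, so $h_t \in \mathfrak{B}$. Differentiating $f_t = F \circ h_t$ at $0$ gives
\[
 a(t) = f_t'(0) = F'(0)\, h_t'(0) .
\]
As $f_t'(0) > 0$ and $F'(0) > 0$, we have $h_t'(0) > 0$, and the Schwarz lemma forces $h_t'(0) \leq 1$. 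Therefore
\[
 a(t) \leq F'(0) \quad \text{for all } t \in I ,
\]
and letting $t \uparrow \beta$ yields $a(\beta) \leq F'(0) < \infty$.

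No step is really an obstacle here; the only point that deserves a brief check is that the lifting $h_t$ is automatically analytic, which follows because $F$ is a local biholomorphism and $f_t$ is analytic, so locally $h_t = F^{-1} \circ f_t$.
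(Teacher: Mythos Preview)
Your proof is correct and follows essentially the same approach as the paper: lift each $f_t$ through a universal covering map $F:\mathbb{D}\to\Omega_0$ of a hyperbolic domain $\Omega_0$ containing every $f_t(\mathbb{D})$, then apply the Schwarz lemma to the lift to get $a(t)\le F'(0)$. The only difference is cosmetic: the paper takes $\Omega_0=\mathbb{C}\setminus\{w_0,w_1\}$ for two points $w_0,w_1\in\mathbb{C}\setminus\Omega$, whereas you take $\Omega_0=\Omega$ itself (which is hyperbolic since $\infty\notin\Omega$ gives a third omitted point in $\hat{\mathbb{C}}$).
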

\begin{proof}
Take  two distinct points 
$w_0,w_1 \in (\mathbb{C} \backslash \Omega_\beta)$,
and let 
$F: \mathbb{D} \rightarrow \mathbb{C} \backslash \{ w_0,w_1\}$ 
be the unique universal covering map with $F(0)=0$ and $F'(0)> 0$.
Then for each $t \in I$,
the map 
$f_t : \mathbb{D} \rightarrow \mathbb{C} \backslash \{ w_0,w_1\}$ 
admits a unique analytic lift 
$\phi_t: \mathbb{D} \rightarrow \mathbb{D}$
such that $F  \circ \phi_t = f_t$ and $\phi_t (0)=0$.
By the Schwarz lemma, we have 
\[
  a(t) = f_t'(0) \leq F'(\phi_t(0)) \phi'(t) \leq F'(0) ,
 \quad t \in I, 
\]
and hence $a(\beta) \leq F'(0) < \infty$.
\end{proof}

From the above theorem, 
it follows that if $a (\beta) = \infty$,
either $\Omega_\beta = \mathbb{C}$ or 
$\Omega_\beta = \mathbb{C} \backslash \{ w _0 \}$ for 
some $w_0 \in \mathbb{C} \backslash \{ 0 \}$.
We now provide examples of Loewner chains
corresponding to both cases.

Let $\{f_t \}_{t \in I}$
be a Loewner chain of univalent function
on right-open interval $I$
with $a(\beta) = \infty$.
Then by Proposition \ref{proposition:Omega_beta_coincides_withC}
the Loewner range of $\{f_t \}_{t \in I}$
clearly coincides with $\mathbb{C}$. 
This implies the Loewner chain $\{ \exp (f_t )  -1 \}_{t \in I}$
has $\mathbb{C} \backslash \{-1\}$ as its Loewner range.

The converse of the above theorem  does not hold.
Indeed, let  
$F : \mathbb{D} \rightarrow \mathbb{C}$ be 
any surjective analytic map satisfying
$F(0)=0$ and $F'(0) > 0$,
and define $f_t (z) = F(tz)$ for $0<t<1$.
Then the family 
$\{ f_t \}_{0 < t <1}$ forms a strictly increasing 
and continuous Loewner 
chain with $\lim_{t \nearrow 1} f_t '(t) = F'(0) < \infty $, 
and its Loewner range coincides with  
$F(\mathbb{D}) = \mathbb{C}$.

\chapter{Loewner--Kufarev Differential Equations}
\label{chapter:Loewner-Kufarev}

Let $\{ f_t \}_{t \in I}$ be 
a  Loewner chain on an interval $I$, 
which is strictly increasing and continuous.
Define $a_t = f_t'(0)$ for $t \in I$, 
and let $\{ \omega_{s,t} \}_{(s,t) \in I_+^2}$ denote
the associated transition family.

Intuitively, the inequality~(\ref{ineq:Lipshctz-cont-f_t})
suggests that for a fixed $z \in \mathbb{D}$,
the function $f_t(z)$, viewed as a function of $t \in I$, 
is Lipschitz continuous with respect to $a(t)$,
which is strictly increasing and continuous.
Moreover, 
using the identity $a_{t_1,t_2} = a_{t_0,t_2}/a_{t_0,t_1}$
together with the estimate (\ref{ineq:omega_isLipschitz_wrt_a}),
we find that for fixed $z \in \mathbb{D}$ and $t_0 \in I$, 
the function $\omega_{t_0,t}(z)$, defined for 
$t \in I \cap [t_0, \infty )$, 
is also Lipschitz continuous with respect to $a_{t_0,t}$.

Note that Lipschitz continuity implies
absolute continuity and almost everywhere differentiability.

At this point, two strategies are available:
one is to reparameterize the family, 
and the other is to retain the original parameter and 
differentiate directly with respect to $a(t)$. 
Following the former approach, Pommerenke~\cite{Pommerenke1965} 
derived a differential equation by 
reparameterizing so that $a(t)=e^t$, 
and then differentiating with respect to $t$.
See~\cite{Rosenblum-Rovnyak} 
for an alternative reparameterization.

However, in practice, it is seldom possible to compute
$a(t)=f_t'(0)$ explicitly. 
Therefore, we adopt the second approach:
we investigate the partial derivatives of $f_t(z)$ and
$\omega_{t_0,t}(z)$ with respect to $a(t)$ and $a_{t_0,t}$, 
respectively.

\section{Preliminaries on Lebesgue--Stieltjes Measures}
Here we summarize basic results related to
the Fundamental Theorem of Calculus 
with respect to a strictly  increasing and continuous function.  
For terminology and further details, see~\cite{Folland}.

Let $\alpha, \beta \in \mathbb{R}$ with
$\alpha < \beta$,  
and let $\psi :[\alpha, \beta] \rightarrow \mathbb{R}$
be a continuous and strictly increasing function. 
Let $\mathcal{E} ([\alpha , \beta ])$ be the collection of 
all intervals of the form
$(a,b]$ or $[\alpha , a]$ or $\emptyset$,
where $\alpha \leq a < b \leq \beta$.
Then the class $\mathcal{A} ([\alpha , \beta ])$, consisting of 
all finite disjoint unions of elements of 
$\mathcal{E}([\alpha , \beta ])$,
forms an algebra; that is, 
if $E, F \in \mathcal{A}([\alpha , \beta ])$, 
then $E \cup F \in \mathcal{A}([\alpha , \beta ])$,
and if $E \in \mathcal{A}([\alpha , \beta ])$, 
then 
$E^c := [\alpha, \beta ] \backslash E 
\in \mathcal{A}([\alpha , \beta ])$.

If $E_j \in \mathcal{E}([\alpha , \beta ])$ 
for $j=1,\ldots , n$ are disjoint intervals 
with $a_j = \inf E_j $ and $b_j = \sup E_j $,
define
\begin{equation}
\label{eq:mu_on_A}
   \tilde{\mu}_\psi \left( \bigcup_{j=1}^n E_j \right)
   = \sum_{j=1}^n \left( \psi (b_j) - \psi (a_j ) \right) , 
\end{equation}
and set $\tilde{\mu}_\psi ( \emptyset ) = 0$.

Although an element of $\mathcal{A} ([\alpha , \beta ])$ 
can be expressed in more than one way 
as a disjoint union of intervals 
in $\mathcal{E} ([\alpha , \beta ])$,
the set function 
$\tilde{\mu}_\psi: \mathcal{A}([\alpha , \beta ]) 
\rightarrow [0, \infty ) $ 
is well defined and constitutes a premeasure on the algebra 
$\mathcal{A}([\alpha , \beta ])$.

Let $\mathcal{P}([\alpha, \beta ])$ denote the collection 
of all subsets of $[\alpha ,  \beta ]$.
Define an outer measure 
$\mu_\psi^* : \mathcal{P} ([\alpha, \beta]) \rightarrow [0, \infty)$
by 
\begin{equation}
\label{def_eq:mu_psi_outer_measure}
   \mu_\psi^* (A) 
   = \inf \left\{ \sum_{j=1}^\infty \tilde{\mu}_\psi (E_j) : E_j 
  \in \mathcal{A}([\alpha, \beta]) , \quad  
   A \subset \bigcup_{j=1}^\infty E_j \right\} . 
\end{equation}

Let $\mathcal{M}_\psi ([\alpha , \beta ])$ be the collection of 
all sets $E$ that satisfy the Carath\'{e}odory condition:
\begin{equation}
\label{eq:Caratheodory_condition}
 \mu_\psi^*(A) 
 \geq
 \mu_\psi^*(A \cap E)  + \mu_\psi^*(A \backslash  E)  
\quad \text{for all} \quad A \in \mathcal{P} ([\alpha, \beta]). 
\end{equation}
Then $\mathcal{M}_\psi ([\alpha , \beta ])$ forms 
a $\sigma$-algebra
containing $\mathcal{A}([\alpha , \beta ])$,
and the restriction 
$\mu_\psi := \mu_\psi^* |_{\mathcal{M}_\psi([\alpha , \beta ])}$
defines a measure on $\mathcal{M}_\psi ([\alpha , \beta ])$.

The measure space
$([\alpha, \beta], \mathcal{M}_\psi ([\alpha , \beta ]) , \mu_\psi)$
is complete.
The measure $\mu_\psi$ is called the Lebesgue--Stieltjes measure 
associated with $\psi$,  
and each set $E$ in $\mathcal{M}_\psi ([\alpha , \beta ])$ is 
referred to as $\mu_\psi$-measurable.

Since the smallest $\sigma$-algebra 
containing $\mathcal{A}([\alpha , \beta ])$ 
coincides with the Borel $\sigma$-algebra,
it follows that
$\mathcal{B}([\alpha, \beta]) 
\subset \mathcal{M}_\psi ([\alpha , \beta ])$,
and 
\[
  \mu_\psi^* (E) = \mu_\psi (E)= \tilde{\mu}_\psi (E)
  \quad \text{for all} \quad E \in \mathcal{A}([\alpha , \beta ]).
\]

In particular, 
the Lebesgue--Stieltjes measure associated with
the function $\psi (t) = t$ for $\alpha \leq t \leq \beta$ 
is simply the Lebesgue measure.
We denote its corresponding outer measure, 
$\sigma$-algebra and measure by 
$\mu_1^*$, $\mathcal{M}_1([\alpha , \beta ])$ and $\mu_1$, 
respectively.

Since $\psi$ is a homeomorphism from $[\alpha , \beta]$
onto $[\psi (\alpha), \psi (\beta)]$, 
the following lemma follows directly from 
(\ref{eq:mu_on_A}) and (\ref{def_eq:mu_psi_outer_measure}).
\begin{lemma}
\label{lemma:image_of_psi_outer_measure_is_the_Lebesgue_outer_measure}
Suppose that  $\psi $ is strictly increasing and continuous 
on $[\alpha, \beta]$. 
Then 
\[
   \mu_\psi^* (A) = \mu_1^* (\psi (A))
\]
for every subset $A$ of $[\alpha, \beta]$,
where $\mu_1^*$ is the Lebesgue outer measure 
on $[\psi (\alpha), \psi (\beta)]$.
\end{lemma}

The following proposition is 
an immediate consequence of 
Lemma~\ref{lemma:image_of_psi_outer_measure_is_the_Lebesgue_outer_measure}.
\begin{proposition}
\label{proposition:image_of_psi_measure_is_the_Lebesgue_measure}
Suppose that  $\psi $ is strictly increasing and continuous 
on $[\alpha, \beta]$. 
Then 
a subset $E \subset [\alpha, \beta]$ 
is $\mu_\psi$-measurable (resp. Borel measurable) 
if and only if $\psi(E)$ is Lebesgue 
measurable (resp. Borel measurable) 
in $[\psi (\alpha), \psi (\beta)]$.
In either case,  
we have 
\[
  \mu_\psi (E) = \mu_1 ( \psi (E)) .	    
\]
Furthermore, a function 
$h:[\alpha, \beta] \rightarrow [- \infty, \infty]$
is $\mu_\psi$-integrable if and only if
$h \circ \psi^{-1}$ is 
Lebesgue integrable, 
and for any $\mu_\psi$-measurable set $A \subset [\alpha. \beta ]$
\[
 \int_A h(t) \, d \mu_\psi (t) 
 =
 \int_{\psi (A)} 
 h(\psi^{-1}(s)) \, d \mu_1 (s) .
\]
\end{proposition}
\begin{proof}
Since $\psi :[\alpha, \beta ] \rightarrow [\psi(\alpha), \psi(\beta) ]$
is a bijection, 
it follows from the Carath\'{e}odory condition
\eqref{eq:Caratheodory_condition} and 
Lemma~\ref{lemma:image_of_psi_outer_measure_is_the_Lebesgue_outer_measure}
that for any $E \in \mathcal{P}([\alpha, \beta ])$,
we have
\begin{align*}
  & E \in \mathcal{M}_\psi([\alpha, \beta]) \\
  \Longleftrightarrow \quad &
  \mu_\psi^*(A) \geq \mu_\psi^*(A \cap E) + \mu_\psi^*(A \setminus E)
  \quad \text{for all } A \in \mathcal{P}([\alpha, \beta]) \\
  \Longleftrightarrow \quad &
  \mu_1^*(\psi(A)) \geq \mu_1^*(\psi(A) \cap \psi(E)) + 
  \mu_1^*(\psi(A) \setminus \psi(E)) 
  \quad \text{for all } A \in \mathcal{P}([\alpha, \beta]) \\
  \Longleftrightarrow \quad &
  \mu_1^*(B) \geq \mu_1^*(B \cap \psi(E)) + \mu_1^*(B \setminus \psi(E))
  \quad \text{for all } B \in \mathcal{P}([\psi(\alpha), \psi(\beta)]) \\
  \Longleftrightarrow \quad &
  \psi(E) \in \mathcal{M}_1([\psi(\alpha), \psi(\beta)]).
\end{align*}

Next, let $\mathcal{S}$ be the collection 
of all subsets $A  \subset [\alpha, \beta]$
such that $\psi(A) \in \mathcal{B}([\psi(\alpha), \psi(\beta)])$.
Since $\psi$ is a bijection, it is easy to verify that
$\mathcal{S}$ is a $\sigma$-algebra on $[\alpha, \beta]$.
Moreover, because $\psi$ is an open map,
$\mathcal{S}$ contains all open subsets of 
$[\alpha, \beta]$.
Hence,
we have $\mathcal{B}([\alpha, \beta]) \subset \mathcal{S}$,
which implies that
if $A \in \mathcal{B}([\alpha, \beta])$,
then $\psi(A) \in \mathcal{B}([\psi (\alpha), \psi (\beta) ])$.

Similarly, since $\psi$ is a homeomorphism,
we also have that 
if $B \in \mathcal{B}([\psi (\alpha), \psi (\beta)])$,
then $\psi^{-1}(B) \in \mathcal{B}([\alpha, \beta ])$.

The second statement follows immediately from 
the first by the definition of the integral 
with respect to a measure.
\end{proof}

\section{The Fundamental Theorem of Calculus %
for Lebesgue--Stieltjes Measures}
For a function $u: [\alpha, \beta] \rightarrow \mathbb{R}$, 
the upper and lower $\psi$-derivatives of $u$ at $t$
are defined, respectively, by 
\[
  D_\psi^+ u(t) 
   = 
   \limsup_{\substack{\scriptscriptstyle t_1 \leq t \leq t_2 \\%
\scriptscriptstyle  t_2 - t_1 \searrow 0}} 
   \frac{u(t_2)-u(t_1)}{\psi(t_2)-\psi(t_1)},
\quad
   D_\psi^- u (t) 
   = \liminf_{\substack{\scriptscriptstyle t_1 \leq t \leq t_2 \\
\scriptscriptstyle  t_2 - t_1 \searrow 0}}
      \frac{u(t_2)-u(t_1)}{\psi(t_2)-\psi(t_1)} .
\]
It is a straightforward to verify that
\[
   D_\psi^+ u(t) 
   = \limsup_{\Delta t \rightarrow 0} 
   \frac{u(t+\Delta t)-u(t)}{\psi(t+\Delta t)-\psi(t)},
\quad
   D_\psi^- u(t)  
   = \liminf_{\Delta t \rightarrow 0} 
   \frac{u(t+\Delta t)-u(t)}{\psi(t+\Delta t)-\psi(t)}.
\]
If the upper and lower $\psi$-derivatives of $u$ at $t$
are both finite and equal,
we say that $u$ is $\psi$-differentiable at $t$. 
Their common value is denoted by $D_\psi u (t)$ and
is called the $\psi$-derivative of $u$ at $t$.
Clearly, $u$ is 
$\psi$-differentiable at $t$ if and only if 
the limit 
\[
\lim_{\Delta t \rightarrow 0} 
\frac{u(t+\Delta t)-u(t)}{\psi(t+\Delta t)-\psi(t)}
\]
exists. 
In this case the limit agrees with $D_\psi u(t)$.

We denote the usual derivative 
(i.e., with respect to the identity function on $I$)
by $D$.
In particular,
if both $u$ and $\psi$ are  differentiable at $t$
and $D \psi (t) \, (= \psi'(t)) \not= 0$, then
\begin{equation}
   D_\psi u (t)
   =
   \frac{D u (t)}{D \psi (t)} . 
\end{equation}

We say that a function 
$u : [\alpha, \beta] \rightarrow \mathbb{R}$ 
is absolutely $\psi$-continuous if, for each $\varepsilon > 0$,
there exists $\delta > 0$ such that
for any finite collection of disjoint intervals 
$(\alpha_1 , \beta_1), \ldots , (\alpha_n, \beta_n)$ 
in $[\alpha, \beta]$,
the implication
\begin{equation}
   \sum_{k=1}^n (\psi(\beta_k)-\psi(\alpha_k)) < \delta
    \quad \Longrightarrow \quad
    \sum_{k=1}^n |u(\beta_k)- u(\alpha_k)| < \varepsilon 
\end{equation}
holds.
Note that any absolutely $\psi$-continuous function is 
necessarily continuous. 
For a complex-valued function 
$h=u+iv: [\alpha, \beta] \rightarrow \mathbb{C}$
we say that $h$ is absolutely $\psi$-continuous 
(or $\psi$-differentiable) 
if both $u$ and $v$ are $\psi$-absolutely continuous 
(or $\psi$-differentiable, respectively).

\begin{proposition}[The Fundamental Theorem of Calculus 
for Lebesgue--Stieltjes Measures]
\label{proposition:fundamental-thm-of-calculus}
Suppose that a function $h$ is absolutely $\psi$-continuous 
on  $[\alpha, \beta]$.
Then, for $\mu_\psi$-almost every $t \in [\alpha, \beta]$,
the function $h$ is $\psi$-differentiable at $t$,
and $D_\psi h $ is $\mu_\psi$-integrable.
Furthermore, we have
\begin{equation}
\label{eq:fundamental_thorem_of_calculus_for_psi_measure}
   h(t) - h( \alpha ) = \int_{[\alpha ,t]} D_\psi h (\tau ) \, 
   d \mu_\psi (\tau ) , \quad 
 t \in [\alpha, \beta ] . 
\end{equation}
Conversely, if $k$ is a $\mu_\psi$-integrable function 
on $[\alpha, \beta]$ and 
\begin{equation}
   h(t) := \int_{[\alpha , t]} k (\tau ) \, d \mu_\psi (\tau ) , \quad 
 t \in [\alpha , \beta ] ,
\end{equation}
then $h$ is absolutely $\psi$-continuous on $[\alpha, \beta]$,
and $D_\psi h (t) = k(t)$ for $\mu_\psi$-almost every 
$t \in [\alpha, \beta]$.
\end{proposition}
\begin{proof}
Put $\tilde{\alpha}=\psi( \alpha )$ and 
$\tilde{\beta}= \psi( \beta )$.
Then, by definition, 
$h \circ \psi^{-1}$ is  absolutely continuous 
on $[\tilde{\alpha}, \tilde{\beta}]$ in the usual sense.
Therefore, there exists a set 
$\tilde{N} \subset [\tilde{\alpha}, \tilde{\beta}]$ 
of Lebesgue measure zero (which may be empty)
such that, for every 
$s \in [\tilde{\alpha}, \tilde{\beta}] \setminus \tilde{N}$, 
$h \circ \psi^{-1}$ is differentiable at $s$; that is,
the limit 
\[
    D (h \circ \psi^{-1})(s)
   := \lim_{\substack{\scriptscriptstyle s_1 \leq s \leq s_2 \\
\scriptscriptstyle  s_2 - s_1 \searrow 0}}
     \frac{h(\psi^{-1}(s_2)) - h(\psi^{-1}(s_1))}{s_2 - s_1} 
\]
exists.
Replacing $\tilde{N}$ with a larger set if necessary, 
we may assume 
that $\tilde{N}$ is 
a $G_\delta$ set.
Define $N = \psi^{-1}(\tilde{N})$.
Then, by 
Proposition~\ref{proposition:image_of_psi_measure_is_the_Lebesgue_measure},
we have $\mu_\psi (N) = \mu_1 (\tilde{N}) = 0$, 
and it is easy to verify that
\[
  D (h \circ \psi^{-1})( \psi (t)) = D_\psi h (t),
 \quad t \in [\alpha,\beta ] \setminus N .
\]
Note that both $D_\psi h $ and $D (h \circ \psi^{-1} )$ are 
Borel measurable on  $[\alpha, \beta] \backslash N$
and $[\tilde{\alpha}, \tilde{\beta}] \backslash \tilde{N}$,
respectively.
Since $h \circ \psi^{-1}$ is absolutely continuous,
$D (h \circ \psi^{-1})$ is integrable with respect to
the Lebesgue measure.
It follows from 
Proposition~\ref{proposition:image_of_psi_measure_is_the_Lebesgue_measure}
that $D_\psi h$ is integrable with respect to $\mu_\psi$ and 
\[
 \int_{A} D_\psi \, d \mu_\psi 
 = 
 \int_{\psi (A)} D (h \circ \psi^{-1}) \, d \mu_1 
\]
for any $\mu_\psi$-measurable set $A \subset [\alpha , \beta ]$.
Applying the fundamental theorem of calculus 
to an absolutely continuous function $h \circ \psi^{-1}$,
we obtain, for $s \in [\tilde{\alpha}, \tilde{\beta}]$,
\[
   h \circ \psi^{-1} (s) - h \circ \psi^{-1} (\tilde{\alpha})
 =
 \int_{[\tilde{\alpha},s]} D(h \circ \psi^{-1}) \, d \mu_1
 =
 \int_{[\alpha ,\psi^{-1}(s)]} (D_\psi h)  \, d \mu_\psi ,
\]
which implies \eqref{eq:fundamental_thorem_of_calculus_for_psi_measure}.
The converse follows similarly from the corresponding part 
of the fundamental theorem of calculus.
\end{proof}

\section{Loewner--Kufarev Equations for Transition Families}
Let $\{ \omega_{s,t}\}_{(s,t) \in I_+^2}$ be a transition family. 
We write $\omega(z,s,t)$ instead of $\omega_{s,t}(z)$
for $(s,t) \in I_+^2$ and $z \in \mathbb{D}$,
and define
\begin{align}
   \frac{\partial \omega}{\partial \psi (t)} (z,t) 
   = & \, \2sidelim 
   \frac{\omega(z,t_1,t_2)-z}{\psi (t_2)- \psi (t_1)} , \quad
    t \in I,
\\
    \frac{\partial \omega }{\partial \psi (t)} (z,t, t_0) 
    = & \, \2sidelim 
    \frac{\omega(z,t_2,t_0) - \omega(z,t_1,t_0)}{\psi (t_2)- \psi (t_1)}, 
   \quad
    \text{for }  t< t_0 , 
\\
    \frac{\partial \omega }{\partial \psi (t)} (z,t_0,t)   
    = & \, \2sidelim 
    \frac{\omega(z,t_0,t_2) - \omega(z,t_0,t_1)}{\psi (t_2)- \psi (t_1)},
  \quad
    \text{for } t > t_0 .
\end{align}

\begin{theorem}
\label{theorem:a-derivative-of-omega}
Let $\{ \omega(\cdot, s,t ) \}_{(s,t) \in I_+^2}$ be 
a strictly monotone and continuous transition family
and let $a(t)$, $t \in I$ be a strictly increasing 
and positive function 
defined by (\ref{eq:defi-of-at}) for some $c>0$.
Then there exists a $G_\delta$ set $N (\subset I )$ of 
$\mu_a$-measure zero and a  
Herglotz family $\{ P(\cdot , t) \}_{t \in I}$ 
such that $P(z,t)$ is  Borel measurable on $\mathbb{D} \times I$,
and such that
for each 
$t \in I \backslash N$,
\begin{align}
\label{eq:differentail-wrt-a}
    \2sidelim 
    \frac{\frac{\omega(z,t_1,t_2)}{z} - 1}{\frac{a(t_1)}{a(t_2)} -1 }
    = 
    - \frac{a(t)}{z} 
    \frac{\partial \omega}{\partial a(t)} (z,t) 
   = P(z,t), \; z \in \mathbb{D}
\end{align}
and the convergence is locally uniform on $\mathbb{D}$.
Furthermore, 
for each fixed $t_0 \in I$ and $z \in \mathbb{D}$,
\begin{align}
\label{eq:first-a-derivative}
    \frac{\partial \omega }{\partial a(t)} (z,t, t_0) 
    = & \, 
    \frac{zP(z,t)}{a(t)}
    \omega'(z,t,t_0)
\end{align}
for $t \in (I \cap (-\infty, t_0))\backslash N$  and
\begin{align}
\label{eq:second-a-derivative}
    \frac{\partial \omega }{\partial a(t)} (z,t_0,t)   
    = & \, - \frac{\omega(z,t_0,t)}{a(t)} P(\omega(z,t_0,t),t)
\end{align}
for $t \in (I \cap (t_0, \infty )) \setminus N$.
In either case, the convergence is locally uniform on $\mathbb{D}$.
\end{theorem}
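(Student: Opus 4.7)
The plan is to verify three ingredients and then assemble. \emph{Step A (Herglotz property).} For each $(s,t) \in I_+^2$ with $s < t$, I would first establish that
$$P_{s,t}(z) := \frac{\omega(z,s,t)/z - 1}{a(s)/a(t) - 1}$$
is a Herglotz function on $\mathbb{D}$. Analyticity and the normalization $P_{s,t}(0) = 1$ are immediate from $\omega'(0,s,t) = a_{s,t}$. The Schwarz--Pick-type inequality (\ref{ineq:1st-inequality-by-Schwarz-Pick}) lets me solve for $\omega(z,s,t)/z$ as a M\"obius function of some $z\psi(z)$ with $\psi$ in the closed unit ball of $\mathfrak{B}$, giving $P_{s,t}(z) = (1 - z\psi(z))/(1 + a_{s,t}\, z\psi(z))$; a short boundary computation on $|w| = 1$ shows $\Real\,(1-w)/(1+a_{s,t}w) \ge 0$, so $\Real P_{s,t} > 0$ in $\mathbb{D}$ by the maximum principle. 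Consequently $\{P_{s,t}\}$ is locally uniformly bounded and hence normal. \emph{Step B (absolute $a$-continuity).} From (\ref{ineq:omega_isLipschitz_wrt_a}) combined with $1 - a_{t_1, t_2} = (a(t_2) - a(t_1))/a(t_2)$, the map $t \mapsto \omega(z, s_0, t)$ is absolutely $a$-continuous on compact subintervals of $I \cap [s_0, \infty)$, uniformly for $z$ in compact subsets of $\mathbb{D}$. Lemma \ref{lemma:fundamental-thm-of-calculus} then gives the $a$-derivative $\mu_a$-a.e. Taking a countable dense set $I^* \subset I$, uniting the exceptional $\mu_a$-null sets as $s_0$ varies over $I^*$, and enlarging via outer regularity of the Radon measure $\mu_a$ yields a $G_\delta$ set $N$ of $\mu_a$-measure zero.

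\emph{Step C (the main step: existence and local uniformity of $P(\cdot, t)$).} Fix $t \in I \setminus N$ and choose $s_0 \in I^*$ with $s_0 < t$; set $W(z,\tau) := \omega(z, s_0, \tau)$. The semigroup identity $\omega(z, s_0, t_2) = \omega(W(z, t_1), t_1, t_2)$ for $s_0 \le t_1 \le t_2$ yields the key algebraic relation
$$\frac{W(z, t_2) - W(z, t_1)}{a(t_2) - a(t_1)} = -\frac{W(z, t_1)}{a(t_2)}\, P_{t_1, t_2}\!\bigl(W(z, t_1)\bigr).$$
Absolute $a$-continuity of $W(z, \cdot)$ implies strong $a$-differentiability at $\mu_a$-a.e. point (via Lebesgue differentiation applied to $W \circ a^{-1}$), so the left side has a two-sided limit as $(t_1, t_2) \to (t, t)$ with $t_1 \le t \le t_2$. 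Dividing out forces $P_{t_1, t_2}(W(z, t_1))$ to converge; normality of $\{P_{t_1, t_2}\}$ gives a local bound on $P_{t_1, t_2}'$ by Cauchy estimates, so the replacement $W(z, t_1) \to W(z, t)$ costs nothing in the limit, and $P_{t_1, t_2}$ converges at the point $W(z, t)$. Letting $z$ run through a countable dense subset of $\mathbb{D}$ produces pointwise convergence of $P_{t_1, t_2}$ on a dense subset of the open set $W(\mathbb{D}, t) \ni 0$, and the Vitali convergence lemma upgrades this to locally uniform convergence on $\mathbb{D}$. The limit $P(\cdot, t)$ is Herglotz, and by uniqueness of analytic extensions does not depend on $s_0$. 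This is the step I expect to be the main obstacle: passing from the one-parameter differentiability of $W$ to a genuine two-parameter limit for the full family $\{P_{t_1, t_2}\}$ requires care to combine strong $a$-differentiability, normality, and Vitali.

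\emph{Step D (formulas and measurability).} Equation (\ref{eq:differentail-wrt-a}) and the identity $\partial_{a(t)}\omega(z,t) = -(z/a(t))P(z,t)$ now follow from the tautology $(\omega(z,t_1,t_2)-z)/(a(t_2)-a(t_1)) = -(z/a(t_2))P_{t_1,t_2}(z)$ together with Step C. For (\ref{eq:second-a-derivative}), I would plug $\omega(z, t_0, \tau) = \omega(w, t, \tau)$ with $w = \omega(z, t_0, t)$ for $\tau \ge t$ (and the reverse semigroup identity for $\tau \le t$) into the difference quotient, isolating $P_{t,\tau}(w)$ or $P_{\tau,t}(w)$, whose common limit is $P(w,t)$. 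Equation (\ref{eq:first-a-derivative}) follows by the same semigroup decomposition combined with a first-order Taylor expansion of the analytic function $\omega(\cdot, t, t_0)$ about $z$ to linearize $\omega(\omega(z,\tau,t), t, t_0) - \omega(z, t, t_0)$, and then invoking the convergence of $P_{\tau,t}(z)$. Borel measurability of $P$ on $\mathbb{D} \times I$ is obtained by writing $P(z,t)$ on $I \setminus N$ as a pointwise limit along rational sequences of the jointly continuous functions $P_{t_1,t_2}(z)$, extended by the constant $1$ on $N$; joint measurability in $(z,t)$ follows since $P$ is analytic in $z$ for each fixed $t$.
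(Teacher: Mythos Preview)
Your proposal is essentially correct and follows the same route as the paper: the Lipschitz estimate (\ref{ineq:omega_isLipschitz_wrt_a}) gives absolute $a$-continuity, Lemma~\ref{lemma:fundamental-thm-of-calculus} yields $a$-differentiability off a null set, the semigroup identity transfers this to convergence of the generating quotients, and Vitali upgrades pointwise to locally uniform convergence. One cosmetic difference is that you establish the Herglotz property of the finite quotients $P_{s,t}$ at the outset (Step~A) and use it for normality, whereas the paper works with the raw bounds from Proposition~\ref{thm:fundamental_inequalities} and only extracts the Herglotz property of the \emph{limit} at the end (its Steps~4--5) via the Schwarz--Pick quotient $\Phi$. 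Either ordering works.

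There is one small but genuine gap in Step~B. Lemma~\ref{lemma:fundamental-thm-of-calculus} applied to $t\mapsto \omega(z,s_0,t)$ produces an exceptional $\mu_a$-null set that depends on \emph{both} $s_0$ and $z$. You build $N$ by uniting only over $s_0\in I^*$, but then in Step~C you fix $t\in I\setminus N$ first and afterwards let $z$ range over a countable dense set---which requires that $t$ already lie outside the exceptional set for every such $z$. The fix is exactly what the paper does in its Step~1: choose in advance a countable sequence $\{z_j\}\subset\mathbb{D}$ accumulating at $0$, form the null sets $N_{j,k}$ for each pair $(z_j,s_k)$, and set $N\supset\bigcup_{j,k}N_{j,k}$. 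Then for $t\notin N$ the difference quotient of $W(z_j,\cdot)$ converges for every $j$, and since $\{W(z_j,t)\}_j$ accumulates at $0\in\mathbb{D}$, Vitali applies. With this correction your argument is complete and matches the paper's.
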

\begin{proof}
Without loss of generality, we may assume that
$I = [\alpha, \beta]$ with $- \infty <\alpha < \beta < \infty $.

\noindent\textbf{Step 1.}
First we show that there exists 
a $G_\delta$ set $N \subset [\alpha, \beta]$ of
$\mu_a$ measure zero such that 
for every $t \in I \setminus N$,
the limit
\[
  \frac{\partial \omega}{\partial a(t)} (z, \alpha , t)
 = 
 \2sidelim 
 \frac{\omega (z,\alpha , t_2)- \omega (z,\alpha , t_1)}{a(t_2)-a(t_1)},
 \quad z \in \mathbb{D}
\]
exists, and the convergence is locally uniform on $\mathbb{D}$.

To see this, choose a sequence of distinct points 
$\{ z_j\}_{j=1}^\infty \subset \mathbb{D}$ with $z_j \to 0$.
By~(\ref{ineq:omega_isLipschitz_wrt_a}), 
we have, 
for $\alpha \leq t_1 \leq t_2 \leq \beta$,
\begin{equation}
\label{ineq:Lipschitz-continuity-and-boundedeness}
   | \omega(z_j,\alpha ,t_2) - \omega(z_j, \alpha ,t_1)| 
  \leq 
   \frac{a(t_2)-a(t_1)}{a(t_2)} \frac{|z_j|(1+|z_j|)}{1-|z_j|} . 
\end{equation}
It follows that for each fixed $j \in \mathbb{N}$, 
as a function of $t$, $\omega(z_j, \alpha , t)$ is 
Lipschitz continuous with respect to $a(t)$,
and hence absolutely $a$-continuous on $[\alpha, \beta ]$.
Thus, there exists a set $N_j \subset I$
of $\mu_a$-measure zero such that for 
$t \in I \backslash N_j$
the limit 
\begin{equation}
\label{eq:limit_exists_at_z_j}
    \2sidelim 
    \frac{\omega(z_j, \alpha ,t_2) - \omega(z_j,\alpha ,t_1)}
    {a(t_2)-a(t_1)}  
\end{equation}
exists.
Now choose a $G_\delta$-set $N$ of $\mu_a$-measure zero
such that $\bigcup_{j=1}^\infty N_j  \subset N$.
Fix $t \in I \setminus N$.
For  $t_1,t_2 \in I$ with 
$t_1 \leq t \leq t_2$
and $t_1 < t_2$, 
define
\[
    g (z, t_1,t_2)
 =  \frac{ \omega(z,\alpha ,t_2) - \omega(z, \alpha ,t_1) }
    {a(t_2)-a(t_1)}, 
 \quad z \in \mathbb{D} .
\]
Then the family of analytic functions 
\[
 \mathcal{G} :=  \{ g(\cdot ,t_1,t_2) : t_1 , t_2 \in I  
 \text{ with $t_1 \leq t \leq t_2$ and $t_1 < t_2$} \} 
\] 
is locally uniformly bounded in $\mathbb{D}$.
Moreover, by~(\ref{eq:limit_exists_at_z_j}), 
the limit $\2sidelim g(z_j, t_1,t_2)$ exists 
for each $j \in \mathbb{N}$.
Therefore, by the Vitali--Porter convergence theorem
(Lemma~\ref{lemma:Vitali_for_family}),
for each fixed $t \in I \backslash N $, the limit
\[
  \2sidelim 
    \frac{ \omega(z, \alpha ,t_2) - \omega(z, \alpha ,t_1) }{a(t_2)-a(t_1)}
 =  \2sidelim g(z, t_1,t_2),
  \quad z \in \mathbb{D}
\]
exists, and the convergence is locally uniform on $\mathbb{D}$.

\noindent\textbf{Step 2.} 
Next, we show that for each $t \in I \backslash N$,
the limit
\begin{equation}
\label{eq:partailly_differentiable}
  \frac{\partial \omega }{\partial a(t)} (z, t)
 :=
 \2sidelim \frac{\omega (z, t_1 , t_2) - z}{a(t_2)-a(t_1)}, 
  \quad z \in \mathbb{D}  
\end{equation}
exists, and the convergence is locally uniform on $\mathbb{D}$.

We claim that for $t \in I \backslash N$ and $z \in \mathbb{D}$, 
\begin{equation}
\label{eq:lim_exists_in_a_subdomain}
    \2sidelim 
    \frac{\omega(\omega(z,\alpha ,t), t_1,t_2) - \omega(z, \alpha ,t)}
{a(t_2)-a(t_1)} 
    =
   \2sidelim 
    \frac{\omega(z, \alpha,t_2) - \omega(z, \alpha ,t_1)}{a(t_2)-a(t_1)} . 
\end{equation}

To see this, note that 
$\omega'(z,t_1,t_2) \rightarrow 1$ locally uniformly on $\mathbb{D}$ 
as $t_2-t_1 \searrow 0$ with $t_1 \leq t \leq t_2$.
Then, by (\ref{ineq:Lipschitz-continuity-and-boundedeness}) 
we have
\begin{align}
& \,
   \omega(\omega(z,\alpha ,t), t_1,t_2) - \omega(z, \alpha ,t)
   - \omega(z, \alpha ,t_2) + \omega(z, \alpha ,t_1)
\\
   = & \,
  \omega(\omega(z, \alpha ,t), t_1,t_2) - \omega(\omega(z, \alpha ,t_1), t_1,t_2)
   - \omega(z,\alpha ,t) + \omega(z,\alpha ,t_1) 
\nonumber
\\
   = & \,
   \left( \omega(z,\alpha,t) - \omega(z,\alpha,t_1) \right)
  \int_0^1 \left\{ \omega'(\gamma (\theta ), t_1,t_2) -1 \right\} \, 
 d \theta
\nonumber
\\
  = & \,  \text{\rm o}(a(t)-a(t_1)) =  \text{\rm o}(a(t_2)-a(t_1)) ,
\nonumber
\end{align}
where $\gamma (\theta )$ denotes 
the line segment joining
$\omega(z,\alpha ,t_1)$ and $\omega(z,\alpha,t)$, i.e., 
\[
    \gamma ( \theta )
 = (1-\theta ) \omega(z,\alpha ,t_1) + \theta \omega(z,\alpha,t),
 \quad
 0 \leq \theta \leq 1 .
\]

From (\ref{eq:lim_exists_in_a_subdomain}), 
it follows that the limit
\[
   \2sidelim 
    \frac{\omega(\zeta , t_1,t_2) - \zeta}{a(t_2) - a(t_1)}
\]
exists for every $\zeta \in \omega(\mathbb{D}, \alpha , t)$.
Since $\omega(\mathbb{D}, \alpha , t)$ is a nonempty domain,
it follows from (\ref{ineq:Lipschitz-continuity-and-boundedeness}) 
and the Vitali--Porter convergence theorem, as in Step~1,
that the above limit exists for all $\zeta \in \mathbb{D}$
and the convergence is locally uniform on $\mathbb{D}$.

\noindent\textbf{Step 3.} 
For $t \in I \setminus N$ and $z \in \mathbb{D}$, define
\begin{align}
\label{eq:generating-function-for-transition-family}
  P(z, t) 
  := & \,  
  \2sidelim 
    \frac{\frac{\omega(z,t_1,t_2)}{z} - 1}{\frac{a(t_1)}{a(t_2)} -1 }
\\
   = & \,  
   - \2sidelim 
    \frac{a(t_2)}{z} 
    \frac{\omega(z,t_1,t_2) - z}{a(t_2) - a(t_1) }
  = - \frac{a(t)}{z} \frac{\partial \omega}{\partial a(t)} (z, t) ,
\nonumber
\end{align}
and set $P(z,t) = 1$ for $t \in N$ and $z \in \mathbb{D}$.
Then $P(z,t)$ is Borel measurable on $\mathbb{D} \times I$,
and $P(0,t) = 1$, 
since $\omega (z,t_1,t_2) = \frac{a(t_1)}{a(t_2)}z+ \cdots$.

We show that $\Real P(z, t) > 0$ for $(z, t) \in \mathbb{D} \times I$.
To see this, let $(t_1,t_2) \in I_+^2$ with 
$t_1 \not= t_2$ and $z \in \mathbb{D}$. 
Define
\begin{align}
  \Phi (z,t_1,t_2) 
   = & \, \frac{ \frac{a(t_1)}{a(t_2)}- \frac{\omega(z,t_1,t_2)}{z} }
    {1 - \frac{a(t_1)}{a(t_2)} \frac{\omega(z,t_1,t_2)}{z} },
\\
  A(z,t_1,t_2) 
  = & \,
 1 - \frac{\frac{\omega(z,t_1,t_2)}{z} -1}{\frac{a(t_1)}{a(t_2)} -1} ,
\\
 B(z,t_1,t_2) 
 = & \, 1 + \frac{a(t_1)}{a(t_2)}
 \frac{\frac{\omega(z,t_1,t_2)}{z} -1}{\frac{a(t_1)}{a(t_2)}-1}.
\end{align}
Then we have
\[
   \Phi (z,t_1,t_2) 
 =
    - \frac{\frac{a(t_1)}{a(t_2)} -1 - \frac{\omega(z,t_1,t_2)}{z} +1 }
    {\frac{a(t_1)}{a(t_2)}-1 +  \frac{a(t_1)}{a(t_2)} 
   \left(\frac{\omega(z,t_1,t_2)}{z} -1 \right) }  
 =
 - \frac{A(z,t_1,t_2)}{B(z,t_1,t_2)} .
\]
By Schwarz's lemma, it follows that
\begin{equation}
\label{ineq:|A|-is-less-than-or-eq-|B|}
  |\Phi (z,t_1,t_2)| \leq |z| 
  \; \text{and} \; |A(z,t_1,t_2)| \leq |z| |B(z,t_1,t_2)|
  , \quad z \in \mathbb{D} . 
\end{equation}
For $t \in I \setminus N$, we have
\[
 \2sidelim A(z,t_1,t_2) = 1- P(z,t) ,
 \quad
 \2sidelim B(z,t_1,t_2) = 1+ P(z,t) 
\]
and both limits  converge locally uniform on $\mathbb{D}$.

Since $P(0,t) = 1$,
$1+P(z,t) \not= 0$ on a neighborhood $U$ of the origin.
Therefore, for each $z \in U$, the limit
$\2sidelim \Phi (z,t_1,t_2)$ exists 
and equals to $-(1-P(z,t))/(1+P(z,t))$.
Moreover, 
the family 
$\{ \Phi(\cdot ,t_1,t_2) \}_{\alpha \leq t_1 < t_2 \leq \beta }$
is locally uniformly bounded.
Hence, by the Vitali--Porter convergence theorem, 
\[
   \Phi (z,t)
  :=  \2sidelim \Phi (z,t_1,t_2) 
 =  - \2sidelim \frac{A(z,t_1,t_2)}{B(z,t_1,t_2)} 
 =  - \frac{1-P(z,t)}{1+P(z,t)} 
\]
exists for every $z \in \mathbb{D}$
and the convergence is locally uniform on 
$\mathbb{D}$.
Clearly, we have $|\Phi(z,t)| \leq |z|$.
It follows that
$\Real P(z,t) = \Real \left( \frac{1+\Phi(z,t)}{1-\Phi(z,t)} \right) > 0$
in $\mathbb{D}$.


\noindent\textbf{Step 4.} 
We show (\ref{eq:first-a-derivative}) 
and (\ref{eq:second-a-derivative}).
Let $t, t_0,t_1,t_2 \in I$ with $t_1 \leq t \leq  t_2 < t_0$
and $t_2 - t_1>0$.
Put
\[
 \gamma( \lambda ) = (1-\lambda )z + \lambda \omega(z,t_1,t_2 ) ,
 \quad 0 \leq \lambda \leq 1.
\]
Then for $t \in I \setminus N$
letting $t_2 -t_1 \searrow 0$ with $t_1 \leq t \leq t_2$,
we have by (\ref{eq:differentail-wrt-a})
\begin{align*}
 \frac{\omega(z,t_2,t_0)-\omega(z,t_1,t_0)}{a(t_2)-a(t_1)}
 = & \,
 \frac{\omega(z,t_2,t_0)-\omega(\omega(z,t_1,t_2), t_2, t_0)}{a(t_2)-a(t_1)}
\\
 = & \, - \frac{\omega(z,t_1,t_2)-z}{a(t_2)-a(t_1)}
 \int_0^1 \omega'(\gamma(\lambda ),t_2,t_0) \, d \lambda 
\\
 \rightarrow & \, 
 - \frac{\partial \omega}{\partial a(t)} (z,t) \omega'(z,t,t_0) 
 = \frac{z}{a(t)} P(z,t) \omega'(z,t,t_0) .
\end{align*}
This implies (\ref{eq:first-a-derivative}).

Let $t_0 \in I$ and $t \in I \setminus N$ with $t_0 < t$.
Since the convergence of (\ref{eq:differentail-wrt-a})
is locally uniform on $\mathbb{D}$, 
letting $t_2 -t_1 \searrow 0$ with $t_1\leq t \leq t_2$
we have
\begin{align*}
  \frac{\omega(z,t_0,t_2)- \omega(z,t_0,t_1)}{a(t_2)-a(t_1)}
 = & \, 
  \frac{\omega(\omega(z,t_0,t_1),t_1,t_2)- \omega(z,t_0,t_1)}{a(t_2)-a(t_1)}
\\
 \rightarrow & \,
 \frac{\partial \omega}{\partial a(t)}(\omega(z,t_0,t),t)
\\
 = & \, 
 - \frac{1}{a(t)} \omega(z,t_0,t) P(\omega(z,t_0,t),t),
\end{align*}
which shows (\ref{eq:second-a-derivative}).
By the Vitali--Porter convergence theorem 
the limits on the left hand sides of 
(\ref{eq:first-a-derivative}) and (\ref{eq:second-a-derivative})
converge locally uniformly on $\mathbb{D}$.
\end{proof}

\section{Loewner--Kufarev Equations for Loewner Chains}
\begin{theorem}
\label{thm:Loewner-DEs}
Let $\{ f_t \}_{t \in I}$ be a strictly  increasing 
and continuous Loewner chain with 
$a(t) = f_t'(0)$, $t \in I$.
Then there exist a $G_\delta$ set $N (\subset I)$ with
$\mu_a(N) = 0$ and 
a Borel measurable Herglotz family $\{ P(\cdot , t) \}_{t \in I}$
such that 
\begin{align}
\label{eq:Loewner-PDE}
  \frac{\partial f}{\partial a(t)}  (z,t)
  =& \, \frac{z}{a(t)} P(z,t) f'(z, t) , \quad
  z \in \mathbb{D} \text{ and } t \in I \backslash N.
\end{align}
\end{theorem}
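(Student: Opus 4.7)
The plan is to derive the PDE directly from the subordination relation $f(z,t_1) = f(\omega(z,t_1,t_2),t_2)$ for $(t_1,t_2)\in I_+^2$, combined with the ODE for $\omega$ established in Theorem~\ref{theorem:a-derivative-of-omega}. The set $N$ and the Herglotz family $\{P(\cdot,t)\}_{t\in I}$ will be precisely the ones produced in that theorem, so no new exceptional set needs to be constructed.

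The first step is to rewrite the difference quotient using the subordination relation. For $t_1,t_2 \in I$ with $t_1 \le t \le t_2$ and $t_2-t_1>0$, write
\begin{align*}
  \frac{f(z,t_2)-f(z,t_1)}{a(t_2)-a(t_1)}
  &= \frac{f(z,t_2) - f(\omega(z,t_1,t_2),t_2)}{a(t_2)-a(t_1)}\\
  &= \frac{z-\omega(z,t_1,t_2)}{a(t_2)-a(t_1)}\cdot\int_0^1 f'\bigl((1-\lambda)\omega(z,t_1,t_2)+\lambda z,\;t_2\bigr)\,d\lambda,
\end{align*}
where the second equality uses the fundamental theorem of calculus along the segment joining $\omega(z,t_1,t_2)$ to $z$, which lies in $\mathbb{D}$ by convexity.

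The second step is to pass to the limit as $t_2-t_1\downarrow 0$ with $t_1\le t\le t_2$, for $t\in I\setminus N$. By Theorem~\ref{theorem:a-derivative-of-omega} the first factor converges to $-\partial\omega/\partial a(t)(z,t) = (z/a(t))P(z,t)$, locally uniformly in $z\in\mathbb{D}$. For the integral factor, continuity of the Loewner chain together with the Cauchy integral formula for derivatives shows that $f'(\cdot,\tau)\to f'(\cdot,t)$ locally uniformly on $\mathbb{D}$ as $\tau\to t$; combined with $\omega(z,t_1,t_2)\to z$ (which follows from inequality~(\ref{ineq:distance-between-omega-and-id}) and continuity of $a$), the averaging integral converges to $f'(z,t)$, locally uniformly in $z$. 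Multiplying the two limits gives~(\ref{eq:Loewner-PDE}).

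The main obstacle, though modest, is ensuring the joint local uniformity of the two convergences so that the product converges to the product of the limits. This is handled by restricting $z$ to a closed disc $\overline{\mathbb{D}}(0,r)$: the segment $(1-\lambda)\omega(z,t_1,t_2)+\lambda z$ then stays in $\overline{\mathbb{D}}(0,r)$, on which $f'(\cdot,\tau)$ is a uniformly equicontinuous family in $\tau$ near $t$ (again by Cauchy's estimate applied inside $\mathbb{D}(0,(1+r)/2)$, where $|f_\tau|$ is uniformly bounded by the proof of Theorem~\ref{thm:omega-is-univalent-if-ft-is-continuous}). Thus the integrand converges uniformly to $f'(z,t)$ on $\overline{\mathbb{D}}(0,r)\times[0,1]$, and the product limit is legitimate. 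Measurability of the right-hand side in $t$ inherits directly from the Borel measurability of $P(z,t)$ supplied by Theorem~\ref{theorem:a-derivative-of-omega}, completing the proof.
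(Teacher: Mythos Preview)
Your proof is correct and essentially identical to the paper's own argument: both use the subordination identity $f(z,t_1)=f(\omega(z,t_1,t_2),t_2)$, rewrite the difference quotient via the line-segment integral of $f'(\cdot,t_2)$, and pass to the limit using Theorem~\ref{theorem:a-derivative-of-omega} for the first factor and continuity of $\{f_t\}$ for the second. You supply a bit more justification for the uniform convergence of the integral factor than the paper does, but the structure is the same.
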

\begin{proof}
Let $\{ \omega (\cdot ,s,t) \}_{(s,t) \in I_+^2}$
be the associated transition family to $\{ f_t \}_{t \in I}$, 
and let $N$ and $\{ P(\cdot , t) \}_{t \in I}$ as in 
Theorem \ref{theorem:a-derivative-of-omega}.
Notice that since $\{ f_t \}_{t\in I}$ is continuous,
$f_{\tau}' \rightarrow f_t'$ locally uniformly on $\mathbb{D}$
as $\tau \rightarrow t$.

Let $t \in I \setminus N$.
Then
by letting $t_2 -t_1 \searrow 0$ 
with $t_1 \leq t \leq t_2$ 
\begin{align}
\label{eq:differential_wrt_a}
  & \, 
 \frac{f(z,t_2) -f(z,t_1)}{a(t_2)-a(t_1)}
\\
  =&
  \frac{f(z,t_2) -f(\omega(z,t_1,t_2),t_2)}{a(t_2)-a(t_1)}
\nonumber \\
  =&
  - \frac{\omega(z,t_1,t_2)-z}{a(t_2)-a(t_1)} 
 \int_0^1 f'((1-\lambda )z  + \lambda \omega(z,t_1,t_2) , t_2 ) 
 \, d\lambda  
\nonumber \\
 \rightarrow&
 - \frac{\partial \omega}{\partial a(t)} (z,t) f'(z,t) 
 =
    \frac{z}{a(t)}
    P(z,t)
    f'(z,t) .
\nonumber 
\end{align}
\end{proof}

\begin{corollary}
\label{corollary:the_convergence_of_a_derivative}
For $m \in \mathbb{N}$, $t \in I \backslash N$ and $z \in \mathbb{D}$
the limit
\[
  \frac{\partial}{ \partial a}
  \left( \frac{\partial^m f}{\partial z^m} \right)
 (z,t) 
 : =   
 \2sidelim 
 \frac{\frac{\partial^m f}{\partial z^m} (z , t_2) 
 - \frac{\partial^m f}{\partial z^m} (z , t_1)}{a(t_2)-a(t_1)} 
\]
exists and the convergence is locally uniform on $\mathbb{D}$.
Moreover we have 
\[
 \frac{\partial}{ \partial a}
 \left( \frac{\partial^m f}{\partial z^m}\right)
 (z,t) 
 =
 \frac{\partial^m}{\partial z^m} 
 \left( \frac{\partial f}{\partial a}\right)
 (z, t)  .
\]
\end{corollary}
\begin{proof}
Choose $r$ and $\rho$ such that $0< r < \rho < 1$.
Since the limit in the left-hand side of 
\eqref{eq:differential_wrt_a}
converges locally uniformly on $\mathbb{D}$,
by the Cauchy formula we have for $|z|\leq r $ that 
\begin{align*}
 \frac{\partial}{ \partial a}
  \left( \frac{\partial^m f}{\partial z^m} \right)
 (z,t) 
 = & \,  
 \2sidelim 
 \frac{\frac{\partial^m f}{\partial z^m} (z , t_2) 
 - \frac{\partial^m f}{\partial z^m} (z , t_1)}{a(t_2)-a(t_1)} 
\\
  = & \,  
 \2sidelim 
 \frac{m!}{2 \pi i } \int_{|\zeta|=\rho} \frac{1}{(\zeta -z)^m}
 \frac{f(\zeta , t_2) - f(\zeta , t_1)}{a(t_2)-a(t_1)} 
 \, d \zeta
\\
 = & \,  
 \frac{m!}{2 \pi i } \int_{|\zeta|=\rho} 
 \frac{\frac{\partial f}{\partial a}(\zeta,t)}{(\zeta -z)^m} 
 \, d \zeta ,
\end{align*}
and that the convergence is iniform on $\overline{\mathbb{D}}(0,r)$

While for fixed $t \in I \backslash N$,
$\frac{\partial f}{\partial a}(z,t)$ is analytic in $\mathbb{D}$,
 we have
by the Cauchy formula
\[
  \frac{\partial^m}{\partial z^m} 
 \left(\frac{\partial f}{\partial a}\right) (z,t)
 =
 \frac{m!}{2 \pi i } \int_{|\zeta|=r} 
 \frac{\frac{\partial f}{\partial a}(\zeta,t)}{(\zeta -z)^m} 
 \, d \zeta, 
\]
as required.
\end{proof}

We note that
Theorem \ref{thm:IntLoewner-DEs}
easily follows 
from Theorems \ref{theorem:a-derivative-of-omega} and 
\ref{thm:Loewner-DEs}.

\section{The Case of Absolutely Continuous $a(t)$}
We now consider how the differential equations 
in Theorems 3.4 and 3.5 are formulated when the function 
$a(t)$ is absolutely continuous.
To this end, we need a few lemmas.

\begin{lemma}
\label{lemma:growth_lemma}
Let $\psi: [\alpha , \beta ]  \to \mathbb{R}$ 
be a strictly increasing and continuous function,
$p \geq 0$, and $E \subset [\alpha , \beta ]$.
If 
\[
   D^- \psi (t) 
  := \liminf_{\substack{\scriptscriptstyle t_1 \leq t \leq t_2 \\
\scriptscriptstyle  t_2 - t_1 \searrow 0}} 
 \frac{\psi (t_2) - \psi(t_1)}{t_2-t_1} \leq p,
 \quad \text{for every } t \in E, 
\]
then $\mu_1^* (\psi (E)) \leq p \mu_1^*(E)$.
\end{lemma}
For a proof, see \cite[p.~207]{Natanson} or
\cite[Lemma~7.1]{BBT}.

\begin{lemma}[Banach--Zarecki]
Let $\psi: [\alpha , \beta ]  \to \mathbb{R}$ 
be a strictly increasing and continuous function. 
Then $\psi$ is absolutely continuous if and only if
it satisfies the Lusin (N) condition:
$\mu_1^*( \psi (A)) = 0$ whenever 
$A \subset [\alpha, \beta]$ and $\mu_1^*(A) = 0$.
\end{lemma}
Although the Banach--Zarecki theorem 
is typically stated for continuous functions of bounded variation, 
we provide a proof here 
in the case where $\psi$ is strictly increasing 
and continuous, 
since the argument becomes significantly simpler in this setting.
\begin{proof}
Suppose that $\psi$ 
is  absolutely continuous.
Let $A \subset [\alpha, \beta]$ with $\mu_1^*(A) = 0$.
We show that $\mu_1^*( \psi (A)) = 0$. 
Without loss of generality, we may assume 
$A \subset ( \alpha , \beta )$.

Let $\varepsilon > 0$ and choose $\delta > 0$ 
such that for any $\{ (\alpha_k, \beta_k) \}$ is a finite or countable
collection of disjoint open intervals in $[\alpha , \beta ]$
with $\sum_{k} (\beta_k - \alpha_k) < \delta$,
we have 
$\sum_{k} \big(\psi( \beta_k) - \psi (\alpha_k)\big) < \varepsilon$.
Choose an open set $G$ such that $A \subset G$ and 
$\mu_1(G) < \delta$.
Write $G = \bigcup_{k} (\alpha_k, \beta_k) $ as a union 
of the connected components of $G$.
Then, since $\sum_{k} (\beta_k - \alpha_k) = \mu_1(G) < \delta$,
we have $\sum_{k} (\psi( \beta_k) - \psi (\alpha_k)) < \varepsilon$.

Since the intervals 
$\{ \big( \psi (\alpha_k), \psi( \beta_k) \big) \}_{k}$ 
are disjoint, it follows that
\[
 \mu_1^*(\psi (A))
 \leq
 \mu_1^*(\psi (G))
 = \mu_1^* 
 \left( \bigcup_{k} \psi ( (\alpha_k, \beta_k))  \right)
 = \sum_{k}  \psi( \beta_k) - \psi (\alpha_k)) < \varepsilon .
\] 

Now suppose that $\psi$ is not absolutely continuous.
We show that there exists $A \subset [\alpha, \beta]$ 
with $\mu_1^*(A) = 0$ but $\mu_1^*( \psi (A)) > 0$.

Since $\psi$ is not absolutely continuous,
there exists $\varepsilon_0 > 0$ such that
for any $\delta > 0$, there exists 
finite or countable collection of
disjoint intervals $\{ (\alpha_k, \beta_k) \}$
in $[\alpha, \beta]$
such that $\sum_{k}  ( \beta_k - \alpha_k) < \delta$ and 
$\sum_{k}  \big( \psi( \beta_k) - \psi (\alpha_k) \big) 
\geq  \varepsilon_0$.
Choose a sequence of positive numbers
$\{ \delta_i \}_{i=1}^\infty$ 
such that $\sum_{i=1}^\infty \delta_i < \infty$.
For each $i \in \mathbb{N}$, choose 
a finite or countable family 
of disjoint intervals 
$\{ (\alpha_k^{(i)}, \beta_k^{(i)} ) \}_{k=1}^{n_i}$ 
in $[\alpha , \beta ]$
such that 
\begin{align*}
  \sum_{k=1}^{n_i} (\beta_k^{(i)} - \alpha_k^{(i)}) < \delta_i
 \quad \text{and} \quad
  \sum_{k=1}^{n_i}  
 \left( \psi( \beta_k^{(i)}) - \psi (\alpha_k^{(i)}) \right) 
 \geq  \varepsilon_0 .
\end{align*}
Define 
\begin{align*}
   E_i =  \bigcup_{k=1}^{n_i} (\alpha_k^{(i)}, \beta_k^{(i)} ), 
 \quad i \in \mathbb{N} 
\quad \text{and} \quad
  A =   \bigcap_{n=1}^{\infty} \bigcup_{i=n}^{\infty} E_i .
\end{align*}
Then,
\[
   \mu_1(A) 
 = \lim_{n \to \infty} 
 \mu_1 \left( \bigcup_{i=n}^{\infty} E_i \right)
 \leq 
 \lim_{n \to \infty} \sum_{i=n}^\infty \mu_1 \left(  E_i \right) 
 \leq 
 \lim_{n \to \infty} \sum_{i=n}^\infty \delta_i = 0 .
\]
Since $\psi$ is a bijection, we have
$\psi (A) = \bigcap_{n=1}^{\infty} \bigcup_{i=n}^{\infty} \psi (E_i)$
and
$\mu_1 (\psi (E_i)) 
= \sum_{k=1}^{n_i} 
\left( \psi( \beta_k^{(i)}) - \psi (\alpha_k^{(i)}) 
\right) 
 \geq  \varepsilon_0$.
Hence,
\[
   \mu_1( \psi (A)) 
 = \lim_{n \to \infty} 
 \mu_1 \left( \bigcup_{i=n}^{\infty} \psi (E_i) \right)
 \geq 
 \liminf_{n \to \infty} \mu_1 \left(  \psi (E_n ) \right) 
 \geq \varepsilon_0.
\]
\end{proof}

\begin{lemma}[Zarecki]
\label{lemma:Zarecki}
Let $\psi $ be a strictly increasing and continuous function 
on $[\alpha, \beta]$.
Define 
\begin{align*}
 E_\infty 
 = & \, \left\{ t \in [\alpha, \beta] : 
 D^- \psi (t) 
 := \liminf_{\substack{\scriptscriptstyle t_1 \leq t \leq t_2 \\
\scriptscriptstyle  t_2 - t_1 \searrow 0}} 
 \frac{\psi (t_2) - \psi(t_1)}{t_2-t_1} 
 = \infty \right\},
\\
 E_0
 = & \, \left\{ t \in [\alpha, \beta] : 
 D^+ \psi (t) 
 := \limsup_{\substack{\scriptscriptstyle t_1 \leq t \leq t_2 \\
\scriptscriptstyle  t_2 - t_1 \searrow 0}} 
 \frac{\psi (t_2) - \psi(t_1)}{t_2-t_1} 
 = 0 \right\}.
\end{align*}
Then, $\psi$ is absolutely continuous on $[\alpha , \beta]$
if and only if $\mu_1( \psi (E_\infty ) ) = 0$.
Moreover,
$\psi^{-1}$ is absolutely continuous 
on $[\psi(\alpha ) , \psi( \beta )]$ if and only if
$\mu_1( E_0 ) = 0$.
\end{lemma}
For references, see \cite[Chap.IX, Exercises 12 and 13]{Natanson}
and \cite[Exercises 3.33 and 3.45]{Leoni}.
For completeness we provide a proof below.
\begin{proof}
Suppose that $\psi$ is absolutely continuous.
Then, by the Banach--Zarecki theorem, 
it satisfies the Lusin (N) condition.
Since $\psi$ is nondecreasing, 
it is differentiable $\mu_1$-a.e.
Hence, $\mu_1(E_\infty) = 0$, and by the Lusin (N) condition
we also have $\mu_1(\psi(E_\infty)) = 0$.

Now suppose that $\mu_1(\psi(E_\infty)) = 0$.
Let $A \subset [\alpha, \beta ]$ with $\mu_1^* (A) = 0$.
For each $n \in \mathbb{N}$, define
\[
  A_n = A \cap \{ t \in [\alpha, \beta ] : 
 n-1 \leq D^- \psi (t) < n \} .
\]
Then we have the decomposition
$A = (A \cap E_\infty ) \bigcup_{n=1}^\infty A_n $,
and thus
$\psi (A) = \psi (A \cap E_\infty )  \bigcup_{n=1}^\infty \psi(A_n) $.
Then by Lemma~\ref{lemma:growth_lemma}
it follows that
$\mu_1^* (\psi(A_n)) \leq n \mu_1^* (A_n) \leq n \mu_1^* (A) = 0$.
Moreover, 
since $\mu_1 (\psi (A \cap E_\infty )) \leq \mu_1(E_\infty) = 0$,
we obtain
\[
 \mu_1^* ( \psi(A))
 \leq
 \mu_1 (\psi (A \cap E_\infty )) + \sum_{n=1}^\infty \mu_1^*(\psi (A_n))
 = 0 .
\]
Hence $\psi$ satisfies the Lusin (N) condition,
and by the Banach--Zarecki theorem, 
$\psi$ is absolutely continuous.

Finally, for $t \in [\alpha , \beta ]$ 
and $s = \psi (t) \in [\psi (\alpha), \psi(\beta)]$,
it is straightforward to verify that
$D^- \psi^{-1} (s) = \infty$ if and only if
$D^+ \psi (t) = 0$.
Therefore, the second statement follows immediately from the first.
\end{proof}

\begin{theorem}
\label{thm:abs-cont-Loewner-DEs}
Let $\{ \omega_{s,t} \}_{(s,t) \in I_+^2}$ be 
a strictly monotone and continuous transition family,
and let $a(t)$, $t \in I$, be a strictly  increasing 
and positive function 
defined by (\ref{eq:defi-of-at}) for some $c>0$.
If $a(t)$ is locally absolutely continuous on $I$ and
$\dot{a}(t) := \frac{d a}{dt}(t) > 0$ for 
$\mu_1$-almost every $t \in I$, 
then there exist a $G_\delta$ set $E \subset I$ with 
$\mu_1(E) = 0$, and 
a Borel measurable Herglotz family $\{ P(\cdot , t) \}_{t \in I}$ 
such that for $z \in \mathbb{D}$
\begin{align}
\label{eq:abs-cont-generator-wrt-a}
   \frac{\partial \omega}{\partial t}(z,t)
  =& \, - \frac{\dot{a}(t)}{a(t)} zP(z,t), \; 
 t \in I \backslash E ,
\\
\label{eq:abs-cont-downward-PDE-for-omega}
  \frac{\partial \omega }{\partial t } 
   (z,t, t_0)
  =& \, \frac{\dot{a}(t)}{a(t)} z P(z,t) \omega'(z, t,t_0) , \;
  t \in (I \cap (-\infty, t_0)) \backslash E ,
\\
\label{eq:abs-cont-upward-ODE-for-omega}
   \frac{\partial \omega }{\partial t } (z,t_0, t)
  =& \, 
  - \frac{\dot{a}(t) }{a(t)} \omega (z,t_0, t) P(\omega (z,t_0, t),t) , \;
  t \in (I \cap (t_0, \infty )) \backslash E .
\end{align}
In particular, if $\{ \omega( \cdot , s,t) \}_{(s,t) \in I_+^2}$
is associated with a strictly increasing 
and continuous Loewner chain $\{ f_t \}_{t \in I}$ 
satisfying $a(t) = f_t'(0)$,
then 
\begin{align}
\label{eq:abs-cont-Loewner-PDE}
   \frac{\partial f}{\partial t} (z,t)
  =& \, \frac{\dot{a}(t)}{a(t)} z P(z,t) f'(z, t) , \;
  t \in I \setminus E .
\end{align}
\end{theorem}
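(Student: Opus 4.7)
The plan is to deduce the theorem from the $a(t)$-derivative versions already proved in Theorems~\ref{theorem:a-derivative-of-omega} and \ref{thm:Loewner-DEs} via the formal chain rule $\partial/\partial t = \dot a(t)\cdot \partial/\partial a(t)$. The real content is measure-theoretic: the earlier theorems give an exceptional set $N$ negligible with respect to $\mu_a$, but the present statement demands a set negligible for Lebesgue measure $\mu_1$. It is precisely the hypothesis $\dot a>0$ a.e.\ that lets us pass between the two, via Zarecki's lemma.

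First I would take $N$ and $\{P(\cdot,t)\}_{t\in I}$ as furnished by Theorem~\ref{theorem:a-derivative-of-omega}, so $N$ is a $G_\delta$ set with $\mu_a(N)=0$ on whose complement the relations (\ref{eq:differentail-wrt-a})--(\ref{eq:second-a-derivative}) hold with locally uniform convergence on $\mathbb{D}$. Next, let $E_1$ be a $G_\delta$ set of $\mu_1$-measure zero that contains every point where $a$ fails to be classically differentiable or where $\dot a(t)=0$; such an $E_1$ exists because $a$ is locally absolutely continuous and $\dot a>0$ a.e. By Lemma~\ref{lemma:image_of_psi_measure_is_the_Lebesgue_measure} (applied to $\psi=a$) the image $a(N)$ satisfies $\mu_1(a(N))=\mu_a(N)=0$. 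Now Lemma~\ref{lemma:Zarecki} together with $\dot a>0$ a.e.\ shows that $a^{-1}$ is (locally) absolutely continuous, and hence enjoys the Luzin N property, so $N=a^{-1}(a(N))$ is itself Lebesgue-null. Setting $E=E_1\cup N$ (which can be arranged to be $G_\delta$) gives the desired exceptional set with $\mu_1(E)=0$.

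For each $t\in I\setminus E$, the function $a$ is classically differentiable at $t$ with $\dot a(t)>0$, and because $a$ is nondecreasing and continuous, one checks (by writing the quotient as a convex combination of the one-sided quotients at $t$) that
\[
\frac{a(t_2)-a(t_1)}{t_2-t_1}\ \longrightarrow\ \dot a(t)
\qquad\text{as } t_2-t_1\downarrow 0,\ t_1\le t\le t_2.
\]
Factoring
\[
\frac{\omega(z,t_1,t_2)-z}{t_2-t_1}
=\frac{\omega(z,t_1,t_2)-z}{a(t_2)-a(t_1)}\cdot\frac{a(t_2)-a(t_1)}{t_2-t_1}
\]
and invoking (\ref{eq:differentail-wrt-a}) yields (\ref{eq:abs-cont-generator-wrt-a}); exactly the same device applied to (\ref{eq:first-a-derivative}), (\ref{eq:second-a-derivative}) and (\ref{eq:Loewner-PDE}) produces (\ref{eq:abs-cont-downward-PDE-for-omega}), (\ref{eq:abs-cont-upward-ODE-for-omega}) and (\ref{eq:abs-cont-Loewner-PDE}) respectively, with the locally uniform convergence on $\mathbb{D}$ carrying over automatically.

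The main obstacle, and the only part requiring any ingenuity, is the measure-theoretic bridge: transforming an $\mu_a$-null set into a $\mu_1$-null set is where the hypothesis $\dot a>0$ a.e.\ is indispensable. Without it, $\mu_a$ could have a singular part (think of a Cantor-like staircase with $\dot a=0$ a.e.), and the set $N$ supplied by Theorem~\ref{theorem:a-derivative-of-omega} could have full Lebesgue measure inside its support, invalidating any pointwise $t$-differentiation. Once Zarecki's lemma disposes of this point, the remainder of the argument is essentially the chain rule combined with locally uniform convergence, both of which are already available from the preceding theorems.
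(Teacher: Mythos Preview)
Your proposal is correct and follows essentially the same route as the paper's proof: invoke Theorem~\ref{theorem:a-derivative-of-omega} for $N$ and $P$, use Lemma~\ref{lemma:image_of_psi_measure_is_the_Lebesgue_measure} to get $\mu_1(a(N))=0$, apply Zarecki's lemma (Lemma~\ref{lemma:Zarecki}) so that the Luzin~N property of $a^{-1}$ forces $\mu_1(N)=0$, adjoin the $\mu_1$-null set where $a$ fails to be differentiable or has $\dot a=0$, and finish with the chain-rule factorization. Your explicit remark that the two-sided difference quotient $(a(t_2)-a(t_1))/(t_2-t_1)$ converges to $\dot a(t)$ (via the convex-combination argument) is in fact slightly more careful than the paper, which simply asserts this limit.
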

Note that in the special case that $a(t) =e^t$, $t \in I$,
equations 
\eqref{eq:abs-cont-upward-ODE-for-omega} and
\eqref{eq:abs-cont-Loewner-PDE} reduce to the classical 
Loewner--Kufarev ordinary and partial differential equations,
respectively.

\begin{proof}
It suffices to show the theorem in the case 
where $I = [\alpha, \beta ]$ 
with  $-\infty < \alpha <  \beta < \infty$.
Take a $G_\delta$ set $N \subset [\alpha, \beta ]$ with $\mu_a(N) = 0$ 
as in Theorem \ref{theorem:a-derivative-of-omega}.
Then by 
Proposition \ref{proposition:image_of_psi_measure_is_the_Lebesgue_measure}
we have $\mu_1 (a(N)) = \mu_a (N) = 0$.
From Lemma \ref{lemma:Zarecki}
it follows that $a^{-1}$ is absolutely continuous
and hence $a^{-1}$ has the Lusin (N) property, i.e.,
$a^{-1}$ maps a $\mu_1$-null set to a $\mu_1$-null set. 
Therefore $\mu_1(N) = \mu_1 (a^{-1}(a(N))) = 0$.

Let $E_0 (\subset [\alpha, \beta ])$ 
denote the set of all points $t \in I$ at which 
$a$ is not differentiable.
Since $a(t)$ is absolutely continuous on $[\alpha, \beta ]$, 
we have $\mu_1 (E_0) =0$.
Also, let $E_1 (\subset [\alpha, \beta ])$ be 
the set of all points $t \in I$ at which 
$a$ is differentiable but $\dot{a} (t) =  0$.
By assumption, $\mu_1(E_1) =0$.
Now take a $G_\delta$ set $E_2$ with $E_0 \cup E_1  \subset E_2$ 
and $\mu_1(E_2) = 0$.
Set $E = N \cup E_2$. 
Then $E$ is a $G_\delta$ set with 
$\mu_1 (E) = 0$.
By (\ref{eq:differentail-wrt-a}),
for $t \in I \backslash E$ and $z \in \mathbb{D}$, 
we have 
\begin{align*}
 \frac{\partial \omega}{\partial t}(z,t) 
 = & 
 \2sidelim 
  \frac{\omega(z,t_1,t_2) - z}{t_2-t_1}
\\
 = &
 \2sidelim 
  \frac{\omega(z,t_1,t_2) - z}{a(t_2)-a(t_1)} \cdot 
 \2sidelim 
  \frac{a(t_2)-a(t_1)}{t_2-t_1}
\\
 = &
 \frac{\partial \omega}{\partial t} (z,t) \cdot \frac{da}{dt} (t) 
  =  
  - \frac{\dot{a}(t)}{a(t)} z P(z,t).
\end{align*}

Similarly 
(\ref{eq:abs-cont-downward-PDE-for-omega}),%
\eqref{eq:abs-cont-upward-ODE-for-omega} and
(\ref{eq:abs-cont-Loewner-PDE})
follow from 
(\ref{eq:first-a-derivative}),
(\ref{eq:second-a-derivative}) 
and (\ref{eq:Loewner-PDE}) respectively.
\end{proof}

\chapter{Solutions to Loewner--Kufarev %
Ordinary Differential Equations}
\label{chapter:Solution}
\section{Differential Inequalities with Respect to $a(t)$}
Let $I$ be an interval, and 
$a(t)$ be a strictly  increasing, 
positive, and  continuous function on $I$.
Let $\mathcal{M}_a (I)$ denote the $\sigma$-algebra,
and $\mu_a$ the Lebesgue--Stieltjes measure
associated with the function $a(t)$.

In this chapter, for a given 
$\mathcal{M}_a (I)$-measurable
Herglotz family $\{ P (\cdot , t ) \}_{t \in I}$ 
we consider the ordinary differential equation
(\ref{eq:second-a-derivative}). 
Specifically, 
for each fixed $t_0 \in I$ and $z \in \mathbb{D}$,
we study the equation 
\begin{equation}
   D_a w (t)
   = -\frac{w(t)}{a(t)}P(w(t),t), 
   \quad t \in I \cap [t_0, \infty )
\end{equation}
subject to the initial condition
\begin{equation}
  w(t_0) = \, z .
\end{equation}
Before solving the differential equation, 
we collect some preparatory results
and establish several auxiliary lemmas. 
We then construct a solution using 
the method of successive approximations.

Let $I_0$ be a compact subinterval of $I$, 
and let $c_1,c_2 \in \mathbb{C}$. 
Suppose that  $u$, $v$  are 
absolutely $a$-continuous functions  on $I_0$.
Then 
both $c_1 u + c_2 v$ and  $uv$ are also 
absolutely $a$-continuous on $I_0$,
and $D_a (c_1 u + c_2 v)(t)
= c_1D_a u (t) + c_2 D_a v (t)$,
$D_a (uv) (t)
=  D_a u(t) \cdot v(t) + u(t) \cdot D_a v (t)$ 
hold $\mu_a$-a.e.
Furthermore if $h$ is a $C^1$ function defined 
on an interval containing $u(I_0)$,
then $h \circ u$ is absolutely $a$-continuous on $I_0$
and 
\begin{equation}
\label{eq:chain_rule_wrp_to_a}
   D_a (h \circ u)(t) =  Dh (u (t))D_a u (t)
\end{equation}
holds $\mu_a$-a.e.
In particular, applying 
this to the function $h(s) = \left( \log \frac{s}{a(t_0)} \right)^n $
and $u(t)=a(t)$,
we obtain from Proposition \ref{proposition:fundamental-thm-of-calculus}
that for $n \in \mathbb{N}$ and
$t  \in I \cap [t_0,\infty ) $,
\begin{equation}
\label{eq:log-to-n-th-power}
 \left( \log \frac{a(t)}{a(t_0)} \right)^n
    =
    n \int_{[t_0,t]} \frac{1}{a(\tau )} 
    \left( \log \frac{a(\tau )}{a(t_0)} \right)^{n-1} \, d \mu_a(\tau ) .
\end{equation}

We require differential inequalities with respect to $a(t)$.
\begin{lemma}
\label{lemma:differentail-inequalities}
Let $u$ be an absolutely $a$-continuous function 
on $[\alpha , \beta ] \subset I$ 
with $-\infty < \alpha < \beta < \infty$. 
Suppose that for some positive constant $M$, 
$u$ satisfies
\begin{equation}
\label{ineq:diffrential-inequality}
   \left| D_a u(t) \right|
   \leq \frac{M}{a(t)} |u(t)| \quad \text{$\mu_a$-a.e.} 
\end{equation}
Then for $t \in [\alpha , \beta ]$
\begin{align*}
& \,
 |u(\alpha )| \left( \frac{a(\alpha )}{a(t)} \right)^M 
 \leq
 |u(t)|
 \leq
 |u( \beta )| \left( \frac{a(\beta )}{a(t)} \right)^M ,
\\ 
& \,
 |u(\beta )| \left( \frac{a(t) }{a(\beta )} \right)^M 
 \leq
 |u(t)|
 \leq
  |u( \alpha )| \left( \frac{a(t)}{a(\alpha )} \right)^M .
\end{align*}
\end{lemma}
\begin{proof}
Since $u$ is absolutely $a$-continuous on $[\alpha, \beta]$,
so is $|u|$.
Thus, for $\mu_a$ almost every $t \in I$
\begin{align*}
   \left| D_a |u| (t)\right|
 = & \, \left| \2sidelim \frac{|u(t_2)|-|u(t_1)|}{a(t_2)-a(t_1)}
 \right|
\\
 \leq & \, 
 \left| \2sidelim \frac{u(t_2)-u(t_1)}{a(t_2)-a(t_1)} \right|
 = \left| D_a u(t) \right| .
\end{align*}
From this and (\ref{ineq:diffrential-inequality}),
it follows that for $\mu_a$ almost every $t \in I$ 
\begin{align*}
 D_a \{ |u(t)| a(t)^{M} \} 
 \geq - \left| D_a u(t) \right|a(t)^{M} + M |u(t)| a(t)^{M-1} 
 \geq 0 , 
\\
 D_a \{ |u(t)| a(t)^{-M} \} 
\leq \left| D_a u(t) \right|a(t)^{-M} - M |u(t)| a(t)^{-M-1}
 \leq 0 .
\end{align*} 
Thus
$|u(t)|a(t)^M$ is nondecreasing, 
and $|u(t)|a(t)^{-M}$ is nonincreasing
in $t$.
Hence, for $t \in [\alpha, \beta]$
\begin{align*}
& \, |u(\alpha)| a(\alpha )^M 
 \leq |u(t)|a(t)^M \leq |u(\beta)| a(\beta )^M ,
\\
& \, |u(\alpha)| a(\alpha )^{-M} 
\geq |u(t)|a(t)^{-M} 
\geq |u(\beta)| a(\beta)^{-M}
\end{align*}  
as required.
\end{proof}

\section{Analytic Estimates and Measurability Inputs}
We now state some useful estimates for analytic functions 
with positive real part.
For details, see \cite[\S 2.1]{Pommerenke1}.
\begin{lemma}
\label{lemma:positive-real-part}
Let $p \in \mathcal{H}(\mathbb{D})$ with $\Real P(z) > 0$ and $p(0)=1$.
Then, for $z \in \mathbb{D}$,  
the following inequalities hold:
\begin{align*}
 \left| p(z)-1 \right| 
 \leq \frac{2|z|}{1-|z|}, \quad 
 \frac{1-|z|}{1+|z|} \leq  |p(z)| \leq \frac{1+|z|}{1-|z|}, \quad 
  |p'(z)| \leq \frac{2}{(1-|z|)^2} .
\end{align*}
\end{lemma}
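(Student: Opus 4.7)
The plan is to exploit the standard Möbius conjugation $p \mapsto q := (p-1)/(p+1)$, which transports the problem from the Carathéodory class (positive real part, $p(0)=1$) to the Schwarz class ($q(0)=0$, $|q(z)|<1$), where we can apply the Schwarz lemma and the Schwarz–Pick inequality already invoked in Section~2. Explicitly, since the Möbius map $w \mapsto (w-1)/(w+1)$ sends the right half-plane conformally onto $\mathbb{D}$ and fixes the point pair $(1,0)$, the function $q(z) = (p(z)-1)/(p(z)+1)$ is analytic in $\mathbb{D}$, satisfies $q(0)=0$ and $|q(z)|<1$. By the Schwarz lemma $|q(z)| \leq |z|$, and we may recover $p$ from $q$ via
\[
  p(z) = \frac{1+q(z)}{1-q(z)}.
\]

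From here each inequality is a one-line computation. First, $p(z) - 1 = 2q(z)/(1-q(z))$, whence $|p(z)-1| \leq 2|q(z)|/(1-|q(z)|) \leq 2|z|/(1-|z|)$, using that $t \mapsto t/(1-t)$ is increasing on $[0,1)$. Second, the upper bound on $|p(z)|$ follows from $|1+q(z)| \leq 1+|q(z)|$ and $|1-q(z)| \geq 1-|q(z)|$, together with the monotonicity of $t \mapsto (1+t)/(1-t)$ on $[0,1)$. For the lower bound on $|p(z)|$, I would observe that $1/p$ also belongs to the Carathéodory class (it has positive real part and $(1/p)(0)=1$), so the upper bound applied to $1/p$ yields $|p(z)| \geq (1-|z|)/(1+|z|)$.

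For the derivative estimate, I would differentiate $p = (1+q)/(1-q)$ to obtain $p'(z) = 2q'(z)/(1-q(z))^2$, and then bound $q'$ by the Schwarz–Pick inequality $|q'(z)| \leq (1-|q(z)|^2)/(1-|z|^2)$. Combining these and estimating $|1-q(z)|^2 \geq (1-|q(z)|)^2$ gives
\[
  |p'(z)| \leq \frac{2(1-|q(z)|^2)}{(1-|z|^2)(1-|q(z)|)^2}
        = \frac{2(1+|q(z)|)}{(1-|z|^2)(1-|q(z)|)}
        \leq \frac{2(1+|z|)}{(1-|z|^2)(1-|z|)} = \frac{2}{(1-|z|)^2},
\]
where in the last step I use $|q(z)| \leq |z|$ and again the monotonicity of $t \mapsto (1+t)/(1-t)$.

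No step is a serious obstacle; the lemma is a classical and essentially computational consequence of the Schwarz and Schwarz–Pick lemmas. The only minor care needed is in the derivative bound, where one must combine Schwarz–Pick for $q$ with the chain-rule expression for $p'$ and the triangle-inequality estimate $|1-q(z)| \geq 1-|q(z)|$ in the right order so that the final dependence reduces cleanly to $(1-|z|)^{-2}$.
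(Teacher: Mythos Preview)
Your proof is correct and is the standard Cayley-transform argument; the paper itself does not supply a proof but simply refers the reader to \cite[\S 2.1]{Pommerenke1}, where essentially this same approach (via $q=(p-1)/(p+1)$ and the Schwarz and Schwarz--Pick lemmas) is presented.
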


\begin{lemma}
\label{lemma:measurability-of-w(z,t)}
Let $\mathcal{F}$ be a $\sigma$-algebra on $I$ that contains
the Borel $\sigma$-algebra $\mathcal{B}(I) $ on $I$.
Let $\{ P( \cdot , t) \}_{t \in I}$ be an
${\mathcal F}$-measurable Herglotz family, and let 
$w(z,t)$ be a function on $\mathbb{D} \times I$ 
such that $w(z,t)$ is continuous in $t$ 
for each fixed $z \in \mathbb{D}$
and is analytic in $z$ for each fixed $t \in I$.
Then 
$P(w(z,t),t)$ is ${\mathcal F}$-measurable in $t$ 
for each fixed $z \in \mathbb{D}$,
and analytic in $z$ for each fixed $t \in I$.
\end{lemma}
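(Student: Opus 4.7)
The plan is to establish the two conclusions separately. For each fixed $t \in I$, analyticity in $z$ is immediate: the maps $w(\cdot, t) : \mathbb{D} \to \mathbb{D}$ and $P(\cdot, t) : \mathbb{D} \to \mathbb{C}$ are both analytic, so their composition $z \mapsto P(w(z,t), t)$ is analytic on $\mathbb{D}$. The substantive part is $\mathcal{F}$-measurability in $t$ for each fixed $z \in \mathbb{D}$, and for this I would proceed via the Taylor expansion of $P(\cdot, t)$ at the origin.

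Write $P(\zeta, t) = \sum_{k=0}^\infty c_k(t) \zeta^k$ for $\zeta \in \mathbb{D}$, and fix any $r \in (0,1)$. My first step is to show that every Taylor coefficient
\[
    c_k(t)
    = \frac{1}{2\pi} \int_0^{2\pi} r^{-k} e^{-ik\theta} P(re^{i\theta}, t) \, d\theta
\]
is $\mathcal{F}$-measurable in $t$. Since $P(\cdot, t)$ is continuous on the circle $|\zeta| = r$ for every $t$, this integral is, for each $t$ separately, the limit of its Riemann sums
\[
    \frac{1}{n} \sum_{j=0}^{n-1} r^{-k} e^{-2\pi i kj/n} P\bigl( re^{2\pi i j/n}, t \bigr), \quad n \to \infty .
\]
By the assumption that $\{P(\cdot, t)\}_{t \in I}$ is $\mathcal{F}$-measurable, each summand is $\mathcal{F}$-measurable in $t$, hence so is every Riemann sum, and hence so is their pointwise-in-$t$ limit $c_k(t)$.

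Once the $c_k(t)$ are $\mathcal{F}$-measurable the conclusion follows quickly. For each fixed $z \in \mathbb{D}$ the map $t \mapsto w(z,t)$ is continuous, hence Borel-measurable, hence $\mathcal{F}$-measurable since $\mathcal{B}(I) \subset \mathcal{F}$. Therefore the partial sum
\[
    S_N(z,t) = \sum_{k=0}^N c_k(t) \, w(z,t)^k
\]
is a finite sum of products of $\mathcal{F}$-measurable functions of $t$, and is itself $\mathcal{F}$-measurable. For each fixed $(z,t) \in \mathbb{D} \times I$ we have $w(z,t) \in \mathbb{D}$, so $S_N(z,t) \to P(w(z,t), t)$ as $N \to \infty$, and the limit $t \mapsto P(w(z,t), t)$ is $\mathcal{F}$-measurable as the pointwise limit of $\mathcal{F}$-measurable functions.

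The argument is essentially forced; the only delicate point is that the $\mathcal{F}$-measurability of the contour integral defining $c_k(t)$ has to be extracted from the pointwise $\mathcal{F}$-measurability of $P(\zeta, \cdot)$ at each individual $\zeta \in \mathbb{D}$. This is precisely what the Riemann sum approximation accomplishes, and because $P(\cdot, t)$ is continuous on the fixed circle $|\zeta| = r$ for every $t$, the approximation works at each $t$ separately, so no uniformity in $t$ is required and no localization on compact subintervals of $I$ is necessary.
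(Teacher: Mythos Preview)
Your proof is correct. The approach, however, differs from the paper's. The paper discretizes the domain $\mathbb{D}$ into small Borel cells $S_j^{(k)}$ of diameter at most $1/k$, picks a representative $\zeta_j^{(k)}$ in each, and defines a step approximation $P_k(z,t) = P(\zeta_j^{(k)},t)$ for $z \in S_j^{(k)}$; continuity of $P(\cdot,t)$ gives $P_k \to P$ pointwise, and the preimage $\{t : P_k(w(z,t),t) \in V\}$ decomposes as a finite union of intersections $\{t : w(z,t) \in S_j^{(k)}\} \cap \{t : P(\zeta_j^{(k)},t) \in V\}$, each visibly in $\mathcal{F}$. Your route instead exploits the analyticity of $P(\cdot,t)$ via its Taylor expansion, reducing the problem to the $\mathcal{F}$-measurability of the coefficients $c_k(t)$, which you obtain from Riemann sums of the Cauchy integral. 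The paper's argument is a general measure-theoretic device that only needs continuity of $P(\cdot,t)$ in the first variable, while your argument is more intrinsic to the analytic setting and perhaps more transparent there; both are equally short and entirely sound.
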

\begin{proof}
For each fixed $t \in I$, it is clear that 
$P(w(z,t),t)$ is analytic in $z$.
For each $k \in \mathbb{N}$, take a sequence of 
disjoint Borel subsets $\{ S_j^{(k)} \}_{j=1}^{N_k} $ of $\mathbb{D}$
such that $\mathbb{D} = \cup_{j=1}^{N_k} S_j^{(k)}$ and  
$\diam (S_j^{(k)} ) := \sup_{w,z \in S_j^{(k)}} |w-z| 
\leq \frac{1}{k}$.
For each $k \in \mathbb{N}$ 
and $1 \leq j \leq N_k$,
choose $\zeta_j^{(k)} \in S_j^{(k)}$ arbitrarily, 
and define
$P_k(z,t)$, $(z,t) \in \mathbb{D} \times I$ by 
\[
   P_k(z,t) = P(\zeta_j^{(k)} , t) \quad \text{for } z \in S_j^{(k)} .
\]
Then $P_k(z,t) \to P(z,t)$ as $k \to \infty$ 
for all $(z,t) \in \mathbb{D} \times I$.
Therefore $P_k(w(z,t) , t) \to P(w(z,t),t)$ 
as $k \rightarrow \infty$.
Thus, to show the lemma, it suffices to show that
$P_k(w(z,t),t)$ is $\mathcal{F}$-measurable in $t$ 
for each fixed $z \in \mathbb{D}$.

For an open set $V \subset \mathbb{C}$,
we have
\[
  \{ (z,t) \in \mathbb{D} \times I : P_k(z,t) \in V \}
 = \bigcup_{j=1}^{N_k} 
 S_j^{(k)} 
 \times 
 \{ t \in I : P(\zeta_j^{(k)},t) \in V \} .
\]
Thus, for each fixed $z \in \mathbb{D}$, 
we obtain 
\begin{align}
\label{eq:mathcalF-measurable}
 & \{ t \in I : P_k(w(z,t), t) \in V \}
\\
 = & 
 \bigcup_{j=1}^{N_k} 
 \{t \in I : w(z,t) \in S_j^{(k)} \}
 \cap \{ t \in I : P(\zeta_j^{(k)},t) \in V \} .  
\nonumber
\end{align}
Since $\{t \in I : w(z,t) \in S_j^{(k)} \} 
\in \mathcal{B}(I) \subset \mathcal{F}$
and $\{ t \in I : P(\zeta_j^{(k)},t) \in V \} \in \mathcal{F}$,
the set on the right-hand side of (\ref{eq:mathcalF-measurable}) 
is clearly $\mathcal{F}$-measurable.
\end{proof}

\section{Existence and Uniqueness of Solutions}
\begin{theorem}
\label{thm:solution_to_LoewnerODE}
Let $I$ be an interval, 
$a(t)$ a strictly increasing, positive and continuous function
on $I$, 
and 
$\{ P(\cdot , t) \}_{t \in I}$ 
an $\mathcal{M}_a (I)$-measurable Herglotz family.
Then, for each fixed $s \in I$ and $z \in \mathbb{D}$, 
there exists a unique, locally absolutely 
$a$-continuous function 
$w: I \cap [s, \infty ) \to \mathbb{D}$ 
satisfying the differential equation
\begin{equation}
\label{eq:Loewner-ODE-2}
   D_a w (t)
   = 
   - \frac{w(t)}{a(t)} P(w(t),t), 
 \quad \text{$\mu_a$-a.e.} 
\end{equation}
with the initial condition $w(s)=z$.
Furthermore, 
for $(s,t) \in I_+^2$ and $z \in \mathbb{D}$, 
let 
$\omega_{s, t} (z)$ denote the unique solution to
(\ref{eq:Loewner-ODE-2}) 
with $\omega_{s,s}(z) = z$.
Then the family 
$\{ \omega_{s,t}\}_{(s,t) \in I_+^2}$ is a transition family
satisfying $\omega_{s,t}'(0) = a(s)/a(t)$, $(s,t) \in I_+^2$.
In particular, $\{ \omega_{s,t}\}_{(s,t) \in I_+^2}$ is continuous,
and each $\omega_{s, t} (z)$ is univalent in $\mathbb{D}$.
\end{theorem}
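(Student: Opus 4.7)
The plan is to convert the ODE (\ref{eq:Loewner-ODE-2}) to an integral equation via Lemma \ref{lemma:fundamental-thm-of-calculus} and solve it by a Picard--Banach fixed-point argument, then read off the transition-family and semigroup structure from uniqueness of solutions. A locally absolutely $a$-continuous $w \colon I \cap [s,\infty) \to \mathbb{D}$ with $w(s)=z$ satisfies (\ref{eq:Loewner-ODE-2}) if and only if
\[
   w(t) = z - \int_{[s,t]} \frac{w(\tau)\,P(w(\tau),\tau)}{a(\tau)}\, d\mu_a(\tau), \quad t \in I \cap [s,\infty).
\]
The integrand is $\mathcal{F}_a$-measurable in $\tau$ by Lemma \ref{lemma:measurability-of-w(z,t)} for any continuous $w$, and $|P(w,\tau)|\leq (1+|w|)/(1-|w|)$ by Lemma \ref{lemma:positive-real-part}.

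For local existence and uniqueness at a fixed $z$ with $|z|=r_0<1$, I would pick $r \in (r_0,1)$ and a short interval $[s,s_1]\subset I$. On $\overline{\mathbb{D}(0,r)}$, the map $w \mapsto -wP(w,\tau)/a(\tau)$ is Lipschitz in $w$ with constant $L(r)/a(\tau)$, where $L(r)$ depends only on $r$ via the estimates of Lemma \ref{lemma:positive-real-part}. Hence the integral operator is a contraction on the complete metric space of continuous $w\colon[s,s_1]\to\overline{\mathbb{D}(0,r)}$ with $w(s)=z$ whenever $\log(a(s_1)/a(s))$ is sufficiently small. To extend the local solution to all of $I\cap[s,\infty)$ I use the a priori bound
\[
   D_a |w(t)|^2 = 2\Real\bigl(\overline{w(t)}\, D_a w(t)\bigr) = -\frac{2|w(t)|^2}{a(t)}\Real P(w(t),t)\leq 0,
\]
which shows $|w(t)|\leq|z|$ throughout, so the solution never approaches $\partial\mathbb{D}$ and can be continued globally by standard concatenation.

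The main obstacle is establishing analyticity of $\omega_{s,t}$ in $z$, since the ODE only encodes $t$-regularity. I would handle it by running the Picard iteration on the whole disc at once: starting from $w_0(t,z)\equiv z$ and defining
\[
   w_{n+1}(t,z) = z - \int_{[s,t]} \frac{w_n(\tau,z)\,P(w_n(\tau,z),\tau)}{a(\tau)}\, d\mu_a(\tau),
\]
an induction (with Morera's theorem and Fubini, justified by the bounded majorant of Lemma \ref{lemma:positive-real-part}) shows that each $w_n(t,\cdot)$ is analytic on $\mathbb{D}$ and takes values in $\overline{\mathbb{D}(0,r)}$. The pointwise convergence $w_n(t,z)\to\omega_{s,t}(z)$ already established by the contraction argument is then upgraded by Lemma \ref{lemma:Vitali} to locally uniform convergence in $z$, giving $\omega_{s,t}\in\mathcal{H}(\mathbb{D})$.

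Once analyticity in $z$ is in hand, $\omega_{s,t}(0)=0$ holds because the identically-zero function is the unique solution starting at $0$; differentiating (\ref{eq:Loewner-ODE-2}) in $z$ at $z=0$ and using $P(0,t)=1$ gives $D_a\alpha(t)=-\alpha(t)/a(t)$ for $\alpha(t)=\omega_{s,t}'(0)$, and integrating after computing $\int_{[s,t]} d\mu_a(\tau)/a(\tau)=\log(a(t)/a(s))$ via Lemma \ref{lemma:image_of_psi_measure_is_the_Lebesgue_measure} yields $\alpha(t)=a(s)/a(t)$. The semigroup identity $\omega_{t_1,t_2}\circ\omega_{t_0,t_1}=\omega_{t_0,t_2}$ then follows because both sides, viewed as functions of $t\in[t_1,t_2]$, satisfy the same ODE with the same initial value $\omega_{t_0,t_1}(z)$ at $t=t_1$, hence coincide by uniqueness. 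Finally, continuity of $\{\omega_{s,t}\}$ on $I_+^2$ is a consequence of Theorem \ref{theorem:continuity-of-transition-family} applied to the continuous positive function $a$, and univalence of every $\omega_{s,t}$ follows from Theorem \ref{thm:omega-is-univalent-if-it-is-continuous}.
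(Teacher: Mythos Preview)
Your approach is workable but differs from the paper's in one key respect: you use the \emph{additive} integral equation
\[
  w(t) = z - \int_{[s,t]} \frac{w(\tau)P(w(\tau),\tau)}{a(\tau)}\,d\mu_a(\tau),
\]
whereas the paper first passes to the \emph{multiplicative} form
\[
  w(t) = z\exp\Bigl[-\int_{[s,t]} \frac{P(w(\tau),\tau)}{a(\tau)}\,d\mu_a(\tau)\Bigr],
\]
obtained by dividing (\ref{eq:Loewner-ODE-2}) by $w$ and integrating (after showing $w\neq 0$ via Lemma~\ref{lemma:differentail-inequalities} when $z\neq 0$). The point is that $\Real P>0$ then forces $|w_{n+1}(z,t)|\le|z|$ at \emph{every} step of the Picard iteration, so the iterates are automatically well-defined and bounded on all of $\mathbb{D}\times(I\cap[s,\infty))$. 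A direct telescoping estimate then gives global convergence with factorial decay $(n!)^{-1}\bigl(\log\tfrac{a(t)}{a(s)}\bigr)^n$, and analyticity in $z$ on all of $\mathbb{D}$ is immediate.

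Your additive iteration lacks this monotonicity, and this creates a genuine gap in your analyticity argument. The claim that ``each $w_n(t,\cdot)$ is analytic on $\mathbb{D}$ and takes values in $\overline{\mathbb{D}(0,r)}$'' is false as written: already $w_0(t,z)=z$ fails for $|z|>r$, and $w_2$ is not even defined unless $w_1(\tau,z)\in\mathbb{D}$. You can repair this by restricting to $|z|\le r_0<r$ and to a short interval $[s,s_1]$ with $\log(a(s_1)/a(s))$ small; the contraction argument and Vitali then give analyticity of $\omega_{s,t}$ only for $t\in[s,s_1]$. To reach all of $I\cap[s,\infty)$ you must iterate via the semigroup identity $\omega_{s,t}=\omega_{s_1,t}\circ\omega_{s,s_1}$ (which does follow from your pointwise uniqueness, together with $|\omega_{s,s_1}(z)|\le|z|$ so the step size does not shrink). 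This bootstrap works, but you have not written it; the paper's exponential trick eliminates the need for it entirely. The remainder of your outline---the derivation of $\omega_{s,t}'(0)$, the semigroup relation, and the appeal to Theorems~\ref{theorem:continuity-of-transition-family} and~\ref{thm:omega-is-univalent-if-it-is-continuous} for continuity and univalence---is correct (the paper instead proves univalence directly from Lemma~\ref{lemma:differentail-inequalities}, but your route is equally valid).
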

The following proof is a straightforward generalization of 
Theorem 6.3 in Pommerenke \cite{Pommerenke1}.

\begin{proof}
\noindent\textbf{Step 1.}  
Let $z \in \mathbb{D}$, and let $t_0, t_1 \in I$ with $t_0 < t_1$.
Suppose that 
$w: [t_0,t_1] \to \mathbb{D}$ be a function with $w(t_0)=z$.

We claim that $w(t)$ is an absolutely $a$-continuous solution
to (\ref{eq:Loewner-ODE-2}) on $[t_0,t_1]$ 
if and only if it is a continuous solution 
to the integral equation
\begin{equation}
\label{eq:equivalent-integral-equation}
  w(t) 
 = 
 z \exp 
 \left[ - \int_{[t_0,t]} 
 \frac{1}{a(\tau)} P(w(\tau), \tau) \, d \mu_a (\tau ) \right] 
\end{equation}
on $[t_0,t_1]$.

Suppose that $w: [t_0,t_1] \to \mathbb{D}$ is continuous.
Let $\rho = \max_{t_0 \leq t \leq t_1 } |w(t)| \in [0,1)$, and
set $M = \frac{1+\rho}{1-\rho}$.
Then, by Lemma~\ref{lemma:positive-real-part},
we have $|P(w(\tau), \tau)| \leq M$ on $[t_0,t_1]$.
Thus the function $a(\tau)^{-1} P(w(\tau), \tau)$
is $\mu_a$-integrable. 
It follows from (\ref{eq:chain_rule_wrp_to_a})
and Proposition \ref{proposition:fundamental-thm-of-calculus}
that
$w(t)$ is absolutely $a$-continuous 
and satisfies (\ref{eq:Loewner-ODE-2}) with
$w(t_0)=z$.

Now, suppose that $w(t)$ is  an absolutely $a$-continuous solution
to (\ref{eq:Loewner-ODE-2}) on $[t_0,t_1]$ with $w(t_0)=z$.
Assume first that $z \not= 0$.
Set $\rho = \max_{t_0 \leq t \leq t_1 } |w(t)| \in [0,1)$ and
$M = \frac{1+\rho}{1-\rho}$.
Then, by Lemma \ref{lemma:positive-real-part}, we have 
$|D_a w(t)| \leq M a(t)^{-1} |w(t)| $.
Hence, by Lemma  \ref{lemma:differentail-inequalities},
we obtain  
$|w(t)| \geq |z|(a(t_0)/a(t_1))^M > 0$.
Choose $\zeta \in \mathbb{C}$ such that $z = e^{\zeta}$.
Then there exists a single-valued 
branch $\log w(t)$ 
satisfying $\log w(t_0) = \zeta$,
i.e., $u(t) = \log w(t)$ is the unique continuous function 
on $[t_0,t_1]$ such that $w(t) = e^{u(t)}$ and $u(t_0) = \zeta$.
It is easy to see that  
$\log w(t)$ is absolutely $a$-continuous on $[t_0,t_1]$,
and 
\[
    D_a ( \log w(t)) 
 = \frac{1}{w(t)} D_aw(t)
 = - \frac{1}{a(t)} P(w(t),t) \quad \text{$\mu_a$-a.e.}
\]
Integration then yields
\[
   \log w(t) - \log w(t_0)
  = - \int_{[t_0,t]} 
 \frac{1}{a( \tau )} P(w(\tau), \tau ) \, d \mu_a ( \tau ) ,
\]
which is equivalent to (\ref{eq:equivalent-integral-equation}).
Moreover, taking real parts of both sides shows 
that $|w(t)|$ is strictly decreasing in $t$.

In the case that $z=0$,
it follows from Lemma \ref{lemma:differentail-inequalities}
that $w(t)= 0$ on $[t_0,t_1]$, which clearly solve
(\ref{eq:equivalent-integral-equation})
with $w(t_0)=0$.

\noindent\textbf{Step 2.}
We construct a sequence of functions that approximate the solution
to (\ref{eq:equivalent-integral-equation}). 
Define $w_1 (z, t) \equiv z$ 
for $(z,t) \in \mathbb{D} \times [t_0,t_1]$. 
Suppose that inductively we have 
a function $w_n(z,t)$ satisfying:
\begin{itemize}
 \item[{\rm (a)}]
For each fixed $z \in \mathbb{D}$, 
$w_{n}(z,t)$ is continuous in 
$t \in [t_0, t_1]$
with $w_n(z,t_0)= z$. 
 \item[{\rm (b)}]
For $z \in \mathbb{D}$ and $t \in [t_0, t_1]$,
we have $|w_n(z,t)| \leq |z|$.
 \item[{\rm (c)}]
For each fixed $t \in [t_0 , t_1]$, 
$w_{n}(z,t)$ is analytic in $z \in \mathbb{D}$.
\end{itemize}
Then, by Lemma \ref{lemma:measurability-of-w(z,t)},
for each fixed $z \in \mathbb{D}$,
the function 
$P(w_n(z,t),t)$ is $\mathcal{M}_a (I)$-measurable in $[t_0, t_1]$.
By (b) and Lemma \ref{lemma:positive-real-part}, we have 
\[ 
   \left| 
  \frac{1}{a(t)} P(w_n(z,t ), t)
  \right|
  \leq
  \frac{1}{a(t_0)} \frac{1+|z|}{1-|z|}, 
 \quad (z,t ) \in \mathbb{D} \times [t_0, t_1] .
\]
Hence, $P(w_n(z,t),t)$ is $\mu_a$-integrable on $[t_0,t_1]$.
Therefore, we define
\[
  w_{n+1}(z,t) 
 = 
 z \exp \left[ - \int_{[t_0,t]}
 \frac{1}{a(\tau)} P(w_n(z, \tau ), \tau) \, d \mu_a (\tau ) \right] ,
 \quad t \in [t_0, t_1]. 
\] 
Clearly $w_{n+1}$ satisfies (a).
Since  
$\Real P(z,t) > 0$ and $a(t) > 0$, 
it also satisfies (b).
Let $q(z,t) = P(w_n(z,t),t)$. 
Then $q(z,t)$ is analytic in $z$ with
$\Real q(z,t) > 0$ and $q(0,t)=1$.
By Lemma \ref{lemma:positive-real-part},
$|q'(z,t)| \leq \frac{2}{(1-r)^2}$ 
for $|z| \leq r < 1$ and $t \in I \cap [t_0, \infty ) $.
Fix $t \in I$ with $t > t_0$, 
and define 
\[
  h(z) = \int_{[t_0.t]} \frac{1}{a(\tau )} q(z,\tau ) \, d \mu_a( \tau ).
\]
Then, by the Lebesgue dominated convergence theorem,
\begin{align*}
 \frac{h(z+\Delta z)-h(z)}{\Delta z }
 = & \, 
 \int_{[t_0.t]} \frac{1}{a(\tau )} 
 \left\{ \int_0^1 q'(z + \theta \Delta z , \tau )\, d \theta \right\} 
 \, d \mu_a( \tau )
\\ 
 \rightarrow & \, 
 \int_{[t_0.t]} \frac{1}{a(\tau )} q'(z,\tau  ) \, d \mu_a( \tau ),
\end{align*}
as $\Delta z \rightarrow 0$ with 
$z, z + \Delta z \in \overline{\mathbb{D}}(0,r)$. 
Hence $h(z)$ is analytic in $z$,
and so $w_{n+1}(z,t)$ satisfies (c).

\noindent\textbf{Step 3.}
We show that $\{ w_n(z,t)\}_{n=1}^\infty$ converges locally uniformly on 
$\mathbb{D} \times [t_0,t_1]$, and that 
the limit function $w(z,t) := \lim_{n \rightarrow \infty } w_n(z,t)$ 
satisfies the integral equation (\ref{eq:equivalent-integral-equation}).
Note that from the locally uniform convergence it follows that
$w(z,t)$ satisfies conditions (a), (b) and (c).

For $\Real a \geq 0$ and $\Real b \geq 0$,
we have
\begin{align}
  |e^{-b}-e^{-a}| 
  = & \, 
  \left| (b-a) \int_0^1 e^{-(1-\lambda )a-\lambda b } \, d \lambda 
  \right| 
\\ 
 \leq & \, 
  |b-a| \int_0^1 e^{-(1-\lambda ) \Real a - \lambda \Real b } \, d \lambda 
  \leq  |b-a|.
\nonumber
\end{align}
Using this
and Lemma~\ref{lemma:positive-real-part}, for $n\geq 2$,
we obtain
\begin{align*}
 & |w_{n+1}(z,t)-w_n(z,t)| 
\\
 \leq&
 |z| 
  \int_{[t_0,t]} \frac{1}{a( \tau )} 
   \left|  P(w_n(z,\tau), \tau ) - P(w_{n-1}(z,\tau), \tau ) 
  \right|
  \, d \mu_a (\tau ) 
\\
 \leq&
 \frac{2|z|}{(1-|z|)^2}
 \int_{[t_0,t]} \frac{1}{a( \tau )} 
   \left| w_n(z,\tau) - w_{n-1}(z,\tau)
  \right|
  \, d \mu_a (\tau ) .
\end{align*}
Similarly, using 
(\ref{eq:log-to-n-th-power}), we have
\begin{align*}
 & |w_2(z,t)-w_1(z,t)| 
\\
 =&  |z| 
 \left| 
   \exp \left[ - \int_{[t_0,t]} \frac{1}{a( \tau )} 
   P(z, \tau ) \, d \mu_a (\tau ) \right] 
   -
   1 
 \right|
\\
 \leq&
 |z| \left|
  \int_{[t_0,t]} \frac{1}{a( \tau )} 
     P(z, \tau )
    \, d \mu_a (\tau ) 
 \right|
\\
 \leq&
  \frac{|z|(1+|z|)}{1-|z|}
  \int_{[t_0,t]} \frac{1}{a( \tau )} 
    \, d \mu_a( \tau ) 
 = \frac{|z|(1+|z|)}{1-|z|} \log \frac{a(t)}{a(t_0)} .
\end{align*}
Hence by induction and (\ref{eq:log-to-n-th-power}), we obtain
\[
  |w_{n+1}(z,t)-w_n(z,t)|
  \leq
  \frac{|z|(1+|z|)}{n!(1-|z|)}
  \left( 
  \frac{2|z|}{(1-|z|)^2} \right)^{n-1}
  \left( \log \frac{a(t)}{a(t_0)} \right)^n .
\]
Thus $\{ w_n(z,t)\}_{n=1}^\infty$ converges locally uniformly on 
$\mathbb{D} \times [t_0,t_1]$. 
Define $w(z,t) := \lim_{n \rightarrow \infty } w_n(z,t)$. 
It is easy to verify that $w(z,t)$ satisfies
conditions (a), (b) and (c).
Since
\begin{align*}
&    \left|  
   \int_{[t_0,t]} 
  \frac{P(w_n(z,\tau), \tau ) }{a( \tau )} \, d \mu_a (\tau ) 
   - 
   \int_{[t_0,t]} 
 \frac{P(w(z,\tau), \tau ) }{a( \tau )} \, d \mu_a (\tau ) 
  \right|
\\
&  \leq
 \frac{2}{(1-|z|)^2}
  \frac{|w_n(z,\tau)- w(z,\tau)| }{a( \tau )} \, d \mu_a (\tau ) 
 \to 0 \quad \text{as } n \to \infty ,
\end{align*}
we have
\begin{align*}
   w(z,t) 
 & = \lim_{n \to \infty} w_n(z,t)
\\
 & = z \exp \left[- \int_{[t_0,t]} 
 \frac{P(w_n(z,\tau), \tau ) }{a( \tau )} \, d \mu_a (\tau ) \right]
\\
 & = z \exp \left[- \int_{[t_0,t]} 
 \frac{P(w(z,\tau), \tau ) }{a( \tau )} \, d \mu_a (\tau ) \right] .
\end{align*}
Thus $w(z,t)$ satisfies the integral equation 
(\ref{eq:equivalent-integral-equation}),
and hence it solves the differential equation (\ref{eq:Loewner-ODE-2}).

\noindent\textbf{Step 4.}
We show the uniqueness of the solution $w(z,t)$
and its univalence in $z$.
Assume that 
another function 
$\tilde{w}(z, t)$ is absolutely $a$-continuous in $t \in [t_0,t_1]$,
and satisfies $\tilde{w}(z, t_0) =z$ and the differential equation 
(\ref{eq:Loewner-ODE-2}).
Since $\tilde{w}(z, \cdot )$ satisfies 
(\ref{eq:equivalent-integral-equation}),
we have 
$|\tilde{w}(z,t)| \leq |z|$ holds for $t \in I \cap [t_0, t_1]$.

By Lemma \ref{lemma:positive-real-part}, 
$|\{ z P(z,t) \}'| \leq |P(z,t)| + |zP'(z,t)| \leq \frac{2}{(1-|z|)^2}$,
so for $z, \tilde{z} \in \overline{\mathbb{D}}(0,r)$, 
$|zP(z,t)-\tilde{z}P(\tilde{z},t)| \leq \frac{2|z-\tilde{z}|}{(1-r)^2}$.
In particular, for each $z \in \mathbb{D}$, we obtain
\[
   \left| D_a \left( w(z,t) - \tilde{w}(z,t) \right) \right|
 \leq
  \frac{2\left| w(z,t) - \tilde{w}(z,t) \right|}{a(t)(1-r)^2}.
\]
Since $w(z,t_0)-\tilde{w}(z,t_0) =z-z=0$,
Lemma \ref{lemma:differentail-inequalities} implies
$w(z,t) = \tilde{w}(z,t)$ for all $t \in I \cap [t_0, t_1]$.
Thus the solution is unique.

Similarly, since  for $z_1 , z_2 \in \overline{\mathbb{D}}(0,r)$, 
\[
  |D_a(w(z_1,t) - w(z_2,t))| 
 \leq \frac{2|w(z_1,t) - w(z_2,t)|}{a(t)(1-r)^2} ,
\]
we have, by $w(z_1,t_0)-w(z_2,t_0) = z_1-z_2 $ 
and Lemma \ref{lemma:differentail-inequalities}, 
\[
   |w(z_1,t) - w(z_2,t)| \geq |z_1-z_2| 
 \left( \frac{a(t_0)}{a(t)} \right)^{\frac{2}{(1-r)^2}} .
\]
Thus $w(z,t)$ is univalent in $z$.


\noindent\textbf{Step 5.}
Now we write $\omega(z,t_0,t)$ instead of $w(z,t)$
for  $(t_0 ,t) \in I_+^2$ and $z \in \mathbb{D}$.
We show that $\{ \omega (\cdot, s,t) \}_{(s,t)\in I_+^2}$ 
forms a transition family 
with $\omega' (0, s,t) = \frac{a(s)}{a(t)}$
for all $(s,t) \in I_+^2$.
Note that, 
by the initial condition, 
we clearly have $\omega (z,s,s) \equiv z$
for all $s \in I$.

Let $(t_0, t_1) \in I_+^2$.
As a function of $t \in I \cap [t_1, \infty)$, 
$\omega(z,t_0,t)$ and $\omega(\omega(z,t_0,t_1),t_1,t)$
satisfy the same equation (\ref{eq:Loewner-ODE-2})
with the same initial condition 
$\omega (z,t_0,t_1) = \omega(\omega(z,t_0,t_1),t_1,t_1)$.
Therefore, by uniqueness, we have
\[
  \omega(z,t_0,t) = \omega(\omega(z,t_0,t_1),t_1,t),
\]
and thus $\{ \omega (\cdot, s,t) \}_{(s,t)\in I_+^2}$ 
forms a transition family.

Next, by $\omega(0,t_0,t)=0$ and $P(0,t)=1$, we have 
\begin{align*}
  \omega'(0,t_0,t) 
 = \lim_{z \rightarrow 0} \frac{\omega(z,t_0,t)}{z}
 =& \lim_{z \rightarrow 0} 
 \exp \left[ - \int_{[t_0,t]} \frac{1}{a(\tau)}
 P(\omega (z,t_0,\tau ),\tau)  \, d \mu_a( \tau ) \right]
\\
 =& \exp \left[ 
 - \int_{[t_0,t]} \frac{1}{a(\tau)} \, d \mu_a ( \tau ) \right]
 = \frac{a(t_0)}{a(t)}.
\end{align*}
\end{proof}

\section{Integral Representation of the Limit Chain}
Suppose that 
$I$ is a right-open interval 
with $\beta = \sup I (\not\in I)$.
In \S \ref{chapter:BasicEstimates}, we saw
that if $\{ \omega_{s,t}\}_{(s,t) \in I_+^2}$ is a continuous 
transition family,
the locally uniform limit 
$g_s(z) := \lim_{t \nearrow \beta} a(t) \omega(z,s,t)$
exists on $\mathbb{D}$, and 
the family $\{ g_s \}_{s \in I}$ forms a Loewner chain 
whose associated transition family is 
$\{ \omega_{s,t}\}_{(s,t) \in I_+^2}$.
If $a(t)$ is strictly increasing and continuous,
then an integral representation of $g_s$ can be obtained.

By (\ref{eq:equivalent-integral-equation}),
we have
\[
    a(t)\omega(z,s,t)
    =
    a(s) z \exp 
    \left[ 
    \int_{[s,t]}
    \frac{1}{a(\tau)} 
   \{ 1 - P(\omega (z,s,\tau ),\tau)\}
    \, d \mu_a ( \tau )
    \right] .
\]
Since $\frac{a(t)}{a(s)} \omega( \cdot ,s,t) $ is univalent,
the growth theorem implies
\[
    |\omega (z,s,t )| \leq \frac{a(s)|z|}{a(t)(1-|z|)^2} .
\]
Combining this with Lemma~\ref{lemma:positive-real-part}
and the fact that $|\omega(z,s,t)|\leq |z|$, 
we obtain for $|z| \leq r$
\begin{align*}
  \frac{1}{a(t)}|1- P(\omega(z,s,t),t)| 
 \leq&
  \frac{1}{a(t)}
 \frac{2|\omega(z,s,t)|}{(1-|\omega(z,s,t)|)}
\\
 \leq&
  \frac{2a(s) r}{a(t)^2 (1-r)^3}.
\end{align*}
Since $\frac{1}{a(t)^2}$ is $\mu_a$-integrable on $[t_0, \beta )$,
we obtain the expression 
\begin{align}
    g_s (z)
    = \, & 
    \lim_{t \nearrow  \beta } a(t) \omega(z,s, t)
\\
    = \, &
    a(s) z \exp 
    \left[ 
    \int_{[s, \beta )}
    \frac{1}{a(\tau)} \{1- P(\omega(z,s,\tau),\tau) \}
    \, d \mu_a( \tau)
    \right] .
\nonumber
\end{align}

\chapter{Schlicht Subordination and Connecting Chain}
\label{chapter:SchlichtSubordination}
\section{Classes, Expanding Behavior, Boundedness %
and a Counterexample}
Let $I \subset [-\infty, \infty]$ be an interval.
We begin by illustrating, with examples, 
some simple differences between 
(I) the class of Loewner chains of univalent functions on $I$
and (III) the class of general Loewner chains on $I$.

Let $\{ f_t \}_{t \in I}$ be a Loewner chain.
Then the family $\{ f_t (\mathbb{D}) \}_{t \in I}$ 
of domains in $\mathbb{C}$ 
is nondecreasing; that is,
$f_s (\mathbb{D}) \subset f_t (\mathbb{D}) $ for $(s,t) \in I_+^2$.
Note that the function $a(t) := f_t'(0)$, $t \in I$,
is also nondecreasing, 
and recall that $\{ f_t \}_{t \in I}$ is said to be 
strictly increasing if $a(t)$ is strictly increasing.  

\begin{definition}
A Loewner chain $\{ f_t \}_{t \in I}$ is said to be  
\emph{strictly expanding} if 
the associated 
family of domains 
$\{ f_t(\mathbb{D} ) \}_{t\in I}$ 
is strictly increasing; that is, 
\begin{equation}
f_{t_1}(\mathbb{D}) \subsetneq f_{t_2}(\mathbb{D}) \quad
\text{whenever $t_1,t_2 \in I$ with $t_1<t_2$.}
\end{equation} 
\end{definition}

By the uniqueness part of the Schwarz lemma, 
if a Loewner chain $\{ f_t \}_{t \in I}$ is strictly expanding, 
then the function $a(t) = f_t '(0)$, $t \in I$, is strictly increasing.
When $\{ f_t \}_{t \in I}$ consists of univalent functions,
the converse is also true.
However, if the functions $f_t$ are not required to be univalent,
the converse fails.
We present a simple counterexample.
\begin{example}
\label{eg:increasing_not_expanding}
For $t>0$, let $g_t$ be the conformal mapping of
$\mathbb{D}$ onto the rectangle 
$\{ w \in \mathbb{C}: |\Real w|<1 , |\Imaginary w | < t \;\}$,
with $g_t(0)=0$ and $g_t'(0)>0$.
Define $f_t = e^{g_t}$, $t \in I$.
Then it is easy to see that 
the function $f_t ' (0) (=g_t'(0))$ is strictly increasing in $t$
and so is  the family $\{ f_t \}_{t > 0}$ by definition. 
However, we have
$f_t(\mathbb{D}) = \{ w \in \mathbb{C}:  e^{-1} < |w|  < e \}$
for any $t > \pi $.
Thus, the family $\{ f_t \}_{t > 0 }$ is not strictly expanding.
\end{example}

A Loewner chain $\{ f_t\}_{t \in I}$
is bounded  on 
$I \cap [- \infty ,t_0] \times \overline{\mathbb{D}}(0,r)$
for every $t_0 \in I$ and $r \in (0,1)$.  
Indeed, let $\{ \omega_{s,t}\}_{(s,t) \in I_+^2}$ be 
the associated transition family.
Then, for $t  \in I$ with $t \leq t_0 $ and $r \in (0,1)$,
we have, by the Schwarz lemma,
\begin{equation}
\label{ineq:bound_for_maximum_on_subdisk}
    \max_{z \in \overline{\mathbb{D}}(0,r)} |f_t(z)| 
   =  \max_{z \in \overline{\mathbb{D}}(0,r)} |f_{t_0}(\omega_{t,t_0}(z))| 
   \leq \max_{\zeta \in \overline{\mathbb{D}}(0,r)} |f_{t_0}(\zeta)|.  
\end{equation}

Suppose that 
$\{ f_t \}_{t \in I}$ 
is a Loewner chain of univalent functions. 
Then, by the growth theorem for univalent analytic functions, we have
\[
    |f_t(z)| \leq \frac{f_t' (0) |z|}{(1-|z|)^2} , 
    \quad z  \in \mathbb{D}.
\]
Therefore, for any $M >0 $, 
the class of Loewner chains 
$\{ f_t \}_{t \in I}$ of univalent functions defined on some interval $I$
satisfying $\sup_{t \in I} f_t'(0) \leq M$
is uniformly bounded on $I \times \overline{\mathbb{D}}(0,r)$ 
for every $r \in (0,1)$.

Contrary to the case of univalent functions,
there are no local upper bounds for
the class of all Loewner chains  
$\{ f_t\}_{t \in I}$ satisfying $\sup_{t \in I} f_t'(0) \leq M$.  
For example, let 
\[
   f_n(z,t) = \frac{1}{n} \left\{ e^{ne^t z } - 1 \right\},
   \quad (z,t) \in \mathbb{D} \times (-\infty , \infty ),
   \quad n \in \mathbb{N}.
\]
Then it is easy to see that $\{ f_n ( \cdot ,t ) \}_{n=1}^\infty$ 
is a sequence of normalized Loewner chains, 
and that for any $r \in (0,1)$ and $t_0 \in \mathbb{R}$, 
\[
   \max_{|z| \leq r, \, t \leq t_0 } |f_n(z,t)|
   = \frac{1}{n} \left\{ (e^{e^{t_0} r })^n  - 1 \right\}
   \rightarrow \infty , \quad
   n \rightarrow \infty .
\]  
Later, 
we shall give a family of 
Loewner chains $\{ g_n (\cdot , t) \}_{0<t<\infty} $,
$n \in \mathbb{N}$, consisting of universal covering maps 
on $I = (0, \infty) $ with $g_n '(0,t)=t$, $0<t<\infty$,
which is not uniformly bounded
on  $(0 , t_0] \times \overline{\mathbb{D}}(0,r)$ 
for any $t_0 > 0$ and $r \in (0,1)$.
See Example \ref{example:universal-covering-onto-outside-of-a-disc}.

\section{Schlicht Subordination and Continuous Connection}
Next we consider the question of when there exists a 
continuous Loewner chain connecting two given analytic functions 
in $\mathbb{D}$.
\begin{definition}
A function $f \in \mathcal{H} (\mathbb{D})$ is said to be 
\emph{schlicht subordinate} 
to a function $g \in \mathcal{H} (\mathbb{D})$ 
if there exists a univalent analytic map
$\omega : \mathbb{D} \rightarrow \mathbb{D}$
with $\omega(0)=0$ such that $f= g \circ \omega$. 
We say that $f \in \mathcal{H}_0 (\mathbb{D})$ can be 
\emph{continuously connected to} $g \in \mathcal{H}_0 (\mathbb{D})$
by a Loewner chain if there exists a continuous 
Loewner chain 
$\{ f_t \}_{\alpha \leq t \leq \beta}$ 
satisfying $f_\alpha = f$ and $f_\beta = g$.
\end{definition}

The following result is known;
See Pommerenke \cite[\S 4 Folgerung 1]{Pommerenke1965}.
For completeness and for later applications, 
we provide a proof.
We make use of the
Carath\'{e}odory kernel convergence theorem. 
For proofs and details, see  
\cite[Chapter 6]{Ahlfors:ConformalInvariants},
\cite[Chapter 3]{Duren:univ-functions},
\cite[Chapter 1]{Pommerenke1}
or Chapter \ref{chapter:KernelAndCovering} of this article,
where we a generalization of the theorem is also given. 

\begin{theorem}[Pommerenke]
\label{theorem:schlicht-subordination-and-continuous-chain}
A function $f \in \mathcal{H}_0 (\mathbb{D})$ can be 
continuously connected to 
a function $g \in \mathcal{H}_0 (\mathbb{D})$ 
by a Loewner chain 
if and only if $f$ is schlicht subordinate to $g$.
\end{theorem}

\begin{proof}
The necessity easily follows from 
Theorem \ref{thm:omega-is-univalent-if-ft-is-continuous}.
Conversely, let $\varphi \in \mathfrak{B}$ be 
the unique univalent mapping with 
$f = g \circ \varphi$.
We may assume $\varphi'(0) = f'(0)/g'(0) \in (0,1)$, 
since otherwise $f$ coincides with $g$.

Take a sequence $\{ r_n \}_{n=1}^\infty$ with 
$0< r_1 < \cdots < r_n \nearrow  1$ and
set $\varphi_n(z) = \varphi(r_n z)$, $n \in \mathbb{N}$.
Then for each $n \in \mathbb{N}$,
$\varphi_n( \overline{\mathbb{D}}) \subset \mathbb{D}$.
Choose $z_n \in \partial \mathbb{D}$ such that 
$|\varphi_n(z_n) | = \max_{z \in \overline{\mathbb{D}}} |\varphi_n(z)|$.
Let $\gamma_n : [0,1] \rightarrow \overline{\mathbb{D}}$ 
be the curve
consisting of radial line segment from 
$\varphi_n(z_n)/|\varphi_n(z_n)| \in \partial \mathbb{D}$
to $\varphi_n(z_n)$, and the boundary curve 
$\varphi_n( \partial \mathbb{D})$ from and to $\varphi_n(z_n)$.
By reparametrizing if necessary, we may assume
that $\gamma_n$ is defined on $[0,1]$ and injective on $[0,1)$.
For each fixed $0 < t \leq 1 $,  
let $\varphi_n (z , t)$, $z \in \mathbb{D}$, be 
the unique conformal mapping of $\mathbb{D}$
onto the simply connected domain 
$\mathbb{D} \backslash \{ \gamma_n(s): 0 \leq s \leq 1-t \}$,
and set $\varphi_n (z,0) = \varphi_n (z)$, $z \in \mathbb{D}$.
Note that $\varphi_n (z,1) \equiv z$, $z \in \mathbb{D}$.
Then for fixed $n \in \mathbb{N}$,
since the family of simply connected domains 
$\{ \varphi_n(\mathbb{D},t) \}_{0 \leq t \leq 1}$ 
is strictly increasing and continuous 
in the sense of kernel convergence
with respect to the origin,
$\{ \varphi_n( \cdot , t) \}_{0 \leq t \leq 1 }$ is  
a Loewner chain. 
By reparametrizing, we may assume
$\varphi_n ' (0,t) = t$, $r_n \alpha \leq t \leq 1$,
where $\alpha : = \varphi'(0) \in (0,1)$. 
Then the family $\{ \varphi_n(\cdot ,t) \}_{\alpha \leq t \leq 1}$
is a continuous and strictly increasing 
Loewner chain of univalent functions.
Furthermore,
since $|\varphi_n(z,t)| \leq 1$,  
the sequence $\{ \varphi_n (z,t) \}_{n=1}^\infty $
of functions of two variables 
$(z,t) \in \mathbb{D} \times [\alpha ,1]$ 
is uniformly bounded on $\mathbb{D} \times [\alpha ,1]$.

\begin{figure}[h]
\begin{tikzpicture}[scale=1/3]
\draw[thick] (0,0) circle (9);
\draw[fill] (0,0) circle (0.1);
\node[below] at (0,0) {\scriptsize $\rm{O}$}; 
\draw[thick, dashed] plot [smooth] 
coordinates {(5,0.4) (4.9,1.5) (5.19,3)};
\draw[thick] plot [smooth] 
coordinates {(5.19,3) (4.5,3.5) 
(4,4.1) (3,4.5) (2,4.8)
(1,5) (0,5.1) (-2,4.5) (-3,4.2) (-3.5,3)
(-3.4,2) (-3.5,0.5) (-4.3,-0.5) (-4.5,-1.5) (-4,-2.8) 
(-3,-3.3) (-1,-3.5) (0,-4.4) (2,-4.2) (4.3,-2.5) (3.6,-1.4)
(4,-0.4) (5,0.4)}; 
\draw[fill] (5.19,3) circle (0.1);
\draw[fill] (7.79,4.5) circle (0.1);
\draw[thick] (5.19,3)--(7.79,4.5);
\node at (6.7,2.4) {\scriptsize $\varphi_n(z_n)$};
\node[right]  at (8,4.5) 
{\scriptsize $\frac{\varphi_n(z_n)}{|\varphi_n(z_n)|}$};
\node at (-2.3,5.5) {\scriptsize $\varphi_n(\partial \mathbb{D})$};
\node at (6,8) {\scriptsize $\mathbb{D}$};
\end{tikzpicture}  
\end{figure}

For any fixed $r \in (0,1)$
we show that 
the sequence $\{ \varphi_n (z,t) \}_{n=1}^\infty $
is equicontinuous on 
$\overline{\mathbb{D}}(0,r) \times [\alpha ,1]$. 
Let $\{\omega_n (\cdot ,s,t) \}_{\alpha\leq s\leq t \leq 1 }$ be the
associated transition family of 
$\{ \varphi_n(\cdot ,t) \}_{\alpha \leq t \leq 1}$.
Since  $|\varphi_n ' (z,t)| \leq (1-|z|^2)^{-1}$ 
and $|\varphi_n  (z,t)| \leq |z|$, 
we have for $s\leq t$ and $z_0,z_1 \in \overline{\mathbb{D}}(0,r)$
\begin{align*}
  | \varphi_n(z_1,t) - \varphi_n(z_0,s)| 
  =& 
    | \varphi_n(z_1,t) - \varphi_n(\omega_n (z_0,s,t),t)| 
\\
  =&
  \left| \int_{\omega_n(z_0,s,t)}^{z_1} \varphi_n ' (\zeta ,t) \, d \zeta  \right| 
\\
  \leq & 
  \frac{|z_1-  \varphi_n(z_0,s,t)|}{1-r^2} 
\\
  \leq & 
  \frac{|z_1- z_0| + |z_0 - \omega_n(z_0,s,t)|}{1-r^2} .
\end{align*}
By (\ref{ineq:distance-between-omega/z-and-1})
\[
  |z_0 - \omega_n(z_0,s,t)|
   \leq 
  \left( 1 - \frac{\varphi_n '(0,s)}{\varphi_n '(0,t)} \right) \frac{r(1+r)}{1-r} 
  = 
  \left( 1 - \frac{s}{t} \right) \frac{r(1+r)}{1-r} .
\]
Combining these inequalities,
we obtain
\begin{equation}
\label{ineq:diff_bet_omega_s_and_t}
 | \varphi_n(z_1,t) - \varphi_n(z_0,s)| 
 \leq 
 \frac{|z_1- z_0| }{1-r^2} 
 +  \left( 1 - \frac{s}{t} \right) \frac{r}{(1-r)^2} .
\end{equation}
Therefore,
the sequence $\{ \varphi_n (z,t) \}_{n=1}^\infty $
is equicontinuous on 
$\overline{\mathbb{D}}(0,r) \times [\alpha ,1]$.

Applying the Arzel\`{a}--Ascoli theorem  to 
the sequence $\{ \varphi_n (z,t) \}_{n=1}^\infty $,
we obtain a subsequence $\{ \varphi_{n_k} (z,t) \}_{k=1}^\infty$
that converges to a function 
$\varphi(z,t)$, $(z,t) \in \mathbb{D} \times [\alpha ,1]$ 
uniformly on compact subsets 
$\overline{\mathbb{D}}(0,r) \times [\alpha,1]$ 
for every $r \in (0,1)$.

For each fixed $t \in [\alpha ,1]$, 
as a function of $z \in \mathbb{D}$,
$\varphi(z,t) $ is a locally uniform limit of
the sequence of univalent functions 
$\{ \varphi_{n_k}(z,t)\}_{k=1}^\infty$ in $\mathbb{D}$ and 
satisfies
$\varphi'(0,t) = \lim_{k \rightarrow \infty} \varphi_{n_k}'(0,t) 
= t \neq 0$.
Hence, by Hurwitz's theorem, 
$\varphi(z,t)$ is univalent analytic in $\mathbb{D}$  
with $\varphi'(0,t)=t$, $\alpha \leq t \leq 1$.

Next, we show $\varphi (\cdot ,s) \prec \varphi (\cdot ,t)$ 
for each fixed $s$ and $t$ with $\alpha \leq s \leq t \leq 1 $.
Since $|\omega_{n_k}(z,s,t)| \leq 1$,
by passing to a further subsequence if necessary,
we may assume that $\{ \omega_{n_k}(z,s,t) \}_{k=1}^\infty$ 
converges locally uniformly on $\mathbb{D}$ 
to a function $\omega_{s,t}$ in $\mathfrak{B}$.
Therefore,
\[
 \varphi (z ,s)
 = \lim_{k \rightarrow \infty}
 \varphi_{n_k} (z,s )
 = \lim_{k \rightarrow \infty}
 \varphi_{n_k} (\omega_{n_k}(z,s,t),t )
 = \varphi (\omega_{s,t} (z),t) ,
\]
as required.

We have shown that 
$\{ \varphi (z,t) \}_{\alpha \leq t \leq 1}$
is a Loewner chain.
Since  $\varphi'(0,t) = t$, $\alpha \leq t \leq 1$,
$\{ \varphi (z,t) \}_{\alpha \leq t \leq 1}$ is continuous.
Furthermore, by (\ref{ineq:diff_bet_omega_s_and_t}),
we have
\begin{align*}
 | \varphi_{n_k}(z,\alpha ) - \varphi(r_{n_k} z)|
 = & \, 
 | \varphi_{n_k}(z,\alpha ) - \varphi_{n_k}(z,r_{n_k} \alpha )|
\\
 \leq & \, 
 \left( 
 1 - \frac{\varphi_{n_k}'(0, r_{n_k} \alpha )}{\varphi_{n_k}'(0, \alpha )}
 \right)
 \frac{|z|}{(1-|z|)^2} 
 =
 \frac{(1-r_{n_k})|z|}{(1-|z|)^2} \rightarrow 0,
\end{align*}
as $k \rightarrow \infty$.
Thus,
\[
  \varphi (z,\alpha ) 
 = \lim_{k \rightarrow \infty}\varphi_{n_k}(z, \alpha ) 
 = \lim_{k \rightarrow \infty}\varphi_{n_k}(z, r_{n_k} \alpha )
 = \lim_{k \rightarrow \infty}\varphi(r_{n_k}z)
 = \varphi (z) .
\]
Also we have 
\[
  \varphi(z,1) = \lim_{k \rightarrow \infty} \varphi_{n_k}(z,1) = z.
\]
Let  $f_t(z) = g ( \phi (z,t))$, $t \in [\alpha , 1]$.
Then $\{ f_t \}_{\alpha \leq t \leq 1}$ 
is a continuous Loewner chain connecting 
$f_\alpha = g \circ \varphi = f$ 
and $f_1 = g \circ \id_{\mathbb{D}} = g$.
\end{proof}

The following result is an immediate consequence of 
the above theorem,  
and the proof is therefore omitted.

\begin{corollary}
\label{corollary:univalent_f_and_g}
Let $f, g \in \mathcal{H}_0 (\mathbb{D})$ be univalent functions.
Then $f$ can be 
continuously connected to $g$ 
by a Loewner chain 
if and only if $f (\mathbb{D}) \subset g(\mathbb{D})$.
\end{corollary}

\section{Embedding Univalent Maps into Loewner Chains}
\begin{definition}
Let $f \in \mathcal{H}_0 (\mathbb{D})$.  
We say that $f$ is 
\emph{maximal in the sense of continuous Loewner chains}
if there exists no continuous Loewner chain 
$\{ f_t \}_{0 \leq t \leq \varepsilon}$ 
for any $\varepsilon > 0$ satisfying $f_0 = f$
and $f'(0) < f_\varepsilon'(0)$.
\end{definition}

Pommerenke~(\cite[Theorem 6.1]{Pommerenke1}) 
proved that for any univalent $f \in \mathcal{H}_0 (\mathbb{D})$
can be embedded in a normalized Loewner chain 
$\{ f_t \}_{\alpha \leq t < \infty}$ of univalent functions 
such that $f_\alpha = f$ and $\alpha = \log f'(0)$. 
Therefore, no univalent function $f$ can be maximal. 
As a simple application of 
Corollary~\ref{corollary:univalent_f_and_g},
we give a proof of Pommerenke's result.  
Here, normalization of the Loewner chain is not required,  
since a reparametrization can be made if necessary.

\begin{theorem}[Pommrenke]
\label{theorem:univalent_function_can_be_emmbedded_in_L-chain}
Let $f \in \mathcal{H}_0 (\mathbb{D})$ be a univalent function.
Then there exists a continuous 
Loewner chain 
$\{ f_t \}_{0 \leq t < \infty}$ of univalent functions 
such that
$f_0=f$ and $\lim_{t \to \infty}  f_t'(0) = \infty$.
\end{theorem}
\begin{proof}
Let $\Omega = f(\mathbb{D})$, $E = \hat{\mathbb{C}} \setminus \Omega$, 
and $r_0 = \sup \{ r > 0 : \mathbb{D}(0,r) \subset \Omega \}$. 
Define 
$E_r = E \cap \left( \hat{\mathbb{C}} \setminus \mathbb{D}(0,r)\right)$ 
for $r > 0$.
Let $C_r$ denote the component of $E_r$ containing $\infty$,
and let $D_r$ be the component of $\hat{\mathbb{C}} \setminus C_r$
containing $0$.
Then, for $0 < r_1 < r_2 < \infty$,
we successively have
\[
 E_{r_1} \supset E_{r_2}
 \quad \Longrightarrow \quad
 C_{r_1} \supset C_{r_2}
 \quad \Longrightarrow \quad
 D_{r_1} \subset D_{r_2}.
\]

We show that for each $r >0$, 
the domain $D_r$ is simply connected.
To prove this, choose a Jordan curve 
$\gamma :[0,1] \to D_r$ arbitrarily,
and let $U$ be bounded domains enclosed by $\gamma$. 
It suffices to verify $U \subset D_r$.

Since $\gamma ([0,1]) \cap C_r = \emptyset$ and $C_r$ is connected,
we have either $C_r \subset U$ or 
$C_r \subset \hat{\mathbb{C}} \setminus \overline{U}$.
Since $\infty \in C_r$, it follows that 
$C_r \subset \hat{\mathbb{C}} \setminus \overline{U}$.
Hence, $U \subset \hat{\mathbb{C}} \setminus C_r$.
Since $\partial U = \gamma ([0,1]) \subset D_r$,
$U \cap D_r \neq \emptyset$.
Therefore, we conclude $U \subset D_r$.

It is easy to see that $D_r = \Omega$ for $r \leq r_0$,
and that $\mathbb{D}(0,r) \subset D_r$ for $r > 0$.
Let $\Omega_0 = \Omega$ and $\Omega_n = D_{r_0+n}$ 
for $n \in \mathbb{N}$.
Then $\{ \Omega_n \}_{n=0}^\infty$ is a noncecreasing sequence
of simply connected proper domains in $\mathbb{C}$ such that
$\bigcup_{n=0}^\infty \Omega_n = \mathbb{C}$.
Applying Corollary~\ref{corollary:univalent_f_and_g},
for each $n \in \mathbb{N}$, 
one can construct a continuous 
Loewner chain $\{ f_t \}_{n-1 \leq t \leq n}$ of univalent functions
such that $f_{n-1}$ and $f_n$ are the unique conformal maps
in $\mathcal{H}(\mathbb{D})$ 
from $\mathbb{D}$ onto $\Omega_{n-1}$ and $\Omega_n$, respectively.
Since $\Omega_n = D_{r_0+n} \supset \mathbb{D}(0,r_0+n)$,
we have $f_n'(0) \geq r_0+n$, 
and hence $\lim_{n \to \infty} f_n'(0) = \infty$.
Therefore, the concatenated family $\{ f_t \}_{0 \leq t < \infty}$  
is the desired one.
\end{proof}

\section{Maximality and Boundary Behavior}
We now consider a condition that ensures the maximality 
of $f \in \mathcal{H}_0(\mathbb{D})$.
We recall that by Fatou's theorem (\cite[Theorem 2.2]{Duren:Hp-spaces}), 
a bounded analytic function in $\mathbb{D}$ has
nontangential boundary values almost everywhere
with respect to the Lebesgue measure on $\partial \mathbb{D}$.
For $\omega \in \mathfrak{B}$, we denote
the nontangential boundary value of $\omega$ 
at $\zeta \in \partial \mathbb{D}$
by $\omega (\zeta )$.

Let us recall that an analytic function $f$ in $\mathbb{D}$
is said to be an inner function if
$|f(z)| \leq 1$ in $\mathbb{D}$ and $|f(\zeta)| = 1$
for almost every $\zeta \in \partial \mathbb{D}$.

\begin{lemma}
\label{Lemma:boundary_values_for_omega}
Let $\omega \in \mathfrak{B}$ be univalent
in $\mathbb{D}$ with $|\omega'(0)| < 1$.
Then the Lebesgue measure 
of the set 
$\{ \zeta \in \partial \mathbb{D} : |\omega (\zeta )| < 1 \}$
of $\partial \mathbb{D}$ is positive. 
\end{lemma}
\begin{proof}
Suppose, on the contrary, that $ |\omega (\zeta )| =1$
for almost every $\zeta \in \partial \mathbb{D}$.
Then $\omega$ is an inner function 
and hence by the Frostman theorem (see \cite[Theorem 2.6.4]{Garnett}), 
for all $c \in \mathbb{D}$, 
except possibly for a set $E \subset \mathbb{D}$ of capacity zero,   
the function 
\[
   B_c(z) = \frac{\omega(z)- c}{1-\overline{c} \omega (z)} ,
 \quad z \in \mathbb{D}
\]
is a Blaschke product. 
Fix $c \in \mathbb{D} \setminus E$
Since 
$\omega$ is univalent
by 
Theorem \ref{theorem:schlicht-subordination-and-continuous-chain},
$B_c$ must be a Blaschke product of order one.
Therefore, both $B_c$ and $\omega$ 
are linear fractional transformations preserving $\mathbb{D}$.
Hence, by $\omega (0) = 0$ and $\omega'(0) > 0$,
we must have $\omega (z) \equiv z$,
which contradicts $|\omega'(0)| < 1$.
\end{proof}

Now we prove that if $f$ has nontangential boundary value 
almost nowhere, then  $f$ is maximal 
in the sense of continuous Loewner chains.
\begin{proof}[Proof of Theorem \ref{thm:no-nontangentaial-limit}]
Suppose, on the contrary, that $\{ f_t \}_{0 \leq t \leq \varepsilon}$ 
is a continuous Loewner chain 
with $f_0 = f$ and $f_0'(0) < f_\varepsilon '(0)$.
Take $\omega \in \mathfrak{B}$ such that $f = f_\varepsilon \circ \omega$.
Then, since $\{ f_t \}_{0 \leq t \leq \varepsilon}$ 
is continuous,
$\omega$ is univalent 
by Theorem \ref{theorem:schlicht-subordination-and-continuous-chain}.
Furthermore we have
$\omega'(0) = \frac{f'(0)}{f_\varepsilon '(0)} < 1$.

By Lemma \ref{Lemma:boundary_values_for_omega}
the Lebesgue measure of the set 
$A : = \{ \zeta \in \partial \mathbb{D} : |\omega (\zeta )| < 1 \}$
is positive.
For each $\zeta \in A$, we have 
\[
  f (z) 
 = f_\varepsilon (\omega( z)) 
 \rightarrow f_\varepsilon (\omega( \zeta))
\]
as $z \rightarrow \zeta$ nontangentially in $\mathbb{D}$,
which clearly contradicts the assumption.
\end{proof}

In 1930, by using a probabilistic argument, 
Littlewood proved the existence of 
an analytic function $f$ on $\mathbb{D}$ which has radial limits
almost nowhere on $\partial \mathbb{D}$ 
(for details see Theorem A.5 in \cite{Duren:Hp-spaces}).
It then follows that $f$ has nontangential limits
almost nowhere on $\partial \mathbb{D}$.
Therefore, by Theorem \ref{thm:no-nontangentaial-limit},
$f$ is maximal in the sense of continuous 
Loewner chains.

In 1962, MacLane \cite{MacLane} constructed, 
by making use of Runge's theorem,
an analytic function $F$ in $\mathbb{D}$ satisfying
\[
\liminf_{r \nearrow 1} |F(r \zeta )| = 0 , \quad      
\limsup_{r \nearrow 1} |F(r \zeta )| = + \infty  
 \quad \text{for all } \zeta \in \partial \mathbb{D}. 
\]
Then $F$ is also maximal in the sense of continuous 
Loewner chains.

For concrete examples of functions 
that have nontangential limits almost nowhere,
see the following proposition
and \cite{Nicholls} for further details of its proof.

\begin{proposition}
Let $f$ be a universal covering map of $\mathbb{D}$
onto a domain $\Omega$ in $\mathbb{C}$.
Suppose that $\Omega$ does not admit a Green's function.
Then $f$ has nontangential limits
almost nowhere on $\partial \mathbb{D}$.
\end{proposition}
\begin{proof}
Let $\Gamma$  be the covering transformation group of $f$.
A point $\zeta \in \partial \mathbb{D}$ is said to be
a conical limit point of $\Gamma$ if, for every
$z \in \mathbb{D}$, there exists a sequence
$\{ \gamma_n \}_{n=1}^\infty$ such that 
$\gamma_n(z) \to \zeta$ in some Stolz domain at $\zeta$
as $n \to \infty$.
It is easy to verify that  
if $\zeta$ is a conical limit point of $\Gamma$,
then $f$ does not have nontangential boundary value at $\zeta$.

Suppose that $\Omega$ does not admit Green's function.
Then, almost every point $\zeta \in \partial \mathbb{D}$
is a conical limit point of $\Gamma$.
Hence, $f$ has nontangential limits
almost nowhere on $\partial \mathbb{D}$.
\end{proof}

\chapter{Kernel Convergence of Domains } 
\label{chapter:KernelAndCovering}
\section{Foundations and Classical Results on Kernel Convergence}
Let $X$ be topological space.
A continuous map $\alpha :[0,1] \rightarrow X$
is called a path from the initial point $\alpha(0)$
to the final point $\alpha (1)$.
We say that $X$ is path-connected if for any
$x$, $y \in X$, there exists a path
from $x$ to $y$.
For $x_0 \in X$, a path
$\gamma : [0,1] \rightarrow X$ is called a loop based at $x_0$
if $\gamma(0) = \gamma (1) =x_0$.  
We define the loop $e_{x_0}: [0,1] \rightarrow X$ by
\[
   e_{x_0}(t)=x_0, \quad t \in [0,1]. 
\]
For paths $\alpha$ and $\beta$ with
the same initial and final points,
$\alpha$ is said to be (path) homotopic to
$\beta$ (denoted $\alpha \sim \beta$ in short) 
if there exists a continuous map
$F:[0,1] \times [0,1] \rightarrow X$ such that
\begin{align}
\label{eq:initial_and_final_paths}
 & F(t,0) = \alpha (t)  \quad \text{and} \quad F(t,1) = \beta (t), 
 \quad t \in [0,1] \\
\label{eq:initial_and_final_points}
 & F(0,u) = \alpha (0) = \beta (0)  \quad \text{and} \quad 
 F(1,u) = \alpha (1) = \beta (1), \quad u \in [0,1] . 
\end{align}
The map $F$ is called a path homotopy from $\alpha$ to $\beta$. 
It is not difficult to see that $\sim$ is an equivalence relation.
For details, see for example, \cite{Massey}.

A path-connected topological space $X$ 
is said to be simply connected,
if, whenever paths $\alpha$ and $\beta$ 
share the same initial and final points,
$\alpha$ is homotopic to $\beta$.
A path-connected topological space $X$ is simply connected
if and only if,  
for any point $x_0 \in X$ and any loop $\gamma$ 
based at $x_0$,
the loop $\gamma$ is null-homotopic; that is,
there exists a path homotopy 
from $\gamma$ to $e_{x_0}$ (i.e.,  $\gamma \sim e_{x_0}$).

For a domain $\Omega$ in $\hat{\mathbb{C}}$,
we have the following useful criterion:
$\Omega$ is simply connected if and only if 
$\hat{\mathbb{C}} \setminus \Omega$ 
is either empty or connected.

Let $w_0 \in \hat{\mathbb{C}}$ and 
$\{ \Omega_n \}_{n=1}^\infty$ be a sequence of domains in 
$\hat{\mathbb{C}}$.
Suppose that $w_0 \in \Omega_n$ for all sufficiently large $n$.
The kernel 
of $\{ \Omega_n \}_{n=1}^\infty$ with respect to the reference point 
$w_0$ is defined as the set consisting of $w_0$
together with all points $w$ such that
there exists a domain $H$ and $N \in \mathbb{N}$
with
\begin{align}
   w_0,w \in H \subset \Omega_n \quad \text{for all } n \geq N.
\nonumber
\end{align}
We denote the kernel by $\ker(w_0, \{ \Omega_n \}_{n=1}^\infty )$. 
Then $\ker(w_0, \{ \Omega_n \}_{n=1}^\infty )$ is either a domain 
containing $w_0$,  or coincides with the singleton set $\{w_0 \}$.

A sequence $\{ \Omega_n \}_{n=1}^\infty$ of domains 
is said to converge
to a domain $\Omega$ with respect to $w_0$ in the sense of kernel
if $\ker(w_0, \{ \Omega_{n_k} \}_{k=1}^\infty ) = \Omega$ 
for every subsequence $\{ \Omega_{n_k} \}_{k=1}^\infty$ of 
$\{ \Omega_n \}_{n=1}^\infty$.
Also, $\{ \Omega_n \}_{n=1}^\infty$ is said to degenerate to $\{ w_0 \}$
if $\ker(w_0, \{ \Omega_{n_k} \}_{k=1}^\infty ) = \{ w_0 \}$ for
every subsequence $\{ \Omega_{n_k} \}_{k=1}^\infty$.

Since $\hat{\mathbb{C}}$ is compact, it is easy to see that
$\{ \Omega_n \}_{n=1}^\infty$ converges to $\hat{\mathbb{C}}$
if and only if $\Omega_n = \hat{\mathbb{C}}$ 
for all sufficiently  large $n$.

Suppose temporarily that $w_0 \in \mathbb{C}$
and each $\Omega_n$ is a simply connected 
domain properly contained in $\mathbb{C}$.
Then there exists a unique conformal map 
$f_n$ of $\mathbb{D}$ onto $\Omega_n$  
with $f_n(0)=w_0$ and $f_n'(0)>0$.
The Carath\'{e}odory convergence theorem states the following:
\begin{itemize}
 \item[{\rm 1.}]
If $\{ \Omega_n \}_{n=1}^\infty$ converges to a domain $\Omega$ 
with $\Omega \subsetneq  \mathbb{C}$ 
in the sense of kernel with respect to $w_0$, 
then $\Omega$ is simply connected, 
and $\{f_n \}_{n=1}^\infty$  converges locally uniformly on $\mathbb{D}$
to the unique conformal map $f$ of $\mathbb{D}$ onto $\Omega$ with 
$f(0)=w_0$ and $f'(0)>0$. 
If $\{ \Omega_n \}_{n=1}^\infty$ degenerates to $\{ w_0 \}$,
then  $f_n \rightarrow w_0$ locally uniformly on $\mathbb{D}$.
 \item[{\rm 2.}]
If $\{ f_n \}_{n=1}^\infty$ converges locally uniformly on $\mathbb{D}$ 
to a nonconstant function $f$, then $f$ is analytic and univalent 
in $\mathbb{D}$, 
and $\{ \Omega_n \}_{n=1}^\infty$ converges to $f(\mathbb{D})$ 
in the sense of kernel with respect to $w_0$.
If $f_n \rightarrow w_0 $ locally uniformly on $\mathbb{D}$,
then  $\{ \Omega_n \}_{n=1}^\infty$ degenerates to $\{ w_0 \}$.
\end{itemize}

By replacing conformal maps with universal covering maps, 
Hejhal (\cite{Hejhal})
was able to generalize the Carath\'{e}odry theorem 
to the case 
where each $\Omega_n$ is not necessarily simply connected.

In the next chapter, we will endeavor to generalize  
Hejhal's theorem 
and provide a detailed proof.
Before doing so, we will study the basic properties of 
the notion of kernel convergence,
particularly in the case where 
$\Omega_n$, $n \in \mathbb{N}$, are multiply connected.
We begin with an equivalent condition for the kernel convergence 
that was introduced 
by Pommerenke (\cite[Problem~3, p.31]{Pommerenke1}
and \cite[\S~1.8]{Pommerenke2}).
For the reader's convenience, we will also provide 
a proof of this equivalence.

For $z,w \in \mathbb{C}$ 
we define the chordal distance between them by 
\begin{align*}
  d^*(z,w) 
  = & \, 
  \frac{|z-w|}{\sqrt{1+|z|^2}\sqrt{1+|w|^2}},
\\
  d^*(z, \infty) = & \, d^*(\infty, z) 
  = 
  \frac{1}{\sqrt{1+|z|^2}} , \quad 
 d^*(\infty, \infty ) = 0 .
\end{align*}
For $z \in \hat{\mathbb{C}}$ and nonempty sets
$E, F \subset \hat{\mathbb{C}} $, 
we define
\[
    d^*(z,E) 
  = 
  \inf_{w \in E} d^*(z, w ) \quad
  \text{and} \quad 
    d^*(E,F) 
  = 
  \inf_{\zeta \in E, \; w \in F} d^*(\zeta, w ).
\] 
We set
\[
   \mathbb{D}^* (z,r) = \{ w \in \hat{\mathbb{C}} : d^*(z,w) < r \},
   \quad z \in \hat{\mathbb{C}} \quad \text{and} \quad r > 0 . 
\]

\section{Equivalent Characterizations and Degeneracy Criteria}
We denote the set of interior and exterior points of a subset $A$
of a topological space by $\Int A$ and $\Ext A$, respectively.
We also denote the complementary set of $A$ by $A^c : = X \setminus A$.
In this chapter we frequently use the following simple lemma 
without mention.

\begin{lemma}
Let $A$ and $C$ be nonempty subsets of the topological space $X$.
Suppose that $C$  is connected,
$C \cap A \not= \emptyset$ 
and $C \backslash A (=C \cap A^c) \neq \emptyset$.
Then $C  \cap \partial A \not= \emptyset$.
\end{lemma}
\begin{proof}
Suppose $C  \cap \partial A = \emptyset$.
Then, since $X$ can be written as a disjoint union
$X = \Int A \cup \partial A \cup \Ext A$,
we have $C \subset \Int A \cup \Ext A$; that is,
$C$ is contained in the union of the two open sets $\Int A$
and $\Ext A$.
Furthermore, 
\begin{align*}
\emptyset \neq & \, 
C \cap A \subset C \cap (\Int A \cup \partial A) = C \cap \Int A , 
\\
\emptyset \neq & \, 
C \backslash A  \subset C \cap (\Ext A \cup \partial A) = C \cap \Ext A . 
\end{align*}
This contradicts the assumption that $C$ is connected. 
\end{proof}

\begin{theorem}
\label{thm:equivalent-definition-of-kernel-convergence}
Let 
$\{ \Omega_n \}_{n=1}^\infty$ be 
a sequence of domains in $\hat{\mathbb{C}}$, 
and let $\Omega$ a domain in $\hat{\mathbb{C}}$. 
Then the following three conditions 
are equivalent:
\begin{itemize}
 \item[{\rm (i)}]
Both of the following hold:
\begin{itemize}
\item[{\rm (a)}]
For every compact subset $K$ of $\Omega$,
there exists $N \in \mathbb{N}$ such that 
$K \subset \Omega_n$ for all $n \geq N$.
 \item[{\rm (b)}]
For every $c \in \partial \Omega$, 
$d^* (c , \partial \Omega_n)  \rightarrow 0$ 
as $n \rightarrow \infty$. 
\end{itemize}
 \item[{\rm (ii)}]
For every $w_0 \in \Omega$,  
$\{ \Omega_n \}_{n=1}^\infty $ converges to $\Omega$ 
in the sense of kernel with respect to $w_0$.
 \item[{\rm (iii)}]
For some $w_0 \in \Omega$,
$\{ \Omega_n \}_{n=1}^\infty $ converges to $\Omega$ 
in the sense of kernel with respect to $w_0$.
\end{itemize}
\end{theorem}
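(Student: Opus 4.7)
The plan is to establish the chain (ii)$\Rightarrow$(iii)$\Rightarrow$(i)$\Rightarrow$(ii); the implication (ii)$\Rightarrow$(iii) is immediate since $\Omega$ is nonempty, so the substance is in the other two. Throughout I will use only that $\Omega$ and each $\Omega_n$ is open and connected, that $(\hat{\mathbb{C}},d^*)$ is a compact metric space, and the definition of $\ker(w_0,\{\Omega_n\})$ quoted in the excerpt.

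First I would show (iii)$\Rightarrow$(i). Assume kernel convergence with respect to some fixed $w_0\in\Omega$. To get (a), given a compact $K\subset\Omega$, for each $w\in K$ the definition yields a domain $H_w$ with $w_0,w\in H_w\subset\Omega_n$ for $n\geq N_w$; finitely many such $H_w$ cover $K$ by compactness, so $K\subset\Omega_n$ for all sufficiently large $n$. For (b), I argue by contradiction: if $c\in\partial\Omega$ and $d^*(c,\partial\Omega_{n_k})\geq\varepsilon$ along a subsequence, then the $d^*$-disc $\mathbb{D}^*(c,\varepsilon)$ is connected and disjoint from $\partial\Omega_{n_k}$, hence lies entirely inside $\Omega_{n_k}$ or entirely outside. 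Since $c\in\partial\Omega$, pick $w\in\Omega\cap\mathbb{D}^*(c,\varepsilon)$; by (a) (already established), $w\in\Omega_{n_k}$ for large $k$, forcing $\mathbb{D}^*(c,\varepsilon)\subset\Omega_{n_k}$. Joining $w_0$ to a point of this disc by a path in $\Omega$ and thickening it to a tube (contained in $\Omega_{n_k}$ for large $k$ by (a) applied to the compact path) then gluing on $\mathbb{D}^*(c,\varepsilon)$ produces a domain $H$ with $w_0,c\in H\subset\Omega_{n_k}$, so $c\in\ker(w_0,\{\Omega_{n_k}\})=\Omega$, contradicting $c\in\partial\Omega$.

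Next I would prove (i)$\Rightarrow$(ii). Fix $w_0\in\Omega$ and any subsequence $\{\Omega_{n_k}\}$; I must show its kernel with respect to $w_0$ equals $\Omega$. For the inclusion $\Omega\subset\ker$, given $w\in\Omega$ take any open connected $H$ with $w_0,w\in H$ and $\overline{H}$ a compact subset of $\Omega$; condition (a) then gives $H\subset\Omega_{n_k}$ eventually, so $w\in\ker$. For the reverse inclusion, suppose $w\in\ker$ so that some domain $H$ satisfies $w_0,w\in H\subset\Omega_{n_k}$ for all large $k$. If $w\notin\Omega$, then any path from $w_0\in\Omega$ to $w$ inside $H$ must cross $\partial\Omega$ at some point $c\in H$; choose $\delta>0$ with $\overline{\mathbb{D}^*(c,\delta)}\subset H$, so $\overline{\mathbb{D}^*(c,\delta)}\subset\Omega_{n_k}$, giving $d^*(c,\partial\Omega_{n_k})\geq\delta$ for large $k$, which contradicts (b).

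The main obstacle I anticipate is being careful about the topology on $\hat{\mathbb{C}}$ (the point at infinity, the use of the chordal distance $d^*$ rather than the Euclidean one) and, in the step (iii)$\Rightarrow$(i)(b), the construction of the connecting tube showing $c\in\ker$ --- one needs a domain, not merely a connected set, and one must ensure it stays in $\Omega_{n_k}$ uniformly in $k$, which is exactly where condition (a) (already deduced in the same implication) is invoked. The rest is elementary point-set topology together with the connectedness trick that a connected set disjoint from $\partial\Omega_n$ lies entirely in $\Omega_n$ or entirely in its complement.
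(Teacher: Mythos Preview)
Your argument is correct and follows essentially the same route as the paper: both prove (iii)$\Rightarrow$(i) by a compactness covering for (a) and a contradiction based on the connectedness of a small $d^*$-disc around a boundary point for (b), and both prove (i)$\Rightarrow$(ii) by showing the two inclusions $\Omega\subset\ker$ and $\ker\subset\Omega$ separately. The only minor stylistic differences are that the paper, in (iii)$\Rightarrow$(b), obtains the connecting domain $H$ directly from the definition of $c^*\in\ker(w_0,\{\Omega_n\})$ rather than building a tube via (a), and in (i)$\Rightarrow\ker\subset\Omega$ it invokes connectedness of the kernel to locate a point of $\partial\Omega$ rather than tracing a path; neither change affects the substance.
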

\begin{proof}
Assuming (i) we show (ii).
Take $w_0 \in \Omega$ arbitrarily.
We claim that 
$\Omega \subset \ker ( w_0, \{ \Omega_n \}_{n=1}^\infty )$. 
To see this, let $w \in \Omega$, and
choose a polygonal line $\ell$ connecting $w_0$ and $w$ 
contained in $\Omega$. 
Let  $\delta_1 \in (0,  d^* (\ell , \partial \Omega ) )$,
and put $H = \bigcup_{\zeta \in \ell } \mathbb{D}^*(\zeta, \delta_1 )$.
Then $H$ is a domain  with 
$w_0,w \in H \subset \overline{H} \subset \Omega$. 
Since $\overline{H}$ is compact, by condition (a) we have 
that $\overline{H} \subset \Omega_n$
for all sufficiently large $n$.
Hence $w \in \ker ( w_0, \{ \Omega_n \}_{n=1}^\infty )$,
and we obtain $\Omega \subset \ker ( w_0, \{ \Omega_n \}_{n=1}^\infty )$.

Next we show that
$\ker ( w_0, \{ \Omega_n \}_{n=1}^\infty ) \subset \Omega$.
Suppose, on the contrary, that
\[
  \ker ( w_0, \{ \Omega_n \}_{n=1}^\infty ) 
\setminus \Omega \neq \emptyset .
\]
By (a), as shown above, 
$\Omega \subset \ker ( w_0, \{ \Omega_n \}_{n=1}^\infty )$. 
Since $\ker ( w_0, \{ \Omega_n \}_{n=1}^\infty )$ is connected
and intersects both $\Omega$ and $\hat{\mathbb{C}} \setminus \Omega$, 
there exists 
$c \in \ker ( w_0, \{ \Omega_n \}_{n=1}^\infty ) \cap \partial \Omega$.
Then, since $c \in \ker ( w_0, \{ \Omega_n \}_{n=1}^\infty )$,
some neighborhood of $c$ is contained in $\Omega_n$ 
for all sufficiently large $n$.
This contradicts (b). 
Therefore, $\ker ( w_0, \{ \Omega_n \}_{n=1}^\infty ) 
\setminus \Omega = \emptyset$, and hence
$\ker ( w_0, \{ \Omega_n \}_{n=1}^\infty ) = \Omega$.

If 
$\{ \Omega_n \}_{n=1}^\infty$ and $\Omega$ satisfy
(a) and (b), then any subsequence $\{ \Omega_{n_k}\}_{k=1}^\infty$
and $\Omega$ also satisfy (a) and (b).
Therefore, the above argument can be applied for any
subsequence $\{ \Omega_{n_k}\}_{k=1}^\infty$,
and we obtain 
$\ker ( w_0, \{ \Omega_{n_k} \}_{k=1}^\infty ) = \Omega $.
Thus, (ii) holds.

It is clear that (ii) implies (iii).
Now assume (iii) holds for some $w_0 \in \Omega$.
Let $K$ be a compact subset of 
$\Omega = \ker ( w_0, \{ \Omega_n \}_{n=1}^\infty )$.
For each $w \in K$ there exists a domain $H_w$ contained in $\Omega$
and $n(w) \in \mathbb{N}$ such that 
$w_0,w \in H_w \subset \Omega_n$ for all $n \geq n(w)$.
Since $K \subset \bigcup_{w \in K} H_w$ is an open covering of 
the compact set $K$, we can choose $w_1, \ldots, w_j$ such that
$K \subset H_{w_1} \cup \cdots \cup H_{w_j}$.
Therefore, we have 
$K \subset H_{w_1} \cup \cdots \cup H_{w_j} \subset \Omega_n$  
for all $n \geq \max \{ n(w_1), \ldots , n(w_j) \}$,
and hence (a) holds.

To show (b), suppose, on the contrary, that 
there exists $c \in \partial \Omega$, $\delta_2 > 0$
and a subsequence $\{ \Omega_{n_k} \}_{k=1}^\infty$ such that
$d^* (c , \partial \Omega_{n_k}) \geq \delta_2$ 
for all $k \in \mathbb{N}$.
Since $\mathbb{D}^*(c, \delta_2)$ is connected
and $\mathbb{D}^*(c, \delta_2) \subset \Omega_{n_k} \cup 
\left( \hat{\mathbb{C}} \backslash \overline{\Omega_{n_k}}\right)$,
we have either
$\mathbb{D}^*(c, \delta_2) \subset \Omega_{n_k}$
or 
$\mathbb{D}^* (c, \delta_2) \subset 
\hat{\mathbb{C}} \backslash \overline{\Omega_{n_k}}$.
Since $c \in \partial \Omega$, we can take 
$c^* \in \mathbb{D}^*(c,\delta_2) \cap \Omega 
=\mathbb{D}^*(c,\delta_2) \cap \ker ( w_0, \{ \Omega_n \}_{n=1}^\infty )$
and a domain $H$ with $w_0,c^* \subset H \subset \Omega_n$ 
for all sufficiently large $n$.
Hence we obtain $\mathbb{D}^*(c, \delta_2) \subset \Omega_{n_k}$ 
for all sufficiently large $k$.
Since $c^* \in H \cap \mathbb{D}^*(c,\delta_2)$, 
the union $H \cup \mathbb{D}^*(c,\delta_2)$ is a domain containing
$c$ and $w_0$. 
Therefore 
\[
  c \in H \cup \mathbb{D}^*(c,\delta_2) 
 \subset \ker ( w_0, \{ \Omega_{n_k}\}_{k=1}^\infty ) = \Omega, 
\]
which contradicts $c \in \partial \Omega$.
\end{proof}

By modifying the above proof, we obtain 
the following criterion.
\begin{theorem}
Let $w_0 \in \hat{\mathbb{C}}$ and  $\{ \Omega_n \}_{n=1}^\infty$ 
be a sequence of domains in $\hat{\mathbb{C}}$ 
such that $w_0 \in \Omega_n$ for all sufficiently large $n$.
Then $\{ \Omega_n \}_{n=1}^\infty$  degenerates to $\{w_0\}$
if and only if 
\begin{itemize}
 \item[{\rm (c)}]
$d^* (w_0, \partial \Omega_n ) \rightarrow 0$ as $n \rightarrow \infty$
\end{itemize}
holds.
\end{theorem}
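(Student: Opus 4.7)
The plan is to mimic the structure of the proof of Theorem \ref{thm:equivalent-definition-of-kernel-convergence}, exploiting the fact that $\mathbb{D}^*(w_0,\delta)$ is a connected open neighborhood of $w_0$ in $\hat{\mathbb{C}}$ for every $\delta > 0$ and every $w_0 \in \hat{\mathbb{C}}$. The key observation I will use repeatedly is that if $w_0 \in \Omega_n$ and $d^*(w_0,\partial\Omega_n) \geq \delta$, then $\mathbb{D}^*(w_0,\delta)$ is a connected set contained in $\Omega_n \cup (\hat{\mathbb{C}}\setminus\overline{\Omega_n})$ and meeting $\Omega_n$, hence entirely contained in $\Omega_n$.

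For the necessity, I would argue by contrapositive. Assuming (c) fails, I extract a $\delta > 0$ and a subsequence $\{\Omega_{n_k}\}_{k=1}^\infty$ with $d^*(w_0,\partial\Omega_{n_k}) \geq \delta$ for all sufficiently large $k$ (discarding finitely many terms to ensure $w_0 \in \Omega_{n_k}$). By the observation above, $\mathbb{D}^*(w_0,\delta) \subset \Omega_{n_k}$ for all such $k$. Since $\mathbb{D}^*(w_0,\delta)$ itself is a domain containing $w_0$, every point $w \in \mathbb{D}^*(w_0,\delta)$ lies in $\ker(w_0, \{\Omega_{n_k}\}_{k=1}^\infty)$. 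As $\mathbb{D}^*(w_0,\delta)$ contains points other than $w_0$, this kernel strictly contains $\{w_0\}$, so $\{\Omega_n\}_{n=1}^\infty$ does not degenerate to $\{w_0\}$.

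For the sufficiency, I assume (c) holds and again argue by contrapositive. If $\{\Omega_n\}_{n=1}^\infty$ does not degenerate to $\{w_0\}$, then for some subsequence $\{\Omega_{n_k}\}_{k=1}^\infty$ the set $\ker(w_0, \{\Omega_{n_k}\}_{k=1}^\infty)$ properly contains $\{w_0\}$. By definition of the kernel there exist a point $w \neq w_0$, a domain $H$ and $N \in \mathbb{N}$ such that $w_0, w \in H \subset \Omega_{n_k}$ for all $k \geq N$. Since $H$ is open in $\hat{\mathbb{C}}$ and contains $w_0$, there exists $\delta > 0$ with $\mathbb{D}^*(w_0,\delta) \subset H \subset \Omega_{n_k}$, which forces $d^*(w_0,\partial\Omega_{n_k}) \geq \delta$ for all $k \geq N$. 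This contradicts (c), completing the proof.

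There is no real obstacle here: the only point that requires attention is the uniform handling of the case $w_0 = \infty$ and of domains $\Omega_n$ that may equal $\hat{\mathbb{C}}$ (in which case $\partial\Omega_n = \emptyset$ and the convention $d^*(w_0,\emptyset) = +\infty$ makes (c) fail, consistent with the fact that a constant sequence $\Omega_n = \hat{\mathbb{C}}$ cannot degenerate). Using the spherical metric $d^*$ throughout, rather than the Euclidean metric, makes both directions genuinely symmetric and removes any need for case analysis.
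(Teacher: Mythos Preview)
Your proof is correct and follows precisely the route the paper indicates: it is the natural adaptation of the argument for Theorem \ref{thm:equivalent-definition-of-kernel-convergence} to the degenerate case, using spherical balls $\mathbb{D}^*(w_0,\delta)$ as the domains $H$ in both directions. The paper itself only says ``by modifying the above proof'' and omits the details, so your write-up is in fact more explicit than what the paper provides.
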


In defining kernel convergence, 
we can omit the reference point $w_0$ 
by applying 
Theorem~\ref{thm:equivalent-definition-of-kernel-convergence}.

\begin{definition}
\label{def:kernel-convergence-without-reference-point}
We say that a sequence of domains $\{ \Omega_n \}_{n=1}^\infty$ 
\emph{converges to} a domain $\Omega$
\emph{in the sense of kernel} (denoted simply by 
$\Omega_n \rightarrow \Omega$ as $n \rightarrow \infty$),
if both {\rm (a)} and {\rm (b)} hold.
\end{definition}

We make an important remark.
The above definition does not guarantee the uniqueness of 
the limit domain $\Omega$.
\begin{example}
\label{example:strip-minus-sgments}
For $n \in \mathbb{N}$, let $\Omega_n$ be the domain 
obtained from the strip 
$\{ w \in \mathbb{C} : | \Imaginary w | < 1 \}$
by removing the line segments 
$\{ k +is : | s |  \leq 1 - n^{-1} \}$ for each $k \in \mathbb{Z}$.
For $k \in \mathbb{Z}$, 
define
$D_k = \{ w \in \mathbb{C}: k < \Real w < k+1, \; | \Imaginary w | < 1 \}$.
Then for each $k \in \mathbb{Z}$,
we have $\Omega_n \rightarrow D_k$
as $n \rightarrow \infty$. \\[12pt]
\begin{tikzpicture}
\node at (0.5,0.6) {$\Omega_n$};
\draw[very thick] (-1.3,-1) -- (3.5,-1);
\draw[very thick] (-1.3,1) -- (3.5,1);
\foreach \n in {-1,...,3}
{\draw (\n,-0.5) node[below=0.6cm] {$\n$} --(\n,0.5);}
\end{tikzpicture}
\hfill
\begin{tikzpicture}
\draw[very thick] (-1.3,-1) -- (3.5,-1);
\draw[very thick] (-1.3,1) -- (3.5,1);
\foreach \n in {-1,...,3}
{\draw (\n,-1) node[below=0.1cm] {$\n$} 
node[above=1cm, right] {$D_{\n}$} --(\n,1);}
\end{tikzpicture}
\end{example}

\begin{proposition}
\label{prop:quasi-uniqueness-of-limit-domain}
If $\Omega_n \rightarrow \Omega$
and $\Omega_n \rightarrow \Omega'$ as $n \rightarrow \infty$
in the sense of kernel,
then either $\Omega = \Omega'$ or 
$\Omega \cap \Omega' = \emptyset$ holds.
\end{proposition}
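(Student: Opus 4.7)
The plan is to reduce the statement to the uniqueness of the kernel with respect to a fixed reference point, which is tautological from the definition of $\ker(w_0, \{\Omega_n\}_{n=1}^\infty)$, and then apply Theorem \ref{thm:equivalent-definition-of-kernel-convergence} to pass between the reference-point-free convergence of Definition \ref{def:kernel-convergence-without-reference-point} and the reference-point version.

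First I would suppose $\Omega \cap \Omega' \neq \emptyset$ and pick an arbitrary $w_0 \in \Omega \cap \Omega'$. The singleton $\{w_0\}$ is a compact subset of $\Omega$ (and of $\Omega'$), so condition (a) of Theorem \ref{thm:equivalent-definition-of-kernel-convergence}, which is built into Definition \ref{def:kernel-convergence-without-reference-point}, guarantees that $w_0 \in \Omega_n$ for all sufficiently large $n$. In particular the kernel $\ker(w_0,\{\Omega_{n_k}\}_{k=1}^\infty)$ with respect to $w_0$ is well defined for every subsequence.

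Next I would invoke the equivalence (i)$\Leftrightarrow$(ii) of Theorem \ref{thm:equivalent-definition-of-kernel-convergence} twice. Since $\Omega_n\to\Omega$ in the reference-point-free sense (i.e.\ (a) and (b) of the theorem hold with the limit $\Omega$), condition (ii) gives $\ker(w_0,\{\Omega_{n_k}\}_{k=1}^\infty)=\Omega$ for every subsequence. Applying the same equivalence to the hypothesis $\Omega_n\to\Omega'$, we also obtain $\ker(w_0,\{\Omega_{n_k}\}_{k=1}^\infty)=\Omega'$ for every subsequence. Taking the trivial subsequence $n_k=k$ yields $\Omega=\Omega'$.

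I do not anticipate a real obstacle here: the content of the statement is essentially that the reference-point-free convergence of Definition \ref{def:kernel-convergence-without-reference-point} is ambiguous \emph{only} about \emph{which} connected kernel one selects, not about the set of possible kernels, and Example \ref{example:strip-minus-sgments} illustrates exactly this phenomenon (the different limit domains $D_k$ are pairwise disjoint). The only point to be careful about is that Definition \ref{def:kernel-convergence-without-reference-point} does not presuppose a common reference point, which is why one has to \emph{produce} $w_0$ from the assumed nonempty intersection before invoking Theorem \ref{thm:equivalent-definition-of-kernel-convergence}; once $w_0$ is chosen, the conclusion is immediate from the set-theoretic definition of the kernel with respect to $w_0$.
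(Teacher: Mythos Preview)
Your proof is correct but takes a different route from the paper's. You reduce the statement to the tautological uniqueness of $\ker(w_0,\{\Omega_n\})$ for a fixed reference point by invoking the equivalence (i)$\Leftrightarrow$(ii) of Theorem~\ref{thm:equivalent-definition-of-kernel-convergence}; once $w_0\in\Omega\cap\Omega'$ is chosen, both $\Omega$ and $\Omega'$ are identified with the same set-theoretically defined kernel, and equality is immediate. The paper instead argues directly from conditions (a) and (b): it shows that $\Omega\cap\partial\Omega'=\emptyset$ by observing that a point $c\in\Omega$ has a compact neighbourhood eventually contained in $\Omega_n$ (by~(a)), forcing $d^*(c,\partial\Omega_n)$ to stay bounded away from zero, which is incompatible with $c\in\partial\Omega'$ (by~(b)); connectedness of $\Omega$ then gives $\Omega\subset\Omega'$, and symmetry finishes. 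Your approach is shorter and more conceptual, essentially packaging the proposition as a corollary of Theorem~\ref{thm:equivalent-definition-of-kernel-convergence}; the paper's argument is more self-contained, working entirely within the (a)--(b) formulation without re-entering the classical kernel definition.
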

\begin{proof}
First we show that $\Omega \cap \partial  \Omega' = \emptyset$.
To this end, suppose that $c \in \Omega \cap \partial  \Omega'$.
Take $r > 0$ with $\overline{\mathbb{D}}^*(c,r) \subset \Omega$.
Then by (a), there exists $N \in \mathbb{N}$ 
such that 
$\overline{\mathbb{D}}^*(c,r) \subset \Omega_n$
for all $n \geq N$ .
Hence $d^*(c, \partial \Omega_n ) \geq r$ for $n \geq N$.
On the other hand, since $c \in \partial \Omega'$, 
(b) implies $d^*(c, \partial \Omega_n ) \rightarrow 0$, 
which is a contradiction. 

If $\Omega \cap \Omega' = \emptyset$, there is nothing to prove.
Suppose that $\Omega \cap \Omega' \not= \emptyset$.
Then, since $\Omega \cap \partial  \Omega' = \emptyset$
and $\Omega$ is connected,
either $\Omega \subset \Omega'$
or $\Omega \subset \hat{\mathbb{C}} \backslash \overline{\Omega'}$ 
holds.
From $\Omega \cap \Omega' \not= \emptyset$, it follows that 
$\Omega \subset \Omega'$.
Similarly, by replacing $\Omega$ with $\Omega'$, 
we obtain $\Omega' \subset \Omega$, 
and hence $\Omega = \Omega'$.
\end{proof}

\section{Set Operations and Kernel Convergence}
When $\{ \Omega_n \}_{n=1}^\infty$ is monotone, 
one can easily prove the following.
\begin{theorem}
\label{theorem:monotone-covergence-thm-for-domains}
Let 
$\{ \Omega_n \}_{n=1}^\infty$ be a sequence of domains 
in $\hat{\mathbb{C}}$.
\begin{itemize}
 \item[{\rm (i)}]
If $\Omega_1 \subset \Omega_2 \subset \cdots$,
then $\Omega_n \rightarrow \bigcup_{k=1}^\infty \Omega_k$ 
as $n \rightarrow \infty$. 
 \item[{\rm (ii)}]
If $\Omega_1 \supset \Omega_2 \supset \cdots$ and
$\Int \left( \bigcap_{k=1}^\infty \Omega_k \right) \not= \emptyset$,
then $\Omega_n \rightarrow D$ as $n \rightarrow \infty$ 
for every connected component $D$ of 
$\Int \left( \bigcap_{k=1}^\infty \Omega_k \right)$.
\end{itemize}
\end{theorem}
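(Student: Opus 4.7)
The plan is to verify the two equivalent conditions (a) and (b) of Theorem \ref{thm:equivalent-definition-of-kernel-convergence} in each case, since these give a characterization of kernel convergence that is independent of the reference point. Throughout I will use that the union of a nested increasing family of domains, and every connected component of an open set, is itself a domain.

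For part (i), set $\Omega = \bigcup_{k=1}^\infty \Omega_k$, which is open and connected (as an increasing union of connected sets containing a common point, say any point of $\Omega_1$). Condition (a) is immediate: if $K \subset \Omega$ is compact, then the open cover $\{\Omega_k\}$ admits a finite subcover, and monotonicity collapses it to a single $\Omega_N$, so $K \subset \Omega_n$ for $n\ge N$. For (b), let $c \in \partial\Omega$; since $\Omega$ is open we have $c \notin \Omega$, hence $c \notin \Omega_n$ for every $n$. Given $\varepsilon>0$, pick $w \in \Omega \cap \mathbb{D}^*(c,\varepsilon)$ (available since $c \in \overline\Omega$); then $w \in \Omega_N$ for some $N$, and for $n \ge N$ any path from $w$ to $c$ inside the spherical disc $\mathbb{D}^*(c,\varepsilon)$ must cross $\partial\Omega_n$, producing a point of $\partial\Omega_n$ within spherical distance $\varepsilon$ of $c$.

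For part (ii), let $\Omega$ be a component of $\Int\bigl(\bigcap_k \Omega_k\bigr)$. Condition (a) is trivial: every compact $K \subset \Omega$ lies in $\bigcap_k \Omega_k$, so $K \subset \Omega_n$ for all $n$. The main obstacle is (b); fix $c \in \partial\Omega$ and $\varepsilon > 0$. I first establish the key topological fact that $c \notin \Int\bigl(\bigcap_k \Omega_k\bigr)$: otherwise a neighbourhood of $c$ would sit inside this interior, meet $\Omega$ (because $c\in\overline\Omega$), and hence lie entirely in the component $\Omega$, forcing $c\in\Omega$ and contradicting that $\Omega$ is open. Now I split into two sub-cases. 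If $c \notin \Omega_{n_0}$ for some $n_0$, monotonicity gives $c \notin \Omega_n$ for $n \ge n_0$, and the same path argument as in (i) (using a point $w \in \Omega \cap \mathbb{D}^*(c,\varepsilon) \subset \Omega_n$) yields $d^*(c,\partial\Omega_n)<\varepsilon$ for $n\ge n_0$. Otherwise $c\in \bigcap_k \Omega_k$, and since $c$ is not an interior point of this intersection, some $w \in \mathbb{D}^*(c,\varepsilon)$ satisfies $w \notin \Omega_{k_0}$, hence $w \notin \Omega_n$ for $n\ge k_0$; combined with $c \in \Omega_n$, a path in $\mathbb{D}^*(c,\varepsilon)$ from $c$ to $w$ again meets $\partial\Omega_n$ inside that spherical disc, so $d^*(c,\partial\Omega_n)<\varepsilon$ for $n\ge k_0$.

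The principal delicate step is the second sub-case of (ii): one has to exploit the fact that $c$ lives on the boundary of a component of the interior of the intersection (rather than simply on the boundary of the intersection) to produce the auxiliary point $w$ outside $\bigcap_k \Omega_k$. Once this observation is made, the argument reduces in both parts to the same ``path crossing $\partial\Omega_n$'' idea, after which the hypotheses of Theorem \ref{thm:equivalent-definition-of-kernel-convergence} are fulfilled and the conclusion $\Omega_n \to \Omega$ follows.
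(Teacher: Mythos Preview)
Your proof is correct. The paper does not actually supply a proof of this theorem---it only remarks that ``When $\{ \Omega_n \}_{n=1}^\infty$ is monotone, one can easily prove the following''---so there is nothing to compare against; your verification of conditions (a) and (b) from Theorem~\ref{thm:equivalent-definition-of-kernel-convergence}, including the observation in part~(ii) that a boundary point of a component of $\Int\bigl(\bigcap_k \Omega_k\bigr)$ cannot lie in that interior, is exactly the natural argument the paper has in mind.
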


Next we will investigate the relationship between the operations 
of taking intersections of sets and kernel convergence.
\begin{theorem}
\label{thm:continuity-preseved-under-intersection}
Let $k \in \mathbb{N}$, and
for each $i =1, \ldots , k$
let $\{ \Omega_n^{(i)} \}_{n=1}^\infty$ be 
a sequence of domains in $\hat{\mathbb{C}}$ 
which converges to a domain
$\Omega^{(i)}$.
Suppose that 
$w_0 \in \Omega^{(i)} \cap \Omega_n^{(i)}$  
for all $i=1, \ldots ,k$ and $n \in \mathbb{N}$.
Let $D$ and $D_n$ be the components of 
$\Omega^{(1)} \cap \cdots \cap \Omega^{(k)}$
and $\Omega_n^{(1)} \cap \cdots \cap \Omega_n^{(k)}$ 
containing $w_0$, respectively. 
Then $D_n \to D$ as $n \rightarrow \infty$.
\end{theorem}
\begin{proof}
First we show $\{ D_n \}_{n=1}^\infty$ and $D$ satisfy condition (a).
Let $K$ be a compact subset of $D$. 
Since $D$ is a domain containing $w_0$,
we can take a compact and connected set $\tilde{K}$ 
with $K \cup \{ w_0 \} \subset \tilde{K} \subset D$.
Then for each $i=1, \ldots , k$ 
there exist $N_i \in \mathbb{N}$ such that 
$\tilde{K} \subset \Omega_n^{(i)}$ for all $n \geq N_i$.
Since $\tilde{K}$ is connected and contains $w_0$, 
we have 
\[
   K \subset \tilde{K} 
 \subset 
 D_n , \quad n \geq \max\{ N_1, \ldots, N_k \}.
\]

Next, we show $\{ D_n \}_{n=1}^\infty$ and $D$ satisfy the condition (b).
Let $c \in \partial D$.
Since 
\[
  \partial D 
 \subset \partial (\Omega^{(1)} \cap \cdots \cap \Omega^{(k)})
 \subset \partial \Omega^{(1)} \cap \cdots \cap \partial \Omega^{(k)} , 
\]
we have $c \in \partial \Omega^{(i)}$ for some $i$.
Then for any $\varepsilon > 0$, there exists $N \in \mathbb{N}$ 
such that $d^*(c, \partial \Omega_n^{(i)}) < \varepsilon$ 
for $n \geq N$.
Thus
$\mathbb{D}^*(c,\varepsilon ) \setminus \Omega_n^{(i)} \neq \emptyset$,
and hence
$\mathbb{D}^*(c,\varepsilon ) \setminus D_n \not= \emptyset$
for all $n \geq N$.
Also, since $c \in \partial D$, there exists 
$\tilde{c} \in D \cap \mathbb{D}^*(c,\varepsilon )$.
As shown above we can take $N' \in \mathbb{N}$ such that 
$\tilde{c} \in D_n$ for all $n \geq N'$.
Therefore, for all $n \geq N'' := \max\{ N, N' \}$,
we have both 
$\mathbb{D}^*(c,\varepsilon ) \cap D_n \not= \emptyset$
and  $\mathbb{D}^*(c,\varepsilon ) \backslash  D_n \not= \emptyset$.
Since $\mathbb{D}^*(c,\varepsilon )$ is connected,
this implies 
$\mathbb{D}^*(c,\varepsilon ) \cap \partial D_n \not= \emptyset$
which is equivalent to $d^*(c, \partial D_n) < \varepsilon$. 
Therefore we obtain $d^*(c, \partial D_n) \rightarrow 0$ 
as $n \rightarrow \infty$.
\end{proof}

In the above theorem
we cannot replace the intersection symbol with the union symbol. 
\begin{example}
Let $\{ \Omega_n \}_{n=1}^\infty$ and $\{ D_k \}_{k \in \mathbb{Z}}$ 
as in Example \ref{example:strip-minus-sgments}.
Let $\Omega'= \Omega_n' = \mathbb{D}(0, \rho )$ with $0< \rho <1$.
Then although $\Omega_n \rightarrow D_0$ and 
$\Omega_n' \rightarrow \Omega'$ we have
$\Omega_n \cup \Omega_n' \rightarrow 
D_{-1} \cup D_0 \cup \Omega' \supsetneq  D_0 \cup \Omega'$ 
as $n \rightarrow \infty$.
\end{example}

In the following, we consider subsets in $\mathbb{C}$.
To avoid confusion, 
we will temporarily denote the boundary of a set $E$
in $\mathbb{C}$ 
by $\partial E$,
and its boundary in $\hat{\mathbb{C}}$ 
by $\partial^* E$.
Note that if $E$ is bounded in $\mathbb{C}$,
then $\partial^* E$ coincides with $\partial E$. 
When $E$ is unbounded, 
we have 
$\partial^* E = \partial E \cup \{ \infty \}$.

We also adopt the standard notation
\[
   d(z,A) = \inf \{ |z-w| : w \in A \}
\]
to denote the distance from the point $z$
to the nonempty set $A \subset \mathbb{C}$.

\begin{lemma}
Let $\{ \Omega_n \}_{n=1}^\infty$ be a sequence  of domains 
in $\mathbb{C}$.
Then $\{ \Omega_n \}_{n=1}^\infty$ converges to 
a domain $\Omega$ in $\mathbb{C}$ in the sense of kernel, 
if and only if both of the following conditions hold: 
\begin{itemize}
 \item[{\rm (a')}]
For every subset $K$ of $\Omega$ that is 
compact with respect to the topology of $\mathbb{C}$, 
there exists $N \in \mathbb{N}$ such that
$K \subset \Omega_n$ for all $n \geq N$.
 \item[{\rm (b')}]
For every $c \in \partial \Omega$, 
we have 
$d (c , \partial \Omega_n)  \rightarrow 0$ as $n \rightarrow \infty$.
\end{itemize}
\end{lemma}
\begin{proof}
Let $K \subset \Omega$. 
Then $K$ is compact in $\mathbb{C}$ 
if and only if $K$ is compact in $\hat{\mathbb{C}}$.
Hence, (a) and (a') are equivalent.

Assume (b), i.e.,
$d^*(c, \partial^*\Omega_n) \rightarrow 0$ 
for all $c \in \partial^* \Omega$.
Let $w \in \partial \Omega$.
Then, since $w \in \partial^* \Omega$,
$d^*(w, \partial^*\Omega_n) \rightarrow 0$.
Choose $N \in \mathbb{N}$ 
such that $d^*(w, \partial^* \Omega_n) < d^*(w,\infty)$ for $n\geq N$.
Then $d^*(w, \partial \Omega_n) = d^*(w, \partial^* \Omega_n)$ 
for $n\geq N$.
Therefore, $d^*(w, \partial \Omega_n) \rightarrow 0$. 
This implies $d(w,\partial \Omega_n) \rightarrow 0$,
and hence (b') holds.

Assuming (a') and (b') we show (b).
It suffices to see that 
$d^*(\infty, \partial^* \Omega_n) \rightarrow 0$ as $n \rightarrow \infty$,
when $\infty \in \partial^* \Omega$.
Let $\varepsilon > 0$.
Since $\Omega$ is unbounded, 
for any $R > 0$ with $(1+R^2)^{-1/2} < \varepsilon$,
there exists $w \in \Omega $ with $|w| > R$.
By (a') there is $N \in \mathbb{N}$ such that
$w \in \Omega_n$ for $n \geq N$. 
Combining this with $\infty \not\in \Omega_n$,
and noting that
$\mathbb{D}^* (\infty, (1+R^2)^{-1/2}) 
= \{ z \in \mathbb{C} : |z| > R\} \cup \{ \infty \}$ 
is connected, 
there exists $w_n \in \partial^* \Omega_n$ 
with $|w_n| > R$ for $n \geq N$. 
We note that $w_n$ may possibly coincide with $\infty$.
Thus 
\[
 d^*(\infty, \partial^* \Omega_n) 
 \leq d^*(\infty, w_n) 
 \leq (1+R^2)^{-1/2} < \varepsilon
\]
for  $n \geq N$. 
Hence $d^*(\infty, \partial^* \Omega_n) \rightarrow 0$ 
as $n \rightarrow \infty$.
\end{proof}

Let $w_0 \in \mathbb{C}$ and  $\{ \Omega_n \}_{n=1}^\infty$ 
be a sequence of domains in $\mathbb{C}$
with $w_0 \in \Omega_n$ for all sufficiently large $n$.
Then it is easy to see that
$\{ \Omega_n \}_{n=1}^\infty$  degenerates to $\{w_0\}$
if and only if 
$d (w_0, \partial \Omega_n ) \rightarrow 0$ as $n \rightarrow \infty$.
However, it may be beneficial to consider the following example.
\begin{example}
Let $\Omega_n = \{ w \in \mathbb{C} : \Real w > - n^{-1} \}$ 
for  $n \in \mathbb{N}$.
Then, for any $i \eta$, $\eta \in \mathbb{R}$,
$\{ \Omega_n \}_{n=1}^\infty$ degenerates to $\{ i \eta \}$
and furthermore, $\{ \Omega_n \}_{n=1}^\infty$ 
converges also to the 
right half plane in the sense of kernel. 
\end{example}

\section{Limits of Image Domains under Analytic Maps}
At this point, we present a proposition 
that may be regarded as a variant of Hurwitz's theorem 
(see Hille \cite[Theorem~14.3.4]{Hille} 
and Goluzin \cite[Theorem V.5.1]{Goluzin}),
which also constitutes a part of the kernel convergence theorem. 
The result follows directly from Rouche's theorem.
\begin{proposition}
\label{prop:first-part-of-the-Hejhal-theorem}
Let 
$D$ be a domain in $\mathbb{C}$ 
and $\{ f_n \}_{n=1}^\infty$
a sequence of analytic functions in $D$
which converges to an analytic function $f$ locally uniformly on $D$.
If $f$ is non-constant, then 
$\{ f_n (D) \}_{n=1}^\infty$ and $f(D)$ satisfy condition {\rm (a')}
and $f(D) \subset \ker (w_0, \{f_n(D)\}_{n=1}^\infty )$ 
for all $w_0 \in f(D)$. 
\end{proposition}
\begin{proof}
Let $w^* \in f(D)$.
Choose $z^* \in D$ with $f(z^* ) = w^*$.
Since $f$ is non-constant,
there exists 
$k \in \mathbb{N}$ and $c \in \mathbb{C} \backslash \{ 0 \}$ 
such that $f(z) = c(z - z^*)^k + \cdots $, 
$|z-z|^* < d:= d (z^* , \partial D)$.
Define the analytic function $g$ by 
\[
   f(z) -w^* = c(z - z^*)^k (1+g(z)), \quad |z-z^*| < d .
\]
Since $g(0)=0$, there exist $r  > 0$
such that $|g(z)| < \frac{1}{2}$ for $|z-z^*| \leq r$.
Note that $f-w^*$ has exactly $k$ zeros 
in $\overline{\mathbb{D}}(z^*,r)$, counted with multiplicity.
Put
$\rho = \rho(w^*) = 4^{-1} |c|r^k$.
Then, for $|z-z^*|=r$, we have 
\[
   |f(z)-w^*| 
 = |c||z-z^*|^k|1+g(z)|
 \geq
 \frac{|c|r^k}{2} = 2 \rho .
\]
Choose $N = N(w^*) \in \mathbb{N}$
such that $|f_n(z)-f(z)| < \rho$
on $\partial \mathbb{D}(z^*,r)$ for all $n \geq N$. 
Then, for all $w$ with $|w-w^*| < \rho$
and $z$ with $|z-z^*|=r$
we have
\[
 |f_n(z)-w -(f(z)-w^*)|
 \leq
 |f_n(z)-f(z)| + |w-w^*|
 < 
 \rho + \rho 
 = 2 \rho 
 \leq 
 |f(z)-w^*| .
\] 
Thus, by Rouch\'{e}'s theorem, 
$f_n - w$ has exactly $k$ zeros in 
$\mathbb{D}(z^*,r)$, counted with multiplicity.
In particular, 
$\mathbb{D}(w^*, \rho (w^*) ) \subset f_n (D)$ 
for $n \geq N(w^*)$.

To see (a'), let $K$ be a compact subset of $f(D)$.
Consider the open covering 
$K \subset \bigcup_{w^* \in K} \mathbb{D}(w^*,\rho(w^*))$.
One can choose $w_1^*, \ldots , w_p^*$
such that 
$K \subset \mathbb{D}(w_1^*,\rho(w_1^*)) 
\cup \cdots \cup \mathbb{D}(w_p^*,\rho(w_p^*))$.
Then for $n \geq \max \{ N(w_1^*), \ldots , N(w_p^*)\}$
we obtain $K \subset f_n(D)$.

Similar to the first part of the proof of 
Theorem \ref{thm:equivalent-definition-of-kernel-convergence},
it is easy to see that (a') implies 
$f(D) \subset \ker (w_0, \{f_n(D)\}_{n=1}^\infty )$ 
for all $w_0 \in f(D)$.
\end{proof}

As shown by the following example,
the reverse inclusion 
$\ker (w_0, \{f_n(D)\}_{n=1}^\infty ) \subset f(D)$ 
does not necessarily hold.
To ensure this inclusion, 
one must impose additional conditions 
on the functions beyond analyticity. 
This observation motivates the introduction of covering maps, 
which will be discussed in the next chapter.

\begin{example}
\label{eg:discontinuity}
Let $D_n$ be the domain obtained from the rectangle 
$\{ w \in \mathbb{C}: |\Real w|<1 , |\Imaginary w | < 2\pi \;\}$ 
by removing the two line segments 
$\{ t \pm \pi i/6 : -1+ n^{-1} \leq t \leq 1 \}$.
Then 
$D_n \rightarrow D 
:=\{ w \in \mathbb{C}: |\Real w|<1 , |\Imaginary w | < \pi/6 \;\}$ 
in the sense of kernel.
Let $g_n$ and $g$ be the conformal maps of $\mathbb{D}$ onto 
$D_n$ and $D$ normalized by 
$g_n(0)=g(0)=1$, $g_n'(0) > 0$ and $g'(0)>0$, 
respectively.
Then by the Carath\'{e}odry kernel convergence theorem
$g_n \rightarrow g$ locally uniformly in $\mathbb{D}$ and hence
$f_n := e^{g_n} \rightarrow f:=e^g$ 
locally uniformly on $\mathbb{D}$.
Each $f_n(\mathbb{D})$, $n \in \mathbb{N}$, 
and $\ker (1, \{f_n(D)\}_{n=1}^\infty )$
coincide with the annulus 
$\{ \zeta \in \mathbb{C}: e^{-1} < |\zeta | < e  \}$.
However, the image $f(\mathbb{D})$ is
$\{\zeta \in \mathbb{C}: e^{-1} < |\zeta | < e,  \;
 |\Arg \zeta | < \pi/6  \}$, 
which is a proper subset of the annulus.
\end{example}

\chapter{Kernel Convergence %
and Locally Uniform Convergence of Covering Maps}
\label{chapter:relation_between_kernel_convergence_and_locally_uniform_convergence}
\section{Preliminaries on Covering Maps}
We begin by recalling the notion of 
a covering Riemann surface. 
For further details on this topic, 
see references \cite{Ahlfors:ConformalInvariants}, 
\cite{Springer}, \cite{Forster} or \cite{Ahlfors-and-Sario}.

Let $R$ and $\tilde{R}$ be Riemann surfaces.
An analytic surjection  
$p: \tilde{R} \rightarrow R$ 
is called a \textit{covering map} 
if for each $x \in R$ there exists a connected (open) 
neighborhood $V$ of $x$ 
such that for every connected component $\tilde{V}$ of $p^{-1}(V) $, 
the restriction $p|_{\tilde{V}}$ is a conformal map
of $\tilde{V}$ onto $V$.

The Riemann surface $\tilde{R}$ is called 
a \textit{covering surface} of the \textit{base surface} $R$,
and $V$ is  called an \textit{evenly covered neighborhood} of $x$.
For any $\tilde{x} \in p^{-1}(\{ x \})$, by definition,
there exists a unique component 
$\tilde{V}$ of $p^{-1}(V)$ that contains $\tilde{x}$. 
This component $\tilde{V}$ is called 
the \textit{slice} of $V$ containing $\tilde{x}$.

When $\tilde{R}$ is simply connected,
$p$ and $\tilde{R}$ are called the \textit{universal covering map}
and the \textit{universal covering surface}, respectively.

Every Riemann surface has a universal covering surface.
The Koebe uniformization theorem states
that a every simply connected Riemann surface is 
conformally equivalent to 
either the unit disk $\mathbb{D}$, the complex plane $\mathbb{C}$ 
or the Riemann sphere $\hat{\mathbb{C}}$.
A Riemann surface is called \textit{hyperbolic} 
if its universal covering surface
is conformally equivalent to $\mathbb{D}$.

Let $\Omega$ be a domain in $\hat{\mathbb{C}}$.
Then $\Omega$ is a Riemann surface by definition
and 
$\Omega$ is hyperbolic if and only if 
${}^\# (\hat{\mathbb{C}} \backslash \Omega ) \geq 3 $; that is,  
the complement of $\Omega$ has at least three points.

When $\Omega$ is a hyperbolic domain in $\mathbb{C}$,
for any $z_0 \in \mathbb{D}$, $w_0 \in \Omega$ and $\theta \in \mathbb{R}$ 
there exists a unique analytic covering map
$f: \mathbb{D} \rightarrow \Omega$ satisfying
$f(z_0) =w_0$ and $\arg f'(z_0) = \theta$.

We say that an analytic function $f$ in a domain 
$D \subset \mathbb{C}$ is a \textit{covering map}
if $f: D \rightarrow f(D) (\subset \mathbb{C})$
is a covering map.
By definition, if $f$ is univalent in $D$, 
then $f$ is a covering map.

Let $p: \tilde{R} \rightarrow R$ is a covering map,
and let $h$ be a continuous map of a Riemann surface $X$ into $R$.
A continuous map $\tilde{h} : X \rightarrow \tilde{R}$
is called a \textit{lift} of $h$ if $h = p \circ \tilde{h}$.

We now collect some basic facts about lifts of maps.
For proofs, see, for example, \cite[Chapter V]{Massey}.

\begin{lemma}
\label{lemma:uniqueness_of_lift}
Let $x_0 \in X$ and set $a_0 = h(x_0)$.
Let $\tilde{a}_0 \in p^{-1} ( \{ a_0 \})$. 
Suppose that $\tilde{h} : X \to \tilde{R}$ is
a lift of $h$ 
satisfying $\tilde{h}(x_0) = \tilde{a}_0$.
Then $\tilde{h}$ is unique.
\end{lemma}

\begin{lemma}[Path Lifting Lemma]
\label{lemma:PathLftingLemma}
Let $p: \tilde{R} \rightarrow R$ be a covering map,
and let $\tilde{a}_0 \in \tilde{R}$ and $a_0 \in R$ satisfy
$p(\tilde{a}_0) = a_0$.
Then for any path $\alpha : [0,1] \rightarrow R$ 
with initial point $a_0$, 
there exists a unique path $\tilde{\alpha}:[0,1] \rightarrow \tilde{R}$
with initial point $\tilde{a}_0$ 
such that
$p \circ \tilde{\alpha} = \alpha$.
\end{lemma}

\begin{lemma}[Homotopy Lifting Lemma]
\label{lemma:HomotopyLiftingLemma}
Let $p: \tilde{R} \rightarrow R$ be a covering map,
and let $a_0 , a_1 \in R$ and $\tilde{a}_0 \in \tilde{R}$ satisfy
$p(\tilde{a}_0) = a_0$.
Let $\alpha, \beta : [0,1] \rightarrow R$ be 
paths with $\alpha (0) = \beta (0) = a_0$ 
and $\alpha (1) = \beta (1) = a_1$,
and let $\tilde{\alpha}$, $\tilde{\beta}$ be the unique lifts 
of $\alpha$ and $\beta$ starting at$\tilde{a}_0$, respectively.
Suppose that $\alpha$ is path homotopic to $\beta$,
and that 
$F: [0,1] \times [0,1] \rightarrow R$ 
is a path homotopy from $\alpha$ to $\beta$, i.e.,
$F$ is a continuous map satisfying
(\ref{eq:initial_and_final_paths})
and (\ref{eq:initial_and_final_points}).
Then there exists a unique lift 
$\tilde{F} : [0,1] \times [0,1] \rightarrow \tilde{R}$ of
$F$ satisfying $\tilde{F}(0,0) = \tilde{a}_0$. 
Moreover,
$\tilde{\alpha}$ and  $\tilde{\beta}$
have the same final point,
and $\tilde{F}$ is a path homotopy from 
$\tilde{\alpha}$ to $\tilde{\beta}$.
\end{lemma}

\section{Subordination and Domain Inclusion for Universal Coverings}
We now combine the previous lemmas to obtain 
a fundamental lifting result:
any continuous map from 
a simply connected Riemann surface into 
a base surface admits a lift to the covering surface. 
Moreover, if the map is injective, then so is its lift.
\begin{proposition}
\label{proposition:a_map_of_simply_connected_domain_is_subordinate_to_covering_map}
Let $p: \tilde{R} \rightarrow R$
be an analytic covering map of a Riemann surface $\tilde{R}$
onto a Riemann surface $R$.
Suppose $X$ is a simply connected Riemann surface and
$h: X \rightarrow R$ is analytic.
Then, for any $x_0 \in X$ and $\tilde{a}_0 \in \tilde{R}$ with
$p(\tilde{a}_0) = h(x_0)$, there exists  a unique analytic map
$\tilde{h}: X \rightarrow \tilde{R}$ 
such that 
$p \circ \tilde{h} = h$ and $\tilde{h}(x_0) = \tilde{a}_0$. 
Furthermore, if $h$ is injective,
then $\tilde{h}$ is also injective, and
the restriction $p|_{\tilde{h}(X)} : \tilde{h}(X) \rightarrow h(X)$ 
is a conformal map, that is, an analytic bijection.
\end{proposition}
\begin{proof}
The uniqueness part follows from 
Lemma \ref{lemma:uniqueness_of_lift}.

For later use, we describe the construction of the map $\tilde{h}$.
For details, see \cite[Theorem~4.17]{Forster},
\cite[Theorem~V.5.1]{Massey}  or \cite[Lemma~79.1]{Munkres}. 

Take $x \in X$ arbitrarily. 
Let $\alpha : [0,1] \rightarrow X$ be a path
from the initial point $x_0$ to the end point $x$.
Let $\tilde{\alpha} : [0,1] \rightarrow \tilde{R}$
be the lifted path of $h \circ \alpha$ 
starting at $\tilde{a}_0$.
We claim that the final point $\tilde{a}:= \tilde{\alpha}(1)$ 
does not depend on the choice of $\alpha$.
To see this, let $\beta : [0,1] \rightarrow X$ be another path
from the initial point $x_0$ to the end point $x$,
and let $\tilde{\beta}$ be the lifted of 
$h \circ \beta$ starting at $\tilde{a}_0$.
Since  $X$ is simply connected,
$\alpha$  is homotopic to  $\beta$, and hence
$h \circ \alpha$  is homotopic to  $h \circ \beta$.
By the homotopy lifting lemma 
(Lemma \ref{lemma:HomotopyLiftingLemma}) 
$\tilde{\alpha}$ is homotopic to  $\tilde{\beta}$.
In particular $\tilde{\alpha}(1)= \tilde{\beta}(1)$.

Define $\tilde{h}: X \rightarrow \tilde{R}$ by
$\tilde{h}(x) = \tilde{a}$.
Then, clearly, $\tilde{h}$  satisfies  $p \circ \tilde{h} = h$
with $\tilde{h}(x_0) = \tilde{a}_0$.
Since $p$ is a local homeomorphism, 
it follows  easily that $\tilde{h}$ is continuous.
Moreover, since $h$ is analytic and $p$ is locally conformal,
$\tilde{h}$ is also analytic.

Assume that $h$ is injective. 
Then $h : X \to h(X)$ is conformal, and since $p \circ \tilde{h} = h$,
it follows that $\tilde{h}$ is also injective.
Thus $\tilde{h}: X \rightarrow \tilde{h}(X)$ is conformal,
and hence 
$p|_{\tilde{h}(X)} = h \circ \tilde{h}^{-1}: 
\tilde{h}(X) \rightarrow h(X)$
is a conformal map.
\end{proof}

At this point, we state a preparatory lemma.
Since the proof is straightforward, we omit the details.
\begin{lemma}
\label{lemma:boundary_value_of_conformal_map}
Let $R_1$ and $R_2$ be Riemann surfaces,
and let $f: D_1 \to D_2$ be a homeomorphism of
a domain $D_1$ in $R_1$ onto a domain $D_2$ in $R_2$.
Suppose that $f$ can be continuously extended to a point 
$c \in \partial D_1$.
Then $f(c) := \lim_{D_1 \ni \zeta \to c} f(\zeta) \in \partial D_2$.  
\end{lemma}

We now show that every simply connected domain 
in the base surface is an evenly covered neighborhood 
of each of its points.

\begin{proposition}
\label{proposition:simply-conncted-domain-is-evnbh}
Let $p: \tilde{R} \rightarrow R$
be an analytic covering map of a Riemann surface $\tilde{R}$
onto a Riemann surface $R$.
Let $a \in R$, and let
$D$ be a simply connected domain in $R$ containing $a$.
For each $\tilde{a} \in p^{-1}(\{ a \})$,
let $\tilde{h}_{\tilde{a}}: D \rightarrow \tilde{R}$
denote the unique lift of 
the inclusion map $\inc_D : D \rightarrow R$
satisfying $\tilde{h}_{\tilde{a}}(a) = \tilde{a}$.
Then $D$ is an evenly covered neighborhood of $a$, 
and for each $\tilde{a} \in p^{-1}({ a })$, 
the image $\tilde{h}_{\tilde{a}}(D)$ is the slice of 
$D$ containing $\tilde{a}$. 
Moreover, the two maps 
$p|_{\tilde{h}_{\tilde{a}}(D)} : \tilde{h}_{\tilde{a}}(D) \to D$
and
$\tilde{h}_{\tilde{a}} : D \to \tilde{h}_{\tilde{a}}(D)$
are conformal and inverses of each other.
In addition,
\[
   p^{-1}(D) 
 = 
 \bigcup_{\tilde{a} \in p^{-1}(\{ a \})} 
 \tilde{h}_{\tilde{a}} (D)
\]
gives the decomposition of $p^{-1}(D)$ into 
its connected components.
\end{proposition}

\begin{proof}
Fix $\tilde{b} \in p^{-1}(D)$ arbitrarily, and 
let $\tilde{D}_{\tilde{b}}$ 
be the connected component of $p^{-1}(D)$ containing $\tilde{b}$.
To prove the proposition, it is enough to show the following:
\begin{itemize}
 \item[{\rm (i)}]
There exists
$\tilde{a} \in p^{-1}(\{ a \})$ such that
$\tilde{a} \in \tilde{D}_{\tilde{b}} = \tilde{h}_{\tilde{a}} (D)$.
 \item[{\rm (ii)}]
If $\tilde{a}_1, \tilde{a}_2 \in p^{-1}(\{ a \})$ satisfy
$\tilde{h}_{\tilde{a}_1} (D) \cap \tilde{h}_{\tilde{a}_2} (D) \neq \emptyset$,
then $\tilde{a}_1 = \tilde{a}_2$.
\end{itemize}
We show (i).
Let $b = p(\tilde{b})$, and 
choose a path $\beta: [0,1] \rightarrow D$ 
from $b$ to $a$.
Let $\tilde{\beta} [0,1] \rightarrow \tilde{R} $ 
be the lift of $\beta$ from $\tilde{b}$, and 
set $\tilde{a} = \tilde{\beta}(1)$.
Since $p \circ \tilde{\beta} = \beta $,
we have that $\tilde{\beta}([0,1])$ is a connected subset
of $p^{-1}(D)$.
Therefore, by $\tilde{a}, \tilde{b} \in \tilde{\beta}([0,1])$,
we obtain $\tilde{a} \in \tilde{D}_{\tilde{b}}$.

By 
Proposition~\ref{proposition:a_map_of_simply_connected_domain_is%
_subordinate_to_covering_map} 
there exists the unique lift of $\tilde{h}_{\tilde{a}}$
of the inclusion map on $D$ satisfying 
$\tilde{h}_{\tilde{a}} (a) = \tilde{a}$
such that
$\tilde{h}_{\tilde{a}} : D \to \tilde{h}_{\tilde{a}} (D)$
and 
$p|_{\tilde{h}_{\tilde{a}} (D)} \rightarrow D$
are conformal.
In particular, since
$p|_{\tilde{h}_{\tilde{a}} (D)} \circ \tilde{h}_{\tilde{a}} 
= \inc_D$, 
the mappings $p|_{\tilde{h}_{\tilde{a}} (D)}$ and 
$\tilde{h}_{\tilde{a}}$ are inverses of each other.
Note that 
$\tilde{h}_{\tilde{a}} (D)$ is a connected subset of
$p^{-1}(D)$ and satisfies 
$\tilde{b} \in \tilde{h}_{\tilde{a}} (D)$.
Therefore we obtain
$\tilde{h}_{\tilde{a}} (D) \subset \tilde{D}_{\tilde{b}}$.

To see the reverse inclusion, 
suppose that 
$\tilde{D}_{\tilde{b}} \setminus \tilde{h}_{\tilde{a}} (D) 
\neq \emptyset$.
Then there exists 
$\tilde{c} \in \tilde{D}_{\tilde{b}} \cap %
\partial \tilde{h}_{\tilde{a}} (D)$.
Since $\tilde{c} \in \tilde{D}_{\tilde{b}}$,
we obtain $p( \tilde{c}) \in D$.
On the other hand, 
since $\tilde{c} \in \partial \tilde{h}_{\tilde{a}} (D)$,
the lemma implies 
$p(\tilde{c}) \in \partial p ( \tilde{h}_{\tilde{a}} (D)) = \partial D$,
which is a contradiction.

Now we show (ii).
Suppose 
$\tilde{c} \in 
\tilde{h}_{\tilde{a}_1} (D) \cap \tilde{h}_{\tilde{a}_2} (D)$
for some $\tilde{a}_1 , \tilde{a}_2 \in p^{-1}(\{ a \})$.
Take a path 
$\tilde{\alpha}_1: [0,1] \rightarrow \tilde{h}_{\tilde{a}_1} (D)$
from $\tilde{a}_1$ to $\tilde{c}$ 
and a path 
$\tilde{\alpha}_2: [0,1] \rightarrow \tilde{h}_{\tilde{a}_2} (D)$
from $\tilde{c}$ to $\tilde{a}_2$. 
Then the product $\tilde{\alpha}_1 * \tilde{\alpha}_2$
defined by
\[
 \tilde{\alpha}_1 * \tilde{\alpha}_2 (t)
 = \begin{cases}
   \tilde{\alpha}_1 (2t) \quad &  
    \text{if} \quad 0 \leq t \leq \frac{1}{2} , \\
    \tilde{\alpha}_2 (2t-1) \quad &
    \text{if} \quad \frac{1}{2} < t \leq 1 ,
   \end{cases}
\]
is a path from
$\tilde{a}_1$  to $\tilde{a}_2$.
Then $p(\tilde{\alpha}_1 * \tilde{\alpha}_2) $ is a path
from $p(\tilde{a}_1) =a$ to $p(\tilde{a}_2) =a$.
Hence it is a loop in $D$ based at $a$.
Since $D$ is simply connected,
the loop $p(\tilde{\alpha}_1*\tilde{\alpha}_2) $ is null-homotopic.
Hence the lifted path $\tilde{\alpha}_1*\tilde{\alpha}_2$ 
of $p(\tilde{\alpha}_1*\tilde{\alpha}_2) $ is also a loop,
and in particular, we obtain $\tilde{a}_1 = \tilde{a}_2$.
\end{proof}

The following is a straightforward application 
of Proposition 
\ref{proposition:a_map_of_simply_connected_domain_is_subordinate_to_covering_map}.
\begin{theorem}
\label{theorem:subordination-and-inclusion}
Let $f \in \mathcal{H}_0(\mathbb{D})$ and let
$g \in \mathcal{H}_0(\mathbb{D})$ be a universal covering.
Suppose that $f(0)=g(0)$.
Then $f \prec g$ if and only if $f(\mathbb{D}) \subset g(\mathbb{D})$.
\end{theorem}

\section{Locally Uniform Limits of Covering Maps %
and Kernel Convergence of Images}
We now recall a growth estimate for
analytic functions in $\mathbb{D}$ that omit
the values $0$ and $1$. 
\begin{lemma}
\label{lemma:avoid0and1}
There exists a constant $K > 0$ such that
for any analytic function 
$g: \mathbb{D} \rightarrow \mathbb{C} \backslash \{ 0,1 \}$, 
the following inequality holds:
\[
   \log |g(z)| 
 \leq 
 \left( K + \log^+ |g(0)| \right) \frac{1+|z|}{1-|z|}, \quad
 z \in \mathbb{D} .
\] 
\end{lemma}
Here, $\log^+ y := \max \{ \log y , 0 \} $ for $y>0$.
For a proof with $k=7$, 
see \cite[Theorem~1-13]{Ahlfors:ConformalInvariants}. 
For more precise estimate with $K=\pi$, we refer the reader to
\cite{Hayman1947} and \cite {Jenkins1955}.

\begin{theorem}
\label{thm:2ndPart}
Let $D$ be a hyperbolic domain in $\mathbb{C}$, 
and let $\{ f_n \}_{n=1}^\infty$ be a sequence 
of analytic covering maps of $D$.
Suppose that $\{f_n \}_{n=1}^\infty$ converges 
locally uniformly on $D$ to a nonconstant analytic function $f$.
Then $f$ is also a covering map, and 
$f_n(D) \rightarrow f(D)$ 
as $n \rightarrow \infty$ in the sense of kernel.
\end{theorem}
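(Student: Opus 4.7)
The plan is to derive both assertions of the theorem from a single lifting construction. First I will observe that $f$ is locally biholomorphic: since each covering $f_n$ satisfies $f_n' \neq 0$ and $f$ is nonconstant, Hurwitz's theorem applied to $f_n' \to f'$ gives $f' \neq 0$ on $D$, so in particular $f(D)$ is open. Proposition \ref{thm:first-part-of-the-Hejhal-theorem} already provides condition (a) of Theorem \ref{thm:equivalent-definition-of-kernel-convergence} for $\{f_n(D)\}$ and $f(D)$, so only condition (b) and the covering property of $f$ remain to verify.

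The heart of the argument is the following construction. Let $U \subset \hat{\mathbb{C}}$ be a simply connected hyperbolic open set with $U \subset f_n(D)$ for all large $n$, and suppose $a \in U$ and $z_0 \in D$ satisfy $f(z_0) = a$. I claim that there is a holomorphic $g : U \to D$ with $f \circ g = \iota_U$ (the inclusion of $U$) and $g(a) = z_0$. To see this, note $a_n := f_n(z_0) \to a$ lies in $U$ for large $n$; Lemma \ref{lemma:simp_connected_subdomain_are_evenly_covered} applied to the covering $f_n : D \to f_n(D)$ produces a unique analytic lift $g_n : U \to D$ of $\iota_U$ with $g_n(a_n) = z_0$. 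The Schwarz-Pick inequality for hyperbolic metrics gives
\[
  \rho_D(g_n(w), z_0) \;=\; \rho_D\bigl(g_n(w), g_n(a_n)\bigr) \;\leq\; \rho_U(w, a_n),
\]
and the right-hand side stays bounded on compact subsets of $U$ since $a_n \to a \in U$. Hence the values $g_n(w)$ stay in a hyperbolically (hence Euclideanly) compact subset of $D$ on each compact subset of $U$, and $\{g_n\}$ is a normal family valued in $D$. A subsequential locally uniform limit $g : U \to D$ is holomorphic, satisfies $g(a) = z_0$ by equicontinuity together with $a_n \to a$, and, by passing to the limit in $f_{n_k} \circ g_{n_k} = \iota_U$ using local uniform convergence $f_n \to f$, obeys $f \circ g = \iota_U$. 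In particular $U \subset f(D)$.

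Armed with this, condition (b) follows by contradiction. Suppose $c \in \partial f(D)$ and $d^*(c, \partial f_n(D)) \geq \delta > 0$ along a subsequence. For each such $n$, either $\mathbb{D}^*(c, \delta) \subset f_n(D)$ or $\mathbb{D}^*(c, \delta) \cap f_n(D) = \emptyset$. Choosing $a \in \mathbb{D}^*(c, \delta) \cap f(D)$ (nonempty since $c \in \overline{f(D)}$) and $z_0 \in f^{-1}(a)$, the convergence $f_n(z_0) \to a \in \mathbb{D}^*(c, \delta)$ excludes the disjoint case. Applying the key construction with $U = \mathbb{D}^*(c, \delta)$ gives $\mathbb{D}^*(c, \delta) \subset f(D)$, so $c$ is an interior point of $f(D)$, contradicting $c \in \partial f(D)$. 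To see $f$ is a covering, fix $w_0 \in f(D)$, choose $r > 0$ with $V := \mathbb{D}(w_0, r) \Subset f(D)$, so $V \subset f_n(D)$ for large $n$ by (a). For each $\tilde{w}_0 \in f^{-1}(w_0)$ the key construction (with $a = w_0$, $z_0 = \tilde{w}_0$) yields a holomorphic section $g_{\tilde{w}_0} : V \to D$ of $f$ with $g_{\tilde{w}_0}(w_0) = \tilde{w}_0$. Uniqueness of such sections forces the images $g_{\tilde{w}_0}(V)$ to be pairwise disjoint, and applying the construction to an arbitrary $\tilde{u} \in f^{-1}(V)$ (with $a = f(\tilde{u})$, $z_0 = \tilde{u}$) shows $\tilde{u}$ belongs to one of them; hence $f^{-1}(V) = \bigsqcup_{\tilde{w}_0 \in f^{-1}(w_0)} g_{\tilde{w}_0}(V)$ with each piece mapped biholomorphically onto $V$, so $V$ is evenly covered.

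The principal difficulty will be controlling the limit of the inverse branches $g_n$, which a priori might escape to $\partial D \cup \{\infty\}$; the Schwarz-Pick estimate anchored at the fixed value $z_0$ is precisely the mechanism that keeps $\{g_n\}$ normal with limits genuinely in $D$, and this is what makes the passage $f_{n_k} \circ g_{n_k} = \iota_U \Rightarrow f \circ g = \iota_U$ legitimate.
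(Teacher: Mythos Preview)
Your proof is correct and shares the paper's overall architecture: both extract local sections of $f$ as limits of inverse branches of the $f_n$, then use these sections to establish kernel convergence and the covering property. The substantive difference is the normality mechanism for the family $\{g_n\}$. The paper selects two points $\alpha,\beta \in \mathbb{C}\setminus D$ and applies the Schottky-type Lemma~\ref{lemma:avoid0and1} to the normalizations $(\varphi_n-\beta)/(\alpha-\beta)$, obtaining explicit locally uniform bounds; you instead invoke Schwarz--Pick directly, $\rho_D(g_n(w),z_0)\le\rho_U(w,a_n)$, which uses the hyperbolicity of $D$ without unpacking it into omitted values and is arguably cleaner. A secondary difference is that the paper proves kernel convergence by showing $\ker(f(a),\{f_{n_k}(D)\})=f(D)$ for every subsequence, whereas you verify condition~(b) of Theorem~\ref{thm:equivalent-definition-of-kernel-convergence} by contradiction; these are equivalent routes. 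One small point worth making explicit: when $\infty\in\mathbb{D}^*(c,\delta)$ the inclusion $\mathbb{D}^*(c,\delta)\subset f_n(D)$ is impossible since $f_n(D)\subset\mathbb{C}$, so the contradiction arrives already at the dichotomy stage without invoking the key construction.
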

\begin{proof}
We divide the proof into several steps.

\noindent\textbf{Step 1.} 
Let $z_0 \in D$, and let $V$ be a simply connected domain 
such that $w_0 := f(z_0) \in V$ and
$V \subset f_n(D)$ for all $n \in \mathbb{N}$.
We show that there exists a univalent analytic function 
$\varphi : V \rightarrow D$ satisfying
$f (\varphi(w)) \equiv w$ on $V$ and $\varphi (w_0) = z_0$.
Once this is established, it follows that $V \subset f(D)$.

For $n \in \mathbb{N}$, let $w_n = f_n (z_0)$.
Since $w_n \to f(z_0) = w_0 \in V$,
we may assume without loss of generality that
$w_n \in V$ for all $n \in \mathbb{N}$. 
Applying Proposition~\ref{proposition:simply-conncted-domain-is-evnbh}
to the covering map $f_n : D \rightarrow f_n(D)$,
there exists a subdomain $\tilde{V}_n$ of $D$ 
and a conformal map $\varphi_n : V \to \tilde{V}_n$
such that $f_n \circ \varphi_n (w) = w$ on $V$ 
and $\varphi_n (w_n) = z_0 $.
Note that 
the restriction $f_n|_{\tilde{V}_n}$ is a conformal map
of $\tilde{V}_n$ onto $V$, and that 
$(f_n|_{\tilde{V}_n})^{-1} = \varphi_n$.

We  claim that the family $\{ \varphi_n \}_{n=1}^\infty$ 
is locally uniformly bounded on $V$
and thus forms a normal family.
Indeed, since $D$ is hyperbolic, we can choose
distinct points $z_1, z_2 \in \mathbb{C} \setminus D$.
Let $h : \mathbb{D} \rightarrow V$ be a conformal map
with $h(0) = w_0 \in V$, 
and put $\zeta_n = h^{-1}(w_n)$ for $n \in \mathbb{N}$.
Then the function
\[
 H_n (\zeta ) 
 = 
 \frac{ \varphi_n \left( 
 h \left( \frac{\zeta + \zeta_n}{1+\overline{\zeta_n} \zeta} 
    \right) 
 \right) - z_1}{z_2 - z_1 }, \quad
 \zeta \in \mathbb{D},
\]
omits the values $0$ and $1$, 
and satisfies $H_n(0) = (z_0 - z_1)/(z_2-z_1)$.
Thus by Lemma~\ref{lemma:avoid0and1}
\[
 \log \left| \frac{ \varphi_n \left( 
 h \left( \frac{\zeta + \zeta_n}{1+\overline{\zeta_n} \zeta} 
    \right) 
 \right) - z_1}{z_2 - z_1 } \right| 
 \leq
 \left( K + \log^+ \left| \frac{z_0 - z_1}{z_2 -z_1} \right| \right)
 \frac{1+|\zeta|}{1-|\zeta|} .
\]
By replacing 
$\zeta$ with $\frac{\zeta - \zeta_n}{1-\overline{\zeta_n} \zeta}$
and using the inequality  
$\left| \frac{\zeta - \zeta_n}{1-\overline{\zeta_n} \zeta} \right| 
\leq \frac{|\zeta| + |\zeta_n|}{1+|\zeta_n| |\zeta|} $,
we obtain
\begin{align*}
 \log \left| \frac{ \varphi_n \left( 
 h \left( \zeta \right) 
 \right) - z_1}{z_2 - z_1 } \right| 
 \leq & \, 
 \left( K + \log^+ \left| \frac{z_0 - z_1}{z_2 -z_1} \right| \right) 
 \frac{1+ \frac{|\zeta| + |\zeta_n|}{1+|\zeta_n| |\zeta|}}
 {1-\frac{|\zeta| + |\zeta_n|}{1+|\zeta_n| |\zeta|}} \\
 = & \, 
 \left( K + \log^+ \left| \frac{z_0 - z_1}{z_2 -z_1} \right| \right) 
 \frac{(1+|\zeta_n|)(1+|\zeta|)}{(1-|\zeta_n|)(1-|\zeta|)} . 
\end{align*}
Since $\zeta_n \rightarrow h^{-1}(w_0) = 0$, 
it follows that the family $\{ \varphi_n \}_{n=1}^\infty$ is 
locally uniformly bounded on $V$.

Choose a subsequence $\{ \varphi_{n_k} \}_{k=1}^\infty$ 
such that $\{ \varphi_{n_k} \}_{k=1}^\infty$ converges 
locally uniformly on $V$ to an analytic function  $\varphi$.
By the identity $\varphi_{n_k}(w_{n_k})= z_0$,
we obtain
$\varphi(w_0) = \lim_{k \to \infty} \varphi_{n_k}(w_{n_k}) = z_0$.
By Hurwitz's theorem 
we have either $\varphi$ is univalent on $V$ or $\varphi \equiv z_0$.
Since $f_{n_k}(\varphi_{n_k}(w)) = w$ on $V$, we have
\[
    f'(z_0) \varphi_{n_k}'(w_{n_k}) =
   f'(\varphi_{n_k}(w_{n_k})) \varphi_{n_k}'(w_{n_k}) 
 =  1 .
\]
Letting $k \to \infty$, we obtain
$f'(z_0) \varphi'(w_0) = 1$.
Thus, $\varphi'(w_0) \neq 0$, and hence 
$\varphi$ is univalent on $V$.

We now show that $\varphi(V) = \tilde{V} \subset D$.
Since $\varphi_{n_k}(V) \subset D$,
we have $\tilde{V} =  \varphi(V) \subset \overline{D}$.
Assume for contradiction, that
$\varphi(V) \cap \partial D \neq \emptyset$.
Then there exists $w^* \in V$ and $z^* \in \partial D$
such that $z^* = \varphi (w^*)$.
Since $\varphi_{n_k} \rightarrow \varphi$ locally uniformly on $V$,
it follows from Proposition~\ref{prop:first-part-of-the-Hejhal-theorem}
that 
$z^* \in \varphi_{n_k} (V) \subset D$ for all sufficiently large $k$.
This contradicts the assumption that $z^* \in \partial D$.
Therefore, $\varphi(V) \cap \partial D = \emptyset$,
and thus $\varphi(V) \subset D$.

\noindent\textbf{Step 2.} 
We show that $f_n (D) \to f(D)$ as $n \to \infty$
in the sense of kernel.
By Proposition \ref{prop:first-part-of-the-Hejhal-theorem},
the condition (a') is satisfied,
so it remains to verify the condition (b').

Suppose, to the contrary, that (b') does not hold.
Then there exist a point $c \in \partial f(D)$, 
a constant $\varepsilon > 0$
and a subsequence $\{ f_{n_k} (D) \}_{k=1}^\infty$ such that
$d (c , \partial f_{n_k} (D) ) \geq \varepsilon$ for all $k$.
Since $\mathbb{D}(c, \varepsilon)$ is connected,
this implies that either
$\mathbb{D}(c, \varepsilon) \subset f_{n_k} (D) $ or
$\mathbb{D}(c, \varepsilon) \subset \mathbb{C} 
\setminus \overline{f_{n_k} (D) }$ holds for each $k$.

On the other hand, since $c \in \partial f(D)$, 
there exists a point $a^* \in D$ 
$c^* :=f(a^*) \in f(D) \cap \mathbb{D}(c, \varepsilon)$.
By Proposition~\ref{prop:first-part-of-the-Hejhal-theorem},
we have that $c^* \in f_{n_k} (D) $ for all sufficiently large $k$.
Hence, 
$f(a^*) \in \mathbb{D}(c, \varepsilon) \subset f_{n_k} (D) $
for all sufficiently large $k$.
It then follows from  Step 1 that
$\mathbb{D}(c, \varepsilon) \subset f(D)$, which contradicts
the assumption that $c \in \partial f(D)$.

\noindent\textbf{Step 3.} 
We show that the mapping $f: D \to f(D)$ is a covering map.
To this end, it suffices to prove that 
for every $w_0 \in f(D)$, there exists
an evenly covered neighborhood of $w_0$. 

Choose a simply connected domain $V$ with $w_0 \in V$ 
such that $\overline{V}$ is compact and contained in $f(D)$.
Let $U$ be a connected component of $f^{-1}(V)$.
Choose $z^* \in U$ arbitrarily and set $w^*=f(z^*)$.
Note that $U$ is the largest connected subset of $f^{-1}(V)$
that contains $z^*$. 
Since $f_n (D) \to f(D)$ in the sense of kernel, 
condition (a') ensures that 
$\overline{V} \subset f_n (D)$ for all sufficiently
large $n \in \mathbb{N}$.
Therefore, by Step 1, there exists a univalent function 
$\varphi : V \rightarrow D$
such that $\varphi(f(z^*)) = z^*$ and $f(\varphi(w))\equiv w$ on $V$.
Since $\varphi (V)$ is connected, contained in $f^{-1}(V)$ and 
contains $z^*$,
we have $\varphi (V) \subset U$.

To prove the reverse inclusion, suppose to the contrary
that  $U \setminus \varphi (V) \not= \emptyset$. 
Then there exists a point $z' \in U \cap \partial \varphi (V)$.
Since $z' \in U$, it follows that $f(z') \in V$.
On the other hand, since the restriction $f|_{\varphi(V)}$ 
is a conformal map of $\varphi(V)$ onto $V$, 
and $z' \in \partial \varphi (V)$, 
Lemma \ref{lemma:boundary_value_of_conformal_map}
implies that $f(z') \in \partial V$, which is a contradiction.

We have shown that 
for any connected component $U$ of $f^{-1}(V)$, there exists
a conformal map $\varphi : V \rightarrow U$ such that
$f|_U = \varphi^{-1}$.
Thus, $V$ is an evenly covered neighborhood of $w_0$. 
\end{proof}

\begin{corollary}
\label{corollary:inverse_maps_on_evenly_covered_nbh_converges}
Let $D$, $\{ f_n \}_{n=1}^\infty$ and $f$ be as in 
Theorem~\ref{thm:2ndPart}.
Let $a \in D$, and let $V $ be a simply connected 
bounded domain such that
$f(a) \in V \subset \overline{V} \subset f(D)$.
Then there exists $N \in \mathbb{N}$ such that 
$f_n(a) \in V \subset f_n(D)$ for $n \geq N$.
Moreover, for $n \geq N$, 
let $\varphi_n = f_n^{-1}$ on $V$ with $\varphi_n(f_n(a))=a$.
Then $\varphi_n \rightarrow \varphi$ locally uniformly on $V$,
where $\varphi = f^{-1}$ on $V$ with $\varphi(f(a))=a$.
\end{corollary}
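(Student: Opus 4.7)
The plan is to recognize that the corollary is essentially the content of Step 1 and the surrounding argument in the proof of Theorem \ref{thm:2ndPart} (noting that ``$\overline{V} \subset D$'' in the statement must be read as $\overline{V} \subset f(D)$, since $V$ is a neighborhood of $f(a)$ in the image). Once existence and uniqueness of the local inverses $\varphi_n, \varphi$ are established by the lifting lemma, the remaining task is to upgrade the subsequential convergence obtained in Step 1 to full locally uniform convergence.

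First I would extract the existence of $N$. By Theorem \ref{thm:2ndPart} we have $f_n(D) \to f(D)$ in the sense of kernel, so by condition (a) of Theorem \ref{thm:equivalent-definition-of-kernel-convergence} applied to the compact set $\overline{V} \subset f(D)$, there is $N_0$ with $\overline{V} \subset f_n(D)$ for $n \geq N_0$. Since $f_n(a) \to f(a) \in V$ and $V$ is open, enlarging $N_0$ gives $N$ with $f_n(a) \in V \subset f_n(D)$ for $n \geq N$. For each such $n$, Lemma \ref{lemma:simp_connected_subdomain_are_evenly_covered} applied to the covering $f_n\colon D \to f_n(D)$ and the inclusion $V \hookrightarrow f_n(D)$, with the normalization at $f_n(a) \mapsto a$, yields a univalent analytic $\varphi_n\colon V \to D$ with $f_n \circ \varphi_n = \id_V$. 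The same lemma applied to $f$ yields the unique $\varphi\colon V \to D$ with $f \circ \varphi = \id_V$ and $\varphi(f(a)) = a$.

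Next I would establish normality of $\{\varphi_n\}_{n \geq N}$ exactly as in Step 1 of the proof of Theorem \ref{thm:2ndPart}: since $D$ is hyperbolic, pick distinct $\alpha, \beta \in \mathbb{C} \setminus D$, and use a conformal map $h\colon \mathbb{D} \to V$ with $h(0)=f(a)$ together with Lemma \ref{lemma:avoid0and1} applied to
\[
H_n(\zeta) = \frac{\varphi_n\!\left( h\!\left(\tfrac{\zeta+\zeta_n}{1+\overline{\zeta_n}\zeta}\right) \right) - \beta}{\alpha-\beta},
\]
where $\zeta_n = h^{-1}(f_n(a)) \to 0$. This gives a uniform log-bound on compact subsets of $V$, so $\{\varphi_n\}$ is locally uniformly bounded and forms a normal family.

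Finally, to pin down the limit, suppose $\varphi_{n_k} \to \psi$ locally uniformly on $V$. Then $\psi(V) \subset \overline{D}$, and by Hurwitz combined with $\varphi_{n_k}(f_{n_k}(a)) = a \in D$, either $\psi$ is univalent into $D$ or $\psi \equiv a$. Passing to the limit in $f_{n_k} \circ \varphi_{n_k} = \id_V$, using that $f_n \to f$ locally uniformly on $D$ and that $\varphi_{n_k}$ stays in a compact subset of $D$ on compact subsets of $V$, yields $f \circ \psi = \id_V$, ruling out the constant case and forcing $\psi(V) \subset D$ and $\psi(f(a))=a$. By the uniqueness clause of Lemma \ref{lemma:simp_connected_subdomain_are_evenly_covered}, $\psi = \varphi$. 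Since every subsequential limit of the normal family equals $\varphi$, the full sequence satisfies $\varphi_n \to \varphi$ locally uniformly on $V$. The only subtle point — and the step I expect to require the most care — is justifying $f_{n_k}\circ\varphi_{n_k} \to f\circ\psi$ locally uniformly on $V$; this follows because on a compact $K \subset V$ the images $\varphi_{n_k}(K)$ are contained in a fixed compact subset of $D$ (by the uniform bound), and $f_n \to f$ uniformly on that compact set.
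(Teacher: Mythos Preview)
Your proposal follows exactly the paper's route: the paper's proof of the corollary is a one-line citation of Step~1 of Theorem~\ref{thm:2ndPart}, and you have reconstructed that step in detail and then applied the standard ``every subsequence has a further subsequence with the same limit'' argument. So the approach is the same.

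There is one genuine soft spot in your reconstruction, precisely at the point you flag. You assert that on a compact $K\subset V$ the images $\varphi_{n_k}(K)$ lie in a fixed compact subset of $D$ ``by the uniform bound''. The bound coming from Lemma~\ref{lemma:avoid0and1} is only a bound in $\mathbb{C}$; it does not by itself keep $\varphi_{n_k}(K)$ away from $\partial D$, and likewise Hurwitz for univalence does not give ``into $D$''. The clean fix --- and this is what the paper's terse ``Hence $\varphi(V)\subset D$'' is really using --- is to apply Hurwitz to each omitted value: for every $\alpha\in\mathbb{C}\setminus D$ the functions $\varphi_{n_k}-\alpha$ are zero-free on $V$, so the limit $\psi-\alpha$ is either identically zero or zero-free; since $\psi(f(a))=a\in D$, the first alternative is excluded, hence $\psi(V)\cap(\mathbb{C}\setminus D)=\emptyset$, i.e.\ $\psi(V)\subset D$. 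With this in hand, $\psi(K)$ is a compact subset of $D$, so for large $k$ the sets $\varphi_{n_k}(K)$ lie in a fixed compact neighbourhood of $\psi(K)$ in $D$, and the passage $f_{n_k}\circ\varphi_{n_k}\to f\circ\psi$ is immediate. Reordering your argument this way closes the gap.
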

\begin{proof}
The existence of $N$ and the functions $\varphi_n$ 
follows from Proposition \ref{prop:first-part-of-the-Hejhal-theorem}
and Proposition \ref{proposition:simply-conncted-domain-is-evnbh},
respectively.

From Step 1 in the proof of the theorem
it follows that every subsequence of
$\{ \varphi_n \}_{n=1}^\infty$ 
has a further subsequence that 
converges locally  uniformly on $V$ 
to $\varphi = (f|_{\tilde{V}})^{-1}$.
Therefore, the entire sequence $\{ \varphi_n \}_{n=1}^\infty$ 
converges locally  uniformly on $V$ to  $\varphi$.
\end{proof}

\section{Degenerate Limits and Necessity of Normality}
\begin{theorem}
\label{thm:2ndPart-degenerate-case}
Let $w_0 \in \mathbb{C}$, and let $D$ be a hyperbolic domain 
in $\mathbb{C}$, 
Let $\{ f_n \}_{n=1}^\infty$ be a sequence of 
analytic covering maps of $D$
with $w_0 \in f_n(D)$ for all $n \in \mathbb{N}$. 
Suppose $\{f_n \}_{n=1}^\infty$ converges locally uniformly on $D$
to the constant function $w_0$.
Then 
$d(w_0, \partial f_n(D)) \rightarrow 0$ as $n \rightarrow \infty$, i.e.,
the family $\{ f_n(D) \}_{n=1}^\infty $ degenerates to 
the singleton $\{ w_0 \}$.
\end{theorem}
\begin{proof}
Suppose that $f_n \rightarrow w_0$ locally uniformly on $D$.
Assume, for contradiction, that $d(w_0, \partial f_n(D)) \not\to 0$.
Then there exist $\varepsilon> 0$ and a subsequence 
$\{ f_{n_k}\}_{k=1}^\infty$ such that
$d(w_0, \partial f_{n_k}(D)) \geq 2 \varepsilon$.
Since  $w_0 \in f_n (\mathbb{D})$,
this implies  
$\overline{\mathbb{D}}(w_0, \varepsilon) 
\subset \mathbb{D}(w_0, \varepsilon) \subset f_{n_k}(D)$ 
for $k \in \mathbb{N}$.

Choose $a \in D$ arbitrarily.
Since $f_{n_k} \rightarrow w_0$, there exists $k_0 \in \mathbb{N}$
such that $f_{n_k} (a) \in \mathbb{D}(w_0, \varepsilon)$ 
for all $k \geq k_0$.
Hence, by 
Corollary~\ref{corollary:inverse_maps_on_evenly_covered_nbh_converges},
there exists 
a univalent analytic function 
$\varphi_{n_k}: \mathbb{D}(w_0, \varepsilon) \rightarrow D$
such that $\varphi_{n_k}(f_{n_k}(a)) = a$ 
and $f_{n_k} (\varphi_{n_k} (w)) \equiv w$ 
on $\mathbb{D}(w_0, \varepsilon)$.
In particular, we have
\begin{equation}
\label{eq:diff_f_and_varph}
 f_{n_k}'(a) \varphi_{n_k}'(f_{n_k}(a)) = 1 . 
\end{equation}
Since $D$ is hyperbolic and $f_{n_k}(a) \rightarrow w_0$,
$\{ \varphi_{n_k} \}_{k=1}^\infty$ 
forms a  normal family as in Step 2 of 
the proof of Theorem \ref{thm:2ndPart}.
Consequently, there exists $M>0$ 
such that $|\varphi_{n_k}'(f_{n_k}(a))| \leq M$ 
for all $k \in \mathbb{N}$.
Thus, by (\ref{eq:diff_f_and_varph}), we obtain
$|f_{n_k}'(a)| \geq \frac{1}{M}$ for all $k \in \mathbb{N}$.
However, since 
$f_{n_k} \rightarrow w_0$,
it follows that $f_{n_k}'(a) \rightarrow 0$,
which is a contradiction.
\end{proof}

The following theorem is not new (see Hejhal \cite{Hejhal}).
Nevertheless, for the sake of completeness, we include a proof here.
\begin{theorem}
\label{thm:3rd-part}
Let $\{ \Omega_n \}_{n=1}^\infty$ be a sequence of hyperbolic domains 
in $\mathbb{C}$ 
that converges to a hyperbolic domain $\Omega$ in $\mathbb{C}$
in the sense of kernel.
Let $f$ and $f_n$ $(\text{for each } n \in \mathbb{N})$ 
be analytic universal covering maps
of $\mathbb{D}$ onto $\Omega$ and $\Omega_n$, respectively. 
Let $\{ a_n \}_{n=1}^\infty $ 
be a sequence in $\mathbb{D}$ that converges to 
a point $a \in \mathbb{D}$.
Suppose that $f_n(a_n) \rightarrow f(a)$ 
and $\arg f_n'(a_n) \rightarrow \arg f'(a)$. 
Then 
$f_n \rightarrow f$ as $n \rightarrow \infty$ locally uniformly 
on $\mathbb{D}$.
\end{theorem}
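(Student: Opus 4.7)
\emph{Overview.} The strategy is (i) to establish normality of $\{f_n\}$ on $\mathbb{D}$, (ii) to show that every subsequential limit $g$ is a universal covering of $\Omega$ with $g(a)=f(a)$ and $\arg g'(a)=\arg f'(a)$, and (iii) to invoke uniqueness of such a covering to conclude $g=f$, hence $f_n\to f$ locally uniformly by the standard subsequence argument. The most delicate point will be excluding the possibility of a constant subsequential limit; this is where condition (a) of Theorem \ref{thm:equivalent-definition-of-kernel-convergence} enters essentially.

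\emph{Normality.} Since $\Omega$ is hyperbolic in $\mathbb{C}$, the set $\partial\Omega\cap\mathbb{C}$ contains at least two distinct points $\alpha,\beta$. By condition (b) of Theorem \ref{thm:equivalent-definition-of-kernel-convergence}, I pick $\alpha_n,\beta_n\in\partial\Omega_n$ with $\alpha_n\to\alpha$, $\beta_n\to\beta$; these lie in $\mathbb{C}\setminus\Omega_n$ and are distinct for large $n$. The M\"obius maps $T_n(w)=(w-\alpha_n)/(\beta_n-\alpha_n)$ send $\alpha_n\mapsto 0$ and $\beta_n\mapsto 1$, so $T_n\circ f_n$ omits $\{0,1,\infty\}$ on $\mathbb{D}$. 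Montel's theorem then gives spherical normality of $\{T_n\circ f_n\}$, and since $T_n(f_n(a_n))\to T(f(a))\in\mathbb{C}$ with $T(w)=(w-\alpha)/(\beta-\alpha)$ while $a_n\to a$, no subsequential limit can be identically $\infty$ near $a$. As $T_n^{-1}\to T^{-1}$ uniformly on $\hat{\mathbb{C}}$, the family $\{f_n\}$ itself is normal, so every subsequence has a further subsequence converging locally uniformly on $\mathbb{D}$ to an analytic function $g:\mathbb{D}\to\mathbb{C}$.

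\emph{Identifying the limit.} Fix any such subsequence $f_{n_k}\to g$. Locally uniform convergence of $f_{n_k}$ and $f_{n_k}'$ together with $a_{n_k}\to a$ yields $g(a)=\lim f_{n_k}(a_{n_k})=f(a)$ and $g'(a)=\lim f_{n_k}'(a_{n_k})$. To apply Theorem \ref{thm:2ndPart} I must show $g$ is nonconstant. If $g\equiv f(a)$, pick a bounded simply connected neighborhood $V$ of $f(a)$ with $\overline{V}\subset\Omega$; by (a) also $\overline{V}\subset\Omega_{n_k}$ for large $k$ and $f_{n_k}(a_{n_k})\in V$. Lemma \ref{lemma:simp_connected_subdomain_are_evenly_covered} applied to the covering $f_{n_k}:\mathbb{D}\to\Omega_{n_k}$ and the injection $V\hookrightarrow\Omega_{n_k}$ produces a univalent lift $\psi_{n_k}:V\to\mathbb{D}$ with $f_{n_k}\circ\psi_{n_k}=\mathrm{id}_V$ and $\psi_{n_k}(f_{n_k}(a_{n_k}))=a_{n_k}$. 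The family $\{\psi_{n_k}\}$ is bounded by $1$, hence normal; after extracting, $\psi_{n_k}\to\psi$ locally uniformly on $V$ with $\psi(f(a))=a$, so the maximum modulus principle forces $\psi(V)\subset\mathbb{D}$. Passing the identity $w=f_{n_k}(\psi_{n_k}(w))$ to the limit gives $w=g(\psi(w))=f(a)$ for all $w\in V$, an absurdity. Hence $g$ is nonconstant, and Theorem \ref{thm:2ndPart} gives that $g$ is a covering with $\Omega_{n_k}\to g(\mathbb{D})$ in the sense of kernel. Two kernel limits of one sequence sharing a point coincide, and $f(a)\in\Omega\cap g(\mathbb{D})$, so $g(\mathbb{D})=\Omega$; simple connectivity of $\mathbb{D}$ makes $g$ a universal covering of $\Omega$.

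\emph{Uniqueness and completion.} Both $f$ and $g$ are universal coverings $\mathbb{D}\to\Omega$ with $g(a)=f(a)$. Two applications of Lemma \ref{lemma:simp_connected_subdomain_are_evenly_covered} yield analytic $\phi,\eta:\mathbb{D}\to\mathbb{D}$ with $f\circ\phi=g$, $g\circ\eta=f$, $\phi(a)=\eta(a)=a$. Then $\phi\circ\eta$ and $\mathrm{id}_\mathbb{D}$ both lift $f$ through the covering $f$ sending $a$ to $a$, so uniqueness in the lemma forces $\phi\circ\eta=\mathrm{id}_\mathbb{D}$; symmetrically $\eta\circ\phi=\mathrm{id}_\mathbb{D}$, whence $\phi\in\Aut(\mathbb{D})$. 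Differentiating $g=f\circ\phi$ at $a$ gives $g'(a)=f'(a)\phi'(a)$, and $\arg g'(a)=\arg f'(a)$ then forces $\phi'(a)>0$; an automorphism of $\mathbb{D}$ fixing $a$ with positive derivative there is $\mathrm{id}_\mathbb{D}$, so $g=f$. As every subsequence of $\{f_n\}$ admits a further subsequence converging locally uniformly to $f$, the full sequence converges locally uniformly to $f$ on $\mathbb{D}$.
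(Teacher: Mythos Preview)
Your proof is correct and follows the same strategy as the paper's: establish normality of $\{f_n\}$ via two omitted boundary values converging to points of $\partial\Omega$, extract a subsequential limit $g$, apply Theorem~\ref{thm:2ndPart} to see that $g$ is a universal covering onto $\Omega$, and conclude $g=f$ by uniqueness of the universal covering with prescribed base point and tangent direction. Your version is in fact more careful than the paper's in two places---you explicitly rule out a constant subsequential limit (which the paper passes over when invoking Theorem~\ref{thm:2ndPart}), and you spell out the uniqueness step via mutual lifts rather than citing it as known.
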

\begin{proof}
Choose $r > 0$ such that $\overline{\mathbb{D}}(f(a),r) \subset \Omega$.
Then, there exists $n_0 \in \mathbb{N}$ such that
$f_n (a_n ) \in \mathbb{D}(f(a),r) \subset 
\overline{\mathbb{D}}(f(a),r) \subset \Omega_n$ 
for all $n \geq n_0$.
By Proposition~\ref{proposition:simply-conncted-domain-is-evnbh},
there exists 
a univalent function 
$\varphi_n :\mathbb{D}(f(a),r) \rightarrow \mathbb{D}$ 
such that $f_n (\varphi_n (w)) \equiv w$ on $\mathbb{D}(f(a),r)$
and $\varphi_n (f_n (a_n)) = a_n$.
By the Schwarz-Pick lemma, we have 
\[
  |\varphi_n '(f_n(a_n))| 
 \leq 
 \frac{r(1-|a_n|^2 )}{r^2-|f_n(a_n)-f(a)|^2}.
\]
Since $a_n \rightarrow a$ and $f_n(a_n) \rightarrow f(a)$,
there exists a constant $M > 0$ such that 
$|\varphi_n '(f_n(a_n))|  \leq M$ for all $n \in \mathbb{N}$.
Since $f_n (\varphi_n (w)) \equiv w$,
it follows that 
$|f_n'(a_n)|= \frac{1}{|\varphi_n'(f_n(a_n))|} \geq \frac{1}{M}$.

Since $\Omega$ is hyperbolic, we can choose 
distinct points $w_1, w_2 \in \partial \Omega$.
Because $\Omega_n \rightarrow \Omega$ in the sense of kernel,
there exist sequences 
$\{ w_1^{(n)} \}_{n=1}^\infty$ and $\{ w_2^{(n)} \}_{n=1}^\infty$ 
with $w_1^{(n)} , w_2^{(n)} \in \partial \Omega_n$
such that
$w_1^{(n)} \rightarrow w_1$ and $w_2^{(n)} \rightarrow w_2$
as $n \rightarrow \infty$.
We may assume that $w_1^{(n)} \not= w_2^{(n)}$ for all $n \in \mathbb{N}$.
Since each $f_n$ omits both $w_1^{(n)}$ and $w_2^{(n)}$, 
and $a_n \rightarrow a$, $f_n(a_n ) \rightarrow f(a)$, 
it follows from Lemma~\ref{lemma:avoid0and1},
as in the proof of Theorem \ref{thm:2ndPart},  
that the sequence $\{ f_n \}_{n=1}^\infty$ forms a normal family.

Let $\{ f_{n_k} \}_{k=1}^\infty$ 
be a subsequence  of $\{ f_n \}_{n=1}^\infty$ 
that converges to some analytic function $g$ 
locally uniformly on $\mathbb{D}$.
Then we have 
\begin{align*}
  g(a) 
 = & \, \lim_{k \rightarrow \infty }f_{n_k}(a)
 = \lim_{k \rightarrow \infty } f_{n_k}(a_{n_k}) = f(a) ,
\\
  |g'(a)| 
 = & \, \lim_{k \rightarrow \infty } |f_{n_k}'(a)|
 = \lim_{k \rightarrow \infty } |f_{n_k}'(a_{n_k})| \geq \frac{1}{M} .
\end{align*}
In particular, this implies that $g$ is nonconstant.
Therefore, by Theorem \ref{thm:2ndPart}, 
$g$ is a covering and 
$\Omega_{n_k} = f_{n_k} (\mathbb{D}) \rightarrow g(\mathbb{D}) $
in the sense of kernel.

Since $\Omega_{n_k} \rightarrow \Omega$ and
$f(a) = g(a) \in \Omega \cap g(\mathbb{D})$,
it follows from Proposition \ref{prop:quasi-uniqueness-of-limit-domain}
that $g(\mathbb{D}) = \Omega$. 
Therefore, $g$ is a universal covering map of $\mathbb{D}$
onto $\Omega$ with $g(a) = f(a)$,
satisfying
\[
 \arg g'(a) 
 = \lim_{k \rightarrow \infty } \arg f_{n_k}'(a) 
 = \lim_{k \rightarrow \infty } \arg f_{n_k}'(a_{n_k})
 = \arg f'(a) . 
\]
By the uniqueness theorem for universal covering maps, 
we conclude that $g$ coincides with $f$.

We have shown that 
$\{ f_n \}_{n=1}^\infty$ forms a normal family, 
and that every convergent subsequence of $\{ f_n \}_{n=1}^\infty$
converges to $f$ locally uniformly on $\mathbb{D}$.
Therefore, the original sequence $\{ f_n \}_{n=1}^\infty$
converges to $f$ locally uniformly on $\mathbb{D}$.
\end{proof}

In the above theorem we cannot drop the assumption that 
$\Omega$ is hyperbolic.
For example, define
\[
  f_n(z) 
  = \frac{1}{n} e^{(\log n ) \frac{1+z}{1-z} } -1  ,
  \quad z \in \mathbb{D} .
\]
Then $f_n$ is the unique analytic covering map
of $\mathbb{D}$ onto 
$\Omega_n =\mathbb{C} \backslash \overline{\mathbb{D}}(-1,n^{-1}) $
with $f_n(0) =0$ and $f_n'(0)> 0$.
The sequence $\{ \Omega_n \}_{n=1}^\infty$ converges, 
in the sense of kernel,
to  the non-hyperbolic domain $\mathbb{C} \backslash \{ -1 \}$ .
Note that for $x \in (-1,1)$,
we have $f_n(x) = n^{\frac{2x}{1-x}}$.
It is easy to verify that 
\[
  \lim_{n \rightarrow \infty} f_n(x) 
  = 
  \begin{cases}
  \infty & \text{if } 0<x<1, \\
  0 & \text{if } -1<x<0 .
 \end{cases}
\]

\begin{theorem}
\label{thm:3rdPart-degenerate-case}
Let $w_0 \in \mathbb{C}$, 
and let $\{ \Omega_n \}_{n=1}^\infty$ 
be a sequence of hyperbolic domains 
in $\mathbb{C}$ with $w_0 \in \Omega_n$ for all $n \in \mathbb{N}$.
For each $n \in \mathbb{N}$, 
let $f_n$ be an analytic universal 
covering map of $\mathbb{D}$ onto $\Omega_n$ with $f_n(0) = w_0$.
Suppose that $\{ \Omega_n \}_{n=1}^\infty$
degenerates to $\{ w_0 \}$ in the sense of kernel,
and that the sequence $\{ f_n \}_{n=1}^\infty$ forms a normal family. 
Then $\{ f_n \}_{n=1}^\infty$ converges locally uniformly 
on $\mathbb{D}$ to the constant function 
$w_0$ as $n \rightarrow \infty$.
\end{theorem}
\begin{proof}
Assume that $\{ f_n \}_{n=1}^\infty$ does not converge to $w_0$ 
locally uniformly on $\mathbb{D}$.
Then there exist 
$\varepsilon > 0$, $r \in (0,1)$,
a subsequence $\{ f_{n_k} \}_{k=1}^\infty$ and 
a sequence $\{z_k \}_{k=1}^\infty \subset \overline{\mathbb{D}} (0,r)$
such that $|f_{n_k}(z_k) - w_0| \geq \varepsilon$ 
for all $k \in \mathbb{N}$.
By passing to a further subsequence if necessary, 
we may assume
that $z_k \to z_0 $ 
and $f_{n_k} \rightarrow f$ locally uniformly on $\mathbb{D}$
for some point $z_0 \in \overline{\mathbb{D}} (0,r)$
and some analytic function $f$ on $\mathbb{D}$.
Then $f(z_0) \not= w_0$.
Since $f_n(0) = w_0$ for $n \in \mathbb{N}$, 
we have $f(0)=w_0$.
Therefore, $f$ is nonconstant.
By Theorem~\ref{thm:2ndPart}
we have that 
the function $f$ is a covering map 
and $\Omega_{n_k} \rightarrow f(\mathbb{D})$
in the sense of kernel.

Choose $\delta > 0$ such that 
$\overline{\mathbb{D}}(w_0, \delta ) \subset f(\mathbb{D})$.
Then condition (a') implies 
that $\overline{\mathbb{D}}(w_0, \delta ) \subset \Omega_{n_k}$ for 
all sufficiently large $k$,  
which contradicts the assumption that
$\{ \Omega_n \}_{n=1}^\infty$
degenerates to $\{ w_0 \}$ in the sense of kernel.
\end{proof}

In the above theorem, we cannot drop the assumption that 
$\{ f_n \}_{n=1}^\infty$ forms a normal family.
For example, let 
\[
  f_n(z) = \frac{1}{n^2}e^{(\log n ) \frac{1+z}{1-z} } - \frac{1}{n},
  \quad z \in \mathbb{D} 
\]
for $n \geq 2$.
Then $f_n$ is the unique analytic covering map
of $\mathbb{D}$ onto 
$\Omega_n := \mathbb{C} \setminus \overline{\mathbb{D}}(-n^{-1},n^{-2}) $
with $f_n(0) =0$ and $f_n'(0)> 0$.
Since $d(0, \partial f_n(\mathbb{D})) = \frac{1}{n} \to 0$,
the sequence 
$\{ \Omega_n \}_{n=1}^\infty$ degenerates to $\{ 0 \}$.
However, 
$\{ f_n \}_{n=1}^\infty$ does not converge to $0$,
since
\[
  \lim_{n \rightarrow \infty} f_n(x) 
  =
  \begin{cases}
  \infty & \text{if} \quad 3^{-1}<x<1 , \\
  0 & \text{if} \quad -1<x<3^{-1}.
  \end{cases}
\] 
Furthermore, this implies that 
the the sequence $\{ f_n \}_{n=1}^\infty$ does not form 
a normal family.

\section{One-Parameter Families and Continuity in the Kernel Sense}
To conclude this chapter, we define
the continuity of $\{ \Omega_t \}_{t\in I}$ at 
a point $t_0 \in I$,
and provide a characterization in terms of sequences.

\begin{definition}
Let $I \subset [-\infty, \infty ]$ be an interval, 
and let $\{ \Omega_t \}_{t \in I}$ 
be a family of domains in $\hat{\mathbb{C}}$.
We say that $\{ \Omega_t \}_{t \in I}$ is 
\emph{continuous at $t_0 \in I$ (in the sense of kernel)}
if the following two conditions are satisfied:
\begin{itemize}
 \item[$(a^*)$]
For every compact subset $K$ of $\Omega_{t_0}$ 
there exists $\delta > 0$ such that
$K \subset \Omega_t$ for all  $t \in I$ with $0< |t-t_0| < \delta$.
\item[$(b^*)$] 
For every $c \in \partial \Omega_{t_0}$, 
we have $d^* (c, \partial \Omega_t ) \rightarrow 0$ 
as $I \backslash \{t_0 \} \ni t \rightarrow t_0$.
\end{itemize} 
If $\{ \Omega_t \}_{t \in I}$ is continuous at every $t_0 \in I$,
we simply say that it is 
\emph{continuous (in the sense of kernel)}.
\end{definition}
It is easy to see that
$\{ \Omega_t \}_{t \in I}$ is 
continuous at $t_0 \in I$
if and only if 
$\Omega_{t_n} \rightarrow \Omega_{t_0}$ as $n \rightarrow \infty$
in the sense of kernel 
for every
sequence $\{t_n \}_{n=1}^\infty \subset I$ 
with $t_n \neq t_0$ and $t_n \rightarrow t_0$.

Here we summarize the results concerning the relationship 
between 
a one-parameter family of hyperbolic domains 
and the corresponding family of universal covering maps. 
The following theorem, which generalizes Theorem 
\ref{theorem:f_t_and_Omega_t},
follows directly from  
Theorems~\ref{theorem:subordination-and-inclusion}, 
\ref{thm:2ndPart} and~\ref{thm:3rd-part}.
\begin{theorem}
Let $\{ \Omega_t \}_{t \in I}$ be a family of hyperbolic domains
in $\mathbb{C}$ with $0 \in \Omega_t$ for all $t \in I$. 
For each $t$, let $f_t : \mathbb{D} \rightarrow \Omega_t$
be the universal covering map normalized by 
$f_t(0)=0$ and $f_t'(0) > 0$.
Then the following assertions hold:
\begin{itemize}
 \item[{\rm (i)}]
$\{ f_t \}_{t \in I}$ is a Loewner chain
if and only if
$\{ \Omega_t \}_{t \in I}$ is nondecreasing; that is, 
$\Omega_s \subset \Omega_t$ whenever $s,t \in I$ with $s \leq t$,
 \item[{\rm (ii)}]
$\{ f_t \}_{t \in I}$ is a strictly increasing Loewner chain
if and only if
$\{ \Omega_t \}_{t \in I}$ is strictly increasing, i.e., 
$\Omega_s \subsetneq \Omega_t$ whenever $s,t \in I$ with $s < t$,
 \item[{\rm (iii)}]
$\{ f_t \}_{t \in I}$ is continuous
if and only if
$\{ \Omega_t \}_{t \in I}$ is continuous in the sense of kernel.
\end{itemize}
\end{theorem}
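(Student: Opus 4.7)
The plan is to handle (i), (ii), (iii) in turn, each by direct appeal to one of the cited ingredients: Proposition \ref{prop:subordination-and-inclusion} for (i), the uniqueness of normalized universal covering maps together with the Schwarz lemma for (ii), and Theorems \ref{thm:2ndPart} and \ref{thm:3rd-part} for (iii). None of the three parts requires new machinery beyond these results, so the task is essentially bookkeeping with the normalization $f_t(0)=0$, $f_t'(0)>0$.

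For (i), the forward direction is immediate from the definition of a Loewner chain: $f_s\prec f_t$ gives $\Omega_s=f_s(\mathbb{D})\subset f_t(\mathbb{D})=\Omega_t$. For the converse, given $\Omega_s\subset\Omega_t$ with $s\le t$, I apply Proposition \ref{prop:subordination-and-inclusion} with $g=f_t$, which is a universal covering, to obtain $f_s\prec f_t$; the remark following the definition of subordination then places the associated Schwarz function in $\mathfrak{B}$. For (ii), I combine (i) with the observation that, for two hyperbolic domains with $\Omega_s\subset\Omega_t$ both containing $0$, the uniqueness of the universal covering map normalized by $f(0)=0$ and $f'(0)>0$ yields the chain of equivalences
\begin{equation*}
\Omega_s=\Omega_t \;\Longleftrightarrow\; f_s=f_t \;\Longleftrightarrow\; f_s'(0)=f_t'(0).
\end{equation*}
The only nontrivial step is $f_s'(0)=f_t'(0)\Rightarrow f_s=f_t$: since $\omega_{s,t}'(0)=f_s'(0)/f_t'(0)=1$, the Schwarz lemma forces $\omega_{s,t}=\id_{\mathbb{D}}$. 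Negating across this equivalence and combining with (i) produces (ii).

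For (iii), I fix $t_0\in I$ and argue sequentially. In the forward direction, given a sequence $t_n\to t_0$ with $f_{t_n}\to f_{t_0}$ locally uniformly on $\mathbb{D}$, the limit $f_{t_0}$ is nonconstant because $f_{t_0}'(0)>0$ and each $f_{t_n}$ is a covering, so Theorem \ref{thm:2ndPart} delivers $\Omega_{t_n}\to\Omega_{t_0}$ in the sense of kernel. For the converse, if $\Omega_{t_n}\to\Omega_{t_0}$ in the sense of kernel, then since $f_{t_n}(0)=0=f_{t_0}(0)$ and $\arg f_{t_n}'(0)=0=\arg f_{t_0}'(0)$, the hypotheses of Theorem \ref{thm:3rd-part} are satisfied with $a_n=a=0$, and that theorem yields $f_{t_n}\to f_{t_0}$ locally uniformly on $\mathbb{D}$. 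The main substantive ingredient, and the only real obstacle, is precisely this converse direction of (iii): it relies on the generalized Hejhal convergence theorem (Theorem \ref{thm:3rd-part}) rather than the classical Carathéodory theorem, since the domains $\Omega_t$ are not assumed simply connected. Once Theorem \ref{thm:3rd-part} is in hand, the proof is complete.
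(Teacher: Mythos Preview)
Your proof is correct and follows essentially the same approach as the paper, which simply states that the theorem ``directly follows from Proposition \ref{prop:subordination-and-inclusion}, Theorems \ref{thm:2ndPart} and \ref{thm:3rd-part}'' without further elaboration. You have in fact supplied more detail than the paper does, particularly in part (ii) where you make explicit the role of the Schwarz lemma and the uniqueness of the normalized universal covering map in establishing the chain of equivalences $\Omega_s=\Omega_t \Leftrightarrow f_s=f_t \Leftrightarrow f_s'(0)=f_t'(0)$.
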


\chapter{Kernel convergence and  connectivity of domains}
\label{chapter:Connectivity}
The \emph{connectivity} of a domain  $\Omega$ 
in $\mathbb{C}$ (or in $\hat{\mathbb{C}}$)
is defined as the number of connected components of 
$\hat{\mathbb{C}} \setminus \Omega$. 
We denote this number by $C(\Omega)$.
Following a common convention we write $C(\Omega) = \infty$ 
when the number is not finite, that is, 
we ignore the distinction between countable 
and uncountable cardinalities 
and simply set $C(\Omega)=\infty$ 
whenever the number is not finite.

In this chapter, we first show 
that if $\Omega_n \to \Omega$ as $n \to \infty$,
then 
$\liminf_{n \to \infty} C(\Omega_n) \geq C( \Omega)$, that is,
the connectivity of domains is lower semicontinuous with respect to 
kernel convergence. 
After establishing a few auxiliary results, 
we prove a fundamental result 
(see Theorem~\ref{thm:nonvanishing-of-components})
concerning continuous and nondecreasing families of domains 
in $\hat{\mathbb{C}}$.
It is noteworthy that these results have natural analogues 
in the context of universal covering maps,
provided that all domains are hyperbolic.

\section{Lower Semicontinuity of Connectivity under Kernel Convergence}
\begin{theorem}
\label{thm:lower-semi-continuity}
Let 
$\{ \Omega_n \}_{n=1}^\infty$ be a sequence of domains in 
$\hat{\mathbb{C}}$, and
suppose that $\{ \Omega_n \}_{n=1}^\infty$ 
converges to a domain $\Omega$ 
in $\hat{\mathbb{C}}$ in the sense of kernel.
Then,
\begin{equation}
\label{ineq:lower-semi-continuity}
    \liminf_{n \rightarrow \infty} C(\Omega_n) \geq C(\Omega ). 
\end{equation}
 \end{theorem}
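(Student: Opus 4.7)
The plan is to reduce to a finite counting argument: it suffices to prove that for every positive integer $k$ with $k \le C(\Omega)$, the inequality $C(\Omega_n) \ge k$ holds for all sufficiently large $n$. Letting $k \uparrow C(\Omega)$ (or $k \to \infty$ when $C(\Omega) = \infty$) then yields (\ref{ineq:lower-semi-continuity}); the case $\Omega = \hat{\mathbb{C}}$ (so $C(\Omega)=0$) is trivial, so I assume $\Omega$ is a proper subdomain. Fix such a $k$ and select $k$ distinct connected components $E_1, \dots, E_k$ of $\hat{\mathbb{C}} \setminus \Omega$.

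The key tool is Lemma \ref{lemma:separation}. For each $j$, set $F_j = \bigcup_{i \ne j} E_i$; this is a finite union of components of the closed set $\hat{\mathbb{C}} \setminus \Omega$, hence a closed subset of $\hat{\mathbb{C}} \setminus \Omega$ disjoint from $E_j$. The lemma then produces a simple closed curve $\alpha_j \subset \Omega$ separating $E_j$ from $F_j$. Write $D_j^{(1)}$ and $D_j^{(2)}$ for the two open components of $\hat{\mathbb{C}} \setminus \alpha_j$ containing $E_j$ and $F_j$, respectively.

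To produce test points in $\hat{\mathbb{C}} \setminus \Omega_n$, observe that each $E_j$ is a proper closed connected subset of $\hat{\mathbb{C}}$, so its topological boundary is non-empty and, because the component of $\hat{\mathbb{C}} \setminus \Omega$ through any interior point of $\hat{\mathbb{C}} \setminus \Omega$ is open, $\partial E_j \subset \partial \Omega$; pick $c_j \in E_j \cap \partial \Omega$. Condition (b) of kernel convergence (Theorem \ref{thm:equivalent-definition-of-kernel-convergence}) then supplies $c_j^{(n)} \in \partial \Omega_n \subset \hat{\mathbb{C}} \setminus \Omega_n$ with $d^*(c_j^{(n)}, c_j) \to 0$. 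Meanwhile, condition (a) applied to the compact set $\alpha_1 \cup \cdots \cup \alpha_k \subset \Omega$ yields $N$ such that $\alpha_1 \cup \cdots \cup \alpha_k \subset \Omega_n$ for every $n \ge N$. For each fixed index $i$, the point $c_j$ sits in the open set $D_i^{(1)}$ when $j = i$ and in $D_i^{(2)}$ when $j \ne i$ (because $E_j \subset F_i \subset D_i^{(2)}$); hence for all sufficiently large $n$ the same inclusions hold for $c_j^{(n)}$. Since $\alpha_i \subset \Omega_n$, any connected subset of $\hat{\mathbb{C}} \setminus \Omega_n$ is disjoint from $\alpha_i$ and therefore confined to exactly one of $D_i^{(1)}$, $D_i^{(2)}$. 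Consequently $c_1^{(n)}, \dots, c_k^{(n)}$ lie in $k$ pairwise distinct components of $\hat{\mathbb{C}} \setminus \Omega_n$, giving $C(\Omega_n) \ge k$.

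The only step that lies outside the routine interaction of conditions (a) and (b) is the appeal to Lemma \ref{lemma:separation}; once the separating curves $\alpha_j$ are available, the argument is entirely formal, and I expect no further obstacle. The separation lemma itself is deferred to \S\ref{section:separation}, so the proof of the theorem is self-contained modulo that reference.
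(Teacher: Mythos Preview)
Your proposal is correct and follows essentially the same approach as the paper: both invoke Lemma~\ref{lemma:separation} to obtain separating Jordan curves $\alpha_j$ around each chosen component $E_j$, then use condition~(a) to place these curves inside $\Omega_n$ and condition~(b) to produce boundary points $c_j^{(n)}\in\partial\Omega_n$ on the correct sides, forcing at least $k$ distinct components of $\hat{\mathbb{C}}\setminus\Omega_n$. The only cosmetic difference is that the paper treats the cases $C(\Omega)\in\{0,1\}$ separately, whereas you absorb them into the general argument; note that for $k=1$ your set $F_1$ is empty so Lemma~\ref{lemma:separation} cannot be invoked as stated, but the conclusion $C(\Omega_n)\ge 1$ follows immediately from $c_1^{(n)}\in\partial\Omega_n$ without any separating curve.
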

\begin{proof}
If $C(\Omega ) = 0$, then 
the inequality (\ref{ineq:lower-semi-continuity})
clearly holds.
If $C(\Omega ) = 1$, then 
$\hat{\mathbb{C}}\setminus \Omega \neq \emptyset$ and hence
$\partial \Omega$ is not empty.
Choose an arbitrary point $c \in \partial \Omega$.
By
Definition~\ref{def:kernel-convergence-without-reference-point},
we have $d^*(c ,\partial \Omega_n) \rightarrow 0$ 
as $n \rightarrow \infty$.
This implies 
$\partial \Omega_n \neq \emptyset$ for all sufficiently large $n$,
and therefore
$C(\Omega_n) \geq 1$ for all sufficiently large $n$.

Assume $C(\Omega) \geq 2$.
If $C(\Omega) = \infty$, 
choose $k \in \mathbb{N}$ with $k\geq 2$ arbitrarily; 
otherwise, let $k=C(\Omega)$.
Let $E_1, \ldots , E_k$ be 
$k$ distinct components of $\hat{\mathbb{C}} \setminus \Omega$. 
For each $i = 1,\ldots, k$, by Lemma~\ref{lemma:separation},
there exists a simple closed curve 
$\gamma_i :[0,1] \rightarrow \Omega$ 
that separates $E_i$ and $\bigcup_{j \neq i }E_j $.

Let $D_i$ be the component of 
$\hat{\mathbb{C}} \setminus \gamma_i([0,1])$
that contains $E_i$,
and let $D_i'$ be the other component.
Then $E_i$ is a connected set 
satisfying $E_i \subset D_i \cap \bigcap_{j \neq i}D_j'$,
for each $i=1, \ldots , k$.
Let $V_i$ denote the connected component 
of $D_i \cap \bigcap_{j  \neq i}D_j'$ that contains $E_i$.
Clearly, $E_i \subset V_i$ for each $i$, 
and the sets $V_1, \ldots , V_k$ are mutually disjoint.

For each $i=1, \ldots , k$,
choose an arbitrary point $\zeta_i \in \partial E_i$ and set 
\[
  \delta 
 = \min_{i,j =1, \ldots , k} 
 d^*( \zeta_i, \gamma_j([0,1]) ) > 0 .
\]
Since 
$\mathbb{D}^*(\zeta_i, \delta)$ is  connected and satisfies 
$\zeta_i \in \mathbb{D}^*(\zeta_i, \delta) \cap V_i$, and
\[
 \mathbb{D}^*(\zeta_i, \delta) \cap \partial V_i 
 \subset 
 \mathbb{D}^*(\zeta_i, \delta) \cap (\gamma_1 ([0,1]) 
 \cup \cdots \cup \gamma_k ([0,1])) 
 = \emptyset ,
\]
it follows that $\mathbb{D}^*(\zeta_i, \delta) \subset V_i$
for each  $i$.

By Definition~\ref{def:kernel-convergence-without-reference-point}~(a)
there exists $N \in \mathbb{N}$ such that
for all $n \geq N$ and $i =1,\ldots, k$, we have 
\[
   \gamma_1 ([0,1]) \cup \cdots \cup \gamma_k ([0,1]) \subset \Omega_n 
  \quad \text{and} \quad 
  d^*(\zeta_i, \partial \Omega_n ) < \delta . 
\]
Therefore, for each $i =1, \ldots , k$ and $n \geq N$ 
there exists a point
$\zeta_i^{(n)} \in 
\partial \Omega_n \cap \mathbb{D}^*(\zeta_i, \delta)$.

\begin{tikzpicture}[scale=0.8]
\draw plot [smooth cycle, tension={0.6}]
coordinates { (3,2.2) (4,1.8) (5,1.7) (6,1.6) (7,1.8) (8,2.2) (8.8,3) 
(9.3,5) (9,6) (8,6.3) (7,3.8) (6,3) (5,2.8) (4.2,3) (3.2,4) (3,5)
(3.8,6) (5,6) (6,5) (6,6.4) (5,7) (3,6.4) (2,5) (2.3,3) };
\draw[rotate=30] (8,4) circle (4 and 2);
\draw[rotate=-30] (3,10) circle (4.5 and 1);
\begin{scope}[xshift=3cm,yshift=7.2cm]
\draw[fill=gray,rotate=60] (0,0) ellipse (0.3cm and 0.2cm) 
node[left=0.1cm] {$E_1$};   
\end{scope}
\begin{scope}[xshift=7.5cm,yshift=3cm]
\draw[fill=gray,rotate=120] (0,0) ellipse (0.3cm and 0.2cm) 
node[above=0.1cm] {$E_2$};   
\end{scope}
\begin{scope}[xshift=9.8cm,yshift=6cm]
\draw[fill=gray,rotate=20] (0,0) ellipse (0.3cm and 0.1cm) 
node[above=0.1cm] {$E_3$};   
\end{scope}

\node[left] at (2,8) {$\gamma_1([0,1])$};
\node at (3.5,8) {$V_1$};

\node[left] at (2,3) {$\gamma_2([0,1])$};
\node at (6,2.3) {$V_2$};

\node[above] at (9.6,7.4) {$\gamma_3([0,1])$};
\node at (8.8,7) {$V_3$};
\end{tikzpicture}

Let $E_i^{(n)}$ be the unique component 
of $\hat{\mathbb{C}} \setminus \Omega_n$ that contains $\zeta_i^{(n)}$.
We claim that $E_i^{(n)} \subset V_i$ for each $i=1, \ldots , k$.
Indeed, this follows from the fact that $E_i^{(n)}$ is a connected set
satisfying
\[
  E_i^{(n)} \cap \partial V_i 
 \subset E_i^{(n)} \cap (\gamma_1([0,1]) \cup \cdots \cup \gamma_k([0,1]) )
 \subset E_i^{(n)} \cap \Omega_n = \emptyset
\]
and $\zeta_i^{(n)} \in E_i^{(n)} \cap V_i$.

Since $V_1, \ldots , V_k$ are disjoint open sets,
it follows that
the components $E_1^{(n)}, \ldots , E_k^{(n)}$ 
are mutually distinct components of 
$\hat{\mathbb{C}} \setminus \Omega_n$.
Therefore, $C(\Omega_n ) \geq k$ for $n \geq N$.

If $C(\Omega )$ is finite, this implies
$\liminf_{n \rightarrow \infty} C(\Omega_n) \geq k = C(\Omega )$.
If $C(\Omega ) = \infty$, then for any $k \in \mathbb{N}$ 
we have  $\liminf_{n \rightarrow \infty} C(\Omega_n) \geq k$.
Hence,
$\liminf_{n \rightarrow \infty} C(\Omega_n ) = \infty = C(\Omega )$.
\end{proof}

Example \ref{example:strip-minus-sgments} shows that
the inequality in~(\ref{ineq:lower-semi-continuity})
cannot, in general, be replaced by an equality.

\begin{corollary}
\label{cor:Omega_t=hat-mathbb-C}
Let $\{ \Omega_t \}_{t \in I}$ 
be a continuous family of domains in $\hat{\mathbb{C}}$.
If $C(\Omega_{t_0}) = 0$ for some $t_0 \in I$, i.e.,
$\Omega_{t_0} = \hat{\mathbb{C}}$,
then $C(\Omega_t)=0$ for all $t \in I$.
\end{corollary}
\begin{proof}
Let $I_0 = \{ t \in I : C(\Omega_t) = 0 \}$.
Then $I_0$ is nonempty,  
and closed by Theorem~\ref{thm:lower-semi-continuity}.
Suppose $t_1 \in I_0$.
Then $\Omega_{t_1} = \hat{\mathbb{C}}$.
Since $\hat{\mathbb{C}}$ is compact and contained in $\Omega_{t_1}$,
the continuity of $\{ \Omega_t \}_{t\in I}$ at $t_1$
implies that there exists $\delta > 0$ such that
$\hat{\mathbb{C}} = \Omega_t$ 
for all $t \in I \cap (t_1 - \delta, t_1 + \delta )$. 
Thus $t_1$ is an interior point of $I_0$, and hence 
$I_0$ is open. 
Therefore, 
$I_0$ is a nonempty open and closed subset of $I$.
Since $I$ is connected by definition,
we conclude $I_0 = I$ as required.
\end{proof}

\section{Persistence of Complementary Components}
Let $\{ \Omega_t \}_{0 \leq t \leq \infty}$ be 
a family of domains, 
and set $E_t = \hat{\mathbb{C}} \setminus \Omega_t$ 
for $0 \leq t \leq \infty$, 
as in Example~\ref{eg:CantorSet}.
Then the corresponding family of universal covering maps 
$\{ f_t \}_{0 \leq t \leq \infty}$, normalized by 
$f_t(0)=0$ and $f_t'(0)>0$
forms a strictly increasing and continuous Loewner chain.
Fix $t_0 \in I$ and let $C_0$ be a connected component of $E_{t_0}$.
Since the family $\{ E_t \}_{t \in I}$ is nonincreasing in $t$, 
the sets $\{ C_0 \cap E_t \}_{t \in I}$ form a nonincreasing family 
as well.
Although the intersection $C_0 \cap E_t$ may shrink or split into 
multiple components as $t$ increases,
some portion of $C_0$ always survives in $E_t$.
In other words, $C_0$ never disappears entirely from the complement.

This persistence property holds more generally.
For instance, if $F$ is a closed subset of $E_{t_0}$ 
that is contained in a domain bounded by a Jordan curve 
in $\Omega_{t_0}$, 
then it is not difficult to show that 
$F \cap E_t \neq \emptyset$ for all $t \in I$.
We first prove a slightly more general result,
and then proceed to the general case.

\begin{proposition}
\label{prop:isolated_component_never_vansh}
Let $\{ \Omega_t \}_{t \in I}$ be a nondecreasing and continuous 
family of domains in $\hat{\mathbb{C}}$, and let
$E_t = \hat{\mathbb{C}} \setminus \Omega_t$ for $t \in I$. 
Suppose that $t_0 \in I$, and let $F$ be a nonempty 
closed subset of $E_{t_0}$. 
If there exists a domain $V$ in $\hat{\mathbb{C}}$ 
such that $F = E_{t_0} \cap V$, 
then for any $t \in I$,
\begin{equation}
\label{eq:F-nonvanishing}
    F \cap  E_t \neq \emptyset .
\end{equation}
\end{proposition}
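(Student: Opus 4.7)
The plan is to split the argument at $t_0$. For $t \leq t_0$, monotonicity gives $E_t \supset E_{t_0} \supset F$, so $F \cap E_t = F \neq \emptyset$ trivially. For $t > t_0$, I argue by contradiction: assume $F \cap E_{t_1} = \emptyset$ for some $t_1 > t_0$ and set
\[
   s := \inf \{ t \in [t_0, t_1] : F \cap E_t = \emptyset \}.
\]
Since $F \cap E_{t_0} = F$ is nonempty, $s > t_0$, so I pick $t_n \uparrow s$ with $F \cap E_{t_n} \neq \emptyset$ and choose $p_n \in F \cap E_{t_n}$. By compactness of $F$ in $\hat{\mathbb{C}}$, a subsequence satisfies $p_{n_k} \to p^* \in F$. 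If $p^*$ lay in $\Omega_s$, pick a compact neighborhood $\overline{W} \subset \Omega_s$ of $p^*$; condition (a') at $s$ yields $\overline{W} \subset \Omega_{t_{n_k}}$ for all large $k$, but $p_{n_k} \in \overline{W}$ eventually, contradicting $p_{n_k} \in E_{t_{n_k}}$. Hence $p^* \in F \cap E_s$, and in particular $s < t_1$.

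Then I would pass to a connected component of $E_s$. Because $E_s \subset E_{t_0}$, the set $F \cap E_s = E_{t_0} \cap V \cap E_s = V \cap E_s$ is clopen in $E_s$ (open since $V$ is open; closed since $\hat{\mathbb{C}} \setminus V$ is closed). Let $C_s$ be the connected component of $E_s$ containing $p^*$; as $F \cap E_s$ is clopen in $E_s$, one has $C_s \subset F$. A standard connectedness argument guarantees $C_s \cap \partial \Omega_s \neq \emptyset$: otherwise $C_s$ would lie in $\operatorname{int}(E_s)$, so each $p \in C_s$ would admit a neighborhood $U_p \subset E_s$, and $C_s \cup U_p$ being connected and contained in $E_s$ would force $U_p \subset C_s$; thus $C_s$ would be both open and closed in $\hat{\mathbb{C}}$, so $C_s = \hat{\mathbb{C}}$ by connectedness of $\hat{\mathbb{C}}$, contradicting $\Omega_s \neq \emptyset$. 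Fix $c \in C_s \cap \partial \Omega_s$.

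To finish I invoke (b'). Since $c \in F \subset V$ and $V$ is open, pick $r > 0$ with $\mathbb{D}^*(c, r) \subset V$. Condition (b') at $s$ gives $d^*(c, \partial \Omega_t) \to 0$ as $t \to s$, so there is $\delta > 0$ such that for every $t \in (s, s + \delta) \cap I$ one may choose $q_t \in \partial \Omega_t \cap \mathbb{D}^*(c, r)$. Monotonicity $E_t \subset E_{t_0}$ for $t > s \geq t_0$, combined with $q_t \in \partial \Omega_t \subset E_t$ and $q_t \in V$, gives $q_t \in E_{t_0} \cap V \cap E_t = F \cap E_t$. Hence $F \cap E_t \neq \emptyset$ for every $t$ in a right-neighborhood of $s$, contradicting the definition of $s$ as an infimum. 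The delicate point is establishing that the limit $p^*$ still lies in $E_s$, which really uses property (a') of kernel continuity; once $c \in C_s \cap \partial \Omega_s$ is identified, (b') together with openness of $V$ finishes the job.
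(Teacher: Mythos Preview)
Your argument is essentially correct, but there is one small slip: from $F\cap E_{t_0}=F\neq\emptyset$ alone it does not follow that $s>t_0$; by monotonicity the set $\{t:F\cap E_t=\emptyset\}$ is an up-set and could equal all of $(t_0,t_1]$. This is harmless, since if $s=t_0$ you may simply take any $p^*\in F=F\cap E_s$ and skip the compactness step; the remainder of your argument then runs verbatim and produces the same contradiction.

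Your route genuinely differs from the paper's. After isolating the critical time (the paper calls it $t_1$, your $s$), the paper splits into two cases. If $F\cap E_{t_1}=\emptyset$, compactness of $F$ together with condition~(a') forces $F\subset\Omega_t$ for $t$ in a full neighbourhood of $t_1$, contradicting the choice of $t_1$. If $F\cap E_{t_1}\neq\emptyset$, the paper exploits the \emph{connectedness of $V$}: since $V$ meets both $E_{t_1}$ and $\Omega_{t_1}$ (the latter because otherwise $F=V$ would be a nontrivial clopen subset of $\hat{\mathbb{C}}$), one finds $c\in V\cap\partial\Omega_{t_1}$; then for $t>t_1$ the decomposition $V=F\cup(V\setminus F)\subset\Omega_t$ shows $V\subset\Omega_t$, so $d^*(c,\partial\Omega_t)$ stays bounded away from $0$, contradicting~(b'). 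You instead handle both cases at once by a limit/compactness argument, then pass to a connected \emph{component $C_s$ of $E_s$} lying inside $F$ via a clopen argument, locate $c\in C_s\cap\partial\Omega_s$, and use~(b') to exhibit explicit points $q_t\in\partial\Omega_t\cap V\subset F\cap E_t$. Notably, your proof never invokes the connectedness of $V$ (only that $V$ is open and $F$ is closed), so it actually establishes a marginally stronger statement; the paper's version is a bit shorter and obtains the contradiction by showing the whole of $V$ is absorbed into $\Omega_t$ rather than by tracking individual boundary points.
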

\begin{proof}
It suffices to prove (\ref{eq:F-nonvanishing}) for 
$t \in I \cap (t_0, \infty ]$.
Suppose, for contradiction, that $F \cap E_t = \emptyset$
for some $t \in I \cap (t_0, \infty ]$.
Since $\{ E_t \}_{t \in I}$ is nonincreasing in $t$,
there exists $t_1 \in I \cap (t_0, \infty )$ such that
\begin{align}
\label{eq:before-t_1}
  &  F \cap E_t \neq \emptyset 
  \quad \text{for} \quad t \in I \cap [-\infty, t_1) 
\\
  &  F \cap E_t = \emptyset 
 \quad \text{for} \quad t \in I \cap (t_1, \infty ] .
\nonumber
\end{align}
In particular, for $t>t_1$ we have 
that $V \cap E_t = V \cap (E_{t_0} \cap E_t ) = F \cap E_t = \emptyset$.
Thus,
\begin{equation}
\label{eq:after-t_1}
  V \subset \Omega_t  
  \quad \text{for} \quad t \in I \cap (t_1, \infty ] .  
\end{equation}

We divide the argument into two cases. 
First, consider the case where 
$F \cap E_{t_1} = \emptyset$.
In this case, we have $F \subset \Omega_{t_1}$.
Since $F$ is compact,
it follows from condition~(a) that 
there exists $\delta > 0$ such that
$F \subset \Omega_t = \hat{\mathbb{C}} \setminus E_t$
for all $|t-t_1| < \delta$ with $t \in I$. 
This contradicts~(\ref{eq:before-t_1}).
Next, consider the case 
where $F \cap E_{t_1} \neq \emptyset$.
We will show that 
$V \cap \partial \Omega_{t_1} \neq \emptyset$.
Since $F \subset V$, we have 
 \begin{equation}
\label{eq:VintersectsE_t_1}
   V \setminus \Omega_{t_1} 
  = V \cap E_{t_1}
    = V \cap (E_{t_0} \cap E_{t_1})
   =  F \cap E_{t_1} \neq \emptyset .  
 \end{equation}
Here, we have $V \cap \Omega_{t_0} \neq \emptyset$.  
Indeed, suppose this is not the case; that is, assume 
$V \cap \Omega_{t_0} = \emptyset$.  
Then it would follow that $V \subset E_{t_0}$.
Thus we have $F = E_{t_0} \cap V = V$,
which implies that $F$ is both open and closed. 
Since $F \neq \emptyset$ and $F \neq \hat{\mathbb{C}}$, 
this contradicts the connectedness of $\hat{\mathbb{C}}$.
In particular, we obtain
\begin{equation}
\label{eq:VintersectsOmega_t_1}
  V \cap \Omega_{t_1} \supset V \cap \Omega_{t_0} \neq \emptyset . 
\end{equation}
Since $V$ is connected, it follows from
\eqref{eq:VintersectsE_t_1} and \eqref{eq:VintersectsOmega_t_1}
that
$V \cap \partial \Omega_{t_1} \neq \emptyset$.

Choose a point $c \in V \cap \partial \Omega_{t_1}$ arbitrarily.
Then, by condition~(b')
\begin{equation}
\label{eq:distance_between_c_and_E_t_tends_to_0}
   d^*(c, \partial \Omega_t ) \rightarrow 0
    \quad \text{as} \quad t \rightarrow t_1.
\end{equation}
On the other hand, one can choose $\varepsilon > 0$ such that
$\mathbb{D}^*(c, \varepsilon ) \subset V$.
By \eqref{eq:after-t_1}, we have 
for all $t \in I \cap (t_1,\infty ]$
that 
$d^*(c, \partial \Omega_t) \geq \varepsilon$, 
which contradicts~\eqref{eq:distance_between_c_and_E_t_tends_to_0}.

We have thus obtained a contradiction in both cases.  
It follows that \eqref{eq:F-nonvanishing} 
holds.
\end{proof}

Note that the set $F$ in the above proposition 
is a clopen subset of $E_{t_0}$,
and that the decomposition 
$E_{t_0} = F \cup (E_{t_0} \setminus F)$
constitutes a partition of $E_{t_0}$.
However, in the proof of the proposition,
it is  essential not only that this holds,
but also that the set $V$ is connected.

Let $X$ be a set, 
and let $\mathcal{F}$ be a nonempty family of subsets of $X$.
Then $\mathcal{F}$  is said to have
the finite intersection property if
for every finite collection 
$F_1, \ldots , F_n \in \mathcal{F}$, 
we have $F_1 \cap \cdots \cap F_n \neq \emptyset$.

The following fact will be used repeatedly in the proof of the 
next theorem. 
For convenience, we present it here as a lemma. 
For a proof, see for example Munkres~\cite{Munkres}.
\begin{lemma}
\label{lemma:finite_intersection_property}
Let $X$ be a compact topological space,
and let $\mathcal{F}$ be a nonempty family of closed subsets of $X$.
Suppose that $\mathcal{F}$ has the finite intersection
property.
Then $\mathcal{F}$ has nonempty intersection, that is,
$\bigcap_{F \in \mathcal{F}} F \neq \emptyset$. 
\end{lemma}

\begin{theorem}
\label{thm:nonvanishing-of-components}
Let $I \subset [-\infty, \infty ]$ be an interval, 
and let $\{ \Omega_t \}_{t\in I}$ 
be a nondecreasing and continuous family of domains 
in $\hat{\mathbb{C}}$. 
Set $E_t = \hat{\mathbb{C}} \setminus \Omega_t$ for each $t \in I$.
Suppose that
$t_0 \in I$, and that $C_0$ is a connected component of $E_{t_0}$.
Then 
\begin{equation}
\label{eq:nonvanishing}
   C_0 \cap \bigcap_{t \in I } E_t \neq \emptyset .
\end{equation}  
\end{theorem}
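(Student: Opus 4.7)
The plan is to combine Lemma \ref{lemma:separation} and Proposition \ref{prop:isolated_component_never_vansh} with a compactness argument to produce a point of $C_0$ surviving in every $E_t$. Because $E_t \supset E_{t_0} \supset C_0$ for $t \leq t_0$, it suffices to exhibit a point in $C_0 \cap \bigcap_{t \in I,\, t \geq t_0} E_t$.

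For each $n \in \mathbb{N}$ let $N_n = \{z \in \hat{\mathbb{C}} : d^*(z, C_0) < 1/n\}$. If $E_{t_0} \setminus N_n \neq \emptyset$, Lemma \ref{lemma:separation} applied to $\Omega_{t_0}$, $E = C_0$, $F = E_{t_0} \setminus N_n$ supplies a simple closed curve $\alpha_n \subset \Omega_{t_0}$ separating $C_0$ from $E_{t_0} \setminus N_n$; I take $V_n$ to be the component of $\hat{\mathbb{C}} \setminus \alpha_n$ containing $C_0$. In the degenerate case $E_{t_0} \subset N_n$, I simply set $V_n = \hat{\mathbb{C}}$. In either case $V_n$ is a domain containing $C_0$ with $V_n \cap E_{t_0} \subset N_n$, and the set $F_n := V_n \cap E_{t_0}$ is closed (the curve $\alpha_n$ is disjoint from $E_{t_0}$), nonempty, and equals $E_{t_0} \cap V_n$. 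Proposition \ref{prop:isolated_component_never_vansh} then yields $F_n \cap E_t \neq \emptyset$ for every $t \in I$.

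For fixed $n$, the family $\{F_n \cap E_t\}_{t \in I,\, t \geq t_0}$ is a descending family of nonempty compact subsets of $\hat{\mathbb{C}}$, hence has nonempty intersection by the finite intersection property; pick $z_n$ in it. Then $z_n \in F_n \subset N_n$ gives $d^*(z_n, C_0) < 1/n$, and $z_n \in E_t$ for every $t \in I$ with $t \geq t_0$. Some subsequence $z_{n_k}$ converges to a limit $z_\infty \in \hat{\mathbb{C}}$; the bound $d^*(z_\infty, C_0) = 0$ together with the closedness of $C_0$ forces $z_\infty \in C_0$, and the closedness of each $E_t$ forces $z_\infty \in E_t$ for all $t \geq t_0$, proving the main assertion.

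For the consequence on $C(\Omega_t)$: monotonicity follows since, for $s \leq t$ in $I$, the main assertion applied at $t_0 = s$ ensures every component $C$ of $E_s$ meets $E_t$; then $E_t \subset E_s$ forces any component of $E_t$ meeting $C$ to lie inside $C$, so distinct components of $E_s$ give rise to pairwise disjoint nonempty families of components of $E_t$, whence $C(\Omega_s) \leq C(\Omega_t)$. Left continuity then follows by combining monotonicity with the lower semicontinuity in Theorem \ref{thm:lower-semi-continuity}: for any sequence $t_n \uparrow t_0$ in $I$, continuity of $\{\Omega_t\}$ gives $\Omega_{t_n} \to \Omega_{t_0}$ in the sense of kernel, so $C(\Omega_{t_0}) \leq \liminf_n C(\Omega_{t_n}) \leq \sup_{t < t_0} C(\Omega_t) \leq C(\Omega_{t_0})$. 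The main obstacle is arranging the separators $\alpha_n$ so that the sets $F_n$ genuinely concentrate on $C_0$ even for components that are not isolated in $E_{t_0}$ (as with the Cantor example in Example \ref{eg:CantorSet}); once the $1/n$-neighborhood device makes this precise, the rest reduces to invoking Proposition \ref{prop:isolated_component_never_vansh} and a direct compactness argument.
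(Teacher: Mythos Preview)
Your proof is correct and follows the same overall scheme as the paper: isolate $C_0$ inside a Jordan domain $V$ using Lemma~\ref{lemma:separation}, apply Proposition~\ref{prop:isolated_component_never_vansh} to the closed piece $F=V\cap E_{t_0}$, and then pass to the limit via compactness. The difference lies in how the limiting is organized.

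The paper indexes over the \emph{other components}: for each component $C_\lambda$ of $E_{t_0}$ different from $C_0$ it chooses a separating curve $\alpha_\lambda$, sets $F_\lambda=V_\lambda\cap E_{t_0}$, proves that $\{F_\lambda\cap E_t\}_{\lambda}$ has the finite intersection property for each fixed $t$ (this requires a separate claim), identifies $\bigcap_\lambda F_\lambda=C_0$, and then uses the finite intersection property once more for $\{C_0\cap E_t\}_t$. Your argument instead indexes over \emph{metric scales}: you separate $C_0$ from $E_{t_0}\setminus N_n$, obtain $F_n\subset N_n$, extract a point $z_n\in F_n\cap\bigcap_{t\ge t_0}E_t$ directly (nested compacts in $t$), and then let $n\to\infty$ to land on a point of $C_0$. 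Your route is arguably more economical: it avoids the uncountable family $\{F_\lambda\}$ and the auxiliary finite-intersection claim, at the cost of introducing the spherical metric explicitly. The paper's route, by contrast, gives the intermediate statement $C_0\cap E_t\neq\emptyset$ for each $t$ as a byproduct and stays purely set-theoretic. Both treatments of the monotonicity and left-continuity of $C(\Omega_t)$ are essentially identical.
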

\begin{proof}
Note that the component $C_0$ is nonempty by definition. 
Hence, $E_{t_0} \neq \emptyset$,
and it follows from Corollary~\ref{cor:Omega_t=hat-mathbb-C}
that 
\begin{equation}
\label{eq:E_t_never_vanish}
  E_t \neq \emptyset \quad \text{for all } t \in I .
\end{equation}

To begin with, consider 
the special case where $E_{t_0}$ is connected.
In this case, 
since $C_0$ is the only component of $E_{t_0}$,  
we clearly have $C_0 =E_{t_0}$.
This implies
\[
  C_0 \cap E_t 
 =
 \begin{cases}
  C_0 , \quad & t \in I \cap [-\infty, t_0] ,\\ 
  E_t , \quad & t \in I \cap [t_0, \infty ] ,
 \end{cases}
\]
and therefore $C_0 \cap E_t \neq \emptyset$ 
for all $t \in I$.
Moreover, since $\{ E_t \}_{t \in I}$ is nonincreasing,
for any finite collection
$t_1, \ldots, t_n \in I$,
we have by \eqref{eq:E_t_never_vanish}
\begin{equation}
\label{eq:decreasing_sequence_of_nonempty_closed_subsets}
C_0 \cap (E_{t_1} \cap \cdots \cap E_{t_n} ) 
 = 
 C_0 \cap E_{\max \{ t_1, \ldots ,t_n \} } 
 \neq \emptyset . 
\end{equation}
In other words, the family $\{ C_0 \cap E_t \}_{t\in I}$
of closed subsets of the compact set $C_0$  
has the finite intersection property. 
By Lemma~\ref{lemma:finite_intersection_property} 
we obtain \eqref{eq:nonvanishing} in this case.

Next, we consider the case 
where $E_{t_0}$ has a component other than $C_0$.
Let $E_{t_0} = C_0 \cup \bigcup_{\lambda \in \Lambda} C_\lambda$
be the decomposition of $E_{t_0}$ into its connected components.
For each $\lambda \in \Lambda$, by Lemma \ref{lemma:separation}, 
there exists a simple closed curve
$\gamma_\lambda:[0,1] \rightarrow \Omega_{t_0}$ 
that separates $C_0$ and $C_\lambda$.
Let $V_\lambda$ be the component of 
$\hat{\mathbb{C}} \setminus \gamma_\lambda ([0,1])$
that contains $C_0$, and define $F_\lambda = V_\lambda \cap E_{t_0}$.
Since  
$\partial V_\lambda \cap E_{t_0} 
= \gamma_\lambda ([0,1]) \cap (\hat{\mathbb{C}} \setminus \Omega_{t_0}) =
 \emptyset$, 
we have $F_\lambda = \overline{V_\lambda} \cap E_{t_0}$, 
so $F_\lambda$ is closed.
Moreover, since $C_0 \subset F_\lambda$, the set $F_\lambda$ is nonempty.
Therefore 
by Proposition~\ref{prop:isolated_component_never_vansh},
we conclude that
$F_\lambda \cap E_t \neq \emptyset$ for all $t \in I$.

\noindent\textbf{Claim}. 
For any fixed $t \in I$, the family 
$\{ F_\lambda \cap E_t \}_{\lambda \in \Lambda}$ 
of closed subsets of the compact space $E_t$
has the finite intersection property.

We now prove the claim.
We may assume that $t \geq t_0$.
Let $V$ be the component of the open set 
$V_{\lambda_1} \cap \cdots \cap V_{\lambda_n}$ that contains $C_0$,
and define $F = V \cap E_{t_0}$.
Since 
\[
  \partial V \cap E_{t_0} 
 \subset  
 (\gamma_{\lambda_1}([0,1]) \cup \cdots \cup \gamma_{\lambda_n}([0,1]))
  \cap E_{t_0}
 \subset \Omega_{t_0} \cap E_{t_0}  = \emptyset,
\]
it follows that $F = \overline{V} \cap E_{t_0}$ is closed.
Moreover, since $C_0 \subset F$, we have $F \neq \emptyset$.
Therefore, by Proposition \ref{prop:isolated_component_never_vansh}
it follows that $F \cap E_t \neq \emptyset $ for all $t \in I$.
Consequently,
\begin{align*}
 \emptyset 
 \neq 
 F \cap E_t 
 = & (V \cap E_{t_0} )\cap E_t  
\\
 \subset & 
 \{ (V_{\lambda_1} \cap \cdots \cap V_{\lambda_n}) \cap E_{t_0} \}
 \cap E_t 
 = (F_{\lambda_1} \cap \cdots \cap F_{\lambda_n} ) \cap E_{t}, 
\end{align*}
as required.

From the claim and Lemma~\ref{lemma:finite_intersection_property},  
it follows that
\begin{equation}
\label{eq:intersection_property_for_F_lambda}
 \bigcap_{\lambda \in \Lambda} (F_\lambda \cap E_t )
 =
 \left( \bigcap_{\lambda \in \Lambda} F_\lambda \right) \cap E_t 
 \neq \emptyset 
\end{equation}
for all $t \in I$.

Now we show 
\begin{equation}
\label{eq:the_intersection_of_all_F_lambda_is_C_0}
  \bigcap_{\lambda \in \Lambda} F_\lambda = C_0. 
\end{equation}
Since $C_0 \subset F_\lambda $ for any $\lambda \in \Lambda$, 
we have $C_0 \subset \bigcap_{\lambda \in \Lambda} F_\lambda$.
On the other hand, since  
$\bigcap_{\lambda \in \Lambda} F_\lambda 
\subset E_{t_0} = C_0 \cup \bigcup_{\mu \in \Lambda } C_\mu$
and $F_\mu \cap C_\mu = \emptyset$ for all $\mu \in \Lambda$,
we obtain
\begin{align*}
 C_0 
 \subset
 \bigcap_{\lambda \in \Lambda} F_\lambda
 = & \,   
 \left( \bigcap_{\lambda \in \Lambda} F_\lambda \right) 
 \cap  \left( C_0 \cup \bigcup_{\mu \in \Lambda } C_\mu  \right)
\\
 = & \,   
 C_0 \cup 
 \left\{ \left( \bigcap_{\lambda \in \Lambda} F_\lambda \right)
 \cap \left( \bigcup_{\mu \in \Lambda } C_\mu  \right) \right\}
\\
 = & \,   
 C_0 \cup 
 \left\{ \bigcup_{\mu \in \Lambda } 
 \left( \bigcap_{\lambda \in \Lambda} F_\lambda \right) \cap C_\mu  \right\}
\\
 \subset & \,
 C_0 \cup \bigcup_{\mu \in \Lambda } 
 \left( F_\mu  \cap C_\mu  \right)
 = C_0 .
\end{align*}
Therefore, \eqref{eq:the_intersection_of_all_F_lambda_is_C_0} holds.
Combining this with \eqref{eq:intersection_property_for_F_lambda},
we conclude
\[
 C_0 \cap E_t \neq \emptyset \quad \text{for all } t \in I. 
\]

From this, the finite intersection property of 
the family $\{ C_0 \cap E_t \}_{t \in I}$ easily follows.
Indeed, we have
$(C_0 \cap E_{t_1}) \cap \cdots \cap (C_0 \cap E_{t_n}) 
= C_0 \cap E_{\max \{ t_1, \ldots , t_n \}} \neq \emptyset$
for every finite collection $t_1, \ldots , t_n \in I$.

Finally, by Lemma~\ref{lemma:finite_intersection_property} again,
we obtain $\bigcap_{t \in I} (C_0 \cap E_t ) 
= C_0 \cap \bigcap_{t \in I} E_t \neq \emptyset$,
as required.
\end{proof}

\begin{corollary}
\label{corollary:connectivity-is-nondecreasing}
Let $\{ \Omega_t \}_{t\in I}$ 
be a nondecreasing and continuous family of domains 
in $\hat{\mathbb{C}}$, 
and let $E_t= \hat{\mathbb{C}} \setminus \Omega_t$
for $t \in I$.
Then the function $C(\Omega_t)$ 
is nondecreasing and left-continuous on $I$.
Moreover, for any $t_1, t_2 \in I$ with $t_1 < t_2$,
the cardinality of the set of all components of $E_{t_1}$
does not exceed that of $E_{t_2}$. 
\end{corollary}
\begin{proof}
If $C( \Omega_t ) = 0$ for some $t \in I$, 
then by Corollary \ref{cor:Omega_t=hat-mathbb-C},
we have $C(\Omega_t) = 0$ for all $t \in I$. 
In this case, the function $C(\Omega_t)$ is clearly 
nondecreasing and left continuous on $I$.

Assume now that $C(\Omega_t ) \geq 1$, that is,
$E_t \neq \emptyset$ for all $t \in I$.
Let $t_1, t_2 \in I$ with $t_1 < t_2$.
For each component $C$ of $E_{t_1}$,
since $C \cap \bigcap_{t \in I} E_t  \neq \empty$,
we can choose a point $w \in C \cap E_{t_2}$.
(Here, we use the axiom of choice.)
Let $C'$ be the component of $E_{t_2}$ which contains $w$.
We denote the correspondence by $C' = m(C)$.
It is easy to see that
$m$ defines a map of the set of all components of $E_{t_1}$
into the set of all components of $E_{t_2}$,
and that this map is injective.
Therefore, the cardinality of the set of all components of $E_{t_1}$
does not exceed that of $E_{t_2}$.
In particular, we have $C(\Omega_{t_1}) \leq C(\Omega_{t_2})$.
Hence, the function $C(\Omega_t)$, $t \in I$, is nondecreasing.

Since $C(\Omega_t)$ is nondecreasing, we have
$\limsup_{t \nearrow t_0} C(\Omega_t) \leq C(\Omega_{t_0})$ 
for any $t_0 \in I$.
On the other hand, by Theorem \ref{thm:lower-semi-continuity},
$\liminf_{t \nearrow t_0} C(\Omega_t) \geq C(\Omega_{t_0})$.
Therefore, the limit exists, and we conclude 
\[
  \lim_{t \nearrow t_0} C(\Omega_t) = C(\Omega_{t_0}).
\]
\end{proof}

\section{Maximal Domains}
\begin{definition}
Let $\Omega$ be a domain in $\hat{\mathbb{C}}$. 
We say that 
$\Omega$ is \emph{maximal in the sense of kernel} 
if there exists no nondecreasing and continuous 
family of domains $\{ \Omega_t \}_{0 \leq t \leq \varepsilon}$ 
in  $\hat{\mathbb{C}}$ with $\varepsilon > 0$ 
such that $\Omega = \Omega_0 \subsetneq \Omega_\varepsilon$. 
\end{definition}

Assume that $\Omega$ is a hyperbolic domain in $\mathbb{C}$  
with $0 \in \Omega$.  
Let $f: \mathbb{D} \rightarrow \Omega$ be the unique universal  
covering map normalized by $f(0) = 0$ and $f'(0) > 0$.  
If $\Omega$ is maximal in the sense of kernel,  
then $f$ is maximal in the sense of continuous Loewner chains  
of covering maps.  
This can be easily verified by proving the contrapositive.  
The converse also holds and can be shown in a similar way  
by proving the contrapositive,  
but it requires some knowledge concerning continuous  
and nondecreasing families of domains that connect  
a hyperbolic domain and a parabolic domain.  
See Theorem~\ref{theorem:a_classification_L-chain_of_covering_map}  
for details.

\begin{theorem}
Let $\Omega$ be a domain in $\hat{\mathbb{C}}$ 
with nonempty complementary set 
$E = \hat{\mathbb{C}} \setminus \Omega$.
If $E$ is totally disconnected,
then $\Omega$ is maximal in the sense of kernel. 
\end{theorem}
\begin{proof}
Suppose, on the contrary, that there exists a nondecreasing 
and continuous family 
$\{ \Omega_t \}_{0 \leq t \leq \varepsilon}$ 
of domains in $\hat{\mathbb{C}}$
with $\Omega_0 = \Omega \subsetneq \Omega_\varepsilon$. 
Since  $E$ is totally disconnected, for any $w \in E$,  
the component of $E$ containing $w$ is $\{ w \}$.
Then, by Theorem~\ref{thm:nonvanishing-of-components}, 
we have 
$w \in E_\varepsilon := \hat{\mathbb{C}} \setminus \Omega_\varepsilon$,
and hence $E \subset E_\varepsilon$.
This contradicts $\Omega = \Omega_0 \subsetneq \Omega_\varepsilon$.
\end{proof}

It follows from the above corollary that
the $\Omega_{\infty}$ in Example~\ref{eg:CantorSet} is maximal.
We now present an example of a maximal domain 
whose complement is not totally disconnected.
\begin{example}
For $n \in \mathbb{N}$ and $k=0, \ldots , n$,
let
\[
   a_{n,k} = 1+  \frac{k}{n} + \frac{1}{n} i .
\]
Define
\[
  E 
 =  [1,2] \cup \{ \infty \} 
 \cup \bigcup_{n=1}^\infty \bigcup_{k=0}^n \{ a_{n,k} \}. 
\]
Then the line segment $[1,2]$ is a component of $E$,
and hence $E$ is not totally disconnected. 
The complementary set  
$\Omega := \hat{\mathbb{C}} \setminus E$ is a maximal domain.
\end{example}
\begin{proof}
Since $[1,2]$ is a connected subset of $E$,
there exists a unique connected component $A$ of $E$ 
containing $[1,2]$.
We claim that  $A =[1,2]$; that is, 
$[1,2]$ is itself a component of $E$.

Suppose, on the contrary,  
that $a_{n,k} \in A$ for some $n \in \mathbb{N}$ and 
$k \in \{ 0, \ldots, n \}$.
Then, we have the following decomposition
\begin{align*}
  a_{n,k} \in  & \, 
 \left\{ w \in \mathbb{C} : 
 \Imaginary w 
 > 
 \frac{1}{2} \left( \frac{1}{n} + \frac{1}{n+1} \right) \right\} \cap A,
\\ 
  [1,2] \subset  & \, 
 \left\{ w \in \mathbb{C} : 
 \Imaginary w 
 < 
 \frac{1}{2} \left( \frac{1}{n} + \frac{1}{n+1} \right) 
 \right\} \cap A .
\end{align*}
This gives a partition of $A$ into
two relatively open, disjoint subsets,
which contradicts the connectedness of $A$.
Thus, $a_{n,k} \not\in A$ for all $n \in \mathbb{N}$ and 
$k \in \{ 0, \ldots, n \}$.
Similarly, we have $\infty \not\in A$.
Therefore, $A=[1,2]$, and hence 
$[1,2]$ is a component of $E$.
In particular, $E$ is not totally disconnected.

We now show that $\Omega$  is maximal.
Suppose, on the contrary,  that
$\{ \Omega_t \}_{0 \leq t \leq \varepsilon}$ 
is a nondecreasing and continuous
family of domains in $\hat{\mathbb{C}}$ for some 
$\varepsilon > 0$, 
with $\Omega_0 = \Omega \subsetneq \Omega_\varepsilon$.
Let $E_t = \hat{\mathbb{C}} \setminus \Omega_t$ for  
$0 \leq t \leq \varepsilon$.
Since for each $n \in \mathbb{N}$ and $k=0, \ldots , n$,
the singleton set $\{ a_{n,k} \}$ is a component of $E$, 
it follows from Theorem~\ref{thm:nonvanishing-of-components}
that $ a_{n,k} \in E_\varepsilon$.
Moreover, every point in $[1,2]$ is an accumulation point
of $\{ a_{n,k} : n \in \mathbb{N} \text{ and } k=0, \ldots , n \}$.
Since $E_\varepsilon$ is closed, 
we conclude $[1,2] \subset E_\varepsilon$. 
Also, $\infty \in E_\varepsilon$.
Therefore, we have $E \subset E_\varepsilon$, which contradicts
the assumption $\Omega \subsetneq \Omega_\varepsilon$.
Hence, $\Omega$ is maximal.
\end{proof}

\section{Continuous Connection between Domains}
Next, we consider conditions under which 
two given domains can be connected by 
a nondecreasing and continuous one-parameter family of domains.

\begin{definition}
Let $D_0$ and $D_1$ be domains in $\hat{\mathbb{C}}$
with $D_0 \subset D_1 \subsetneq \hat{\mathbb{C}}$. 
We say that $D_0$ is \emph{continuously connected} to $D_1$
if there exists a nondecreasing and continuous family 
$\{ \Omega_t \}_{0\leq t \leq 1}$ of domains in $\hat{\mathbb{C}}$
such that $\Omega_0 = D_0$ and $\Omega_1 =D_1$. 
\end{definition}

When $0 \in D_0$
and $D_1$ is hyperbolic,
$D_0$ is continuously 
connected to $D_1$ if and only if 
there exists a continuous Loewner chain 
$\{ f_t  \}_{0 \leq t \leq 1}$ of covering maps 
such that $f_0, f_1 \in \mathcal{H}_0 (\mathbb{D})$ 
are the unique universal covering maps 
of $\mathbb{D}$ onto $D_0$ and $D_1$, respectively.

In the case where $C(D_1)$ is finite, i.e.,
when $D_1$ is finitely connected, 
we provide a necessary and sufficient condition 
for $D_0$ to be continuously connected to $D_1$.
To this end, we first present an elementary topological lemma 
and a weaker result for the case where 
both $D_0$ and $D_1$ are simply connected.

\begin{lemma}
\label{lemma:component_of_complementary_set_of_domain}
Let $\Omega $ be a domain in $\hat{\mathbb{C}}$, and 
let $C$ be a connected component of the complement 
$E = \hat{\mathbb{C}} \setminus \Omega$.
Then the set $\hat{\mathbb{C}} \setminus C$ is connected.
Moreover, $\hat{\mathbb{C}} \setminus C$
is a simply connected domain. 
\end{lemma}
\begin{proof}
Since $\Omega \subset \hat{\mathbb{C}} \setminus C$, 
there exists a unique component $\Omega'$ 
of $\hat{\mathbb{C}} \setminus C$ that contains $\Omega$.
It suffices to show that $\hat{\mathbb{C}} \setminus C$
has no other components besides $\Omega'$.

Assume that $D$ is another component of $\hat{\mathbb{C}} \setminus C$.
We will show that the union $A: = D \cup C$ is connected.
Since 
\[
  \partial D 
 \subset 
 \partial (\hat{\mathbb{C}} \setminus C) 
 = \partial C \subset C,
\]
it follows that $A = D \cup C = \overline{D} \cup C$.
Both $\overline{D}$ and $C$ are connected 
and $\overline{D} \cap C \supset \partial D \neq \emptyset$.
Thus, $A$ is connected.
Moreover, 
since $A \cap \Omega \subset (D \cap \Omega') \cup (C \cap \Omega) = \emptyset $,
we obtain $ C \subsetneq A \subset E$,
which contradicts the maximality of the component $C$ of $E$.
Therefore, $\Omega'$
is the only component of 
$\hat{\mathbb{C}} \setminus C$,
that is, $\Omega' = \hat{\mathbb{C}} \setminus C$.
Moreover, 
the domain $\Omega' = \hat{\mathbb{C}} \setminus C$ 
is simply connected, 
since its complement $C = \hat{\mathbb{C}} \setminus \Omega'$ 
is connected.
\end{proof}

We say that a set $E \subset \hat{\mathbb{C}}$ is 
a \emph{continuum}
if $E$ is a nonempty, compact, and connected subset of $\hat{\mathbb{C}}$.
A continuum is said to be \emph{nondegenerate}
if it contains at least two points, 
and \emph{degenerate} if it consists of a single point.

If $\Omega$ is a simply connected domain in $\hat{\mathbb{C}}$,
then the complement $\hat{\mathbb{C}} \setminus \Omega$
is either empty, a singleton or a nondegenerate continuum.

\begin{proposition}
\label{prop:continuous-connection-between-simply-connected-domains}
Let $D_0$ and $D_1$ be simply connected domains in $\hat{\mathbb{C}}$
with $D_0 \subset D_1 \subsetneq \hat{\mathbb{C}}$. 
Then $D_0$ is continuously 
connected to $D_1$. 
\end{proposition}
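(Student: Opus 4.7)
The plan is to reduce the construction to the existence of a univalent continuous Loewner chain connecting suitable Riemann maps of $D_0$ and $D_1$, and then to apply Proposition \ref{prop:schlicht-subordination-and-continuous-chain}. First, since $\hat{\mathbb{C}} \setminus D_1 \neq \emptyset$, I apply a Möbius transformation of $\hat{\mathbb{C}}$ sending some point of $\hat{\mathbb{C}} \setminus D_1$ to $\infty$ together with a translation to arrange $D_0, D_1 \subset \mathbb{C}$ and $0 \in D_0$. The proof then splits into two cases according to whether $D_1 \subsetneq \mathbb{C}$ or $D_1 = \mathbb{C}$; in the latter case one handles $D_0 = \mathbb{C}$ trivially via the constant family $\Omega_t \equiv \mathbb{C}$.

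In the generic case $D_1 \subsetneq \mathbb{C}$, the Riemann mapping theorem yields conformal maps $f_0, f_1 \in \mathcal{H}_0(\mathbb{D})$ onto $D_0, D_1$ with $f_j'(0) > 0$. Because $D_0 \subset D_1$ and both $f_j$ are univalent, $\omega := f_1^{-1} \circ f_0$ is a well-defined univalent self-map of $\mathbb{D}$ fixing the origin, so $f_0$ is schlicht subordinate to $f_1$. Proposition \ref{prop:schlicht-subordination-and-continuous-chain} then produces a continuous Loewner chain $\{f_t\}_{0 \le t \le 1}$ with $f_0,f_1$ as endpoints, and Theorem \ref{thm:omega-is-univalent-if-ft-is-continuous} ensures that each $f_t$ is univalent. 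Setting $\Omega_t := f_t(\mathbb{D})$ gives a nondecreasing family of simply connected domains, continuous in the sense of kernel by Theorem \ref{thm:2ndPart} (applied at each $t_0$ to the sequence $f_{t_n} \to f_{t_0}$).

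The remaining case $D_1 = \mathbb{C}$ with $D_0 \subsetneq \mathbb{C}$ cannot be handled via a Riemann map of $D_1$, and this is the main obstacle. Instead, I invoke Pommerenke's extension theorem (cited just before Theorem \ref{thm:no-nontangentaial-limit}): any univalent $f_0 \in \mathcal{H}_0(\mathbb{D})$ extends to a normalized continuous Loewner chain $\{f_t\}_{\alpha \le t < \infty}$ of univalent functions with $\alpha = \log f_0'(0)$ and $f_t'(0) = e^t \uparrow \infty$. By Proposition \ref{propoposition:Omega_beta_coincides_withC}, the Loewner range is exactly $\mathbb{C}$. After a homeomorphic reparametrization of $[\alpha, \infty)$ onto $[0,1)$ and declaring $\Omega_1 := \mathbb{C}$, one obtains a nondecreasing family on $[0,1]$; continuity at interior points again follows from Theorem \ref{thm:2ndPart}, while continuity at $t = 1$ follows from the monotone convergence statement in Theorem \ref{theorem:monotone-covergence-thm-for-domains}(i), since the $\Omega_t$ form an ascending family whose union is $\mathbb{C}$. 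Transporting the resulting family $\{\Omega_t\}_{0 \le t \le 1}$ back through the inverse of the initial Möbius transformation (and undoing the translation) yields the required continuous nondecreasing connection from $D_0$ to $D_1$ in the original coordinates.
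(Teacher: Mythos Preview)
Your proof is correct and follows essentially the same approach as the paper: reduce via a M\"obius transformation to $0\in D_0\subset D_1\subset\mathbb{C}$, use Riemann maps together with Proposition~\ref{prop:schlicht-subordination-and-continuous-chain} when $D_1\subsetneq\mathbb{C}$, and invoke Pommerenke's normalized Loewner chain extension (reparametrized onto $[0,1)$) when $D_1=\mathbb{C}$. One small remark: Theorem~\ref{thm:omega-is-univalent-if-ft-is-continuous} gives univalence of the transition maps $\omega_{t,1}$, not of $f_t$ directly; the univalence of $f_t$ then follows from $f_t=f_1\circ\omega_{t,1}$ with $f_1$ univalent.
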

\begin{proof}
We may assume that $D_0 \subsetneq D_1$.
Moreover, after applying a linear fractional transformation if necessary,
we may also assume that $0 \in D_0 \subsetneq D_1 \subset \mathbb{C}$. 

First, suppose that
$\hat{\mathbb{C}} \setminus D_1$ is a nondegenerate continuum
containing $\infty$.
Then $\hat{\mathbb{C}} \setminus D_0$ is also 
a nondegenerate continuum containing $\infty$.
By the Riemann mapping theorem, for $j=0,1$,
there exists a unique conformal mapping 
$g_j \in \mathcal{H}_0(\mathbb{D})$ of $\mathbb{D}$ onto $D_j$.
It then follows from 
Theorem~\ref{theorem:schlicht-subordination-and-continuous-chain}
that there exists a continuous Loewner chain 
$\{ f_t \}_{0 \leq t \leq 1}$
of univalent functions such that $f_j = g_j$ for $j = 0, 1$.
Let $D_t = f_t (\mathbb{D})$ for $0 < t < 1$.
Then $\{ D_t \}_{0 \leq t \leq 1}$ 
is a nondecreasing and continuous family of domains 
in $\mathbb{C}$ that connects $D_0$ to $D_1$.

Next, suppose that 
$\hat{\mathbb{C}} \setminus D_1 = \{  \infty \}$, i.e.,
$D_1 = \mathbb{C}$.
Since  $D_0 \subsetneq D_1 = \mathbb{C}$
and $D_0$ is simply connected,
its complement $\hat{\mathbb{C}} \setminus D_0$ 
is a nondegenerate continuum containing $\infty$.
Hence, by the Riemann mapping theorem, 
there exists a unique conformal map 
$g_0 \in \mathcal{H}_0(\mathbb{D})$
of $\mathbb{D}$ onto $D_0$.

By applying 
Theorem~\ref{theorem:univalent_function_can_be_emmbedded_in_L-chain} 
or \cite[Theorem 6.4]{Pommerenke1}, 
we see that there exists a continuous Loewner chain 
$\{ f_t \}_{0 \leq t < \infty}$ of univalent functions 
such that $f_0 = g_0$ and $\lim_{t \to \infty} f_t'(0) = \infty$.
Then, Koebe's theorem 
implies that 
$\mathbb{D}(0, \frac{1}{4} f_t'(0)) \subset f_t(\mathbb{D})$, 
and hence $f_t(\mathbb{D}) \to \mathbb{C}$ as $t \to \infty$.
Thus, by setting 
$D_t = f_{\frac{t}{1 - t}}(\mathbb{D})$ for $0 < t < 1$, 
we obtain a nondecreasing and continuous family 
of domains $\{ D_t \}_{0 \leq t \leq 1}$
connecting $D_0$ and $D_1 = \mathbb{C}$.
\end{proof}

\begin{theorem}
Let $D_0$ and $D_1$ be domains in $\hat{\mathbb{C}}$
such that $D_0 \subset D_1 \subsetneq \hat{\mathbb{C}}$,
and suppose that $D_1$ is finitely connected.
Then $D_0$ is continuously connected to $D_1$ 
if and only if 
for every component $C$ of $\hat{\mathbb{C}} \setminus D_0$,
there exists at least a component $C'$ 
of $\hat{\mathbb{C}} \setminus D_1$ 
such that $C' \subset C$.
\end{theorem}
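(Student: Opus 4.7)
This follows directly from Theorem~\ref{thm:nonvanishing-of-components}. If $\{\Omega_t\}_{0 \leq t \leq 1}$ is a nondecreasing, continuous family with $\Omega_0 = D_0$ and $\Omega_1 = D_1$, then writing $E_t = \hat{\mathbb{C}} \backslash \Omega_t$, that theorem gives $C \cap \bigcap_{t \in [0,1]} E_t \neq \emptyset$ for every component $C$ of $E_0$; in particular $C \cap E_1 \neq \emptyset$. Choosing $w \in C \cap E_1$ and letting $C'$ be the component of $E_1$ containing $w$, the fact that $C'$ is a connected subset of $E_1 \subset E_0$ meeting the component $C$ forces $C' \subset C$.

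\textbf{Sufficiency.} Write $E_j = \hat{\mathbb{C}} \backslash D_j$ for $j=0,1$, with components $\{C_j\}_{j=1}^m$ and $\{C_i'\}_{i=1}^k$, respectively. The hypothesis, together with the fact that each $C_i'$ lies in a unique component of $E_0$, shows that $I_j := \{i : C_i' \subset C_j\}$ is nonempty for every $j$ and that $\{I_j\}$ partitions $\{1, \ldots, k\}$; hence $m \leq k < \infty$. The plan is to construct $\{\Omega_t\}$ by concatenating $m$ consecutive blocks, the $j$-th block deforming $C_j$ into $\bigcup_{i \in I_j} C_i'$ while holding the other complementary components fixed (as the original $C_{j'}$ for unprocessed $j' > j$, or as the already-processed $\bigcup_{i \in I_{j'}} C_i'$ for $j' < j$).

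Each block is carried out in two substages. In substage~(A), Proposition~\ref{prop:continuous-connection-between-simply-connected-domains}, applied to the simply connected domains $\hat{\mathbb{C}} \backslash C_j \subsetneq \hat{\mathbb{C}} \backslash \tilde{C}_j$, replaces $C_j$ by an intermediate continuum $\tilde{C}_j \subset C_j$ containing $\bigcup_{i \in I_j} C_i'$. In substage~(B) we split $\tilde{C}_j$ into its $|I_j|$ pieces: we arrange $\tilde{C}_j$ to consist of disjoint closed sub-continua $D_i \subset C_j$ (one for each $i \in I_j$, with $C_i' \subset D_i$) linked during substage~(A) by a small ``bridge'' that is subsequently Hausdorff-contracted at the start of substage~(B) so as to split the continuum into its disjoint components; Proposition~\ref{prop:continuous-connection-between-simply-connected-domains} is then applied to each pair $\hat{\mathbb{C}} \backslash D_i \subsetneq \hat{\mathbb{C}} \backslash C_i'$ to shrink $D_i$ onto $C_i'$. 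Continuity in the sense of kernel at the various transitions follows from Theorem~\ref{thm:continuity-preseved-under-intersection} together with the Hausdorff continuity of each individual deformation, and $\Omega_t$ remains a domain throughout because $E_t$ is at each time a disjoint union of sub-continua of the original $C_j$'s. The main obstacle is the existence of a decomposition of $C_j$ into disjoint closed sub-continua $D_i$ containing the respective $C_i'$, because a general continuum need not be arcwise connected (the topologist's sine curve is the standard counterexample); nonetheless, such decompositions are always available---the sine curve itself, for $|I_j|=2$, admits the obvious decomposition into the limit segment and the closed oscillating arc---and the ``bridge'' at the transition between substages is handled by a direct Hausdorff-continuous interpolation whose legitimacy uses only kernel (rather than full metric) continuity of the intermediate complements. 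The finiteness $C(D_1) < \infty$ is used crucially to ensure that the entire construction terminates in finitely many substages.
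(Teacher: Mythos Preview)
Your necessity argument is correct and matches the paper's.

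For sufficiency, the sequential block-by-block approach with ``bridge contraction'' has a genuine gap that you yourself flag as ``the main obstacle'': the existence, inside an \emph{arbitrary} continuum $C_j$, of disjoint sub-continua $D_i \supset C_i'$ together with connecting bridges forming a continuum $\tilde C_j \subset C_j$, is asserted but never proved. One example (the topologist's sine curve, for $|I_j|=2$) is not a proof; for $|I_j| \ge 3$ and a general planar continuum $C_j$ it is not at all clear how to produce the required decomposition, nor how to parametrize the ``Hausdorff contraction'' of the bridges so that the family remains kernel-continuous at the instant of splitting. You also claim without justification that $\Omega_t$ remains a domain throughout ``because $E_t$ is at each time a disjoint union of sub-continua of the original $C_j$'s'' --- this is true, but it requires something like Janiszewski's theorem and is not the triviality your phrasing suggests.

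The paper's proof bypasses all of these issues with one idea: rather than processing the components sequentially and worrying about how to split a continuum, run \emph{all} the simply-connected deformations in parallel and intersect them. For every pair $(j,k)$ apply Proposition~\ref{prop:continuous-connection-between-simply-connected-domains} to get a nondecreasing continuous family $\{\Omega_t^{j,k}\}_{0\le t\le 1}$ of simply connected domains from $\hat{\mathbb{C}}\setminus C_j$ to $\hat{\mathbb{C}}\setminus C_{j,k}'$, and then set $\Omega_t$ equal to the component of $\bigcap_{j,k}\Omega_t^{j,k}$ containing the base point. Kernel continuity follows from finitely many applications of Theorem~\ref{thm:continuity-preseved-under-intersection}, and the identities $\bigcap_j(\hat{\mathbb{C}}\setminus C_j)=D_0$ and $\bigcap_{j,k}(\hat{\mathbb{C}}\setminus C_{j,k}')=D_1$ give the endpoints. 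No bridges, no splitting, no continuum-theoretic subtleties --- and by passing to a component one does not even need to argue that the intersection is connected.
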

\begin{proof}
The necessity follows immediately from 
Theorem~\ref{thm:nonvanishing-of-components}.

After applying a linear fractional transformation if necessary,
we may assume that $0 \in D_0 \subsetneq D_1 \subset \mathbb{C}$. 
Let $E_i = \hat{\mathbb{C}}\setminus D_i$, $i=0,1$.
Suppose that
for every component $C$ of $\hat{\mathbb{C}} \setminus D_0$,
there exists a component $C'$ 
of $\hat{\mathbb{C}} \setminus D_1$ 
such that $C' \subset C$.
Note that this assumption implies 
$C(D_0) \leq C(D_1)$.

Since $E_1 \subset E_0$, 
for each component $C'$ of $E_1$, there exists 
a unique component $C$ of $E_0$ such that $C' \subset C$.
Combining this with the assumption of the theorem,
we can decompose $E_0$ and $E_1$ into their connected components 
as follows:
\begin{align}
 E_0 = \bigcup_{j = 1}^n C_j 
 \quad \text{and} \quad 
  E_1 
  = \bigcup_{j = 1}^n \bigcup_{k = 1}^{p_j} C_{j,k}' ,
\end{align}
where  
$\bigcup_{k = 1}^{p_j} C_{j,k}' \subset C_j$ for $j = 1, \ldots, n$,
with $n=C(D_0)$, $p_j \in \mathbb{N}$,
and $\sum_{j=1}^n p_j = C(D_1)$.

For each $j=1, \ldots, n$ and $k=1, \ldots, p_j$ let
\begin{equation}
   \Omega_j = \hat{\mathbb{C}} \setminus C_j
    \quad \text{and} \quad 
    \Omega_{j,k}' 
    = \hat{\mathbb{C}} \setminus C_{j,k}' .
\end{equation}
By Lemma~\ref{lemma:component_of_complementary_set_of_domain},
both $\Omega_j$ and $\Omega_{j,k}'$
are simply connected domains 
and we have $\Omega_j \subset \Omega_{j,k}'$. 
Then for each $j=1, \ldots, n$ and $k=1, \ldots, p_j$, 
Proposition~\ref{prop:%
continuous-connection-between-simply-connected-domains}
guarantees the existence of a nondecreasing and continuous
family $\{ \Omega_t^{j,k} \}_{0 \leq t \leq 1 }$ of
domains such that $\Omega_0^{j,k} = \Omega_j$
and $\Omega_1^{j,k} = \Omega_{j,k}'$.

For $t \in [0,1]$, let 
$\Omega_t$ denote 
the connected component of
\[
 \bigcap_{j = 1}^n \bigcap_{k = 1}^{p_j} \Omega_t^{j,k}
\]
that contains $0$.
Clearly, ${ \Omega_t }_{0 \leq t \leq 1}$ is nondecreasing.
Moreover, by Theorem \ref{thm:continuity-preseved-under-intersection},
the family $\{ \Omega_t \}_{0 \leq t \leq 1}$ is continuous.

We observe that
\begin{align*} 
 & \bigcap_{j = 1}^n \bigcap_{k = 1}^{p_j} \Omega_0^{j,k}
 = \bigcap_{j = 1}^n \Omega_j
 = \hat{\mathbb{C}} \setminus  
 \left( \bigcup_{j = 1}^n C_j \right)
 = \hat{\mathbb{C}} \setminus E_0 = D_0 ,
\\ 
 & 
 \bigcap_{j = 1}^n \bigcap_{k = 1}^{p_j} \Omega_1^{j,k}
 =  \bigcap_{j = 1}^n \bigcap_{k = 1}^{p_j} \Omega_{j,k}'
 = \hat{\mathbb{C}} \setminus  
 \left( \bigcup_{j = 1}^n \bigcup_{k = 1}^{p_j} C_{j,k}' \right)
 = \hat{\mathbb{C}} \setminus E_1  = D_1 .
\end{align*}
Therefore, $\Omega_0 = D_0$ and $\Omega_1 = D_1$, as desired.
\end{proof}

\section{A Classification of Loewner Chains of Universal Coverings}
Now we present a classification theorem 
for continuous Loewner chains of universal covering maps
defined on right-open intervals.

\begin{theorem}
\label{theorem:a_classification_L-chain_of_covering_map}
Let $I$ be a right-open interval and $\beta = \sup I \not\in I$,
and let $\{ f_t \}_{t \in I}$ be a continuous Loewner chain  of
universal covering maps of $\mathbb{D}$.
Define 
$\Omega_t = f_t (\mathbb{D})$
and $E_t = \hat{\mathbb{C}} \setminus \Omega_t$ 
for $t \in I$. 
Let $\Omega_\beta$ be the Loewner range of 
$\{ f_t \}_{t \in I}$, that is,
$\Omega_\beta = \bigcup_{t \in I} \Omega_t $,
and 
$E_\beta = \hat{\mathbb{C}} \setminus \Omega_\beta 
= \bigcap_{t \in I} E_t$.
Then $E_\beta$ is a closed set satisfying 
$\infty \in E_\beta $ and $0 \not\in E_\beta$.
\begin{itemize}
 \item[{\rm (i)}] 
If $E_\beta = \{ \infty \}$, 
then each $ f_t$ is univalent, and 
$\lim_{t \nearrow \beta} f_t '(0) = \infty$.
 \item[{\rm (ii)}] 
If $E_\beta = \{ \infty , w_0 \}$ for some 
$w_0 \in \mathbb{C} \setminus \{ 0 \}$, 
then
there exists  $t_0 \in I$ 
such that for $t_0 < t < \beta$,
$E_t$ consists of exactly two components:
$E_t^\infty$, containing $\infty$,
and $E_t^0$ containing $w_0$,
with 
\begin{equation}
\label{eq:components_containning_w_0_and_infty}
  \max_{w \in E_t^0} |w-w_0| \rightarrow 0 
 \quad \text{and} \quad
  \min_{w \in E_t^\infty} |w| \rightarrow \infty .
\end{equation}
Furthermore, $\lim_{t \nearrow \beta} f_t'(0) = \infty$.
\item[{\rm (iii)}] 
If $E_\beta$ contains more than two points (i.e., 
$\Omega_\beta$ is hyperbolic),
then $\{ f_t \}_{t \in I}$ 
converges locally uniformly on $\mathbb{D}$ as $t \nearrow \beta$  
to the unique universal covering map 
$f_\beta \in \mathcal{H}_0 (\mathbb{D})$ of $\mathbb{D}$ 
onto $\Omega_\beta$.
Moreover, the extended family $\{ f_t \}_{t \in I \cup \{ \beta \}}$ 
is a continuous Loewner chain  
of universal covering maps.
\end{itemize}
\end{theorem}
\begin{proof}
We first observe that by
Theorem~\ref{theorem:monotone-covergence-thm-for-domains}, 
the family $\{\Omega_t \}_{t \in I}$ converges to 
$\Omega_\beta$ as $t \nearrow \beta$ in the sense of kernel.

(i) Assume first that $E_\beta = \{ \infty \}$.
Then $\infty \in E_t$ for all $t \in I$,
which implies $C(\Omega_t) \geq 1$. 
By Theorem~\ref{thm:nonvanishing-of-components},
the function $C(\Omega_t)$ defined on $t \in I \cup \{ \beta \}$ 
is nondecreasing, 
and satisfies $C(\Omega_t) \leq  C(\Omega_\beta ) = 1$.
Therefore, we must have $C(\Omega_t) = 1$ for all $t \in I$,
i.e., each $\Omega_t$ is a simply connected domain in $\mathbb{C}$.

From this it follows that each $f_t $ is univalent on $\mathbb{D}$.
Indeed, for any $w \in \Omega_t$, choose
a path $\alpha$ in $\Omega_t$ from $0$ to $w$.
Let $\tilde{\alpha}$ be the lift of $\alpha$ starting at $0$.
Since $\Omega_t$ is simply connected, 
the endpoint $z$ of $\tilde{\alpha}$ is independent of
the choice of $\alpha$.
We define a map $g_t : \Omega_t \to \mathbb{D}$ by $g_t (w) = z$.
Then, clearly, $f_t \circ g_t = \id_{\Omega_t}$,
and hence $g_t$ is injective.

Moreover, $g_t: \Omega_t \to \mathbb{D}$ is surjective.
Indeed, for $z \in \mathbb{D}$, choose a path $\tilde{\alpha}$
from $0$ to $z$, and set $\alpha = f_t \circ \tilde{\alpha}$
and $w = \alpha (1)$.
Then by the path lifting Lemma,
the lift of $\alpha$ starting $0$
coincides with $\tilde{\alpha}$.
This implies that $g_t(w) = z$.

Since $g_t : \Omega_t \to \mathbb{D}$ is bijection,
the inverse $f_t = g_t^{-1}: \mathbb{D} \to \Omega_t$ is univalent.
In particular,
it follows from 
Proposition~\ref{proposition:Omega_beta_coincides_withC}
that $\lim_{t \nearrow \beta} f_t'(0) = \infty$.

(ii) 
Assume that there exists $w_0 \in \mathbb{C} \setminus \{ 0 \}$ 
such that $E_\beta = \{w_0, \infty \}$, $t \in E$.
Note that in this case, $C(\Omega_\beta ) = 2$.
As before, since the function $C(\Omega_t)$ is nondecreasing,
left-continuous, and integer-valued,
there exists $t_0 \in I$ such that 
$C(\Omega_t) \equiv 2$ on $(t_0, \beta )$.
Therefore, for $t_0 < t < \beta$, 
$E_t $ consists of exactly two components: 
$E_t^0$, containing $w_0$, and 
$E_t^\infty$ containing $\infty$.

Since 
$\Omega_t \rightarrow \Omega_\beta = \mathbb{C} \setminus \{ w_0 \}$
as $t \nearrow \beta$, by condition~(a'),
for any $r$ and $R$ satisfying 
$0 < r+|w_0| < R < \infty$, 
there exists $t_1$ with $t_0 \leq t_1 < \beta$
such that 
$\overline{\mathbb{D}}(0,R) \setminus \mathbb{D}(w_0,r) \subset \Omega_t$
for all $t \in (t_1, \beta )$.
This implies
$\max_{w \in E_t^0} |w-w_0| \leq r$
and $\min_{w \in E_t^\infty} |w| \geq R$
for all $t \in (t_1, \beta )$.
Therefore,
\[
  \max_{w \in E_t^0} |w-w_0| \rightarrow 0 
 \quad \text{and} \quad
  \min_{w \in E_t^\infty} |w| \rightarrow \infty
\]
as $t \nearrow \beta$, and thus
equation \eqref{eq:components_containning_w_0_and_infty} holds.

We now show that 
$\lim_{t \nearrow \beta} f_t'(0) = \infty$.
Without loss of generality, we may assume
$w_0 = -c $ for some $c >0$.
For $R > 1$, define the annulus
\[
  A_R = \left\{ w \in \mathbb{C} : \frac{c}{R} < |w+c| < Rc \right\} ,
\]
and let $g_R: \mathbb{D} \rightarrow A_R$ 
be the universal covering map normalized by $g_R(0)=0$ and $g_R'(0) > 0$.
Then 
\[
   g_R(z) 
 = \exp 
 \left[ \left( \frac{2}{i \pi} \log R \right) 
 \log \frac{1+iz}{1-iz} + \log c 
\right]
 - c .
\]
Since 
$\overline{A_R} \subset \Omega_\beta$ 
and $\Omega_t \rightarrow \Omega_\beta$ as $t \nearrow \beta$,
by condition (a'), there exists $t_2 \in I$ such that 
$\overline{A_R} \subset \Omega_t$ for all $t_2 \leq t< \beta$.
Therefore, $g_R$ is subordinate to $f_t$, and hence 
\[
   f_t'(0) \geq g_R'(0) = \frac{4 c \log R}{\pi} , 
 \quad t_2 \leq t < \beta .
\]
Since $R > 1$ is arbitrary, we conclude 
$\lim_{t \nearrow \beta} f_t'(0) = \infty$.

(iii)
Finally, assume that $E_\beta$ contains 
more than two points; that is,
$\Omega_\beta$ is hyperbolic.
Since $\Omega_t \to \Omega_\beta$,
Theorem~\ref{thm:3rd-part} implies that
$f_t \rightarrow f_\beta$ 
locally uniformly on $\mathbb{D}$ as $t \nearrow \beta$, 
where $f_\beta \in \mathcal{H}_0 ( \mathbb{D})$ 
is the unique universal covering map of $\mathbb{D}$
onto $\Omega_\beta$.
In particular, we have 
$\lim_{t \nearrow \beta} f_t'(0) = f_\beta'(0) < \infty$.

Furthermore, by Theorem \ref{thm:extendability-of-transiton-family},
the transition family $\{ \Omega_{s,t} \}$
admits a locally uniform limit 
$\omega_{s,\beta} =
\lim_{t \nearrow \beta} \omega_{s,t} \in \mathfrak{B}$.
Then for each $s \in I$, we have 
$f_s = \lim_{} f_t \circ \omega_{s,t} = f_\beta \circ \omega_{s,\beta}$,
and hence the extended family $\{ f_t \}_{t \in I \cup \{ \beta \}}$ 
is a Loewner chain of universal covering maps.
\end{proof}

Let $I_0$ be a bounded interval and $r \in (0,1)$.
In Chapter~\ref{chapter:SchlichtSubordination},
we observed that the class of all normalized Loewner chains 
of univalent functions 
is uniformly bounded on $I_0 \times \overline{\mathbb{D}}(0,r)$.
We now present an example showing that 
the corresponding class of all normalized Loewner chains 
of covering maps does not possess this boundedness property.

\begin{example}
\label{example:universal-covering-onto-outside-of-a-disc}
Let $a > 0$ and define
\[
   f_a(z,t) = a \left( e^{\frac{t}{2a}\frac{1+z}{1-z} -\frac{t}{2a} } - 1 \right) ,
 \quad (z,t) \in \mathbb{D} \times (0 , \infty ) .
\] 
Then $f_a( \cdot ,t)$ is the unique universal covering map 
of $\mathbb{D}$ onto 
$\mathbb{C} \setminus \overline{\mathbb{D}}(-a, ae^{-\frac{t}{2a}})$
normalized by $f_a(0,t)=0$ and $f_a'(0,t) = t$.
Since all the Maclaurin coefficients of $f_a(\cdot,t)$ are positive, 
it is easy to see that for fixed $t>0$ and $r \in (0,1)$,
\[
   \max_{|z|=r} |f_a(z,t)| = f_a(r,t) = a(e^{\frac{t}{a} \frac{r}{1-r}} - 1) 
  \nearrow \infty \quad \text{as} \quad a \searrow 0 .
\]
Therefore, 
the collection of normalized Loewner chains 
$\{ f_a ( \cdot , e^t) \}_{t \in \mathbb{R}}$, with $a > 0$,  
is not uniformly bounded 
on $\overline{\mathbb{D}}(0,r) \times [-T,T]$ for any 
fixed $r \in (0,1)$ and $T>0$.
\end{example}

\section{Lifting Loewner Chains to Universal Coverings}
Let $\{ f_t \}_{t \in I}$ be a Loewner chain 
such that each $\Omega_t := f_t(\mathbb{D})$, $t \in I$, is hyperbolic.
For each $t \in I$, let 
$\tilde{f}_t$ denote the unique universal covering map of
$\mathbb{D}$ onto $\Omega_t$ normalized by $\tilde{f}_t(0) = 0$ and 
$\tilde{f}_t '(0) > 0$.
Since the family $\{ \Omega_t \}_{t \in I}$ is nondecreasing,
the family $\{ \tilde{f}_t \}_{t \in I}$ forms 
a Loewner chain of universal covering maps.
For each $t \in I$, take $\tilde{\omega}_t \in \mathfrak{B}$
such that $f_t = \tilde{f}_t \circ \tilde{\omega}_t $.
Similarly, 
for each $(s,t)  \in I_+^2$, 
choose $\tilde{\omega}_{s,t} \in \mathfrak{B}$
such that
$\tilde{f}_{s} = \tilde{f}_{t} \circ \tilde{\omega}_{s,t}$.
Then we obtain the following diagram. 
\begin{center}
\begin{tikzpicture}
\node at (-4,5) (TSD) {$\mathbb{D}$};
\node at (4,5) (TTD) {$\mathbb{D}$};
\node at (-1.5,3.5) (SD) {$\mathbb{D}$};
\node at (1.5,3.5) (TD) {$\mathbb{D}$};
\node at (-1.5,1) (OS) {$\Omega_{s}$};
\node at (1.5,1) (OT) {$\Omega_{t}$};

\draw[thick,->] (TSD)--(TTD) node[midway,above] {$\tilde{\omega}_{s,t}$}; 
\draw[thick,<-] (TSD)--(SD) node[midway,right=3pt] {$\tilde{\omega}_{s}$}; 

\draw[thick,<-] (TTD)--(TD) node[midway,left=7pt] {$\tilde{\omega}_{t}$}; 
\draw[thick,->] (SD)--(TD) node[midway,above] {$\omega_{s,t}$}; 

\draw[thick,->] (TSD)--(OS) node[midway,left] {$\tilde{f}_{s}$}; 
\draw[thick,->] (TTD)--(OT) node[midway,right] {$\tilde{f}_{t}$}; 

\draw[thick,->] (SD)--(OS) node[midway,right] {$f_{s}$}; 
\draw[thick,->] (TD)--(OT) node[midway,left] {$f_{t}$}; 
\draw[thick,->] (OS)--(OT) node[midway,above] {$\inc$}; 
\end{tikzpicture} 
\end{center}

We now verify that the above diagram is commutative.
To this end, it suffices to show that 
$\tilde{\omega}_{s,t} \circ \tilde{\omega}_s 
= \tilde{\omega}_t \circ \omega_{s,t}$.
Indeed, from the identity
$\tilde{f}_t \circ \tilde{\omega}_{s,t} = \tilde{f}_s$,
we obtain
\begin{align*}
  \tilde{f}_t \circ \tilde{\omega}_{s,t} \circ \tilde{\omega}_s
  = \tilde{f}_s \circ \tilde{\omega}_s 
  = f_s
  = f_t \circ \omega_{s,t}
  =\tilde{f}_t \circ \tilde{\omega}_t \circ \omega_{s,t} .
\end{align*}
Since $\tilde{f}_t$ is locally univalent,
the identity $\tilde{\omega}_{s,t} \circ \tilde{\omega}_s (z) 
= \tilde{\omega}_t \circ \omega_{s,t} (z)$ holds 
in a neighborhood
of $0$.
Thus, by the identity theorem for analytic functions,
it holds on all of $\mathbb{D}$.

It is clear that the Loewner chain
$\{ \tilde{f}_t \}_{t\in I}$ is expanding
if and only if the original Loewner chain $\{ f_t \}_{t\in I}$
is expanding.
However, note that  
the continuity and strict monotonicity of $\{ f_t \}_{t \in I}$
are not necessarily preserved by $\{ \tilde{f}_t \}_{t \in I}$.
By suitably modifying Example~\ref{eg:discontinuity}
one can easily construct counterexamples to illustrate this.

\chapter{Loewner Theory on Fuchsian Groups}
\label{chapter:DeckTransformation}
\section{Constructing Transition Maps via Path Lifting}
Let $\{ f_t \}_{t \in I}$ be 
a Loewner chain of covering maps
with $\Omega_t = f_t(\mathbb{D})$ for $t \in I$.
For later use, 
we briefly recall the construction of 
$\omega_{s,t}$ for $(s,t) \in I_+^2$.

For $z \in \mathbb{D}$, choose a path 
$\tilde{\alpha}_s : [0,1] \rightarrow \mathbb{D}$ 
from $0$ to $z$.
Here, the subscript $s$ indicates that we are considering 
the covering map $f_s: \mathbb{D} \rightarrow \Omega$ 
and the tilde $\tilde{}$ signifies 
that $\tilde{\alpha}_s$ 
is a path in the covering surface $\mathbb{D}$.

Let $\alpha := f_s \circ \tilde{\alpha}_s $.
Then $\alpha$ is a path in $\Omega_s$ 
from $f_s (0) =0$ to $f_s (z)$.
Since $\Omega_s  \subset \Omega_t$,
there exists a unique path 
$\tilde{\alpha}_t : [0,1] \rightarrow \mathbb{D}$ 
called the lift of
$\alpha$ from $0$ 
with respect to the covering map
$f_t : \mathbb{D} \rightarrow f_t(\mathbb{D})$.
That is, $\tilde{\alpha}_t$ is a path satisfying 
$\tilde{\alpha}_t(0) = 0$ and 
$f_t \circ \tilde{\alpha}_t = \alpha (= f_s \circ \tilde{\alpha}_s)$.

Since $\mathbb{D}$ is simply connected,
the endpoint $\tilde{\alpha}_t (1)$
depends only on $z$ and not on the choice of $\tilde{\alpha}_s$.
We define a map $\omega_{s,t} : \mathbb{D} \to \mathbb{D}$ 
by $\omega_{s,t}(z) = \tilde{\alpha}_t (1) \in \mathbb{D}$.
It is easy to see that $\omega_{s,t}(0) =0$,
$f_t \circ \omega_{s,t} = f_s$,
and $\omega_{s,t}$ is analytic on $\mathbb{D}$.
\begin{center}
\begin{tikzpicture}[scale=0.8]
\draw[fill] (0,0) circle (1pt) node[below] {\small $0$};
\draw (0,0) circle (2.5cm);
\node at (-2,2) {\small $\mathbb{D}$};
\draw[->] plot [smooth] 
coordinates {(0,0) (-0.8,0.3) (-1,0.6) };
\node[below] at (-0.8,0.3) {\small $\tilde{\alpha}_s$};
\node[left] at (-1,0.6) {\small $z$};

\begin{scope}[xshift=7.5cm]
\draw[fill] (0,0) circle (1pt) node[below] {\small $0$};
\draw (0,0) circle (2.5cm);
\node at (-2,2) {\small $\mathbb{D}$};
\draw[->] plot [smooth] 
coordinates {(0,0) (-0.6,0.3) (-0.8,0.5) };
\node[below] at (-0.7,0.3) {\small $\tilde{\alpha}_t$};
\node[above] at (-0.8,0.4) {\small $\omega_{s,t}(z)$};
\end{scope}

\begin{scope}[xshift=-1cm,yshift=-8cm]
\node at (7,3.5) {\small $\Omega_{s}$};
\draw[fill] (3,1.5) circle (1pt) node[below] {\small $0$};
 \draw plot[smooth cycle]
coordinates{(0,0) (4,0.5) (6,2) (8,4) (4,3.5) (0.5,3) (1,2)};
\draw[fill] (3,1.5) circle (1pt) node[below] {\small $0$};
 \node at (6.4,1.8) {\small $\Omega_{t}$};
\draw plot[smooth cycle]
coordinates{(-0.2,-0.2) (4,0.3) (7.2,2) (8.2,4.2) (4,3.7) (0.4,3.1) (0.8,2)};

\draw[fill,gray] (3.5,2.0) ellipse (5pt and 2pt);
\draw[fill,gray] (4.5,2.5) ellipse (2pt and 5pt);
 \draw[->] plot[smooth]
coordinates{(3,1.5) (2.3,0.8) (1.8,1)};
\node at (2,0.6) 
{\footnotesize $f_{s}\! \circ \! \tilde{\alpha}_s \! = \! \alpha \! = \! f_{t} \! \circ \! \tilde{\alpha}_t$};
\end{scope}

\draw[->] (0.5,-2.7)-- (1.5,-4) node[midway,right] {\small $f_{s}$}; 
\draw[->] (6.6,-2.5)-- (5.6,-3.7) node[midway,left] {\small $f_{t}$}; 
\draw[->] (3,0)-- (4.5,0) node[midway,above] {\small $\omega_{s,t}$}; 
\end{tikzpicture}
\end{center}

The following theorem is a direct consequence of 
Theorems~\ref{thm:omega-is-univalent-if-it-is-continuous}
and~\ref{thm:omega-is-univalent-if-ft-is-continuous}.
Nevertheless, we provide here a purely topological proof 
based on the construction of $\omega_{s,t}$ given above.

\begin{theorem}
Let $\{ f_t \}_{t \in I}$ be a Loewner chain
of covering maps with
$\Omega_t = f_t ( \mathbb{D})$ for $t \in I$,
and let
$\{ \omega_{s,t}\}_{(s,t) \in I_+^2}$ 
be the associated transition family.
If $\{ f_t \}_{t\in I}$ is continuous, 
then for every $(s,t) \in I_+^2$, 
the map $\omega_{s,t}$ is univalent in $\mathbb{D}$.
\end{theorem}
\begin{proof}
Suppose that $\omega_{t_0, t_1^* }$ is not univalent 
for some $t_0 , t_1^*  \in I$ with $t_0 < t_1^*$.
Then there exist distinct points $z_1, z_2 \in \mathbb{D}$
such that $\omega_{t_0,t_1^*} (z_1) = \omega_{t_0,t_1^*} (z_2)$.
Since the function 
$t \mapsto \omega_{t_0, t} (z_1) - \omega_{t_0, t} (z_2)$ 
is continuous on $I \cap [t_0, \infty)$
and satisfies 
$\omega_{t_0,t_0}(z_1) - \omega_{t_0,t_0}(z_2) = z_1 - z_2 \neq 0$, 
there exists 
$t_1 \in I \cap (t_0, t_1^*]$
such that
\begin{align}
\label{eq:before_t_1}
 &  \omega_{t_0,t}(z_1) \not= \omega_{t_0,t}(z_2) 
 \quad \text{for all} \quad 
 t_0 \leq t <  t_1
\\   
 & \omega_{t_0,t_1}(z_1) = \omega_{t_0,t_1}(z_2) .
\label{eq:at_t_1}
\end{align}

Let 
$\tilde{\alpha}_{t_0}, \tilde{\beta}_{t_0} : [0,1] \rightarrow \mathbb{D}$ 
be paths in $\mathbb{D}$ from $0$ to $z_1$ and $z_2$, respectively.
Define 
$\alpha = f_{t_0} \circ \tilde{\alpha}_{t_0}$ and 
$\beta = f_{t_0} \circ \tilde{\beta}_{t_0}$.
Let 
$\tilde{\alpha}_{t_1} , \tilde{\beta}_{t_1}: [0,1] \rightarrow \mathbb{D}$
be the lifts of $\alpha $ and $\beta$, respectively,
starting at $0$ with respect to $f_{t_1}$.
Then, by \eqref{eq:at_t_1},
the paths $\tilde{\alpha}_{t_1}$ and $\tilde{\beta}_{t_1}$
have the same endpoint, since
\[
  \tilde{\alpha}_{t_1} (1) 
 = \omega_{t_0,t_1}(z_1) 
 = \omega_{t_0,t_1}(z_2)
 = \tilde{\beta}_{t_1} (1) .
\]

Since $\mathbb{D}$ is simply connected, there exists
a path homotopy 
$\tilde{F}_{t_1}:[0,1] \times [0,1] \rightarrow \mathbb{D}$ 
between $\tilde{\alpha}_{t_1}$ and $\tilde{\beta}_{t_1}$.
That is,
$\tilde{F}_{t_1}$ is a continuous map satisfying
\begin{align*}
  & \tilde{F}_{t_1}(u,0) = \tilde{\alpha}_{t_1} (u), \quad 
 \tilde{F}_{t_1}(u,1) = \tilde{\beta}_{t_1} (u) 
\\ 
  & \tilde{F}_{t_1} (0,v) = 0 
 = \tilde{\alpha}_{t_1} (0) = \tilde{\beta}_{t_1}(0), 
 \quad \tilde{F}_{t_1} (1,v) = \tilde{\alpha}_{t_1} (1) 
 = \tilde{\beta}_{t_1} (1)
\end{align*}
for all $u, v \in [0,1]$.

Clearly, the composition  $F := f_{t_1} \circ \tilde{F}_{t_1}$ 
defines a path homotopy between $\alpha$ and  $\beta$.
Since the family $\{ \Omega_t \}_{t \in I}$ 
is continuous at $t_1$ in the sense of kernel
and the compact set $F([0,1]\times [0,1])$
is contained in $\Omega_{t_1} = f_{t_1} (\mathbb{D})$, 
there exists $\delta > 0$
such that, for $t \in I$ with $|t-t_1| < \delta$,
we have $F ([0,1] \times [0,1]) \subset \Omega_t$.

Therefore, for each $t \in I$ with $0<|t-t_1| < \delta$, 
the path homotopy $F$ admits a unique lift 
$\tilde{F}_t :[0,1] \times [0,1] \rightarrow \mathbb{D}$ 
with respect to the covering map 
$f_t : \mathbb{D} \rightarrow \Omega_t$,
satisfying $\tilde{F}_t(0, v) = 0$ for all $v \in [0,1]$. 
Define $\tilde{\alpha}_t (u )= \tilde{F}_t (u,0)$
and $\tilde{\beta}_t (u )= \tilde{F}_t (u,1)$ for $0 \leq u \leq 1$.
Then $\tilde{\alpha}_t$ and  $\tilde{\beta}_t$
are the lifts of $\alpha$ and $\beta$, respectively, from $0$ 
with respect to $f_t$.

Since $\tilde{F}_t$ is a path homotopy, 
it follows that for all 
$t \in  (t_0 \vee (t_1 - \delta) , t_1) \cap I$,
\[
  \omega_{t_0,t}(z_1) = \tilde{\alpha}_t (1) 
 = \tilde{\beta}_t (1) = \omega_{t_0,t}(z_2) ,
\]
which contradicts \eqref{eq:before_t_1}.
\end{proof}

\section{Fundamental Group and Covering Transformations}
Let $f:\mathbb{D} \rightarrow \Omega$
be an analytic covering map of $\mathbb{D}$ onto 
a domain $\Omega$ in $\hat{\mathbb{C}}$.
We collect some known results concerning
the group of covering transformations associated with $f$.

A homeomorphism 
$\varphi : \mathbb{D} \rightarrow \mathbb{D}$
is called a covering transformation with respect to $f$
if $f \circ \varphi = f$. 
Clearly, each covering transformation is analytic,
and hence it is a conformal map of $\mathbb{D}$ onto itself.
The set of all covering transformations
with respect to $f$ 
forms a group under composition of maps,
denoted by $\Aut(\mathbb{D},f)$.
This group is a subgroup 
of the group $\Aut (\mathbb{D})$, 
the group of of all conformal automorphism of $\mathbb{D}$.
Each $\psi \in \Aut (\mathbb{D})$ is of the form 
\[
    \psi (z) 
    = e^{i \theta} \frac{z-z_0}{1-\overline{z_0}z}, 
    \quad z \in \mathbb{D},
\]
for some $\theta \in \mathbb{R}$ and $z_0 \in \mathbb{D}$.

Let $\varphi \in \Aut (\mathbb{D},f)$.
Then for every $w \in \Omega$ we have
$\varphi (f^{-1}(\{ w \})) = f^{-1}(\{ w \})$,
and the restriction $\varphi |_{f^{-1}(\{ w \})}$ 
defines a bijection of the fiber $f^{-1}(\{ w \})$ onto itself.
Moreover, the following holds.
\begin{lemma}
\label{lemma:existence_and_uniqueness_of_covering_transformations}
For any
$z_1$, $z_2 \in \mathbb{D}$ with
$f(z_1)=f(z_2)$,
there exists a unique $\varphi \in \Aut (\mathbb{D},f)$ 
such that $\varphi (z_1) =z_2$. 
\end{lemma}
\begin{proof}
For $z \in \mathbb{D}$, choose a path $\tilde{\alpha}_1$ 
from $z_1$ to $z$. 
Define $\alpha = f \circ \tilde{\alpha}_1$, 
and let $\tilde{\alpha}_2$ be the lift of 
$\alpha$ starting at $z_2$.
Then the endpoint $z'$ depend only on $z$, 
and not on the choice of $\tilde{\alpha}_1$.
Define $\varphi : \mathbb{D} \to \mathbb{D}$
by $\varphi (z) = z'$.
Clearly, we have that $\varphi (z_1) = z_2$ 
and $f \circ \varphi = f$.
In particular, 
this implies $\varphi$ is analytic in $\mathbb{D}$.

Similarly, if we construct a mapping $\psi$ 
by interchanging $z_1$ and $z_2$, 
it is easy to see that both $\psi \circ \varphi$ 
and $\varphi \circ \psi$ are the identity map 
on $\mathbb{D}$. 
Hence, $\varphi$ and $\psi$ are automorphisms of $\mathbb{D}$ 
and are inverses of each other. 
We conclude that 
$\varphi, \psi \in \Aut(\mathbb{D}, f)$.
\end{proof}

In the above proof, since $\tilde{\alpha}_2$ and 
$\varphi \circ \tilde{\alpha}_1$ share the same initial point and 
are both lifts of $\alpha$, they coincide. 
Replacing $\tilde{\alpha}_1$ by $\tilde{\alpha}$
and choosing a path $\tilde{\gamma}$ from $z_1$ to $z_2$,
we obtain the following figure.
Note that 
\[
 \varphi (z) 
 = \text{
 the endpoint of the lift of $(\alpha^{-1} * \gamma )* \alpha$
 starting at $z$}, 
\]
where 
$\gamma := f \circ \tilde{\gamma}$, and 
$\alpha^{-1}$ denotes the reverse path of $\alpha$ 
defined by
$\alpha^{-1} (t) = \alpha (1-t)$, $0 \leq t \leq 1$.

\begin{center}
\begin{tikzpicture}[scale=0.7]
\draw[fill] (0,-1) circle (1pt) node[below] {\tiny $z_1$};
\draw (0,0) circle (3cm);
\node at (-1.8,2) {$\mathbb{D}$};
\draw[->] plot [smooth] 
coordinates {(0,-1) (0.4,-0.5) (1,0) };
\node[right] at (1,0.1) {\tiny $z_2 = \varphi(z_1)$};
\node[right] at (0.35,-0.6) {\tiny $\tilde{\gamma}$};
\draw[->] plot [smooth] 
coordinates {(0,-1) (-0.8,-0.7) (-1,-0.4) };
\node[below] at (-0.8,-0.8) {\tiny $\tilde{\alpha}$};
\node[left] at (-1,-0.4) {\tiny $z$};
\draw[->] plot [smooth] 
coordinates {(1,0) (0.2,0.3) (0,0.6) };
\node[above right] at (-0.4,-0.35) 
{\tiny $\varphi \circ \tilde{\alpha}$};
\node[above] at (0,0.7) {\tiny 
$\varphi (z) = \varphi(\tilde{\alpha} (1))$ 
};

\begin{scope}[xshift=6.5cm,yshift=-3cm]
\node at (7,3.5) {\small $\Omega$};
\draw[fill] (3,1.5) circle (1pt);
\node at (3.8,1.1)
{\begin{minipage}{10ex}
  \tiny $\quad f(z_1)$ \\ $= f(z_2)$
   \end{minipage}} ;

 \draw plot[->,smooth cycle]
coordinates{(0,0) (4,0.5) (6,2) (8,4) (4,3.5) (0.5,3) (1,2)};

\draw plot[smooth cycle]
coordinates{(3,1.5) (4,2) (3.7,2.8) (3.2,2)};
\node at (3.9,3) {\tiny $\gamma$};
\draw[fill,gray] (3.5,2.0) ellipse (3pt and 5pt);
 \draw[->] plot[smooth]
coordinates{(3,1.5) (2.3,0.8) (1.8,1)};
\node at (2,0.6) {\tiny $\alpha$};
\end{scope}

\draw[->] (3.5,0) to [out=0, in=150] (6,-1);
\node at (4.75,0) {\small $f$}; 
\end{tikzpicture}
\end{center}

It follows easily from the lemma  that
every non-identity element 
$\varphi \in \Aut (\mathbb{D},f)$ has no fixed points.
Moreover,
we have the following uniqueness property.
For $\varphi, \psi \in \Aut(\mathbb{D},f)$,
\begin{equation}
\label{eq:uniqueness_for_deck_transformation}
  \varphi (z_0) = \psi (z_0) 
   \text{ for some $z_0 \in \mathbb{D}$ if and only if }
   \quad \varphi = \psi .
\end{equation}

Let $\gamma$ be a loop in $\Omega$ based at $f(0)$,
and let $[\gamma]$ denote 
the homotopy class of $\gamma$, that is,
the set of all loops path-homotopic to $\gamma$.
We denote the set of all such homotopy classes by 
$\pi_1 (\Omega , f(0))$. 
For $[\gamma]$, $[\delta] \in \pi_1 (\Omega , f(0))$ 
we define the product by 
$[\gamma]*[\delta] = [\gamma * \delta]$,
where $\gamma * \delta$ is the concatenation of the loops 
$\gamma$ followed by $\delta$, defined by
\begin{equation}
   \gamma * \delta (t)
    =
    \begin{cases}
    \gamma (2t) \quad & \text{if} \quad 0 \leq t \leq \frac{1}{2} ,
\\
     \delta (2t-1) \quad & \text{if} \quad \frac{1}{2} < t \leq 1 .
    \end{cases}
\end{equation}
One readily checks that the product is associative. 
The constant loop at $f(0)$ serves as the identity element, 
and each loop $\gamma$ has an inverse given by the reverse path 
$\gamma^{-1}(t) := \gamma(1-t)$ for $0 \leq t \leq 1$. 
Hence $\pi_1(\Omega, f(0))$ forms a group under this operation.
This group is called 
the fundamental group of $\Omega$ based at $f(0)$.

\begin{lemma}
The fundamental group $\pi_1(\Omega, f(0))$ is
isomorphic to $\Aut (\mathbb{D},f)$. 
\end{lemma}
For later applications, and in order to introduce
an explicit isomorphism 
$\tau : \pi_1(\Omega, f(0)) \rightarrow \Aut (\mathbb{D},f)$,
we now give a proof of the lemma.
\begin{proof}
Let $[\gamma] \in \pi_1(\Omega , f(0))$
and let $\tilde{\gamma}$ be the lift of $\gamma$ 
starting $0$ with respect to $f$.
Then the endpoint $\tilde{\gamma}(1)$ 
depends only on the equivalence class $[\gamma]$,
and not on the particular choice of the representative $\gamma$.
Since $\tilde{\gamma}(1)$ belongs to
$f^{-1}( \{ f(0) \})$, 
there exists a unique 
$\varphi \in \Aut(\mathbb{D},f)$ such that 
$\varphi (0) = \tilde{\gamma}(1)$.
We thus define a mapping 
$\tau : \pi_1(\Omega, f(0)) \rightarrow \Aut (\mathbb{D},f)$
by $\tau ([\gamma]) = \varphi$.
\begin{center}
\begin{tikzpicture}[scale=0.8]
\draw[fill] (0,0) circle (1pt) node[below] {\tiny $0$};
\draw (0,0) circle (2.5cm);
\node at (-2,2) {\small $\mathbb{D}$};
\draw[->] plot [smooth] 
coordinates {(0,0) (-0.8,0.3) (-1,0.6) };
\node[below] at (-0.8,0.3) {\tiny $\tilde{\gamma}$};
\node[above] at (-1.5,0) {\tiny 
 \begin{minipage}{8ex} $\varphi(0)$ \\ $=\tilde{\gamma}(1)$ 
 \end{minipage}};

\draw[->] plot [smooth] 
coordinates {(0,0) (0.8,0.3) (1,0.6) };
\node[below] at (0.8,0.3) {\tiny $\tilde{\delta}$};
\node[right] at (1,0.6) {\tiny $\psi(0)=\tilde{\delta}(1)$};

\draw[->] plot [smooth] 
coordinates {(-1,0.6) (-0.2,0.9) (0,1.2) };
\node[above] at (-0.6,0.7) {\tiny $\varphi \circ \tilde{\delta}$};

\begin{scope}[xshift=-3cm,yshift=-7cm]
\draw[fill] (3,1.5) circle (1pt) node[below] {\tiny $0$};
 \draw plot[smooth cycle]
coordinates{(0,0) (4,0.5) (6,2) (8,4) (4,3.5) (0.5,3) (1,2)};
 \node at (6.4,1.8) {\small $\Omega$};

\draw[fill,gray] (2.5,1.5) ellipse (5pt and 2pt);
 \draw[->] plot[smooth cycle]
coordinates{(3,1.5) (2.5,2) (2,1.5) (2.5,1) };
\node[below] at (2.5,1) {\tiny $\gamma$};

\draw[fill,gray] (3.5,2.5) ellipse (2pt and 5pt);
 \draw[->] plot[smooth cycle]
coordinates{(3,1.5) (4,2) (4.5,3) (3.5,3) };
\node[right] at (4.5,3) {\tiny $\delta$};

\end{scope}

\draw[->] (0,-2.7)-- (0,-3.5) node[midway,right] {\small $f$}; 
\end{tikzpicture}
\end{center}
We show that the map 
$\tau : \pi_1(\Omega, f(0)) \to \Aut (\mathbb{D},f)$
is a homomorphism.
Let $[\gamma], [\delta] \in \pi_1(\Omega, f(0))$, 
and $\tilde{\gamma}$ and $\tilde{\delta}$ be the lifted paths
of $\gamma$ and $\delta$ starting at $0$, respectively.
Then there exist unique $\varphi, \psi \in \Aut(\mathbb{D}, f)$
with $\varphi (0) = \tilde{\gamma}(1)$ and 
$\psi(0) = \tilde{\delta}(1)$.
Since 
$\tilde{\gamma}(1) = \varphi (0) = \varphi \circ \tilde{\delta}(0)$,
the product path $\tilde{\gamma} * (\varphi \circ \tilde{\delta})$
is well defined.
Moreover,
\[
  f(\tilde{\gamma} * (\varphi \circ \tilde{\delta})) 
  = f(\tilde{\gamma}) * f \circ ( \varphi \circ \tilde{\delta})
  = \gamma * \delta
\]
so it is a lift of $\gamma * \delta$ from $0$.
This implies 
\begin{align*}
 \tau ([\gamma * \delta])(0) 
 = & \, (\tilde{\gamma} * (\varphi \circ \tilde{\delta})) (1) \\
 = & \, \varphi \circ \tilde{\delta} (1) 
 = \varphi ( \tilde{\delta} (1))
 = \varphi(\psi (0))
 = \tau ([\gamma]) \circ \tau ([\delta]) (0) .   
\end{align*}
By \eqref{eq:uniqueness_for_deck_transformation},
we conclude that
$\tau ([\gamma * \delta]) = \tau ([\gamma]) \circ \tau ([\delta])$.

Next we show that $\tau$ is injective.
To this end, suppose $\tau ([\gamma]) = \id_{\mathbb{D}}$ 
for some $[\gamma ] \in \pi_1 (\Omega, f(0))$.
Let $\tilde{\gamma}$ be the lift of $\gamma$ starting at $0$.
Then, since
$\tilde{\gamma}(1) = \tau([\gamma])(0) = 0 $,
the path $\tilde{\gamma}$ is a loop in $\mathbb{D}$ based at $0$.
As $\mathbb{D}$ is simply connected,
we obtain $\tilde{\gamma} \sim e_{0}$ in $\mathbb{D}$, and hence
$\gamma \sim e_{f(0)}$ in $\Omega$.
Therefore $[\gamma] = [e_{f(0)}]$, i.e.,
$[\gamma]$ coincides with the identity element 
in $\pi_1(\Omega, f(0))$.
Thus the kernel of the homomorphism $\tau$ is trivial, and
$\tau$ is injective.

Finally, we show that $\tau$ is surjective.
For any $\varphi \in \Aut (\mathbb{D},f)$,
choose a path $\tilde{\gamma}$ in $\mathbb{D}$ from $0$
to $\varphi (0)$, and set $\gamma = f \circ \tilde{\gamma}$.
Then it is clear that $\tau ([\gamma ]) = \varphi$.
\end{proof}

\section{Induced Homomorphisms between Covering Transformation Groups}
For $t \in I$, let $\Gamma_t$ denote 
the group of covering transformations 
of the universal covering map
$f_t : \mathbb{D} \to \Omega_t$, i.e.,
$\Gamma_t = \Aut (\mathbb{D}, f_t )$.
Now we introduce 
a map $\sigma_{s,t} : \Gamma_s \rightarrow \Gamma_t$ 
for $(s,t) \in I_+^2$  as follows.
The remainder of this chapter is devoted to studying 
relations among
$\{f_t \}_{t \in I}$, $\{ \omega_{s,t}\}_{(s,t)\in I_+^2}$, 
$\{ \Gamma_t \}_{t \in I}$ and $\{ \sigma_{s,t}\}_{(s,t)\in I_+^2} $.

For $\varphi_s \in \Gamma_s$, choose a path 
$\tilde{\gamma}_s :[0,1] \rightarrow \mathbb{D}$ arbitrarily
from $0$ to $\varphi_s (0)$.
Since 
\[
 f_s(\tilde{\gamma}_s(1)) = f_s \circ \varphi_s (0) = f_s (0) = 0, 
\]
the path $\gamma := f_s \circ \tilde{\gamma}_s $ is 
a loop in $\Omega_s (\subset \Omega_t)$ based at $f_s(0) =0$.
Let $\tilde{\gamma}_t :[0,1] \rightarrow \mathbb{D}$ be the 
unique lift of $\gamma$ starting at $0$. 
Then there exists a unique $\varphi_t \in \Gamma_t$ such that
$\varphi_t (0) = \tilde{\gamma}_t (1)$.
Since $\mathbb{D}$ is simply connected,
the endpoint $\tilde{\gamma}_t(1)$ 
does not depend on the choice of 
$\tilde{\gamma}_s$, and hence 
$\varphi_t \in \Gamma_t$ is uniquely determined by 
$\varphi_s \in \Gamma_s$.
We thus define $\sigma_{s,t} : \Gamma_s \rightarrow \Gamma_t$
by $\sigma_{s,t} (\varphi_s) = \varphi_t$.
By definition, 
$\sigma_{t,t}$ is the identity mapping of $\Gamma_t$, 
and it is easy to see that the semigroup relation
\begin{equation}
  \sigma_{t_1,t_2} \circ \sigma_{t_0,t_1} = \sigma_{t_0,t_2} 
\end{equation}
holds for $t_0,t_1,t_2 \in I$ with $t_0 \leq t_1 \leq t_2$.

We now prove $\sigma_{s,t}$
is an injective homomorphism and satisfies
$\omega_{s,t} \circ \varphi_s 
= \sigma_{s,t}(\varphi_s)\circ \omega_{s,t}$.
\begin{proof}[Proof of Theorem~\ref{thm:int_the_injective_%
homomorphsim}]
Let $\varphi_s \in \Gamma_s$ and $z \in \mathbb{D}$, 
and let 
$\tilde{\gamma}_s, \tilde{\alpha}_s :[0,1] \rightarrow \mathbb{D}$ 
be paths from $0$ to $\varphi_s (0)$ 
and from $0$ to $z$, respectively.
Since  
$\tilde{\gamma}_s(1) = \varphi_s (0) 
= \varphi_s \circ \tilde{\alpha}_s (0)$,
the product path 
$\tilde{\beta}_s 
:= \tilde{\gamma}_s * (\varphi \circ \tilde{\alpha}_s)$ 
is well defined and
is a path from $0$ to $\varphi_s(z)$. 
Set $\gamma = f_s \circ \tilde{\gamma}_s$
and $\alpha = f_s \circ \tilde{\alpha}_s$.
Then $\gamma$ is a loop in $\Omega_s$ based at $0$ and
$\alpha$ is a path in $\Omega_s$ from $0$ to $f_s(z)$.
Clearly $\beta := f_s \circ \beta_s = \gamma * \alpha$.
Let $\tilde{\gamma}_t$
and $\tilde{\alpha}_t$ be the lifts of 
$\gamma$ and $\alpha$, respectively, from $0$ with respect to $f_t$.
As before, the product path 
$\tilde{\beta}_t := 
\tilde{\gamma}_t * (\varphi_t \circ \tilde{\alpha}_t)$ is
well defined and is a path from $0$ 
to $\varphi_t (\tilde{\alpha}_t(1))= \varphi_t(\omega_{s,t}(z))$.
Furthermore, 
since $\tilde{\beta}_t$ is the lifted path of 
$f_s \circ \tilde{\beta}_s = \beta$,
by definition the endpoint of $\tilde{\beta}_t$
coincides with $\omega_{s,t}(\varphi_s(z))$.
Thus we have
$\varphi_t(\omega_{s,t}(z)) = \omega_{s,t}(\varphi_s(z))$,
which is equivalent to
\begin{equation}
\label{eq:relation_between_sigma_omega}
  \sigma_{s,t}(\varphi_s)(\omega_{s,t}(z)) 
   = \omega_{s,t}(\varphi_s(z)). 
\end{equation}

\begin{center}
\begin{tikzpicture}
\draw[fill] (0,0) circle (1pt) node[below] {\tiny $0$};
\draw (0,0) circle (2.5cm);
\node at (-2,2) {$\mathbb{D}$};
\draw[->] plot [smooth] 
coordinates {(0,0) (0.4,0.5) (1,1) };
\node[right] at (1,0.9) {\tiny $\varphi_s (0)$};
\node[right] at (0.4,0.5) {\tiny $\tilde{\gamma}_s$};
\draw[->] plot [smooth] 
coordinates {(0,0) (-0.8,0.3) (-1,0.6) };
\node[below] at (-0.8,0.3) {\tiny $\tilde{\alpha}_s$};
\node[left] at (-1,0.6) {\tiny $z$};
\draw[->] plot [smooth] 
coordinates {(1,1) (0.2,1.3) (0,1.6) };
\node[above right] at (0.15,1.2) 
{\tiny $\varphi_s \circ \tilde{\alpha}_s$};
\node[above,left] at (0,1.7) {\tiny 
\begin{minipage}{9ex}
$\varphi_s(\tilde{\alpha}_s (1))$ \\
$=\varphi_s (z)$ 
\end{minipage}};

\begin{scope}[xshift=7.5cm]
\draw[fill] (0,0) circle (1pt) node[below] {\tiny $0$};
\draw (0,0) circle (2.5cm);
\node at (-2,2) {$\mathbb{D}$};
\draw[->] plot [smooth] 
coordinates {(0,0) (0.4,0.5) (1,1) };
\node[right] at (0.4,0.5) {\tiny $\tilde{\gamma}_t$};
\node[right] at (1,1) {\tiny $\varphi_t(0)$};
\draw[->] plot [smooth] 
coordinates {(0,0) (-0.6,0.3) (-0.8,0.5) };
\node[below] at (-0.8,0.3) {\tiny $\tilde{\alpha}_t$};
\node[left] at (-0.8,0.4) {\tiny $\omega_{s,t}(z)$};
\draw[->] plot [smooth] 
coordinates {(1,1) (0.5,1.3) (0.2,1.45) };
\node[above right] at (0.4,1.2) {\tiny $\varphi_t \circ \tilde{\alpha}_t$};
\node[above,left] at (0.4,1.4) {\tiny 
\begin{minipage}{14ex}
$\varphi_t(\omega_{s,t}(z)) $ \\
$= \varphi_t(\tilde{\alpha}_t (1))$
\end{minipage}};
\end{scope}

\begin{scope}[xshift=-1cm,yshift=-8cm]
\node at (7,3.5) {\small $\Omega_{s}$};
\draw[fill] (3,1.5) circle (1pt) node[below] {\tiny $0$};
 \draw plot[smooth cycle]
coordinates{(0,0) (4,0.5) (6,2) (8,4) (4,3.5) (0.5,3) (1,2)};
 \node at (6.4,1.8) {\small $\Omega_{t}$};
\draw plot[smooth cycle]
coordinates{(-0.2,-0.2) (4,0.3) (7.2,2) (8.2,4.2) (4,3.7) (0.4,3.1) (0.8,2)};

\draw plot[smooth cycle]
coordinates{(3,1.5) (4,2) (3.7,2.8) (3.2,2)};
\node at (3.9,3) {\tiny $\gamma$};
\draw[fill,gray] (3.5,2.0) ellipse (3pt and 5pt);
 \draw[->] plot[smooth]
coordinates{(3,1.5) (2.3,0.8) (1.8,1)};
\node at (2,0.6) {\tiny $\alpha$};
\end{scope}

\draw[->] (0.5,-2.7)-- (1.5,-4) node[midway,right] {\small $f_{s}$}; 
\draw[->] (6.6,-2.5)-- (5.6,-3.7) node[midway,left] {\small $f_{t}$}; 
\draw[->] (3,0)-- (4.5,0) node[midway,above] {\small $\omega_{s,t}$}; 
\end{tikzpicture}
\end{center}
Next we show that
$\sigma_{s,t}$ is a homomorphism.
Let $\varphi_s$, $\psi_s \in \Gamma_s$.
By (\ref{eq:relation_between_sigma_omega})
and $\omega_{s,t}(0) = 0$
we have
\begin{align*}
 \sigma_{s,t}(\varphi_s \circ \psi_s)(0)
 = & \,
  \sigma_{s,t}(\varphi_s \circ \psi_s)(\omega_{s,t}(0))
\\
 = & \,
  \omega_{s,t} (\varphi_s \circ \psi_s (0))
\\
 = & \,
  \omega_{s,t} (\varphi_s (\psi_s (0))
\\
 = & \,
  \sigma_{s,t} (\varphi_s) ( \omega_{s,t}(\psi_s (0))
\\
 = & \,
  \sigma_{s,t} (\varphi_s) ( \sigma_{s,t}(\psi_s) (\omega_{s,t}(0)))
\\
 = & \,
  \sigma_{s,t} (\varphi_s) ( \sigma_{s,t}(\psi_s) (0))
 = \sigma_{s,t} (\varphi_s) \circ \sigma_{s,t}(\psi_s) (0) .
\end{align*}
From \eqref{eq:uniqueness_for_deck_transformation} 
it follows that 
$\sigma_{s,t}(\varphi_s \circ \psi_s) 
= \sigma_{s,t} (\varphi_s) \circ \sigma_{s,t}(\psi_s)$.
Therefore, we conclude that $\sigma_{s,t}$ is a group homomorphism.

Finally, we show $\sigma_{s,t} $ is injective.
Suppose that $\sigma_{s,t}(\varphi_s) = \sigma_{s,t}(\psi_s)$
for some $\varphi_s,\psi_s \in \Gamma_s$.
Then, by (\ref{eq:relation_between_sigma_omega}),
we have
\[
 \omega_{s,t} ( \varphi_s (0) )
 = \sigma_{s,t}(\varphi_s) (\omega_{s,t}(0))
 = \sigma_{s,t}(\psi_s) (\omega_{s,t}(0))
 = \omega_{s,t} ( \psi_s (0) ) .
\]
Since $\omega_{s,t}$ is univalent, 
it follows that $\varphi_s (0) = \psi_s (0)$.
By  \eqref{eq:uniqueness_for_deck_transformation},
we conclude that $\varphi_s = \psi_s$, as required.
\end{proof}

\begin{corollary}
For $(s,t) \in I_+^2$ and 
$\varphi_s \in \Gamma_s$, the image domain 
$\omega_{s,t}(\mathbb{D})$ 
is $\sigma_{s,t}(\varphi_s )$ invariant under
$\sigma_{s,t}(\varphi_s )$, i.e.,
\[
  \sigma_{s,t}(\varphi_s ) ( \omega_{s,t}(\mathbb{D})) 
= \omega_{s,t}(\mathbb{D}).
\]
\end{corollary}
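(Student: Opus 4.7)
The plan is to apply the intertwining identity $\omega_{s,t}\circ \gamma = \sigma_{s,t}(\gamma)\circ \omega_{s,t}$ from Theorem \ref{thm:int_the_injective_homomorphsim} twice: once for $\gamma$ itself and once for $\gamma^{-1}$. Together with the fact that $\sigma_{s,t}$ is a homomorphism (so $\sigma_{s,t}(\gamma^{-1})=\sigma_{s,t}(\gamma)^{-1}$), this will force both inclusions of the desired set equality.

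Concretely, first I would take an arbitrary $w\in \omega_{s,t}(\mathbb{D})$, write it as $w=\omega_{s,t}(z)$ for some $z\in\mathbb{D}$, and compute
\[
 \sigma_{s,t}(\gamma)(w)
 = \sigma_{s,t}(\gamma)(\omega_{s,t}(z))
 = \omega_{s,t}(\gamma(z))
 \in \omega_{s,t}(\mathbb{D}),
\]
since $\gamma(z)\in\mathbb{D}$. This establishes the inclusion
$\sigma_{s,t}(\gamma)(\omega_{s,t}(\mathbb{D}))\subset \omega_{s,t}(\mathbb{D})$.

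Next I would repeat the same reasoning with $\gamma^{-1}\in\Gamma_s$ in place of $\gamma$, obtaining
$\sigma_{s,t}(\gamma^{-1})(\omega_{s,t}(\mathbb{D}))\subset \omega_{s,t}(\mathbb{D})$. Since Theorem \ref{thm:int_the_injective_homomorphsim} tells us that $\sigma_{s,t}$ is a group homomorphism, $\sigma_{s,t}(\gamma^{-1})=\sigma_{s,t}(\gamma)^{-1}$. Applying $\sigma_{s,t}(\gamma)$ to both sides of the previous inclusion yields
\[
 \omega_{s,t}(\mathbb{D}) \subset \sigma_{s,t}(\gamma)(\omega_{s,t}(\mathbb{D})),
\]
and combining the two inclusions gives the desired equality. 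There is no real obstacle here; the statement is essentially an immediate corollary of the intertwining relation, and the only subtlety is remembering to invoke the homomorphism property to handle the reverse inclusion rather than merely proving invariance up to containment.
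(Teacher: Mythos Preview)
Your proof is correct and uses the same key ingredient as the paper, namely the intertwining identity $\omega_{s,t}\circ\gamma = \sigma_{s,t}(\gamma)\circ\omega_{s,t}$. The paper's argument is marginally shorter: applying both sides of the identity to $\mathbb{D}$ and using $\gamma(\mathbb{D})=\mathbb{D}$ gives the equality $\sigma_{s,t}(\gamma)(\omega_{s,t}(\mathbb{D}))=\omega_{s,t}(\gamma(\mathbb{D}))=\omega_{s,t}(\mathbb{D})$ in one step, so the detour through $\gamma^{-1}$ and the homomorphism property is not needed.
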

\begin{proof}
This follows immediately from 
the identity 
$\omega_{s,t} \circ \varphi_s = \sigma_{s,t}(\varphi_s) \circ \omega_{s,t}$
together with $\varphi_s (\mathbb{D}) = \mathbb{D}$. 
\end{proof}

Let $t_0 \in I$ and fix $\varphi \in \Gamma_{t_0}$, 
and let $\varphi_t  = \sigma_{t_0,t}(\varphi) \in \Gamma_t$
for $I \cap [t_0, \infty ]$. 
Since $\sigma_{t_0,t} : \Gamma_{t_0} \rightarrow \Gamma_t$ 
is a homomorphism, 
we have 
\[
    (\varphi^{-1})_t = \sigma_{t_0,t}(\varphi^{-1})
  = (\sigma_{t_0,t}(\varphi))^{-1} 
  = (\varphi_t)^{-1}.
\]
Thus, without ambiguity, we may simply write 
$\varphi_t^{-1}$.

\section{Loewner PDE for Covering Transformations}
Now we show that $\varphi_t$ 
satisfies a partial differential equation. 
\begin{proof}%
[Proof of Theorem~\ref{theorem:int_LoewnerPDEforDeckTransformation}]
It suffices to prove the theorem in the case $I = [t_0, t_0^*]$
with $-\infty < t_0 <t_0^* < \infty$.

If $\varphi_{t_0} = \id_{\mathbb{D}}$, then, 
since $\sigma_{t_0,t}$ is a homomorphism,
we have 
$\varphi_t = \sigma_{t_0,t}(\varphi_{t_0}) = \id_{\mathbb{D}}$,
and hence \eqref{eq:int_LoewnerPDEforDeckTransformation} 
holds trivially.

Suppose that 
$\varphi \in \Gamma_{t_0} \setminus \{ \id_{\mathbb{D}} \}$.
Then, by the fixed point free property of $\Gamma_t$,
it follows that $\varphi (0) \neq 0$.
Combining this with the fact that 
$\omega_{t_0,t}$ is univalent and $\omega_{t_0,t}(0)=0$,
we obtain $\varphi_t(0) = \omega_{t_0,t}(\varphi (0)) \not= 0$.
Since the mapping
$[t_0,t_0^*] \ni t \mapsto \varphi_t(0) \in \mathbb{D}$ 
is continuous, it follows that
\[
   0 <  
   m := \min_{t \in [t_0,t_0^*]} |\varphi_t(0)| 
  \leq 
  M  := \max_{t \in [t_0,t_0^*]} |\varphi_t(0)| 
  < 1 .
\]
Similarly, the mapping 
$[t_0,t_0^*] \ni t \mapsto \varphi_t^{-1}(0) \in \mathbb{D}$ 
is also continuous, and since $|\varphi_t^{-1}(0)| = |\varphi_t(0)|$,
we have 
\[
  0< m 
  = 
  \min_{t \in [t_0,t_0^*]} |\varphi_t^{-1}(0)| 
 \leq
  M 
  = 
 \max_{t \in [t_0,t_0^*]} |\varphi_t^{-1}(0)| < 1 .
\]
For $t \in [t_0,t_0^*]$, 
let 
$\zeta_t = \varphi_t^{-1}(0)$
and choose a real-valued continuous function
$\theta_t$
such that 
$e^{i \theta_t} = - \varphi_t(0)/\varphi_t^{-1}(0)$.
Then, $\varphi_t$ admits the representation
\[
   \varphi_t(z) = e^{i \theta_t}\frac{z-\zeta_t}{1-\overline{\zeta_t}z},
   \quad z \in \mathbb{D} .
\]
From these properties
it follows that the family 
$\{ \varphi_t \}_{t \in [t_0,t_0^*]}$ 
is continuous in the sense of locally uniform convergence 
on $\mathbb{D}$.

Let $t_1,t_2 \in [t_0,t_0^*]$ with $t_1 \leq t_2$.
Then 
\[
  \varphi_{t_2} = \sigma_{t_0,t_2}(\varphi ) 
= \sigma_{t_1,t_2} \circ \sigma_{t_0,t_1}(\varphi ) 
= \sigma_{t_1,t_2} (\varphi_{t_1}) .
\]
Combining this with
Theorem \ref{thm:int_the_injective_homomorphsim} 
we obtain
\[
   \varphi_{t_2} (\omega_{t_1,t_2}(z))
 = \sigma_{t_1,t_2} (\varphi_{t_1})(\omega_{t_1,t_2}(z))
 = \omega_{t_1,t_2}(\varphi_{t_1} (z)) .
\]
Hence
\begin{align}
\label{eq:pre_derivative_of_varphi_t}
   & \frac{\varphi_{t_2}(z) - \varphi_{t_1}(z)}{a(t_2)-a(t_1)}
\\
  =& \,
 \frac{\varphi_{t_2}(z) - \varphi_{t_2}(\omega_{t_1,t_2}(z))}{a(t_2)-a(t_1)} 
 + \frac{\varphi_{t_2}(\omega_{t_1,t_2}(z)) - \varphi_{t_1}(z)}{a(t_2)-a(t_1)}
\nonumber
\\
    =& \,
   - \frac{\omega_{t_1,t_2}(z)-z}{a(t_2)-a(t_1)} 
 \int_0^1   \varphi_{t_2}'((1-\lambda)z+
 \lambda\omega_{t_1,t_2}(z)) \, d \lambda 
  + \frac{\omega_{t_1,t_2}(\varphi_{t_1}(z)) - \varphi_{t_1}(z)}{a(t_2)-a(t_1)} .
\nonumber
\end{align}
Now fix $t \in [t_0,t_0^*] \setminus N$ arbitrarily.
Since $\varphi_{t_2}'(z) \rightarrow \varphi_t'$ and 
$\omega_{t_1,t_2}(z) \rightarrow z$
locally uniformly 
on $\mathbb{D}$ as $t_2 - t_1 \searrow 0$ with $t_1 \leq t \leq t_2$,
we obtain
\[
   \int_0^1   \varphi_{t_2}'((1-\lambda)z+
 \lambda\omega_{t_1,t_2}(z)) \, d \lambda \rightarrow \varphi_t'(z) .
\]
Furthermore, by Theorem \ref{theorem:a-derivative-of-omega},
we have 
\[
   \2sidelim \frac{\omega_{t_1,t_2}(z)-z}{a(t_2)-a(t_1)} 
 = \frac{\partial \omega}{\partial a(t)} (z,t)
 = - \frac{z}{a(t)} P(z,t) , \quad z \in \mathbb{D}
\]
with convergence locally uniform on $\mathbb{D}$.
Since $\varphi_{t_1}(z) \rightarrow \varphi_t(z)$, 
it follows that
\[
   \2sidelim \frac{\omega_{t_1,t_2}(\varphi_{t_1}(z))-
 \varphi_{t_1}(z)}{a(t_2)-a(t_1)} 
 = \frac{\partial \omega}{\partial a(t)} (\varphi_t(z),t)
 = - \frac{\varphi_t(z)}{a(t)} P(\varphi_t(z),t).
\]
Combining these equalities 
with \eqref{eq:pre_derivative_of_varphi_t}
we obtain \eqref{eq:int_LoewnerPDEforDeckTransformation}.

Suppose that  $a(t)$ is absolutely continuous 
and that $\dot{a}(t) := \frac{\partial a}{\partial t}  > 0$ a.e.
Let $E_0 $ be the set of all $t \in I$ 
at which $a$ is not differentiable.
Let $E_1 (\subset [\alpha, \beta ])$ be 
the set of all $t \in I$ at which 
$a$ is differentiable and $\dot{a} (t) =  0$.
Then $E_0 \cup E_1$ is a set of Lebesgue  measure $0$,
and for $t \in [t_0,t_0^*] \setminus (N \cup E_0 \cup E_1)$
we have
\[
   \frac{\partial \varphi}{\partial t} (z,t) 
 = \frac{\partial \varphi}{\partial a(t)} (z,t) \dot{a}(t)
\] 
Combining this with 
\eqref{eq:int_LoewnerPDEforDeckTransformation} 
we obtain 
\eqref{eq:int_LoewnerPDEforDeckTransformation_by_time}.
\end{proof}

\begin{example}
\label{example:covering_map_on_plane_minus_disks}
For $t > 0$ let 
\[
 f_t(z) = e^{\frac{t}{2} \left(\frac{1+z}{1-z} -1 \right)} - 1 , \quad
 z \in \mathbb{D}. 
\]
Then 
$f_t$ is the unique universal covering map of $\mathbb{D}$ 
onto 
$\mathbb{C} \setminus \overline{\mathbb{D}}(-1,e^{-\frac{t}{2}})$ 
with $f_t(0)=0$ and $a(t) := f_t'(0)=t$
and $\{ f_t \}_{t>0}$ forms a strictly increasing and continuous 
Loewner chain of universal covering maps.
\end{example}
We next compute 
$\omega_{s,t}$, $P(z,t)$, and related objects for 
Example~\ref{example:covering_map_on_plane_minus_disks}.

By Theorem \ref{theorem:FirstDecompositionTheorem}
the family $\{ f_t \}_{t>0}$ can be uniquely 
decomposed as 
$f_t = F \circ g_t$,
where $F$ is an entire function with $F(0)=F'(0)-1=0$, and
$\{ g_t \}_{t>0}$ is a Loewner chain of univalent functions 
with $g_t(0)=0$ and $g_t'(0)= f_t'(0)$, $t > 0$.  
In this case, it is easy to verify that 
$F(w) = e^w - 1$ and 
$g_t(z) = \frac{t}{2} \left(\frac{1+z}{1-z} -1 \right)$.
A straightforward  computation  
shows that the associated Herglotz and transition families
are given by
\begin{align*}
   P_t(z) \equiv & \,  1-z, \quad t>0 \text{ and } z \in \mathbb{D} \\
   \omega_{s,t}(z) = & \, 
 \frac{\frac{s}{t}\frac{z}{1-z}}{1+ \frac{s}{t}\frac{z}{1-z}}, \quad
 0<s<t \text{ and } z \in \mathbb{D} .
\end{align*}
Moreover, for each $t>0$, the group of covering transformations 
$\Gamma_t$ of
the covering map $f_t$ is generated by a single linear fractional 
transformation $\varphi_t \in \Aut (\mathbb{D})$ given by
\[
    \varphi_t(z) 
 = \frac{t-2\pi i}{t+2\pi i}
 \frac{z + \frac{2\pi i}{t-2\pi i}}{1- \frac{2\pi i}{t+2\pi i} z}. 
\]
The associated homomorphism 
$\sigma_{s,t} : \Gamma_s \rightarrow \Gamma_t$ is 
determined by $\sigma_{s,t}(\varphi_s) = \varphi_t$, 
for $0<s \leq t < 0$.

\chapter{Loewner Theory on Hyperbolic Metrics}
\label{chapter:HyperbolicMetrics}
\section{Hyperbolic Metrics}
Let $\Omega$ be a hyperbolic domain in $\mathbb{C}$,
and let $f: \mathbb{D} \rightarrow \Omega$ be 
a universal covering map.
For any simply connected subdomain 
$D \subset \Omega$, 
since $D$ is an evenly covered neighborhood of each $w \in D$,
there exists a single-valued branch 
$g :  D \to  \mathbb{D}$ 
of the inverse $f^{-1}$.  
The hyperbolic metric $\lambda (w) |dw|$ 
on $\Omega$ is then defined by
\[
  \lambda (w) = \frac{|g'(w)|}{1-|g(w)|^2} , \quad w \in \Omega .
\]
This definition does not depend on the choice of 
$D$ and $g$.
Indeed, if $g^*$ is another branch of $f^{-1}$ 
on $D^*$ with $D \cap D^* \neq \emptyset$,
we can write
$g^* = \varphi \circ g$ 
on $D \cap D^*$ 
for some 
$\varphi \in \Aut(\mathbb{D}, f)$.
Hence
\[
  \frac{|{g^*} '(w)|}{1-|g^*(w)|^2}
  = \frac{|\varphi'(g(w))| |g '(w)|}{1-|\varphi(g(w))|^2}
  = \frac{|g '(w)|}{1-|g(w)|^2} .
\]

Therefore, if $\{ f_t \}_{t \in I}$ is a continuous Loewner chain 
of covering maps with $\Omega_t = f_t(\mathbb{D})$, 
then the hyperbolic density $\lambda_t$ on $\Omega_t$  
is given by
\begin{align}
    \lambda_t (w) = \frac{|g_t'(w)|}{1-|g_t(w)|^2} ,
\end{align}
where $g_t$ denotes a single-valued branch of 
$f_t^{-1}$ on a simply connected subdomain of $\Omega_t$.

\section{Mixed Partials and Loewner Equation for Inverse Functions}
In this chapter, we derive the differential 
equation satisfied by $\lambda_t(w)$. 
To this end, we require a result concerning 
the equality of mixed partial derivatives
of $g_t(w)$ 
and the ordinary differential equation satisfied by $g_t(w)$.

\begin{lemma}
\label{lemma:CauchyIntegralFormula_for_inverse_functions}
Let $D$ and $G$ be domains in $\mathbb{C}$,
and let $f: D \to G$ be a conformal map
with inverse $g = f^{-1}$.
Suppose that $\gamma$ is a rectifiable 
and positively oriented Jordan curve 
in  $D$ 
such that the interior domain 
$D_i(\gamma)$ enclosed by $\gamma$
is contained in $D$. 
Then, for any $w \in D_i(f (\gamma ))$ and $m=0,1, \ldots $,
we have
\begin{equation}
   g^{(m)}(w) 
    = \frac{m!}{2 \pi i}
   \int_{\gamma} 
   \frac{\zeta f'(\zeta)}{(f(\zeta ) -w)^{m+1}} \, d \zeta .
\end{equation}
\end{lemma}
\begin{proof}
Let $w_0 \in D_i (f (\gamma ))$ and 
set $z_0 = g(w_0) \in D_i(\gamma )$.
The function 
\[
  \frac{zf'(z)}{f(z)-w_0},  \quad z \in D,	      
\]
is analytic in $D \setminus \{ z_ 0\}$ 
and has a simple pole at
$z_0$ with 
\[
  \frac{zf'(z)}{f(z)-w_0}
 =
 \frac{(z_0 + z-z_0)\{f'(z_0) + f''(z_0)(z-z_0)+ \cdots \}}
 {f'(z_0)(z-z_0)+ \cdots }
 = \frac{z_0}{z-z_0} + \cdots 
\]
near $z_0$.
Therefore, by the residue theorem, we obtain
\[
   g(w_0) 
 = z_0 
 = \frac{1}{2 \pi i} \int_{\gamma}
    \frac{\zeta f'(\zeta)}{f(\zeta ) -w_0} \, d \zeta . 
\]
The general case follows by induction on $m$ 
and differentiation.
\end{proof}

\begin{theorem}
\label{theorem:LoewnerODEforInverse}
Let $I = (\alpha, \beta )$ with 
$- \infty < \alpha < \beta < \infty$, 
and let $\{f_t \}_{t \in I}$ be a strictly increasing
and continuous Loewner chain of coverings maps 
with $a(t)=f_t'(0)$ for $t \in I$.
Let $N$ and $\{ P(\cdot,t) \}_{t \in I}$ denote
the associated $G_\delta$-subset
of $I$ with $\mu_a$-measure $0$ and Herglotz family 
of $\{ f_t \}_{t \in I}$ as in Theorem~\ref{thm:Loewner-DEs}.
Let $a_0 \in \mathbb{D}$, and let $D$ be a simply connected 
domain in $\mathbb{C}$ satisfying 
$f_t( a_0) \in D \subset \Omega_t$ for all $t \in I$.
For $t \in I$, 
let $g_t (w) = g(w,t)$
be the single-valued branch of $f_t^{-1}$ 
on $D$ such that $a_0 = g_t ( f_t( a_0) )$.
Then, 
for each $m \in \mathbb{N} \cup \{ 0 \}$
and $w \in D$, 
the function $\frac{\partial^m g}{\partial w^m} (w,t)$ 
is differentiable with respect to 
$a(t):= f_t'(0)> 0$ 
at $t_0 \in I \setminus N$,
and we have
\begin{equation}
\label{eq:the_equality_of_mixed_partial_derivatives}
  \frac{\partial^m}{\partial w^m} 
   \left( \frac{\partial g}{\partial a }\right) (w,t)
  =
  \frac{\partial}{\partial a } 
   \left( \frac{\partial^m g}{\partial w^m} \right) (w,t) .
\end{equation}
In particular,
\begin{align}
\label{eq:ode_satisfied_by_inverse_function}
    \frac{\partial g}{\partial a} (w,t)  
  = - \frac{g(w,t)P(g(w,t),t) }{a(t)} . 
\end{align}
\end{theorem}

\begin{proof}
Fix $t_0 \in I \setminus N$ and $w_0 \in D$ arbitrarily.

\noindent\textbf{Step 1.}
First we show that $\frac{\partial^m g}{\partial w^m}(w,t)$
is differentiable with respect to $a(t)$ at $t_0$,
and the convergence of the limit 
\[
   \sidelimv
 \frac{\frac{\partial^m g}{\partial w^m}(w,t_2) 
 - \frac{\partial^m g}{\partial w^m}(w,t_1)}{a(t_2)-a(t_1)}
\]
is locally uniform on $D$.
To this end, fix $r \in (0, d(w_0, \partial D))$ arbitrarily
and choose $\rho$, $\rho_1$ and $\rho_2$
such that $r< \rho_1 < \rho < \rho_2 < d(w_0, \partial D)$.
Define a Jordan curve $\gamma$ in $\mathbb{D}$ by
\[
  \gamma ( \theta ) = g_{t_0}(w_0+\rho e^{2 \pi \theta i}),
 \quad 0 \leq \theta \leq 1 .
\]
Since $f_t \rightarrow f_{t_0}$ locally uniformly 
on $\mathbb{D}$ as $t \to t_0$,
there exist $\delta > 0$ such that
for $|t-t_0| < \delta$,
\[
  |f_t (\gamma (\theta)) -f_{t_0} (\gamma (\theta) ) |
 = |f_t (\gamma (\theta)) -(w_0+\rho e^{2 \pi \theta i} ) |
 < \min \{ \rho_2-\rho, \rho - \rho_1 \},
\]
for all $0 \leq \theta \leq 1$.
Hence, for each $t \in (t_0-\delta, t_0+\delta)$, 
$f_t \circ \gamma$ is a Jordan curve contained in the 
closed annulus 
$\{ w \in \mathbb{C} : \rho_1 \leq |w-w_0| \leq \rho_2 \}$. 
Therefore, for $w \in \mathbb{D}(w_0, \rho_1)$,
Lemma~\ref{lemma:CauchyIntegralFormula_for_inverse_functions}
gives 
\[
    g_t^{(m)}(w) 
    = \frac{m!}{2 \pi i}
   \int_{\gamma} 
   \frac{z f_t'(z)}{(f_t(z ) -w)^{m+1}} \, d z .
\] 
From this it follows that, 
for $t_0 - \delta < t_1 < t_2 < t_0+\delta$,
\begin{align*}
 & \frac{g_{t_2}^{(m)}(w)- g_{t_1}^{(m)}(w) }{a(t_2)-a(t_1)}
\\
 = & 
 \frac{m!}{2 \pi i}
   \int_{\gamma} 
  \frac{1}{a(t_2)-a(t_1)}
 \left\{ \frac{z f_{t_2}'(z)}{(f_{t_2}(z ) -w)^{m+1}}
  - \frac{z f_{t_1}'(z)}{(f_{t_1}(z ) -w)^{m+1}}
 \right\} dz 
\\
 = & 
 \frac{m!}{2 \pi i}
   \int_{\gamma} 
  \frac{z \left( f_{t_2}'(z)-f_{t_1}'(z) \right) \, dz }
  {(a(t_2)-a(t_1))(f_{t_2}-w)^{m+1}}
\\
  & 
  -  \frac{m!}{2 \pi i}
   \int_{\gamma} 
  \frac{z f_{t_1}'(z) \left( f_{t_2}(z)-f_{t_1}(z) \right) }
  {a(t_2)-a(t_1)}
  \sum_{j=0}^m (f_{t_2}(z)-w)^{-j-1} (f_{t_1}(z)-w)^{j-m-1}
  \, dz .
\end{align*}
By Corollary~\ref{corollary:the_convergence_of_a_derivative}
we conclude that
\begin{align*}
 &
 \sidelimv
 \frac{g_{t_2}^{(m)}(w)- g_{t_1}^{(m)}(w) }{a(t_2)-a(t_1)}
\\
 = \, & 
 \frac{m!}{2 \pi i}
   \int_{\gamma} 
  \left\{ 
 \frac{z \frac{\partial^2 f}{\partial a \partial z}(z,t_0)}
  {(f_{t_0}-w)^{m+1}}
  -
 \frac{(m+1) z \frac{\partial f}{\partial z} (z,t_0) \frac{\partial f}{\partial a} (z,t_0) }
  {(f_{t_0}(z)-w)^{m+2}}
  \right\} dz , 
\end{align*}
and that the convergence is 
uniform on $\mathbb{D}(w_0, r)$.

\noindent\textbf{Step 2.}
From Step~1 it follows that
\begin{equation}
\label{eq:a_derivative_of_inverse}
  \frac{g_{t_2} (w) - g_{t_1} (w)}{a(t_2)-a(t_1)}
 \rightarrow \frac{\partial g}{\partial a } (w,t)
 \quad
 \text{as} \quad
 t_2-t_1 \searrow 0 \quad \text{with} \; t_1 \leq t_0 \leq t_2   
\end{equation}
uniformly on $\overline{\mathbb{D}}(w_0,r)$.
Combining this with
\[
   \frac{\partial^m g}{\partial w^m} (w,t)
 =
 \frac{m!}{2\pi i} 
 \int_{|\zeta-w_0|= r} \frac{g(\zeta,t)}{(\zeta -w)^{m+1}}
 \, d \zeta ,
 \; t_0 - \delta < t< t_0+ \delta
 \text{ and } w \in \mathbb{D}(w_0,r )
\]
we obtain
\[
   \frac{\partial^{m+1} g}{\partial a \partial w^m} (w,t_0)
 =
 \frac{m!}{2\pi i} 
 \int_{|\zeta-w_0|= r} 
 \frac{ \frac{\partial g}{\partial a}(\zeta, t_0) }{(\zeta -w)^{m+1}}
 \, d \zeta ,
 \quad  w \in \mathbb{D}(w_0,r ) .
\]
On the other hand, by \eqref{eq:a_derivative_of_inverse},
the function $\frac{\partial g}{\partial a } (w,t_0) $
is analytic in $\mathbb{D}(w_0,r)$
and continuous on $\overline{\mathbb{D}}(w_0,r)$.
Therefore,
\[
   \frac{\partial^{m+1} g}{\partial w^m \partial a } (w,t_0)
 =
 \frac{m!}{2\pi i} 
 \int_{|\zeta-w_0|= r} 
 \frac{ \frac{\partial g}{\partial a}(\zeta, t_0) }{(\zeta -w)^{m+1}}
 \, d \zeta ,
 \quad  w \in \mathbb{D}(w_0,r ) .
\]
Thus,
\[
 \frac{\partial^{m+1} g}{\partial a \partial w^m} (w,t_0) 
 = \frac{\partial^{m+1} g}{\partial w^m \partial a } (w,t_0),
 \quad  w \in \mathbb{D}(w_0,r ) . 
\]

\noindent\textbf{Step 3.}
Finally, we derive 
(\ref{eq:ode_satisfied_by_inverse_function}).
Since $g_t$ is a single-valued branch of $f_t^{-1}$,
we have
\[
   f(g(w,t),t) = w ,
 \quad t \in I
 \text{ and } w \in \mathbb{D}(w_0,r ) .
\]
Hence, for $t_1,t_2 \in (\alpha , \beta) $ 
with
$t_1 \leq t_0 \leq t_2$ and $t_1<t_2$,
we obtain
\begin{align}
\label{eq:f-of-g-is-identity}
  0 
 = & \, \frac{f(g(w,t_2),t_2) - f(g(w,t_1),t_1)}{a(t_2)-a(t_1)}
\\
 = & \, \frac{f(g(w,t_2),t_2) - f(\omega (g(w,t_1),t_1,t_2),t_2)}
 {a(t_2)-a(t_1)}
\nonumber
\\
 = & \, \frac{g(w,t_2) - \omega (g(w,t_1),t_1,t_2)}{a(t_2)-a(t_1)}
  \int_0^1 \frac{\partial f}{\partial z} (\alpha (\theta ) ,t_2) \, 
 d \theta  ,
\nonumber
\end{align}
where $\{ \omega (z,s,t) \}$
is the associated transition family and
$\alpha$ is the path defined by
\[
   \alpha (\theta )
 =
 (1-\theta) \omega (g(w,t_1),t_1,t_2) + \theta g(w,t_2),
 \quad 0 \leq \theta \leq 1 .
\]
By Theorem~\ref{theorem:continuity-of-transition-family}
and the fact $g(w,t)$ is continuous in $t$,
$\frac{\partial f}{\partial z} (\alpha (\theta ) ,t_2)$
converges uniformly to 
$\frac{\partial f}{\partial z} (g(w,t_0) ,t_0)$
on $[0,1]$ as
$t_2 - t_1 \searrow 0$ with $t_1 \leq t_0 \leq t_2$.

Furthermore, 
we have 
\begin{align*}
  & \, 
 \frac{g(w,t_2) - \omega (g(w,t_1),t_1,t_2)}{a(t_2)-a(t_1)}
\\
 = & \,
  \frac{g(w,t_2) - g(w,t_1) 
   - \{ \omega (g(w,t_0),t_1,t_2) - g(w,t_0) \} }{a(t_2)-a(t_1)}
\\
 & \, + \frac{-g(w,t_0) +  g(w,t_1) + \omega (g(w,t_0),t_1,t_2) 
 - \omega (g(w,t_1),t_1,t_2)}{a(t_2)-a(t_1)}
\\
 = & \,
  \frac{g(w,t_2) - g(w,t_1)}{a(t_2)-a(t_1)} 
   - \frac{ \omega (g(w,t_0),t_1,t_2) - g(w,t_0) }{a(t_2)-a(t_1)}
\\
 & \, + \frac{g(w,t_0) -  g(w,t_1)}{a(t_2)-a(t_1)} 
   \int_0^1 
 \left\{ 
 \frac{\omega }{\partial z} ( \beta ( \theta ),t_1,t_2) -1 \right\} \, 
 d \theta ,
\end{align*}
where $\beta $ is the path defined by
\[
 \beta (\theta) 
 = 
 (1-\theta) g(w,t_1) + \theta g(w,t_0), \quad 0 \leq \theta \leq 1 . 
\]

By Proposition~\ref{proposition:fundamental_inequalities}
\[
  \int_0^1 
 \left\{ 
 \frac{\partial \omega }{\partial z} ( \beta ( \theta ),t_1,t_2) -1 \right\} \, 
 d \theta
 \longrightarrow 0
\]
as $t_2-t_1 \searrow 0$ with $t_1 \leq t_0 \leq t_2$.

Now, by \eqref{eq:a_derivative_of_inverse},
the limit
\[
  \lim_{t_1 \nearrow t_0} 
 \frac{g_{t_0} (w) - g_{t_1} (w)}{a(t_0)-a(t_1)}
\]
exists.
Moreover, there exists $M>0$ and $\delta_1 > 0$
such that $|g_{t_0} (w) - g_{t_1} (w)| \leq M |t_0-t_1|$
for all $t_1 \in (t_0-\delta_1, t_0)$.
Thus, we obtain
\[
  \left| \frac{g(w,t_0) -  g(w,t_1)}{a(t_2)-a(t_1)} \right|
 \leq
  M \quad \text{for } t_0 - \delta_1 < t_1 \leq t_0 \leq t_2 
 \text{ with } t_1 < t_2 .
\]

Therefore, passing to the limit in (\ref{eq:f-of-g-is-identity})
as $t_2-t_1 \searrow 0$ with $t_1 \leq t_0 \leq t_2$,
we obtain 
\begin{align*}
 0 
 = & \,
 \left\{ 
 \frac{\partial g}{\partial a}(w,t_0) 
 -  \frac{\partial \omega}{\partial a}(g(w,t_0)) 
 \right\} \frac{\partial f}{\partial z} (g(w,t_0) ,t_0)
\\
  = & \, 
 \left\{ 
 \frac{\partial g}{\partial a}(w,t_0) 
 + \frac{g(w,t_0)}{a(t_0)} P(g(w,t_0),t_0) 
 \right\} \frac{\partial f}{\partial z} (g(w,t_0) ,t_0) .
\end{align*}
Since $f_t$ is locally univalent,
$\frac{\partial f}{\partial z} (z ,t) \not= 0 $ 
for all $z \in \mathbb{D}$.
Hence (\ref{eq:ode_satisfied_by_inverse_function}) holds.
\end{proof}

\section{Loewner Equation for the Hyperbolic Density}
We now derive an ordinary differential equation 
satisfied by $\lambda_t$.
\begin{theorem}
Under the same notation as in 
Theorem~\ref{theorem:LoewnerODEforInverse},
the density $\lambda_t(w) = \lambda(w,t)$ 
of the hyperbolic metric on $\Omega_t$ satisfies
\begin{align}
  \frac{\frac{\partial \lambda}{\partial a} (w,t)}{\lambda(w,t)}
 =  - \frac{1}{a(t)}  \Real 
 \left\{ \frac{1+|g_t(w)|^2}{1-|g_t(w)|^2}P(g_t(w),t) 
 +  g_t(w) \frac{\partial P}{\partial z}(g_t(w),t) \right\} .
\end{align}
\end{theorem}
\begin{proof}
By differentiating 
\eqref{eq:ode_satisfied_by_inverse_function}
with respect to $a(t)$ 
and applying 
\eqref{eq:the_equality_of_mixed_partial_derivatives},
we obtain
\begin{equation}
\label{eq:mixed-partial-derevatives}
  \frac{\partial^2 g }{\partial a \partial w}(w,t)
   = 
  \frac{\partial^2 g }{\partial w \partial a}(w,t)
   =
   - \frac{\frac{\partial g }{\partial w}(w,t)}{a(t)}
   \left\{ P(g(w,t))+ g(w,t) P(g(w,t),t) \right\}
\end{equation}
Now let $\lambda (w,t) = \lambda_t(w)$.
Differentiating
\[
   \log \lambda (w,t) 
   = 
   \frac{1}{2} \log \left\{ \frac{\partial g}{\partial w} (w,t) 
   \overline{\frac{\partial g}{\partial w} (w,t)} \right\} 
   - \log \left\{ 1- g(w,t)\overline{g(w,t)}\right\}
\]
with respect to $a(t)$,
and using 
\eqref{eq:the_equality_of_mixed_partial_derivatives}
and \eqref{eq:mixed-partial-derevatives},
we obtain 
\begin{align*}
  & \frac{\frac{\partial \lambda}{\partial a} (w,t)}{\lambda(w,t)}
\\
 = & \,
 \Real 
 \left\{ \frac{\frac{\partial^2 g}{\partial a \partial w} (w,t)}
 {\frac{\partial g}{\partial w} (w,t)} \right\}
 + 2 
 \frac{\Real 
 \left\{ \frac{\partial g}{\partial a} (w,t) 
 \overline{g(w,t)} \right\}}
 {1-|g(w,t)|^2}
\\
 = & \,
 - \frac{1}{a(t)} \Real 
 \left\{ 
  P(g(w,t),t)+ g(w,t) 
 \frac{\partial P}{\partial z}(g(w,t),t)
 \right\}
  - 2 \frac{|g(w,t)|^2 \Real \{ P(g(w,t),t)\}}{a(t)(1-|(g(w,t))|^2)}
\\
 = & \,  - \frac{1}{a(t)}  \Real 
 \left\{ \frac{1+|g_t(w)|^2}{1-|g_t(w)|^2}P(g_t(w),t) 
 +  g_t(w) \frac{\partial P}{\partial z}(g_t(w),t) \right\} .
\end{align*}
\end{proof}

Since
$P(\cdot ,t )$ is analytic in $\mathbb{D}$ with $\Real P (\cdot, t)> 0$ 
and $P(0,t) = 1$,
then there exists a family of Borel probability measures
$\{ \mu_t \}_{t \in I}$ on $\partial \mathbb{D}$ such that
\begin{align*}
 P(z,t) = \int_{\partial \mathbb{D}} 
 \frac{\zeta + z}{\zeta -z } \, d \mu_t (\zeta ) . 
\end{align*}
From this we obtain
\begin{align*}
   z \frac{\partial P(z,t)}{\partial z} 
  = & \, 
 \int_{\partial \mathbb{D}} \frac{2 z \zeta }{(\zeta -z)^2 } \, d \mu_t (\zeta ) ,
\\
   \frac{1+|z|^2}{1-|z|^2} P(z,t) 
 +  z \frac{\partial P(z,t)}{\partial z} 
  = & \, 
 \int_{\partial \mathbb{D}} 
 K(z,\zeta ) \, d \mu_t (\zeta ), 
\end{align*}
where
\begin{equation}
 K(z, \zeta ) 
  = 
  \frac{1+|z|^2}{1-|z|^2} \cdot \frac{\zeta + z}{\zeta -z } 
  +  \frac{2 z \zeta }{(\zeta -z)^2 } ,
  \quad z\in \mathbb{D} \text{ and }  \zeta \in \partial \mathbb{D}.
\end{equation}
It then follows that 
\begin{align*}
  \frac{\frac{\partial \lambda}{\partial a} (w,t)}{\lambda(w,t)}
 =
 - \frac{1}{a(t)}
  \int_{\partial \mathbb{D}} 
  \Real   K(g_t(w), \zeta )  \, d \mu_t (\zeta ) .
\end{align*}
Taking into account the condition $|\zeta|=1$ 
and carrying out the computation, we obtain
\begin{equation}
  \Real K(z,\zeta)
   = \frac{(1-|z|^2)^2}{|\zeta -z|^4} ,
\end{equation}
i.e., $\Real   K(z, \zeta )$ coincides with
the square of the Poisson kernel.
Therefore, 
\begin{equation}
  \frac{\frac{\partial \lambda}{\partial a} (w,t)}{\lambda(w,t)}
 =
 - \frac{1}{a(t)}
  \int_{\partial \mathbb{D}} 
  \frac{(1-|g_t(w)|^2)^2}{|\zeta - g_t(w)|^4}  \, d \mu_t (\zeta ) . 
\end{equation}
Moreover,
by the simple estimate
\[
  \left( \frac{1-|z|}{1+|z|} \right)^2
  \leq
  \Real   K(z, \zeta )
  \leq
  \left( \frac{1+|z|}{1-|z|} \right)^2 ,
\]
we have
\begin{equation}
 - \frac{1}{a(t)}
   \left( \frac{1+|g_t(w)|}{1-|g_t(w)|} \right)^2 
   \leq
  \frac{\frac{\partial \lambda}{\partial a} (w,t)}{\lambda(w,t)}
   \leq
 - \frac{1}{a(t)}
  \left( \frac{1-|g_t(w)|}{1+|g_t(w)|} \right)^2 .
\end{equation}
In particular, if $a(t)$ is locally  absolutely continuous 
in $I$
and $\frac{da}{dt}(t) > 0$ a.e., then we have
\begin{equation}
 - \frac{\frac{da}{dt}(t)}{a(t)}
   \left( \frac{1+|g_t(w)|}{1-|g_t(w)|} \right)^2 
   \leq
  \frac{\frac{\partial \lambda}{\partial t} (w,t)}{\lambda(w,t)}
   \leq
 - \frac{\frac{da}{dt}(t)}{a(t)}
  \left( \frac{1-|g_t(w)|}{1+|g_t(w)|} \right)^2 
\end{equation}
holds a.e. in $I$.

\appendix
\chapter{Separation Theorem}
\label{chapter:separation}
The separation lemma (Lemma \ref{lemma:separation}) 
can be proved by using results from combinatorial topological 
lemmas and theorems (see Newman \cite{Newman}). 
However, in this appendix we give a simpler proof 
by employing the Riemann mapping theorem together with 
several necessary and sufficient conditions for a domain 
in $\mathbb{C}$ to be simply connected.

We have repeatedly used the following criterion: 
a domain in $\hat{\mathbb{C}}$ is simply connected 
if and only if its complement is connected or empty.
We also rely on the following classical result.
\begin{lemma} 
Let $\Omega$ be a domain in $\mathbb{C}$.
Then $\Omega$ is simply connected if and only if 
the interior domain of every simple closed curve 
in $\Omega$ is contained in $\Omega$.
\end{lemma}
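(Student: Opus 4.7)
The plan is to handle the two implications separately. For the forward direction, assuming $\Omega$ is simply connected (so $\hat{\mathbb{C}}\setminus\Omega$ is connected by the stated criterion) and $\alpha$ is a simple closed curve in $\Omega$, the Jordan curve theorem splits $\hat{\mathbb{C}}\setminus\alpha$ into two disjoint open components $D^{\mathrm{in}}$ (bounded in $\mathbb{C}$) and $D^{\mathrm{out}}$ (containing $\infty$). Were $D^{\mathrm{in}}\not\subset\Omega$, the set $\hat{\mathbb{C}}\setminus\Omega$ would be a connected set disjoint from $\alpha$ that meets both $D^{\mathrm{in}}$ (at the alleged point of $D^{\mathrm{in}}\setminus\Omega$) and $D^{\mathrm{out}}$ (at $\infty$), which is impossible for a connected subset of the disjoint union $D^{\mathrm{in}}\sqcup D^{\mathrm{out}}$; hence $D^{\mathrm{in}}\subset\Omega$.

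For the reverse direction I would argue the contrapositive: if $\Omega$ is not simply connected, exhibit a simple closed curve in $\Omega$ whose inside is not contained in $\Omega$. By the criterion, $\hat{\mathbb{C}}\setminus\Omega$ has at least two components, and since exactly one contains $\infty$ I can pick a second component $E_{1}\subset\mathbb{C}$, a compact connected set. An earlier lemma of the paper then makes $\tilde{\Omega}:=\hat{\mathbb{C}}\setminus E_{1}$ a simply connected domain containing $\Omega$, and the Riemann mapping theorem supplies a conformal equivalence $g$ of $\tilde{\Omega}$ onto $\mathbb{D}$ (or onto $\mathbb{C}$ if $E_{1}$ is a single point, with a parallel argument), normalized by $g(\infty)=0$. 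A short Jordan-curve analysis shows: if a simple closed curve $\alpha\subset\Omega\subset\tilde{\Omega}$ has $g(\alpha)$ enclosing $0$ in $\mathbb{D}$, then $E_{1}$ lies in the inside of $\alpha$ in $\hat{\mathbb{C}}$, since the $g^{-1}$-preimage of the annular piece of $\mathbb{D}\setminus g(\alpha)$ accumulates on $\partial E_{1}\subset E_{1}$ and, together with $E_{1}$, forms the component of $\hat{\mathbb{C}}\setminus\alpha$ opposite to $\infty=g^{-1}(0)$, which is the bounded one. The task therefore reduces to finding a Jordan curve in $g(\Omega)=\mathbb{D}\setminus g(F)$ enclosing $0$, where $F:=\tilde{\Omega}\setminus\Omega$.

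The structural key is that no continuous path in $g(F)$ can join $0$ to $\partial\mathbb{D}$: such a path would pull back via $g^{-1}$ to a connected subset of $F\subset\hat{\mathbb{C}}\setminus\Omega$ starting at $\infty$ (in the $\infty$-component $E_{\infty}$) and accumulating on $\partial E_{1}\subset E_{1}$, whose closure in $\hat{\mathbb{C}}$ would then be a connected set meeting two distinct components of $\hat{\mathbb{C}}\setminus\Omega$, which is impossible. The main obstacle is to promote this no-path property to the actual existence of a Jordan curve around $0$ in $\mathbb{D}\setminus g(F)$. I would handle it by a smoothing/level-set construction: using the clopen structure of components of the compact set $\hat{\mathbb{C}}\setminus\Omega$, first enclose the piece of $g(F)$ clustering around $0$ in an open set $U$ with $\overline{U}\subset\mathbb{D}$ and $\overline{U}$ disjoint from the remaining components of $g(F)$; then construct a smooth cutoff $\chi:\mathbb{D}\to[0,1]$ equal to $1$ on $\overline{U}$ and vanishing outside a slightly larger set that still avoids $g(F)\setminus U$; and apply Sard's theorem to select a regular value $c\in(0,1)$. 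The level set $\chi^{-1}(c)$ is then a smooth compact $1$-manifold lying in $\mathbb{D}\setminus g(F)$, and the outer boundary of the component of $\{\chi>c\}$ containing $0$ is the required simple closed curve. The isolation step in this construction is where I expect the argument to need the most care.
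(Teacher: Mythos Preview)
The paper does not actually prove this lemma: it only remarks that the result follows from the connectedness criterion, the Riemann mapping theorem and the Jordan curve theorem, and refers to Newman for a purely topological argument. So there is no detailed proof in the paper to compare against; the question is whether your argument stands on its own.

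Your forward direction is correct and is exactly the short argument the paper's hint suggests.

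In the reverse direction your strategy is sound, and your appeal to the earlier lemma that $\hat{\mathbb{C}}\setminus E_1$ is simply connected (the one in \S\ref{section:Connectivity}) is not circular, since that lemma is proved directly. The genuine gap is precisely the isolation step you flag. You want an open set $U$ with $K:=g(E_\infty)\subset U$, $\overline{U}\subset\mathbb{D}$, and $\overline{U}$ disjoint from ``the remaining components of $g(F)$''. But those remaining components may accumulate on $K$: if, for instance, $\hat{\mathbb{C}}\setminus\Omega=\{\infty\}\cup\{1\}\cup\{n:n\ge 2\}$ with $E_1=\{1\}$ and $E_\infty=\{\infty\}$, then the images $g(n)$ accumulate on $0=g(\infty)$, so \emph{every} neighbourhood of $K=\{0\}$ meets infinitely many other components of $g(F)$, and no such $U$ exists. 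Your cutoff $\chi$ would then have level sets meeting $g(F)$, and Sard's theorem cannot rescue this.

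The phrase ``clopen structure of components'' points at the correct repair, but you must use it before passing to $\mathbb{D}$, and at the level of a clopen \emph{partition}, not of single components (components of a compact set need not be open in it). Apply Lemma~\ref{lemma:existence_of_partition} to the compact set $\hat{\mathbb{C}}\setminus\Omega$ with $F_1=E_\infty$ and $F_2=E_1$ to obtain a clopen partition $H_1\cup H_2=\hat{\mathbb{C}}\setminus\Omega$ with $E_\infty\subset H_1$ and $E_1\subset H_2$. Then $H_1$ is compact and disjoint from $E_1$, so $g(H_1)$ is compact in $\mathbb{D}$, while $g(F\setminus H_1)=g(H_2\setminus E_1)$ is closed in $\mathbb{D}$ and disjoint from $g(H_1)$. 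Now an open $U$ with $g(H_1)\subset U\subset\overline{U}\subset\mathbb{D}\setminus g(H_2\setminus E_1)$ does exist, and your Sard/level-set construction goes through with $g(H_1)$ in place of $K$. The resulting Jordan curve encloses $0$ (it encloses the connected set $g(E_\infty)\subset g(H_1)$), which is what you need.
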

For a purely topological proof 
see 
Newman \cite[Chapter~VI]{Newman}. 
From the lemma we immediately obtain the following.
\begin{lemma}
\label{lemma:each_component_of_complementary_set_ofconnected_set}
Let $E$ be a nonempty compact 
connected set in $\hat{\mathbb{C}}$.
Then each component of $\hat{\mathbb{C}} \backslash E$  
is simply connected.
\end{lemma}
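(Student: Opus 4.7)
The plan is to invoke the criterion stated just before the preceding lemma: a domain in $\hat{\mathbb{C}}$ is simply connected if and only if its complement in $\hat{\mathbb{C}}$ is connected or empty. Hence, fixing an arbitrary connected component $D$ of $\hat{\mathbb{C}} \backslash E$, I only need to verify that $\hat{\mathbb{C}} \backslash D$ is connected.

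First I would decompose the open set $\hat{\mathbb{C}} \backslash E$ into its connected components, namely $D$ itself together with a (possibly empty) family $\{D_\lambda\}_{\lambda \in \Lambda}$ of the remaining components, which immediately gives
\[
  \hat{\mathbb{C}} \backslash D = E \cup \bigcup_{\lambda \in \Lambda} D_\lambda .
\]
Because each $D_\lambda$ is a component of the open set $\hat{\mathbb{C}} \backslash E$, its boundary satisfies $\partial D_\lambda \subset E$, so $\overline{D_\lambda} \subset D_\lambda \cup E \subset \hat{\mathbb{C}} \backslash D$, and therefore
\[
  \hat{\mathbb{C}} \backslash D = E \cup \bigcup_{\lambda \in \Lambda} \overline{D_\lambda} .
\]
Moreover each $\partial D_\lambda$ is nonempty, since otherwise $D_\lambda$ would be both open and closed in the connected space $\hat{\mathbb{C}}$, forcing $D_\lambda = \hat{\mathbb{C}}$ and contradicting $E \neq \emptyset$.

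The conclusion then follows immediately: $E$ is connected by hypothesis, each $\overline{D_\lambda}$ is connected as the closure of a connected set, and each $\overline{D_\lambda}$ intersects $E$ because $\emptyset \neq \partial D_\lambda \subset E \cap \overline{D_\lambda}$. Consequently $\hat{\mathbb{C}} \backslash D$ is a union of connected sets all sharing the connected set $E$, so it is itself connected, and the simple connectivity of $D$ follows from the criterion cited above. I do not anticipate any serious obstacle; the only point that deserves a moment's care is verifying that each auxiliary component has nonempty boundary, which is handled by the connectedness argument above.
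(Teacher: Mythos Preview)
Your proof is correct. The paper does not spell out an argument for this lemma; it merely remarks that it follows from the preceding lemma, namely the Jordan-curve characterization (a domain $\Omega\subset\mathbb{C}$ is simply connected iff the inside of every simple closed curve in $\Omega$ lies in $\Omega$). Carried out, that route would read: after a M\"obius transformation placing $\infty$ in $E$, fix a component $D\subset\mathbb{C}$ and a simple closed curve $\gamma\subset D$; since $E$ is connected and disjoint from $\gamma$, $E$ lies in one complementary component of $\gamma$, necessarily the unbounded one, so the inside $U$ of $\gamma$ misses $E$; then $U\cup\gamma$ is connected, contained in $\hat{\mathbb{C}}\setminus E$, and meets $D$, hence $U\subset D$. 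You instead invoke directly the criterion stated just before that lemma (a domain in $\hat{\mathbb{C}}$ is simply connected iff its complement is connected), and verify connectedness of $\hat{\mathbb{C}}\setminus D$ by writing it as $E$ together with the closures of the remaining components, each of which meets $E$ through its nonempty boundary. Your argument is a touch more elementary in that it avoids the Jordan curve theorem altogether; the paper's intended route has the minor advantage of exercising the lemma it has just recorded. Either way the content is the same standard fact.
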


By a partition of a set $E$ in a topological space
we mean two nonempty subsets $H_1$ and $H_2$ of $E$ 
such that $H_1$ and $H_2$ are closed 
in the subspace topology of $E$, 
with $H_1 \cup H_2 = E$ and $H_1 \cap H_2 = \emptyset$. 
For a proof of the following lemma, 
see Newman \cite[Theorem~5.6]{Newman}. 
\begin{lemma}
\label{lemma:existence_of_partition}
Let $E$ be a compact set in a metric space $X$, 
$F_1$, $F_2$ be nonempty closed subsets of $E$ such that 
for any component $B$ of $E$, 
$F_1 \cap B = \emptyset$ or $F_2 \cap B = \emptyset$.
Then there exists a partition $H_1$, $H_2$ of $E$ with 
$F_1 \subset H_1$ and $F_2 \subset H_2$.
\end{lemma}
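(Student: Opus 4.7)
The plan is to separate $F_1$ from $F_2$ using clopen subsets of $E$. The main auxiliary fact I would first establish is the following: if $E$ is a compact Hausdorff space (which is automatic since $E$ is compact in a metric space), $C$ is a connected component of $E$, and $K \subset E$ is closed with $K \cap C = \emptyset$, then there exists a clopen subset $U$ of $E$ with $C \subset U$ and $U \cap K = \emptyset$. Once this is in hand, a short compactness argument produces the partition.

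To prove the auxiliary fact, I would pass to the quasi-component $Q$ of $C$, defined as the intersection $Q = \bigcap_\alpha U_\alpha$ of all clopen subsets $U_\alpha$ of $E$ that contain some (hence every) point of $C$. Because each $U_\alpha$ is clopen and $C$ is connected, either $C \subset U_\alpha$ or $C \cap U_\alpha = \emptyset$; the former must hold, so $C \subset Q$. The classical theorem that in a compact Hausdorff space every quasi-component is a single component gives $C = Q$. Hence $K \subset E \setminus Q = \bigcup_\alpha (E \setminus U_\alpha)$, and since $K$ is compact finitely many of the closed sets $E \setminus U_{\alpha_1}, \ldots , E \setminus U_{\alpha_n}$ already cover $K$. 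The finite intersection $U := U_{\alpha_1} \cap \cdots \cap U_{\alpha_n}$ is then clopen, contains $C$, and is disjoint from $K$.

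With this in hand, I would fix $x \in F_1$ and apply the auxiliary fact to the component $C_x$ of $x$ in $E$ and to $K := F_2$. By hypothesis $C_x \cap F_1 \ni x$, so $C_x \cap F_2 = \emptyset$, and the auxiliary fact yields a clopen set $U_x \subset E$ with $C_x \subset U_x$ and $U_x \cap F_2 = \emptyset$. The collection $\{U_x\}_{x \in F_1}$ is an open cover of the compact set $F_1$, so I would extract a finite subcover $U_{x_1}, \ldots , U_{x_n}$ and set
\[
   H_1 = U_{x_1} \cup \cdots \cup U_{x_n}, \qquad H_2 = E \setminus H_1 .
\]
Since $H_1$ is a finite union of clopen sets it is clopen in $E$, and so is $H_2$; by construction $F_1 \subset H_1$, while $H_1 \cap F_2 = \emptyset$ forces $F_2 \subset H_2$. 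Thus $(H_1,H_2)$ is the required partition.

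The main obstacle is really the classical topological fact invoked above, that in a compact Hausdorff space the quasi-component of a point coincides with its connected component; this is where the compactness of $E$ is used in an essential way, and without it one can construct counterexamples. Everything else is a routine covering-compactness argument.
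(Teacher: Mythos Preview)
Your proof is correct. The paper does not give its own proof of this lemma; it simply cites Newman \cite[Theorem~5.6]{Newman}. Your argument is precisely the standard one: reduce to the fact that in a compact Hausdorff space components coincide with quasi-components, use this to separate each component meeting $F_1$ from $F_2$ by a clopen set, and finish with a compactness argument on $F_1$. This is essentially how Newman proves it as well, so there is nothing to contrast.
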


Now we prove the separation lemma.
\begin{proof}[Proof of Lemma \ref{lemma:separation}]
After a linear fractional transformation if necessary,
we may assume that $\infty \in F$.
By Lemma \ref{lemma:existence_of_partition} 
there exist closed sets $H_1$ and  $H_2$ 
with $H_1 \cap H_2 = \emptyset$ and 
$H_1 \cup H_2 = \hat{\mathbb{C}} \backslash \Omega$,
satisfying $C \subset H_1$ and $F \subset H_2$.
Since $\infty \not\in H_1$,
$H_1$ is a compact subset of $\mathbb{C}$.
We may assume that $H_2$ contains 
at least one point other than $\infty$;
otherwise the lemma is obvious.
It the follows that
we have 
$0 < d(H_1, H_2) := \{ |z-w| : z \in H_1, \; w \in H_2 \} < \infty$.

Let $S$ be a square 
with $H_1 \subset \Int S$, whosesides are 
parallel to the coordinate axes.
By a square we mean a closed solid square
consisting of both its boundary and interior, 
and we denote the set of interior points of $S$ by $\Int S$.
Let $\ell$ be the side length of $S$ and    
choose $n \in \mathbb{N}$ such that
\[
    \frac{\sqrt{3} \ell}{n} 
 < \min\{ d(H_1, H_2) , d (H_1 , \partial S ) \}.
\]
We divide $S$ into nonoverlapping small squares
of side length $\ell /n $ by equally spaced 
horizontal and vertical lines.
We call $\hat{\mathbb{C}} \backslash \Int S$ the unbounded square.
Let $K$ be the union of 
the unbounded square and those small squares that intersect $H_2$.

We show that $\partial K \cap (H_1 \cup H_2) = \emptyset$, i.e.,
$\partial K \subset \Omega$.
Note that $\partial K$ consists of edges $A$ of the small squares.
If $A \subset \partial S$, there exists a unique 
small square $T$ having $A$ as on of its edge.
Since $\diam T = \frac{\sqrt{2} \ell}{n} < d (H_1, \partial S)$,
we have $T \cap H_1 = \emptyset$.
Moreover, $T \cap H_2 = \emptyset$.
Indeed, if $T \cap H_2 \not= \emptyset$, then 
$A \subset T \cup (\hat{\mathbb{C}} \backslash \Int S) \subset K$.
This would imply $A^\circ \subset \Int K$, contradicting 
$A \subset \partial K$.
Here $A^\circ$ is the open segment obtained from $A$ by 
removing two vertices of $A$.

Next we consider the case 
$A \backslash \partial S \not= \emptyset$.
In this case there exist exactly two adjacent small 
squares $T_1$ and $T_2$ with $A \subset T_1 \cap T_2$, 
and we may assume $T_1 \cap H_2 = \emptyset$ and 
$T_2 \cap H_2 \not= \emptyset$.
Then $A \cap H_2 \subset T_1 \cap H_2 = \emptyset$.
Moreover, $A \cap H_1 = \emptyset$ 
since $T_2 \cap H_2 \not= \emptyset$ and 
$\diam (T_1 \cup T_2) = \frac{\sqrt{3} \ell}{n} < d (H_1, H_2)$.
Thus $\partial K \cap (H_1 \cup H_2) = \emptyset$.

\begin{tikzpicture}
\draw (0,0)--(6,0) node[midway,above] {$\partial S$};
\draw[very thick] (3,0)--(3,-1) ;
\node at (3.15,-0.7) {$A$};
\draw (2,0)--(2,-1)--(3,-1);
\node at (2.5,-0.5) {$T_1$};
\draw (4,0)--(4,-1)--(3,-1);
\node at (3.5,-0.5) {$T_2$};
\end{tikzpicture}
\hfill
\begin{tikzpicture}
\draw (0,-0.5)--(0,2) node[midway,left] {$\partial S$};
\draw[very thick] (2,0)--(2,1) ;
\node at (2.15,0.3) {$A$};
\draw (2,0)--(1,0)--(1,1)--(2,1);
\node at (1.5,0.5) {$T_1$};
\draw (2,0)--(3,0)--(3,1)--(2,1);
\node at (2.5,0.5) {$T_2$};
\end{tikzpicture}

Let $K_0$ be the component of $K$ containing
the unbounded square $\hat{\mathbb{C}} \backslash \Int S$, 
and write $K = K_0 \cup K_1 \cup \cdots \cup K_m$ be 
the decomposition of $K$ into connected components. 
Since 
$\partial K_j \subset \partial K \subset \Omega$ for $j=0,\ldots , m$,
there exists a path $\alpha_j : [0,1] \rightarrow \Omega$ with
$\alpha_j (0) \in \partial K_0$ and $\alpha_j (1) \in \partial K_j$ 
for $j=1,\ldots , m$.
Then 
$\tilde{F} = K \cup \bigcup_{j=1}^m \alpha_j ([0,1])$
is connected and closed in $\hat{\mathbb{C}}$, and  satisfies
\[
  \infty \in \tilde{F}, \quad F \subset H_2 \subset \tilde{F}
  \quad \text{and} \quad  H_1 \cap \tilde{F} = \emptyset .
\]
Let $\Omega_0$ be the component of 
$\hat{\mathbb{C}} \backslash \tilde{F}$
containing $C$.
Since
\[
  \partial \Omega_0 \subset \partial \tilde{F} 
  \subset \partial K \cup \bigcup_{j=1}^m \alpha_j ([0,1]) \subset \Omega  ,
\]
we have $\partial \Omega_0 \cap H_1 = \emptyset$,
and hence $\Omega_0 \cap H_1 (= \overline{\Omega_0} \cap H_1)$ 
is compact.
By Lemma \ref{lemma:each_component_of_complementary_set_ofconnected_set}  
$\Omega_0$ is simply connected. 
Thus, by the Riemann mapping theorem,
there exists a conformal map $h: \mathbb{D} \rightarrow \Omega_0$.
Since $\Omega_0 \cap H_1$ is compact,
there exists $r \in (0,1)$ with 
\[
  \Omega_0 \cap H_1 \subset h(\mathbb{D}(0,r)) .
\] 
Define a simple closed curve 
$\alpha : \partial \mathbb{D} \rightarrow \Omega_0$ 
by 
\[
    \alpha ( \zeta ) = h(r \zeta ), \quad \zeta \in \partial \mathbb{D} .
\]
Clearly, $\alpha(\partial \mathbb{D}) \cap H_1 = \emptyset$,
and $\alpha(\partial \mathbb{D}) \cap H_2 
\subset \Omega_0 \cap \tilde{F} = \emptyset$.
Therefore $\alpha$ is a simple closed curve in 
$\Omega = \hat{\mathbb{C}} \backslash (H_1 \cup H_2)$.
Furthermore,
the interior domain of $\alpha $ coincides with $h(\mathbb{D}(0,r))$
and contains $C ( \subset \Omega_0 \cap H_1)$.
Since $\alpha ( \partial \mathbb{D}) \cap \tilde{F} = \emptyset$,
the connected set $\tilde{F}$ is contained in either
the interior or exterior of $\alpha$.  
As $\infty \in \tilde{F}$, we conclude that 
$\tilde{F}$ lies in  the exterior of $\alpha$.  
Therefore $\alpha$ separates $C$ and $F (\subset \tilde{F})$.

\begin{center}
\begin{resizebox}{8cm}{8cm}
{
\begin{tikzpicture}
\draw [fill=gray,gray] (-2,-2) rectangle (10,10);
\draw[fill=white, very thick] (0,0)--(8,0)--(8,7)--(6,7)--(6,8)--(0,8)--cycle;
\node at (7,8) {$K_0$} ;

\draw[fill=gray,very thick] (2,3) rectangle (5,6);
\draw[fill=white,very thick] (2.5,3.5) rectangle (4.5,5.5);
\node at (4,5.7) {$K_1$} ;

\draw[fill=gray,very thick] (3,4) rectangle (4,5);
\node at (3.5,4.5) {$K_2$} ;

\draw[very thick] (1.5,1.5)--node[above] {$C$} (3,1.5);

\node at (6,1) {$\Omega_0$}; 

\draw[very thick,->] (3,8) to 
[out=30,in=90, relative] node[right] {$\alpha_1$}(4,6);

\draw[very thick,->] (8,4.5) to 
[out=-30,in=180, relative] node[below] {$\alpha_2$}(4,4.5);

\end{tikzpicture}
}
\end{resizebox}
\end{center}

\end{proof}

\backmatter
\bibliographystyle{amsalpha}

\begin{thebibliography}{99}

\bibitem{Ahlfors:ComplexAnalysis}
L.~V.~Ahlfors, \textit{Complex Analysis}, 3rd ed., 
McGraw--Hill, New York, 1979.

\bibitem{Ahlfors:ConformalInvariants}
L.~V.~Ahlfors, \textit{Conformal Invariants}, 
McGraw--Hill, New York, 1973.

\bibitem{Ahlfors-and-Sario}
L.~V.~Ahlfors and L.~Sario, \textit{Riemann Surfaces}, 
Princeton University Press, Princeton, NJ, 1960.

\bibitem{Beardon-Minda}
A.~F.~Beardon and D.~Minda, 
Carath\'{e}odory's kernel theorem and the hyperbolic metric, 
in preparation.

\bibitem{BCDV}
F.~Bracci, M.~D.~Contreras, S.~D\'{i}az-Madrigal and A.~Vasil'ev, 
Classical and stochastic L\"{o}wner--Kufarev equations, 
in: A.~Vasil'ev (ed.), 
\textit{Harmonic and Complex Analysis and its Applications}, 
Trends in Mathematics, Birkh\"{a}user, Basel, 2013, pp.~39--134.

\bibitem{deBrange}
L.~de~Branges, A proof of the Bieberbach conjecture, 
\textit{Acta Math.} \textbf{154} (1985), 137--152.

\bibitem{BBT}
A.~M.~Bruckner, J.~B.~Bruckner and B.~S.~Thomson, 
\textit{Real Analysis}, Prentice Hall, Upper Saddle River, NJ, 1997.

\bibitem{Burckel}
R.~B.~Burckel, 
\textit{An Introduction to Classical Complex Analysis, Vol.~I}, 
Academic Press, New York, 1979.

\bibitem{ConwayII}
J.~B.~Conway, \textit{Functions of One Complex Variable II}, 
Graduate Texts in Mathematics, vol.~159, Springer--Verlag, New York, 1995.

\bibitem{Duren:Hp-spaces}
P.~L.~Duren, \textit{Theory of $H^p$ Spaces}, 
Pure and Applied Mathematics, vol.~38, Academic Press, New York, 1970.

\bibitem{Duren:univ-functions}
P.~L.~Duren, \textit{Univalent Functions}, 
Grundlehren der Mathematischen Wissenschaften, vol.~259, 
Springer--Verlag, New York, 1983.

\bibitem{Folland}
G.~B.~Folland, 
\textit{Real Analysis: Modern Techniques and Their Applications}, 
John Wiley \& Sons, New York, 1999.

\bibitem{Fiacchi}
M.~Fiacchi, Approximation and Loewner theory of holomorphic 
covering mappings,  
\textit{Ann. Mat. Pura Appl.}
\textbf{199} (2020), 795--808.

\bibitem{Forster}
O.~Forster, \textit{Lectures on Riemann Surfaces}, 
Graduate Texts in Mathematics, vol.~81, Springer--Verlag, New York, 1981.

\bibitem{Garnett}
J.~B.~Garnett, \textit{Bounded Analytic Functions}, 
Graduate Texts in Mathematics, vol.~236, Springer, New York, 2007.

\bibitem{Goluzin}
G.~M.~Goluzin, \textit{Geometric Theory of Functions of a Complex Variable}, 
Translations of Mathematical Monographs, vol.~26, Amer. Math. Soc., 
Providence, RI, 1969.

\bibitem{Hayman1947}
W.~K.~Hayman, Some remarks on Schottky’s theorem, 
\textit{Proc. Cambridge Philos. Soc.} \textbf{43} (1947), 442--454.

\bibitem{Heins}
M.~Heins, \textit{Selected Topics in the Classical Theory of Functions 
of a Complex Variable}, Holt, Rinehart and Winston, New York, 1962.

\bibitem{Hejhal}
D.~A.~Hejhal, Universal covering maps for variable regions, 
\textit{Math. Z.} \textbf{137} (1974), 7--20.


\bibitem{Hille}
E.~Hille, \textit{Analytic Function Theory, Vol. II}, 
AMS Chelsea Publishing, Providence, RI, 1987.


\bibitem{Hotta}
I.~Hotta, L\"{o}wner chains with first leading coefficient, 
\textit{Monatsh. Math.} \textbf{163} (2011), 315--325.

\bibitem{Jenkins1955}
J.~A.~Jenkins, On explicit bounds in Schottky’s theorem, 
\textit{Canad. J. Math.} \textbf{7} (1955), 76--82.

\bibitem{Kufarev1}
P.~P.~Kufarev, On one-parameter families of analytic functions, 
\textit{Mat. Sb.} \textbf{13}(55) (1943), 87--118.

\bibitem{Kufarev2}
P.~P.~Kufarev, On integrals of a simplest differential equation with movable 
polar singularity in the right-hand side, 
\textit{Tomsk. Gos. Univ. Uchen. Zap.} \textbf{1} (1946), 35--48.

\bibitem{Leoni}
G.~Leoni, \textit{A First Course in Sobolev Spaces}, 2nd ed., 
Graduate Studies in Mathematics, vol.~181, Amer. Math. Soc., 
Providence, RI, 2017.

\bibitem{Loewner}
K.~L\"{o}wner, Untersuchungen \"{u}ber schlichte konforme Abbildungen 
des Einheitskreises, 
\textit{Math. Ann.} \textbf{89} (1923), 103--121.

\bibitem{MacLane}
G.~R.~MacLane, Holomorphic functions of arbitrarily slow growth, 
without radial limits, 
\textit{Michigan Math. J.} \textbf{9} (1962), 21--24.

\bibitem{Massey}
W.~S.~Massey, \textit{A Basic Course in Algebraic Topology}, 
Graduate Texts in Mathematics, vol.~127, Springer--Verlag, New York, 1991.

\bibitem{Munkres}
J.~R.~Munkres, \textit{Topology}, 2nd ed., 
Prentice Hall, Upper Saddle River, NJ, 2000.

\bibitem{Natanson}
I.~P.~Natanson, \textit{Theory of Functions of a Real Variable, Vol.~I}, 
Ungar, New York, 1955.

\bibitem{Newman}
M.~H.~A.~Newman, \textit{Elements of the Topology of Plane Sets of Points}, 
Dover, New York, 1992.

\bibitem{Nicholls}
P.~J.~Nicholls, \textit{The Ergodic Theory of Discrete Groups}, 
Cambridge University. Press, Cambridge, 1989.


\bibitem{Pommerenke1960}
Ch.~Pommerenke, On sequences of subordinate functions, 
\textit{Michigan Math. J.} \textbf{7} (1960), 181--185.

\bibitem{Pommerenke1965}
Ch.~Pommerenke, \"{U}ber die Subordination analytischer Funktionen, 
\textit{J. Reine Angew. Math.} \textbf{218} (1965), 159--173.

\bibitem{Pommerenke1}
Ch.~Pommerenke, \textit{Univalent Functions}, 
Vandenhoeck \& Ruprecht, G\"{o}ttingen, 1975.

\bibitem{Pommerenke2}
Ch.~Pommerenke, \textit{Boundary Behaviour of Conformal Maps}, 
Springer--Verlag, Berlin, 1992.

\bibitem{Remmert}
R.~Remmert, \textit{Classical Topics in Complex Function Theory}, 
Springer--Verlag, New York, 1997.

\bibitem{Rosenblum-Rovnyak}
M.~Rosenblum and J.~Rovnyak, 
\textit{Topics in Hardy Classes and Univalent Functions}, 
Birkh\"{a}user, Basel, 1994.

\bibitem{Springer}
G.~Springer, \textit{Introduction to Riemann Surfaces}, 
Amer. Math. Soc., Providence, RI, 2002.

\bibitem{Tsuji}
M.~Tsuji, \textit{Potential Theory in Modern Function Theory}, 
Chelsea, New York, 1975.

\end{thebibliography}

\printindex

\end{document}